\documentclass[reqno,11pt]{amsart}

\usepackage{amsmath} 
\usepackage{amssymb} 
\usepackage{amsfonts}
\usepackage{mathrsfs}
\usepackage{amsthm} 
\usepackage[all]{xy}
\usepackage{vmargin}       
\usepackage{lineno,hyperref} 


\pagestyle{plain}
\setlength{\skip\footins}{0.4cm}
\renewcommand\labelenumi{(\roman{enumi})}
\renewcommand\theenumi\labelenumi

\setmarginsrb{2cm}{1.5cm}{2cm}{3cm}{3cm}{1cm}{5cm}{1cm}     


\newcommand{\Z}{\mathbb{Z}}

\newcommand{\R}{\mathbb{R}}
\newcommand{\C}{\mathbb{C}}

\newcommand{\rad}{\operatorname{rad}}

\newcommand{\m}{\operatorname{m}}

\newcommand{\rank}{\operatorname{rank}}

\newcommand{\Hom}{\operatorname{Hom}}

\newcommand{\Lie}{\operatorname{Lie}}

\def\bigquotient#1#2{%
    \left.\raise1ex\hbox{$#1$}\middle/\lower1ex\hbox{$#2$}\right.%
}
\def\quotient#1#2{%
    \left.\raise0.3ex\hbox{$#1$} \middle/\lower0.3ex\hbox{$#2$}\right.%
}

\newtheoremstyle{thme}
{2}						
{2}						
{\itshape}				
{}						
{\bfseries}				
{}						
{\newline}				
{}						

\newtheoremstyle{rem}	
{2}						
{2}						
{\upshape}				
{}						
{\bfseries}				
{}						
{\newline}				
{}						

\theoremstyle{thme}
\newtheorem{thmintro}{Theorem}
 
\newtheorem{corolintro}[thmintro]{Corollary}

\newtheorem{thm}{Theorem}[section]
\newtheorem*{thm*}{Theorem}

\newtheorem{prop}[thm]{Proposition}

\newtheorem{lem}[thm]{Lemma}

\theoremstyle{rem}
\newtheorem*{remarkintro}{Remark}

\newtheorem{remark}[thm]{Remark}

\newtheorem{defin}[thm]{Definition}

\begin{document}    


\title{Restriction of irreducible modules for $\mbox{Spin}_{2n+1}(K)$ to $\mbox{Spin}_{2n}(K)$}
\author{Mika\"el Cavallin}
\address{Fachbereich Mathematik, Postfach 3049, 67653 Kaiserslautern, Germany.} 
\email{cavallin.mikael@gmail.com} 
\thanks{The author would like to acknowledge the support of the Swiss National Science Foundation through grant no. 20020-135144 as well as the ERC Advanced Grant through grant no. 291512.}

\begin{abstract}     

Let \(K\) be an algebraically closed field of characteristic \(p\geqslant 0\) and let $Y=\mbox{Spin}_{2n+1}(K)$ $(n\geqslant 3)$ be a simply connected simple algebraic group of type $B_n$  over \(K.\) Also let \(X\) be the subgroup  of type \(D_n,\) embedded in \(Y\) in the usual way, as the derived subgroup of the stabilizer of a non-singular one-dimensional subspace of the natural module for \(Y.\) In this paper, we give a complete set of isomorphism classes of finite-dimensional, irreducible, rational \(KY\)-modules on which  \(X\) acts with exactly two composition factors, completing the work of Ford in \cite{Fo1}.
\end{abstract}

\maketitle


\section{Introduction}     

In the $1950$s, Dynkin  \cite{Dynkin} determined the maximal closed connected subgroups of the classical algebraic groups over $\C.$ The most difficult part of the investigation concerned irreducible closed simple subgroups $X$ of $\mbox{SL}(V).$ In the course of his analysis, Dynkin observed that if $X$ is a simple algebraic group over $\C$ and if $\phi : X \to \mbox{SL}(V)$ is an irreducible rational representation, then with specified exceptions the image of $X$ is maximal among closed connected subgroups in one of the classical groups $\mbox{SL}(V),$ $\mbox{Sp}(V)$ or $\mbox{SO}(V).$  Here Dynkin determined the triples $(Y,X,V)$ where $Y$ is a closed connected subgroup of $\mbox{SL}(V),$ $V$ is an irreducible $KY$-module different from the natural module for $Y$ or its dual, and $X$ is a closed connected subgroup of $Y$ such that the restriction of $V$ to $X,$ written $V|_X,$ is also irreducible. Such triples (as well as similar triples with $X$ a closed \emph{disconnected} subgroup of $Y$) shall be referred to as \emph{irreducible triples} in the remainder of the paper.
\vspace{5mm}

In the $1980$s, Seitz \cite{Se} extended the analysis to the situation of fields of arbitrary characteristic. By introducing new techniques, he determined all irreducible triples $(Y,X,V)$ where $Y$ is a simply connected simple algebraic group of classical type over an algebraically closed field $K$ having characteristic $p\geqslant 0,$ $X$ is a closed connected proper subgroup of $Y,$ and $V$ is an irreducible, tensor indecomposable $KY$-module. In particular, in the situation where $Y=\mbox{Spin}_{2n+1}(K)$ $(n\geqslant 3)$ and $X$ is a subgroup of type \(D_n\), embedded in the usual way (as the derived subgroup of the stabilizer of a non-singular one-dimensional subspace of the natural module for $Y$), Seitz showed that $V|_X$ can be irreducible only if $p=2,$ thus drastically reducing the number of possibilities (see \cite[Table 1 $(\mbox{MR}_4)$]{Se}). His investigation was then extended by Testerman \cite{Te} to exceptional algebraic groups $Y,$ again for $X$ a closed connected subgroup.
\newpage

The work of Dynkin, Seitz and Testerman provides a complete classification of irreducible triples $(Y,X,V)$ where $Y$ is a simple algebraic group, $X$ is a closed connected proper subgroup of $Y$ and $V$ is an irreducible, tensor indecomposable $KY$-module. Now focusing on disconnected subgroups, Ford in \cite{Fo1} and \cite{Fo2}, investigated irreducible triples $(Y,X,V)$ in the special case where $Y$ is of classical type, $X$ is disconnected with simple connected component $X^{\circ},$ and the restriction $V|_{X^{\circ}}$ has $p$-restricted composition factors. In particular, in \cite[Section 3]{Fo1}, Ford considered the embedding of $X=X^{\circ} \langle \theta \rangle$ in   $Y=\mbox{Spin}_{2n+1}(K)$ $(n\geqslant 3),$ where $X^{\circ}$ is a subgroup of type \(D_n\), embedded in the usual way, and $\theta$ denotes the standard graph automorphism of order $2$ of $X^{\circ}.$ Contrary to the connected case, a whole family of irreducible triples arises in this situation (see \cite[Theorem 1, Table II, No. U$_2$]{Fo1} for a complete list).
\vspace{5mm}

For completeness, observe that more recently, Ghandour \cite{Gh} gave a complete classification of irreducible triples $(Y,X,V)$ in the case where $Y$ is a simple algebraic group of exceptional type, $X$ is a closed, disconnected, positive-dimensional subgroup of $Y$ and $V$ is an irreducible, $p$-restricted, rational $KY$-module. Finally, in \cite{BGMT}, \cite{BGT} and \cite{BMT}, the authors essentially treat the case of triples $(Y,X,V)$ where $Y$ is of classical type, $X$ is a closed, positive-dimensional subgroups of $Y,$ and $V$ is an irreducible, tensor indecomposable $KY$-module, removing the previously mentioned assumption of Ford.
\vspace{5mm}

In this paper, part of the author's doctoral thesis, we initiate the investigation of  triples $(Y,X,V),$ where $Y$ is a simply connected simple algebraic group of classical type over $K,$ $X$ is a maximal, closed, connected subgroup of $Y,$ and $V$ is a finite-dimensional, irreducible, rational $KY$-module on which $X$ acts with \emph{exactly two} composition factors. Not only is this problem a natural follow-up to the study  of irreducible triples initiated by Dynkin, but also identifying such triples can  sometimes provide partial information on the structure of certain  Weyl modules for $X,$ as well as insight on their corresponding irreducible quotients (see \cite[Theorem 5]{Thesis} and \cite[Corollary 6]{Thesis}, for instance).

\subsection{Statements of results}     

Let $Y=\mbox{Spin}_{2n+1}(K)$ $(n\geqslant 3)$ be a simply connected simple algebraic group of type $B_n$  over \(K\) and let \(X\) be a subgroup of type \(D_n,\) embedded in \(Y\) in the usual way, as the derived subgroup of the stabilizer of a non-singular one-dimensional subspace of the natural module for \(Y.\) Fix $B_Y$ a Borel subgroup of $Y$ containing a maximal torus $T_Y$ of $Y,$ and let $B_X\subset B_Y$ be a Borel subgroup of $X$ containing the fixed maximal torus $T_Y$ of $X.$ Finally, denote by   $\{\lambda_1,\ldots,\lambda_n\}$ the set of fundamental weights associated to $B_Y$ and by $\{\omega_1,\ldots,\omega_n\}$ the set of fundamental weights associated to $B_X,$ where we adopt the usual labelling of the Dynkin diagram of \(Y\) and \(X\) (see Bourbaki \cite{Bourbaki}). We refer the reader to Section \ref{[Section] Preliminaries}  for more notation.
\vspace{5mm}

As seen above, there are no irreducible triples $(Y,X,V)$ if \(p\neq 2,\) so it is  natural to consider the next best possible scenario, that is, the situation where $V$ is a finite-dimensional, irreducible, rational  $KY$-module  such that $V|_X$ has exactly two composition factors. Now \cite[Theorem 3.3]{Fo1} already provides us with a whole family of examples of  such irreducible modules for $Y,$ whose two $KX$-composition factors are  interchanged by the aforementioned graph automorphism $\theta$ of $X.$ The aim of this paper is to  give a proof of the following generalization of Ford's result, which consists in  a complete  classification of triples $(Y,X,V)$ for which $X$ has exactly two composition factors on $V.$

\begin{thmintro}\label{Main result}
Let $Y$ be a simply connected simple algebraic group of type $B_n$ over $K$ $(n\geqslant 3),$ and let $X$ be a simple subgroup of type $D_n,$ embedded in $Y$ in the usual way. Let $V=L_Y(\lambda)$ be an irreducible non-trivial $KY$-module having $p$-restricted highest weight $\lambda=\sum_{r=1}^n{a_r\lambda_r}\in X^+(T_Y),$ and if $\lambda\notin \Z  \lambda_n,$ let $1\leqslant k<n$ be maximal such that $a_k  \neq 0.$ Then $X$ has exactly two composition factors on $V$ if and only if one of the following holds.
\begin{enumerate}
\item \label{[Dn<Bn] Theorem: the case lambda=lambda_i for 0<i<n} 	$\lambda=\lambda_k$ and $p\neq 2.$
\item \label{[Dn<Bn] Theorem: the case lambda=lambda_n} $\lambda=\lambda_n.$
\item \label{[Dn<Bn] Theorem: Remaining cases}											$\lambda$ is neither as in \textnormal{\ref {[Dn<Bn] Theorem: the case lambda=lambda_i for 0<i<n}} nor \textnormal{\ref{[Dn<Bn] Theorem: the case lambda=lambda_n}}, \(a_n\leqslant 1,\) and the following divisibility conditions are satisfied.
					\begin{enumerate}
					\item \label{[Dn<Bn] Theorem: Remaining cases (1)}				$p\mid a_i+a_j + j - i$ for every $1\leqslant i<j<n$ such that $a_i a_j \neq 0$ and $a_r=0$ for $i<r<j.$
					\item \label{[Dn<Bn] Theorem: Remaining cases (2)}				$p\mid 2(a_n + a_k + n - k )-1.$
					\end{enumerate}
\end{enumerate}
Furthermore, if $(\lambda,p)$ is as in \textnormal{\ref{[Dn<Bn] Theorem: the case lambda=lambda_i for 0<i<n}}, \textnormal{\ref{[Dn<Bn] Theorem: the case lambda=lambda_n}}  or \textnormal{\ref{[Dn<Bn] Theorem: Remaining cases}}, then $V|_{X}$ is completely reducible.
\end{thmintro}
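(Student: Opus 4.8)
The plan is to determine, for every $p$-restricted $\lambda$, the entire multiset of highest weights of the $KX$-composition factors of $V|_X$, and then read off when it has exactly two members. Since $X$ and $Y$ share the torus $T_Y$, the $T_Y$-weights of $V$ are fixed once and for all and only their grouping into $X$-factors is in question. Restricting $\lambda=\sum_r a_r\lambda_r$ through $\lambda_i=\omega_i$ $(i\leqslant n-2)$, $\lambda_{n-1}=\omega_{n-1}+\omega_n$ and $\lambda_n=\omega_n$ produces the $X$-dominant weight $\mu=\sum_{i\leqslant n-2}a_i\omega_i+a_{n-1}\omega_{n-1}+(a_{n-1}+a_n)\omega_n$, the highest weight of $V|_X$; thus $L_X(\mu)$ is a composition factor of multiplicity one. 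My second tool is the graph automorphism $\theta$ of $X$ exchanging $\omega_{n-1}$ and $\omega_n$: the stabiliser in $Y$ of the fixed non-singular line contains an element inducing $\theta$ on $X$, and as it acts on $V$ it permutes the $X$-factors, sending $L_X(\sigma)$ to $L_X(\theta\sigma)$. Because $\mu-\theta\mu=a_n\beta_n\geqslant 0$, we have $\theta\mu\neq\mu$ exactly when $a_n\neq 0$, so $a_n\geqslant 1$ already forces the two distinct factors $L_X(\mu)$ and $L_X(\theta\mu)$.

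I would first dispose of (i) and (ii) by explicit identifications. For $\lambda=\lambda_k$ with $k<n$ and $p\neq 2$, $L_Y(\lambda_k)=\Lambda^k V_Y$, and $V_Y|_X=V_X\oplus\langle v_0\rangle$ gives $L_Y(\lambda_k)|_X=\Lambda^k V_X\oplus\Lambda^{k-1}V_X$, namely $L_X(\omega_k)\oplus L_X(\omega_{k-1})$ (or $L_X(\omega_{n-1}+\omega_n)\oplus L_X(\omega_{n-2})$ if $k=n-1$): exactly two factors. In characteristic $2$ these exterior powers gain a trivial constituent and the count rises, which is why (i) requires $p\neq 2$. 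For $\lambda=\lambda_n$ the spin module restricts, for every $p$, to $L_X(\omega_n)\oplus L_X(\omega_{n-1})$, giving (ii). Next I would record that $a_n\leqslant 1$ is necessary in the remaining cases: if $a_n\geqslant 2$ then $\mu-e_n$ is $X$-dominant, fixed by $\theta$, lies strictly between $\mu$ and $\theta\mu$, and (by the weight-multiplicity count) is the highest weight of a third factor.

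The heart of the proof is case (iii). I would list the candidate highest weights of the further factors as the $X$-dominant weights reached from $\mu$ by the minimal admissible lowering moves imposed by the branching: an \emph{interior} move across each pair of consecutive non-zero nodes $i<j<n$, and \emph{terminal} moves towards the short end of the diagram from the maximal node $k$ (together with all their $\theta$-images). For each candidate I would decide whether its factor actually occurs by lifting to the Weyl module $V_Y(\lambda)$ and feeding the relevant rank-one and rank-two (types $A_2$ and $B_2$) subsystem data into the linkage principle and the Jantzen sum formula, then transporting the result through the restriction to $X$ and pairing factors under $\theta$. The expected outcome is that the interior factor across $(i,j)$ is suppressed exactly when $p\mid a_i+a_j+j-i$, which is (iii)(a), and that no excess factor arises at the short end exactly when $p\mid 2(a_n+a_k+n-k)-1$, which is (iii)(b); the factor $2$ and the shift $-1$ there reflect the short coroot and the half-integral $\rho$-shift at node $n$. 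As a check, for $\lambda=2\lambda_1$ condition (iii)(b) reads $p\mid 2n+1=\dim V_Y$, precisely the classical condition for $V_Y(2\lambda_1)$ to acquire a trivial composition factor; this removes the bottom (trivial) constituent arising at the short end and drops the count from three to two. Combining all determinations with the $\theta$-pairing and $a_n\leqslant 1$, one finds that under (iii)(a)--(iii)(b) the surviving factors are exactly $L_X(\mu)$ and one partner (its $\theta$-image when $a_n=1$, an explicit short-end factor when $a_n=0$), whereas the failure of either divisibility or of $a_n\leqslant 1$ manufactures a third factor. Since \cite{Se} forbids irreducibility of $V|_X$ when $p\neq 2$, the count never falls below two there, and the $p=2$ irreducible configurations are disjoint from (i)--(iii).

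The principal obstacle is the simultaneous control of \emph{all} candidate factors across the whole diagram: a priori $V|_X$ may have many constituents, and one must prove both that the stated divisibilities annihilate every spurious factor and that their failure genuinely creates one rather than merely shifting a multiplicity. Making the minimal-move enumeration exhaustive---excluding composite moves and showing that no weight below the first two layers can be a new $X$-highest weight---requires a uniform argument; for this I would combine a description of the $T_Y$-weight supports of $L_Y(\lambda)$ and of each $L_X(\sigma)$ (via Premet's support theorem) with an induction down the Levi subgroups of types $A$ and $D$, handling the short node $n$ of $B_n$ separately. The small-rank instances and the characteristic-two configurations, where the special isogeny of $B_n$ and the non-genericity of exterior and spin modules intervene, will have to be settled directly as base cases. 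The complete reducibility of the final sentence then follows, since in each surviving configuration the two factors are non-isomorphic and lie in distinct linkage classes (or are exchanged by $\theta$), leaving no room for a non-split extension.
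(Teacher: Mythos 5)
Your overall architecture (explicit identifications for cases (i) and (ii), the $\theta$-pairing of factors, the exclusion $a_n\leqslant 1$, and a reduction of the problem to deciding which ``candidate'' highest weights below $\omega$ actually occur) is sound in outline, and it departs genuinely from the paper: the paper never enumerates candidate factors character-theoretically, but instead reduces the whole problem, via Curtis's theorem, to the single Lie-algebra identity $V=\Lie(X)v^+\oplus\Lie(X)w^+$, and then (following Ford) to finitely many explicit membership conditions $f_{i,n}v^+\in\Lie(X)w^+$, $f_{i,n}w^+\in\Lie(X)v^+$, which are settled by Chevalley-basis computations of maximal vectors and generating sets in specific weight spaces of Weyl modules (the appendices). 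However, your proposal has a genuine gap at exactly the point you flag as ``the principal obstacle'': the tools you name cannot close it. The linkage principle and the Jantzen sum formula constrain composition factors of Weyl modules \emph{for a single group}; they say nothing directly about the branching $L_Y(\lambda)|_X$. To run your plan you would need the multiset of $T_X$-weights of $L_Y(\lambda)$ \emph{with multiplicities}, and then to match it against $\operatorname{ch}L_X(\sigma)$ for the candidates $\sigma$ --- but in positive characteristic neither character is known in general, and Premet's theorem gives only the weight \emph{support}, never multiplicities. Whether a third factor appears is precisely a multiplicity question (e.g.\ whether the vectors spanning $V_{\lambda-(\alpha_i+\cdots+\alpha_n)}$ already lie in $\Lie(X)w^+$), so ``transporting rank-two linkage data through the restriction'' is not an argument; it is a restatement of the problem. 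Both directions of case (iii) --- that the divisibility conditions kill every spurious factor, and that their failure creates one --- are asserted, not derived, and nothing in your toolkit produces the specific constants $a_i+a_j+j-i$ and $2(a_n+a_k+n-k)-1$; in the paper these emerge from solving explicit linear systems for maximal vectors (the systems \eqref{first_system_of_equations}, \eqref{second_system_of_equations}, \eqref{third_system_of_equations} and their $B_n$ analogues).

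Two smaller points. First, your claim $\mu-\theta\mu=a_n\beta_n\geqslant 0$ is false: $\mu-\theta\mu=a_n(\omega_n-\omega_{n-1})$, which is not a nonnegative combination of roots; $\mu$ and $\theta\mu$ are \emph{incomparable}, and in fact do not even lie in the same coset of the root lattice --- this incomparability (not a dominance inequality) is what makes both of them highest weights of distinct factors, and it is the paper's observation $\Lambda(\omega)\cap\Lambda(\omega')=\emptyset$. Second, your complete-reducibility argument (``distinct linkage classes ... leaving no room for a non-split extension'') is imprecise but repairable: the correct statement is that $\omega-\omega'$ lies outside the root lattice of $X$, so no extension between the two factors can exist; being non-isomorphic alone would not suffice.
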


\begin{remarkintro}
Let $(\lambda,p)$ be as in case \ref{[Dn<Bn] Theorem: the case lambda=lambda_n}  or \ref{[Dn<Bn] Theorem: Remaining cases} of Theorem \ref{Main result}, with $a_n=1$ in the latter case. Then as seen in \cite{Fo1}, the $KX$-composition factors of $V$ have respective highest weights $\omega=\sum_{r=1}^{n-1}{a_r\omega_r}+(a_{n-1}+1)\omega_n$ and $\omega' =\omega^\theta$ (that is, $\omega'=\omega+\omega_{n-1}-\omega_n).$  If on the other hand $(\lambda,p)$ is as in case \ref{[Dn<Bn] Theorem: the case lambda=lambda_i for 0<i<n} or \ref{[Dn<Bn] Theorem: Remaining cases} of Theorem \ref{Main result}, with $a_n=0$ in the latter case, then the $KX$-composition factors of $V$ have respective highest weights $\omega=\sum_{r=1}^{k}{a_r\omega_r}+\delta_{k,n-1}a_k\omega_n$ and $\omega'=\sum_{r=1}^{k-2}{a_r\omega_r}+(a_{k-1}+1)\omega_{k-1}+(a_k-1)(\omega_k+\delta_{k,n-1}\omega_n).$
\end{remarkintro}

The second result of this paper consists in a generalization of Theorem \ref{Main result} to the case of arbitrary irreducible, finite-dimensional, rational  \(KY\)-modules. Its proof essentially relies on an application of the well-known Steinberg Tensor Product Theorem (see Theorem \ref{[Preliminaries] Weights and multiplicities: Steinberg tensor product theorem}).

\begin{corolintro}\label{main_corol}
Let $Y,$ $X$ be as in the statement of Theorem \textnormal{\ref{Main result}} and consider an irreducible non-trivial $KY$-module $V=L_Y(\delta)$ having  highest weight $\delta\in X^+(T_Y).$ Then $X$ has exactly two composition factors on $V$ if and only if  one of the following holds.
\begin{enumerate}
\item \label{main_corol1} \(\delta=p^r\lambda\) for some  \(r\in \mathbb{Z}_{\geqslant 0}\) and some \(\lambda\in X^+(T_Y)\) as in case  \textnormal{\ref{[Dn<Bn] Theorem: the case lambda=lambda_i for 0<i<n}}, \textnormal{\ref{[Dn<Bn] Theorem: the case lambda=lambda_n}},  or \textnormal{\ref{[Dn<Bn] Theorem: Remaining cases}} of Theorem \textnormal{\ref{Main result}}. 
\item \label{main_corol2} \(p=2\) and \(\delta=\delta_0+2\delta_1+\cdots+2^r\delta_r\) for some \(r\in \mathbb{Z}_{> 0}\) and $p$-restricted \(\delta_0,\ldots,\delta_r\in X^+(T_Y)\) such that $\#\{1\leqslant j\leqslant r: \langle \delta_j,\alpha_n\rangle \neq 0\} =\#\{1\leqslant j\leqslant r: \delta_j=\lambda_n\} =1.$
\end{enumerate}
\end{corolintro}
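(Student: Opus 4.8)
The plan is to deduce the corollary from Theorem \ref{Main result} by means of the Steinberg Tensor Product Theorem (Theorem \ref{[Preliminaries] Weights and multiplicities: Steinberg tensor product theorem}), reducing the study of an arbitrary irreducible \(KY\)-module to that of its \(p\)-restricted tensor factors. First I would write the \(p\)-adic expansion \(\delta=\sum_{j=0}^r p^j\delta_j,\) with each \(\delta_j\in X^+(T_Y)\) being \(p\)-restricted, so that \(L_Y(\delta)\cong \bigotimes_{j=0}^r L_Y(\delta_j)^{[p^j]}\) as \(KY\)-modules. Since the standard embedding of \(X\) in \(Y\) is defined over the prime field, the subgroup \(X\) is stable under the standard Frobenius endomorphism of \(Y,\) whose restriction is the corresponding Frobenius endomorphism of \(X;\) consequently restriction commutes with Frobenius twists, and one obtains \(V|_X\cong \bigotimes_{j=0}^r (L_Y(\delta_j)|_X)^{[p^j]}.\)

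The core of the argument is then a multiplicativity statement for the number of composition factors. I would first record that, for each \(j,\) every \(KX\)-composition factor of \(L_Y(\delta_j)|_X\) has a \(p\)-restricted highest weight; for the factors arising in Theorem \ref{Main result} this is explicit from the Remark following its statement, and in the remaining cases it must be checked from the structure established in the proof of that theorem. Granting this, writing the composition factors of \(L_Y(\delta_j)|_X\) as \(L_X(\mu_{j,1}),\dots,L_X(\mu_{j,c_j})\) with all \(\mu_{j,s}\) being \(p\)-restricted, exactness of tensor product over \(K\) together with a second application of the Steinberg Tensor Product Theorem on \(X\) shows that the composition factors of \(V|_X\) are exactly the modules \(L_X(\sum_{j=0}^r p^j\mu_{j,s_j})\) indexed by tuples \((s_0,\dots,s_r),\) and that distinct tuples yield non-isomorphic factors. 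Hence the number of \(KX\)-composition factors of \(V|_X\) equals \(\prod_{j=0}^r c_j,\) where \(c_j\) denotes the number for \(L_Y(\delta_j)|_X,\) and \(X\) has exactly two composition factors on \(V\) if and only if there is a unique index \(j_0\) with \(c_{j_0}=2\) while \(c_j=1\) for all \(j\neq j_0.\)

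It then remains to translate the conditions ``\(c_j=2\)'' and ``\(c_j=1\)'' into weight-theoretic terms. By Theorem \ref{Main result}, \(c_j=2\) holds precisely when \(\delta_j\) is as in one of its cases \ref{[Dn<Bn] Theorem: the case lambda=lambda_i for 0<i<n}--\ref{[Dn<Bn] Theorem: Remaining cases}, whereas \(c_j=1\) holds precisely when either \(\delta_j=0\) or \((Y,X,L_Y(\delta_j))\) is an irreducible triple, which by Seitz \cite{Se} can occur with \(\delta_j\neq 0\) only if \(p=2.\) When \(p\neq 2\) this forces all but one digit to vanish and the surviving digit to be as in Theorem \ref{Main result}, which is exactly case \ref{main_corol1}. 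When \(p=2\) I would first observe that condition \ref{[Dn<Bn] Theorem: Remaining cases (2)} of Theorem \ref{Main result} can never hold, since it requires \(p\) to divide the odd integer \(2(a_n+a_k+n-k)-1,\) and that case \ref{[Dn<Bn] Theorem: the case lambda=lambda_i for 0<i<n} requires \(p\neq 2;\) hence the unique \(c_{j_0}=2\) digit must satisfy \(\delta_{j_0}=\lambda_n.\) Combining this with the list of \(p=2\) irreducible triples from \cite{Se}, which pins down exactly which nonzero digits satisfy \(c_j=1\) and in particular which carry a nonzero pairing \(\langle\delta_j,\alpha_n\rangle,\) yields the numerical condition recorded in case \ref{main_corol2}.

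The step I expect to be the main obstacle is the multiplicativity of the composition-factor count, and more precisely the verification that the irreducibles \(L_X(\sum_j p^j\mu_{j,s_j})\) are pairwise distinct and do not fuse: this is exactly where the \(p\)-restrictedness of the weights \(\mu_{j,s}\) is indispensable, and it is what singles out the spin weight \(\lambda_n\) and explains the role played by the condition on \(\langle\delta_j,\alpha_n\rangle\) in case \ref{main_corol2}. A secondary technical point will be the careful extraction from Seitz's tables of the exact set of \(p=2\) irreducible triples, so as to match it against the divisibility-free condition of case \ref{main_corol2}.
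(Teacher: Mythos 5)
Your overall strategy coincides with the paper's: write the $p$-adic expansion of $\delta,$ apply Theorem \ref{[Preliminaries] Weights and multiplicities: Steinberg tensor product theorem} to reduce to the $p$-restricted digits $\delta_j,$ classify the digit with two composition factors via Theorem \ref{Main result} (noting that for $p=2$ only $\delta_j=\lambda_n$ survives, since the divisibility condition in \ref{[Dn<Bn] Theorem: Remaining cases} involves an odd integer), and classify the digits with irreducible restriction via Seitz \cite{Se}; the converse is again Steinberg. This is exactly the paper's proof, with your multiplicativity discussion fleshing out what the paper compresses into ``from which one deduces.''

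There is, however, one step in your formulation that would fail as stated. You propose to \emph{first} record that every $KX$-composition factor of every $L_Y(\delta_j)|_X$ has $p$-restricted highest weight, and to derive from this the exact count $\prod_j c_j.$ That premise is false for arbitrary $p$-restricted digits: by \eqref{[Dn<Bn] Weight restrictions}, the digit $\delta_j=(p-1)(\lambda_{n-1}+\lambda_n)$ restricts to the $T_X$-weight $(p-1)\omega_{n-1}+2(p-1)\omega_n,$ which is the highest weight of a composition factor of $L_Y(\delta_j)|_X$ (the image of a maximal vector for $B_Y$ is one for $B_X$) and is not $p$-restricted since $2(p-1)\geqslant p.$ So exact multiplicativity is simply not available in the ``only if'' direction, where you do not yet know the digits are of the special form. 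The repair is to use only the inequality $\#\{\text{composition factors of }V|_X\}\geqslant \prod_j c_j,$ which needs no restrictedness hypothesis (tensoring composition series of the factors gives a filtration of $V|_X$ with $\prod_j c_j$ non-zero quotients). The hypothesis of exactly two factors then yields $\prod_j c_j\leqslant 2;$ the case $\prod_j c_j=1$ is excluded because, for $p\neq 2,$ Seitz forces all digits to vanish, while for $p=2$ it forces $\langle \delta_j,\alpha_n\rangle=0$ for all $j,$ in which case each restricted weight $\delta_j|_{T_X}$ \emph{is} $2$-restricted and Steinberg applied to $X$ makes $V|_X$ irreducible, a contradiction. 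Only after the digits have been identified in this way (Theorem \ref{Main result} for the unique $j_0$ with $c_{j_0}=2,$ Seitz for the rest) does the $p$-restrictedness you need actually hold, and exact multiplicativity — hence the converse implication — becomes legitimate. With the argument reordered in this way, your proof is complete and agrees with the paper's.
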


Our proof of Theorem \ref{Main result} is essentially based on the method introduced by Ford in \cite[Section 3]{Fo1}: let $V$ be a finite-dimensional, irreducible, rational $KY$-module having highest weight  $\lambda=\sum_{r=1}^{n-1}{a_r\lambda_r},$ and denote by $v^+$ a maximal vector in $V$ for $B_Y.$ (Observe that the case where $a_n\neq 0$ is dealt with in \cite[Theorem 3.3]{Fo1}.) If $p\neq 2,$ then $X$ acts reducibly on $V$ by \cite[Table 1 $(\mbox{MR}_4)$]{Se}, yielding the existence of a second maximal vector $w^+\notin \langle v^+\rangle_K $ in $V$ for $B_X.$ The main idea behind  the proof consists in determining the condition(s) on $(V,p)$ under which 
\begin{equation*}
V=\Lie(X)v^+ \oplus \Lie(Y)w^+,
\end{equation*}
where $\Lie(X)$ and $\Lie(Y)$ denote the Lie algebras of $X$ and $Y,$ respectively. A result analogous to \cite[Lemma 3.4]{Fo1}, namely Proposition \ref{[Dn<Bn] Proof of the main result: Key Proposition}, then suggests taking a closer look at generating sets for certain weight spaces in $V,$ leading to the detailed  investigation  recorded in the appendices of the paper.

One of the main difficulties in adjusting Ford's arguments to fit our situation resides in the new weight spaces that need to be understood. We deal with  the latter as follows: first we start by computing their dimension  in the corresponding  Weyl module $V_Y(\lambda)$ of weight $\lambda$ for $Y,$ using the results in \cite{Cavallin}. An application of Proposition \ref{A simplification for V(lambda)mu} then yields bases for such weight spaces in $V_Y(\lambda).$ Finally, we deduce  generating sets in $V$ by explicitly working  with the action of the Lie algebra on the quotient of $V_Y(\lambda)$ by its unique maximal proper submodule. 
\vspace{5mm}

Results on generating sets for weight spaces are interesting in their own rights, since applicable in various areas of representation theory. For instance in \cite{Fo3}, Ford applied his methods (more precisely, those used in the proof of \cite[Proposition 3.1]{Fo1}) to the study of symmetric groups in order to prove a conjecture formulated by Jantzen and Seitz in \cite{JaSe}, equivalent to the Mullineux conjecture. We refer the reader to  \cite{Mullineux} for a proof of the latter. Furthermore, the fact that it was possible to generalize Ford's argument suggests a method of investigating branching problems  for $Y$ and $X$ in general, as well as a way of studying other embeddings, like the usual embedding of $\mbox{Spin}_{2n+1}(K)$ in $\mbox{Spin}_{2n+2}(K),$ for instance.

\subsection{Organization of the paper}     

Finally, let us make some comments on the overall organization of the paper. In Section \ref{[Section] Preliminaries}, we fix the notation that will be used for the rest of the paper and then we present several preliminary results which will be needed in the proof of Theorem \ref{Main result}. In particular, we recall some basic facts on representations of (semi)simple algebraic groups and Lie algebras, as well as standard results concerning weights and their multiplicities. (We refer the reader to Table \ref{Some_weight_multiplicities_in_Weyl_modules} for a survey of certain  weight multiplicities in characteristic zero.) In Section \ref{[Section] proof of the main result}, we give a proof of Theorem \ref{Main result} and Corollary \ref{main_corol}.  Finally, the study of the structure of various Weyl modules and their irreducible quotients can be found in Appendices \ref{Weight spaces for G of type Al} and \ref{Weight spaces for G of type Bn}. In particular, we give explicit generating sets for certain weight spaces in the latter modules. The description of such sets plays a major role in the proof of Theorem \ref{Main result}, but may also be of independent interest, so we refer the reader to Propositions \ref{Conditions_for_f(i,j)_to_belong_to_V(i,j)}, \ref{Conditions_for_f(k,n)_to_belong_to_V(k,n)}, \ref{basis_of_W_lambda-alpha_i-2alpha_k-...-2alpha_n}, \ref{condition_if_l=k-1}  \ref{condition_if_l<k-1}, and Theorem \ref{How_to_relate_divisibility_conditions_and_generators_for_V_i,j} for a summary of the corresponding investigations.

\section{Preliminaries}\label{[Section] Preliminaries}     

In this section, we recall some elementary properties of simple Lie algebras, representations of simple algebraic groups,  weights and their multiplicities, and finally conclude with a brief investigation of generating sets for weight spaces in Weyl modules. Unless specified otherwise, most results can be found in \cite{Bourbaki}, \cite{St1}, \cite{Humphreys2}, \cite{Humphreys1}, or \cite{Ca}.

\subsection{Notation}\label{Notation}     

We first fix some notation that will be used for the rest of the paper. Let $K$ be an algebraically closed field of characteristic $p\geqslant 0$  and let $G$ be a simply connected simple algebraic group over $K.$ Also fix a Borel subgroup $B=UT$ of $G,$ where $T$ is a maximal torus of $G$ and $U$ denotes the unipotent radical of $B.$ Let $n=\rank G = \dim T$ and let $\Pi=\{\alpha_1,\ldots,\alpha_n\}$ be a corresponding base of the root system $\Phi=\Phi^+\sqcup \Phi^-$ of $G,$ where $\Phi^+$ and $\Phi^-$ denote the sets of positive and negative roots of $G,$ respectively.  Let
\[
X(T)=\Hom (T,K^*) 
\]
denote the character group of $T$ and write $(-,-)$ for the usual inner product on $X(T)_{\R}=X(T) \otimes_{\Z} \R.$ Recall the existence of a partial ordering on \(X(T)_{\R},\) defined by \(\mu\preccurlyeq \lambda\) if and only if \(\lambda-\mu \in \Gamma, \) where \(\Gamma\) denotes the monoid of \(\mathbb{Z}_{\geqslant 0}\)-linear combinations of simple roots. (We also  write \(\mu\prec \lambda\) to indicate that \(\mu\preccurlyeq \lambda\) and  \(\mu \neq \lambda.\)) 

In addition, let $\{\lambda_1,\ldots,\lambda_n\}$ be the set of fundamental weights for $T$ corresponding to our choice of base $\Pi,$ that is $\langle \lambda_i,\alpha_j\rangle =\delta_{ij}$ for every $1\leqslant i,j\leqslant n,$ where \(\langle \lambda, \alpha\rangle = 2(\lambda,\alpha) (\alpha,\alpha)^{-1}\) for   $\lambda,\alpha$ with \(\alpha \neq 0.\) Set
\[
X^+(T)=\{\lambda\in X(T): \langle \lambda,\alpha_r\rangle \geqslant 0 \mbox{ for every } 1\leqslant r\leqslant n\}
\]
and call a character $\lambda \in X^+(T)$ a \emph{dominant character}. Every such character can be written as a $\Z_{\geqslant 0}$-linear combination  $\lambda=\sum_{r=1}^n{a_r\lambda_r}.$ In addition, for $\alpha\in \Phi,$ define the reflection $s_{\alpha}:X(T)_{\R} \to X(T)_{\R}$ relative to $\alpha$ by $s_{\alpha}(\lambda)=\lambda-\langle \lambda,\alpha\rangle\alpha,$ this for all $\lambda\in X(T)_{\R},$ and denote by $\mathscr{W}$ the finite group $\langle s_{\alpha_r}:1\leqslant r\leqslant n\rangle,$ called the \emph{Weyl group of $G.$} Finally, we use the notation 
\[
U_{\alpha}=\{x_{\alpha}(c):c\in K\}
\] 
to denote the root subgroup of \(G\) corresponding to the root \(\alpha \in \Phi,\) (i.e. \(x_{\alpha}:K\to G\) is an injective morphism of algebraic groups such that \(tx_{\alpha}(c)t^{-1}=x_{\alpha}(\alpha(t)c)\) for all \(t\in T\) and \(c\in K\)) and we write \(\Lie(G)\) for the Lie algebra of \(G.\)

\subsection{Chevalley bases and structure constants}     
\label{structure constants and Chevalley basis}          

In this section, we recall some elementary facts on Chevalley bases for simple Lie algebras and on the so-called structure constants. In addition, we fix signs for the latter in types $A$ and $B.$ Most of the results presented here can be found in \cite[Chapter VII]{Humphreys1} or \cite[Chapter 4]{Ca}. Let  $\mathfrak{g}$ be a finite-dimensional simple Lie algebra over $\mathbb{C}$ with Cartan subalgebra \(\mathfrak{h}.\) Set \(n=\rank \mathfrak{g}\) and fix an ordered base $\Pi=\{\alpha_1,\ldots,\alpha_n\}$  of the corresponding root system $\Phi=\Phi^+\sqcup \Phi^{-}$ of $\mathfrak{g},$ where \(\Phi^+\) and \(\Phi^-\)  denote the sets of positive and negative roots of \(\Phi,\) respectively. To each root \(\alpha\in \Phi\) corresponds a $1$-dimensional root space \(\mathfrak{g}_\alpha \) defined by 
\[
\mathfrak{g}_\alpha=\{x\in \mathfrak{g}:[h,x]=\alpha(h)x \mbox{ for all } h\in \mathfrak{h}\}.
\]

It is often easier to work with a basis \(\{f_{\alpha},h_{\alpha_r},e_\alpha:\alpha \in \Phi^+,1\leqslant r\leqslant n\}\) for \(\mathfrak{g},\) where \(e_{\alpha}\in \mathfrak{g}_\alpha, \)  \(f_{\alpha}=e_{-\alpha}\in \mathfrak{g}_{-\alpha}\)   are root vectors for \(\alpha \in \Phi^+\) and \([e_{\alpha_r},f_{\alpha_r}]=h_{\alpha_r}\) for   \(1\leqslant r\leqslant n.\) Such a basis can be chosen in many ways: throughout this paper, we fix a \emph{standard Chevalley basis}
\[
\mathscr{B}=\{f_{\alpha},h_{\alpha_r},e_\alpha: \alpha\in \Phi^+, 1\leqslant r\leqslant n\} 
\]
for any simple Lie algebra $\mathfrak{g},$ whose elements satisfy the usual relations (see \cite[Theorem 4.2.1]{Ca}, for example). In particular, for all $\alpha,\beta \in \Phi$ such that $\alpha+\beta\in \Phi,$ we have 
\begin{equation}
[e_{\alpha},e_{\beta}]  = N_{(\alpha,\beta)}e_{\alpha+\beta}=\pm (q+1)e_{\alpha+\beta},
\label{structure_constants_relation_0}
\end{equation}
where $q$ is the greatest integer for which $\alpha-q\beta \in \Phi.$ The $N_{(\alpha,\beta)}$ (\(\alpha,\beta\in \Phi\)) are called the \emph{structure constants.} One can easily check that for any pair of roots $(\alpha,\beta),$ we have
\begin{equation}
N_{(\beta,\alpha)} = -N_{(\alpha,\beta)}=N_{(-\alpha,-\beta)},
\label{structure_constants_relation_1}
\end{equation}
and using the Jacobi identity of $\mathfrak{g},$ one can prove (see  \cite[Theorem 4.1.2 (ii)]{Ca}, for example) that for every $\alpha,\beta,\gamma \in \Phi$ satisfying $\alpha+\beta+\gamma=0,$ we have
\begin{equation}
\frac{N_{(\alpha,\beta)}}{(\gamma,\gamma)}  =\frac{N_{(\beta,\gamma)}}{(\alpha,\alpha)} = \frac{N_{(\gamma,\alpha)}}{(\beta,\beta)}.
\label{structure_constants_relation_2}
\end{equation}
Finally, one can show (e.g. \cite[Theorem 4.1.2 (iv)]{Ca}) that for every $\alpha,\beta,\gamma,\delta \in \Phi$ such that $\alpha+\beta+\gamma+\delta=0$ and no pair are opposite, we have
\begin{equation}
\frac{N_{(\alpha,\beta)}N_{(\gamma,\delta)}}{(\alpha+\beta,\alpha+\beta)} + \frac{N_{(\gamma,\alpha)}N_{(\beta,\delta)}}{(\gamma+\alpha, \gamma+\alpha)} + \frac{N_{(\beta,\gamma)}N_{(\alpha,\delta)}}{(\beta +\gamma,\beta+\gamma)}=0.
\label{structure_constants_relation_3}
\end{equation}

It turns out that the choice of a sign  in \eqref{structure_constants_for_B_part1} for certain well-chosen ordered pairs    of positive roots (called \emph{extraspecial}) completely determines the signs of the remaining constant structures. 
As in \cite[Section 2.1]{Ca}, we thus fix an ordering \(\leqslant\) on $\Phi^+$ by saying that $\alpha \leqslant \beta$ if either $\alpha=\beta$ or if there exists $1\leqslant m\leqslant n$ and $c_1,\ldots,c_m\in \mathbb{Z}$ such that  $\beta-\alpha=\sum_{r=1}^m{c_r\alpha_r},$ with $c_m>0.$ We shall also write $\alpha < \beta$ if  $\alpha\leqslant \beta$ and $\alpha \neq \beta.$

\begin{defin}\label{Definition: Extraspecial pairs}
An ordered pair of positive roots $(\alpha,\beta)$ is \emph{special} if $\alpha+\beta\in \Phi^+$ and $ \alpha < \beta.$ Also, such a pair is \emph{extraspecial} if for all special pairs $(\gamma,\delta)$ satisfying $\gamma+\delta=\alpha+\beta,$ we have $\alpha \leqslant \gamma.$
\end{defin}

\begin{remark}\label{Uniqueness of extraspecial pairs}
In view of Definition \ref{Definition: Extraspecial pairs}, one notices that if $\gamma\in \Phi^+,$ then either $\gamma \in \Pi$ or there exist unique $\alpha,\beta\in \Phi^+$ such that $\alpha+\beta=\gamma$ and $(\alpha,\beta)$ is extraspecial.  In particular, the number of distinct extraspecial pairs equals the number of non-simple positive roots.
\end{remark}

By \cite[Proposition 4.2.2]{Ca}, the structure constants of $\mathfrak{g}$ are uniquely determined by their values on the set of extraspecial pairs, for which we can arbitrarily choose the sign in \eqref{structure_constants_relation_0}. Throughout this paper, we assume  $N_{(\alpha,\beta)} > 0$ for any extraspecial pair $(\alpha,\beta).$

\begin{lem}\label{structure_constants_for_A}
Let $\mathfrak{g}$ be a simple Lie algebra of type $A_n$ $(n\geqslant 2)$ over \(\C\) and adopt the notation above. Then the extraspecial pairs are $(\alpha_i,\alpha_{i+1}+\cdots+\alpha_j),$ where $1\leqslant i<j\leqslant n.$ Moreover $$N_{(\alpha_i+\cdots+\alpha_r, \alpha_{r+1}+\cdots +\alpha_j)}=1$$  for every $1 \leqslant i \leqslant r < j \leqslant n.$ 
\end{lem}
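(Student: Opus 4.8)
The plan is to treat the two assertions separately, exploiting the fact that in type $A_n$ the positive roots are exactly the ``intervals'' $\alpha_{i,j}:=\alpha_i+\cdots+\alpha_j$ with $1\leqslant i\leqslant j\leqslant n$, and that $\alpha_{i,r}+\alpha_{r+1,j}$ (as $i\leqslant r<j$ varies) are the only ways to write a non-simple $\alpha_{i,j}$ as a sum of two positive roots, since a sum of two intervals is again an interval precisely when the two intervals are adjacent.

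To identify the extraspecial pairs, I would fix a non-simple positive root $\alpha_{i,j}$ (so $i<j$) and run through its decompositions $(\alpha_{i,r},\alpha_{r+1,j})$. For each such pair the difference $\alpha_{r+1,j}-\alpha_{i,r}$ has coefficient $+1$ on $\alpha_j$, so its highest-index nonzero coefficient is positive; by the definition of the ordering $\leqslant$ on $\Phi^+$ this gives $\alpha_{i,r}<\alpha_{r+1,j}$, whence every such pair is special. A similar coefficient comparison shows $\alpha_{i,r}<\alpha_{i,r'}$ whenever $r<r'$, so the special pair with smallest first entry is obtained at $r=i$. By Definition \ref{Definition: Extraspecial pairs} this is precisely the extraspecial pair $(\alpha_i,\alpha_{i+1}+\cdots+\alpha_j)$, and Remark \ref{Uniqueness of extraspecial pairs} confirms there are no others.

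For the value of the constants, I would first note that all roots in $A_n$ have the same length and that $\alpha_{i,r}-\alpha_{r+1,j}$ is never a root (its coefficients are not all of the same sign), so the integer $q$ in \eqref{structure_constants_relation_0} vanishes and $N_{(\alpha_{i,r},\alpha_{r+1,j})}=\pm 1$. For the extraspecial pairs ($r=i$) the sign is $+1$ by our global convention, which settles the base case. I would then induct on $r-i$. For the inductive step ($r>i$), I apply \eqref{structure_constants_relation_3} to the quadruple $\alpha=\alpha_i$, $\beta=\alpha_{i+1,r}$, $\gamma=\alpha_{r+1,j}$, $\delta=-\alpha_{i,j}$, whose entries sum to zero with no two opposite. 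Since $\gamma+\alpha=\alpha_i+\alpha_{r+1,j}$ is not a root, the middle summand vanishes and, all root lengths being equal, the relation collapses to
\[
N_{(\alpha,\beta)}N_{(\gamma,\delta)}+N_{(\beta,\gamma)}N_{(\alpha,\delta)}=0.
\]
Here $N_{(\alpha,\beta)}=N_{(\alpha_i,\alpha_{i+1,r})}=1$ and $N_{(\alpha,\delta)}=-1$ come from the extraspecial values via \eqref{structure_constants_relation_1} and \eqref{structure_constants_relation_2}, while $N_{(\beta,\gamma)}=N_{(\alpha_{i+1,r},\alpha_{r+1,j})}=1$ is supplied by the induction hypothesis (its first entry has strictly smaller length $r-i-1$). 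Finally, \eqref{structure_constants_relation_2} applied to the triple $\alpha_{r+1,j},-\alpha_{i,j},\alpha_{i,r}$ identifies $N_{(\gamma,\delta)}$ with the unknown $N_{(\alpha_{i,r},\alpha_{r+1,j})}$, so the displayed relation forces it to equal $1$.

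The main obstacle I anticipate is bookkeeping rather than conceptual: one must verify at each stage which of the sums $\alpha+\beta$, $\gamma+\alpha$, $\beta+\gamma$ are roots, so that \eqref{structure_constants_relation_3} takes the stated two-term form, check that no two of $\alpha,\beta,\gamma,\delta$ are opposite, and track signs carefully when passing between the $N_{(\cdot,\cdot)}$ for positive roots and for the mixed-sign triples via \eqref{structure_constants_relation_1} and \eqref{structure_constants_relation_2}. Taking the induction parameter to be the length $r-i$ of the first root is what guarantees that every constant entering the recursion is either extraspecial or already known.
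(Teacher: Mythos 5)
Your proposal is correct and takes essentially the same route as the paper: both identify the special decompositions of $\alpha_i+\cdots+\alpha_j$ as the adjacent-interval splittings $(\alpha_i+\cdots+\alpha_r,\,\alpha_{r+1}+\cdots+\alpha_j)$ to pin down the extraspecial pairs, and both compute the constants by induction on $r-i$ via the four-term identity \eqref{structure_constants_relation_3} combined with \eqref{structure_constants_relation_1} and \eqref{structure_constants_relation_2}. The only difference is cosmetic: you apply \eqref{structure_constants_relation_3} to the quadruple $\bigl(\alpha_i,\,\alpha_{i+1}+\cdots+\alpha_r,\,\alpha_{r+1}+\cdots+\alpha_j,\,-(\alpha_i+\cdots+\alpha_j)\bigr)$ instead of the paper's $\bigl(\alpha_i,\,-(\alpha_i+\cdots+\alpha_r),\,-(\alpha_{r+1}+\cdots+\alpha_j),\,\alpha_{i+1}+\cdots+\alpha_j\bigr)$, and both choices collapse to the same two-term relation reducing the unknown constant to $N_{(\alpha_{i+1}+\cdots+\alpha_r,\,\alpha_{r+1}+\cdots+\alpha_j)}$, which the induction hypothesis handles.
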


\begin{proof}  
Consider a pair \((\alpha,\beta)=(\alpha_i,\alpha_{i+1}+\cdots+\alpha_j)\) as in the statement of the lemma  and observe that \(\alpha+\beta\in \Phi^+\) and \(\alpha<\beta,\) so \((\alpha,\beta)\) is special. Also, if  \((\gamma,\delta)\) is another special pair satisfying  \(\alpha+\beta=\gamma+\delta,\) then there exists a unique \(i\leqslant r < j\) such that  \(\gamma=\alpha_i+\cdots +\alpha_r,\) \(\delta =\alpha_{r+1}+\cdots +\alpha_j.\) Therefore \(\gamma \geqslant \alpha,\) showing that \((\alpha,\beta)\) is extraspecial.  Now an application of Remark \ref{Uniqueness of extraspecial pairs}  shows that all extraspecial pairs are of the aforementioned form, so that the first assertion of the lemma holds. For the second assertion, we proceed by induction on \(0\leqslant r-i\leqslant n-2.\) If \(r-i=0\) (i.e. \(r=i\)), then \((\alpha_i,\alpha_{i+1}+\cdots+\alpha_j)\) is an extraspecial pair and thus the result follows from \eqref{structure_constants_relation_0} together with our assumption on the positivity of structure constants for such pairs.  We next suppose   that $1 \leqslant i<r<j\leqslant n,$ in which case applying \eqref{structure_constants_relation_3} to the roots $\alpha=\alpha_i,$ $\beta=-(\alpha_i+\cdots+\alpha_r),$ $\gamma=-(\alpha_{r+1}+\cdots+\alpha_j),$ and $\delta=\alpha_{i+1}+\cdots +\alpha_j$  yields
\[
0 = N_{(\alpha,\beta)}N_{(\gamma,\delta)} +  N_{(\beta,\gamma)}N_{(\alpha,\delta)},
\]
since all roots in \(\Phi\) have equal length and \(\gamma+\alpha \) is not a root. Now  $N_{(\beta,\gamma)} = - N_{(\alpha_i+\cdots+\alpha_r, \alpha_{r+1}+\cdots +\alpha_j)} $  by \eqref{structure_constants_relation_1},  while by the $r=i$ case, we know that $N_{(\alpha,\delta)}=1.$ Finally, by \eqref{structure_constants_relation_1}, \eqref{structure_constants_relation_2}, and the $r=i$ case again, we get $N_{(\alpha,\beta)}=-1$ and $N_{(\gamma,\delta)}= N_{({\alpha_{r+1}+\cdots+\alpha_j,\alpha_{i+1}+\cdots+\alpha_r)}},$ so that $$N_{(\alpha_i+\cdots+\alpha_r, \alpha_{r+1}+\cdots +\alpha_j)} = N_{(\alpha_{i+1} + \cdots + \alpha_r, \alpha_{r+1}+\cdots +\alpha_j)}.$$ The result then follows by induction.
\end{proof}

We conclude this section by considering a simple Lie algebra \(\mathfrak{g}\) of type $B_n$ over \(\C\) $(n\geqslant 3).$ Here we leave to the reader to check that the extraspecial pairs $(\alpha,\beta)$ are  as in Table \ref{Extraspecial pairs (type B)}. (In view of Remark \ref{Uniqueness of extraspecial pairs}, it suffices to show that each pair \((\alpha,\beta)\) appearing in the table is extraspecial,  as in the proof of Lemma \ref{structure_constants_for_A}.)
\begin{table}[h]
\center
\begin{tabular}{ccc}
 \hline \hline \\[-2ex] 
 $\alpha$				&				$\beta$															&	$\mbox{Conditions}$				\\  
 \hline \hline \\ [-1.5ex]
$\alpha_i$ 				&				$\alpha_{i+1}+\cdots+\alpha_j$ 									& 	$1\leqslant i<j\leqslant n$				\cr
$\alpha_i$				&				$\alpha_{i+1}+\cdots+\alpha_k+2\alpha_{k+1}+\cdots+2\alpha_n$ 	&	$1\leqslant i<k<n$			 				\cr
$\alpha_i$			&				$\alpha_{i-1}+\alpha_{i}+2\alpha_{i+1} +\cdots+2\alpha_n$			&	$1 < i <n$	\cr
$\alpha_n$			&				$\alpha_{n-1}+ \alpha_n$			&	$ $							\\[0.2cm]
\hline\cr
\end{tabular}
\caption{Extraspecial pairs $(\alpha,\beta)$ for $\Phi$ of type $B_n$ $(n\geqslant 3).$}
\label{Extraspecial pairs (type B)}
\end{table}

\begin{lem}\label{structure_constants_for_B}
Let $\mathfrak{g}$ be a simple Lie algebra of type $B_n$ over \(\C\) $(n\geqslant 3) $ and adopt the notation above. Then the following assertions hold.
\begin{enumerate} 
\item \label{structure_constants_for_B_part1} $N_{(\alpha_i+\cdots+\alpha_r, \alpha_{r+1}+\cdots +\alpha_j)}=1,$ $1\leqslant i\leqslant r<j\leqslant n.$
\item \label{structure_constants_for_B_part2} $N_{(\alpha_i+\cdots +\alpha_r, \alpha_{r+1}+\cdots+\alpha_j+2\alpha_{j+1}+\cdots+2\alpha_n)}=1,$ $1\leqslant i\leqslant r<j<n.$
\item \label{structure_constants_for_B_part3} $N_{(\alpha_{r}+\cdots +\alpha_j,\alpha_i+\cdots+\alpha_j+2\alpha_{j+1}+\cdots+2\alpha_n)}=1,$ $1\leqslant i<r\leqslant j <n.$
\item \label{structure_constants_for_B_part4} $N_{(\alpha_j+\cdots+\alpha_n,\alpha_r+\cdots +\alpha_n)}=2,$ $1\leqslant r<j\leqslant n.$
\item \label{structure_constants_for_B_part5} $N_{(\alpha_{i}+\cdots + \alpha_j,\alpha_{r+1}+\cdots+\alpha_j+2\alpha_{j+1}+\cdots+2\alpha_n)}=-1,$ $1\leqslant i \leqslant r <j <n.$
\end{enumerate}
\end{lem}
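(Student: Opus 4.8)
The plan is to carry out everything inside the standard Euclidean realization of $\Phi$. Writing $\varepsilon_1,\dots,\varepsilon_n$ for an orthonormal basis of $\R^n$, I take $\alpha_i=\varepsilon_i-\varepsilon_{i+1}$ for $1\leqslant i<n$ and $\alpha_n=\varepsilon_n$, so that the long roots are the $\pm\varepsilon_i\pm\varepsilon_j$ (of squared length $2$) and the short roots are the $\pm\varepsilon_i$ (of squared length $1$). Under this identification one reads off at once that $\alpha_i+\cdots+\alpha_r=\varepsilon_i-\varepsilon_{r+1}$ when $r<n$ (resp.\ $\varepsilon_i$ when $r=n$) and that $\alpha_{r+1}+\cdots+\alpha_j+2\alpha_{j+1}+\cdots+2\alpha_n=\varepsilon_{r+1}+\varepsilon_{j+1}$, and similarly for every other root occurring in the statement. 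With the roots made explicit the \emph{magnitude} of each constant is immediate from \eqref{structure_constants_relation_0}, where one computes the largest $q$ with $\alpha-q\beta\in\Phi$: in parts \ref{structure_constants_for_B_part1}, \ref{structure_constants_for_B_part2}, \ref{structure_constants_for_B_part3} and \ref{structure_constants_for_B_part5} already $\alpha-\beta\notin\Phi$, so $q=0$ and $|N_{(\alpha,\beta)}|=1$; in part \ref{structure_constants_for_B_part4}, where $\alpha=\varepsilon_j$ and $\beta=\varepsilon_r$ are both short, one has $\alpha-\beta\in\Phi$ but $\alpha-2\beta\notin\Phi$, so $q=1$ and $|N_{(\alpha,\beta)}|=2$. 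Thus only the signs remain.

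For the signs I would mimic the proof of Lemma~\ref{structure_constants_for_A}. The base cases are the extraspecial pairs of Table~\ref{Extraspecial pairs (type B)}, for which $N_{(\alpha,\beta)}>0$ by our normalization, and a general constant is reached from these by induction on the length of the supporting chain, using \eqref{structure_constants_relation_1} to pass between $(\alpha,\beta)$, $(\beta,\alpha)$ and $(-\alpha,-\beta)$, using \eqref{structure_constants_relation_2} to rotate a triple summing to $0$ (this is where the squared-length factors $(\gamma,\gamma)\in\{1,2\}$ enter), and using \eqref{structure_constants_relation_3} as the main engine: for a suitably chosen auxiliary root one arranges $\alpha+\beta+\gamma+\delta=0$ with one of the three pairwise sums failing to be a root, so that the corresponding term drops out and the identity expresses the desired constant through constants already computed. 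For example, in part \ref{structure_constants_for_B_part1} with $j=n$ I take $\alpha=\alpha_i$, $\beta=-(\alpha_i+\cdots+\alpha_r)$, $\gamma=-(\alpha_{r+1}+\cdots+\alpha_n)$ and $\delta=\alpha_{i+1}+\cdots+\alpha_n$ exactly as in the type $A$ argument; here $\gamma+\alpha\notin\Phi$ kills the middle term, and after inserting $(\beta+\gamma,\beta+\gamma)=1$ and $(\alpha+\beta,\alpha+\beta)=2$ the surviving identity returns the sign $+1$.

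I would therefore prove the parts in the order forced by these reductions rather than in the printed one. The normalization lives on long-root extraspecial pairs, but the genuinely short-root constants — those of part \ref{structure_constants_for_B_part4}, together with the auxiliary constants $N_{(-\varepsilon_{r+1},\varepsilon_{i+1})}$ produced in the reduction above, both of the form ``two short roots summing to a long root'' and hence of magnitude $2$ — must be settled first, since the reductions for parts \ref{structure_constants_for_B_part1}, \ref{structure_constants_for_B_part2}, \ref{structure_constants_for_B_part3} and \ref{structure_constants_for_B_part5} all feed on them. Concretely I first establish part \ref{structure_constants_for_B_part4}, relating $N_{(\varepsilon_j,\varepsilon_r)}$ via \eqref{structure_constants_relation_3} to the extraspecial pair summing to $\varepsilon_r+\varepsilon_j$ coming from rows $2$--$4$ of Table~\ref{Extraspecial pairs (type B)} (where the factor $2$ is produced), and only afterwards treat the long-root constants of parts \ref{structure_constants_for_B_part2}, \ref{structure_constants_for_B_part3} and \ref{structure_constants_for_B_part5} by induction.

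The main obstacle is precisely this two-length bookkeeping. In type $A_n$ all roots have the same length, so in \eqref{structure_constants_relation_2} and \eqref{structure_constants_relation_3} the denominators cancel and the recursion is sign-transparent; in $B_n$ those denominators take the two values $1$ and $2$, and one must track which roots are short and which are long at every inductive step. This is what forces the value $2$ in part \ref{structure_constants_for_B_part4} and, more delicately, the negative sign in part \ref{structure_constants_for_B_part5}, where the constant is pinned to $-1$ (rather than $+1$) by the mismatch of squared-length factors in the relevant application of \eqref{structure_constants_relation_3}. Checking that every reduction closes up consistently, returning the stated values $\pm1$ and $2$, is the routine but somewhat lengthy heart of the argument; as in Table~\ref{Extraspecial pairs (type B)} itself, the verification that the chosen pairs are extraspecial proceeds exactly as in the proof of Lemma~\ref{structure_constants_for_A} via Remark~\ref{Uniqueness of extraspecial pairs}.
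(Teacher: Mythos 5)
Your proposal is correct, and its engine for the signs is the same as the paper's: induction by means of \eqref{structure_constants_relation_3} applied to quadruples summing to zero with one pairwise sum failing to be a root, seeded by the positive normalization on the extraspecial pairs of Table~\ref{Extraspecial pairs (type B)}, with \eqref{structure_constants_relation_1} and \eqref{structure_constants_relation_2} for bookkeeping; your one fully worked case (part \ref{structure_constants_for_B_part1} with $j=n$) uses exactly the paper's quadruple and closes up correctly. What you add is the passage to the Euclidean realization, so that all magnitudes are read off in advance from the chain condition in \eqref{structure_constants_relation_0} and the induction only has to chase signs; the paper instead extracts sign and magnitude simultaneously from the relations, so your split buys some transparency at no extra cost. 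Where you genuinely deviate from the paper is the claim that parts \ref{structure_constants_for_B_part1}, \ref{structure_constants_for_B_part2}, \ref{structure_constants_for_B_part3} and \ref{structure_constants_for_B_part5} all ``feed on'' the short-root constants, forcing part \ref{structure_constants_for_B_part4} and the auxiliary constants to be settled first. That is an overstatement: with the paper's choices of quadruples, parts \ref{structure_constants_for_B_part2}, \ref{structure_constants_for_B_part3} and \ref{structure_constants_for_B_part5} never leave the long roots (every root and every relevant pairwise sum occurring there is long, all denominators equal $2$, and the computation is as sign-transparent as in type $A$); only part \ref{structure_constants_for_B_part1} with $j=n$ and part \ref{structure_constants_for_B_part4} genuinely need the two-length bookkeeping, and in the former the auxiliary short-short constant $N_{(-\varepsilon_{r+1},\,\varepsilon_{i+1})}$ is pinned down inline by \eqref{structure_constants_relation_2} from the long-root constant already known by induction, so nothing has to be settled in advance. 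Your ordering therefore works, but it is a choice, not a necessity, and it slightly increases the amount of two-length bookkeeping you must carry. Finally, like the paper's own proof, your write-up leaves the remaining quadruple selections and verifications to the reader, so the level of detail is comparable to the original.
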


\begin{proof}
We proceed as in the proof of Lemma \ref{structure_constants_for_A}, applying \eqref{structure_constants_relation_3} to well-chosen quadruples of roots \((\alpha,\beta,\gamma,\delta).\) We refer the reader to the proof Lemma \ref{structure_constants_for_A} for \ref{structure_constants_for_B_part1}, which can be dealt with in the exact same fashion. Next  \ref{structure_constants_for_B_part2} obviously holds in the situation where \(i=r,\) since in this case the considered pair of roots is extraspecial. If on the other hand \(i<r,\) then arguing by induction on \(r-i\) and applying \eqref{structure_constants_relation_3} to the roots  \(\alpha=\alpha_i,\) \(\beta=-(\alpha_i+\cdots+\alpha_r),\) \(\gamma=-(\alpha_{r+1}+\cdots+\alpha_j+2\alpha_{j+1}+\cdots+2\alpha_n),\)  and \(\delta=\alpha_{i+1}+\cdots+\alpha_j+2\alpha_{j+1}+\cdots+2\alpha_n\) yields the desired result. Part \ref{structure_constants_for_B_part3} can be dealt with in two steps: firstly, one shows using induction on \(j-i\) (where again the considered pair is extraspecial in the base case \(i=j-1\)) that the desired assertion holds in the case where \(r=j.\) Here \eqref{structure_constants_relation_3} is applied to the roots \(\alpha=\alpha_i,\) \(\beta=-\alpha_j,\) \(\gamma=-(\alpha_i+\cdots+\alpha_j+2\alpha_{j+1}+\cdots+2\alpha_n),\) and \(\delta=\alpha_{i+1}+\cdots+\alpha_{j-1}+2\alpha_j+\cdots+2\alpha_n.\) Secondly, arguing by induction on \(j-r\) and  applying   \eqref{structure_constants_relation_3} to the roots \(\alpha=\alpha_r,\) \(\beta=-(\alpha_r+\cdots+\alpha_j),\) \(\gamma=-(\alpha_i+\cdots+\alpha_j+2\alpha_{j+1}+\cdots+2\alpha_n),\) \(\delta=\alpha_i+\cdots+\alpha_r+2\alpha_{r+1}+\cdots+2\alpha_n\)  yields the result. Similarly, one proceeds in two steps in order to show that \ref{structure_constants_for_B_part4} holds. Firstly, one shows that \(N_{(\alpha_n,\alpha_r+ \cdots + \alpha_n)}=2\) for \(1\leqslant r<n,\) arguing by induction on \(n-r\) (the case where \(r=n-1\) directly following from our assumption on structure constants for extraspecial pairs). In order to do so, \eqref{structure_constants_relation_3} is applied to the roots \(\alpha=\alpha_r,\) \(\beta=-\alpha_n,\) \(\gamma=-(\alpha_r+\cdots+\alpha_n),\) \(\delta=\alpha_{r+1}+\cdots+\alpha_{n-1}+2\alpha_n.\) One then easily concludes that the assertion holds in general by applying \eqref{structure_constants_relation_3} to \(\alpha=\alpha_j,\) \(\beta=-(\alpha_j+\cdots+\alpha_n),\) \(\gamma=-(\alpha_r+\cdots+\alpha_n),\) \(\delta= \alpha_r+\cdots+\alpha_j+2\alpha_{j+1}+\cdots+2\alpha_n.\) Finally, \ref{structure_constants_for_B_part5} is dealt with in the three following steps.

\begin{enumerate}
\item One first shows that the desired assertion holds in the case where \(r=i\) and \(j=i+1\) (that is, when the considered pair is \((\alpha_i+\alpha_{i+1},\alpha_{i+1}+2\alpha_{i+2}+\cdots+2\alpha_n)\) for some \(1\leqslant i<n-1).\) As usual, applying \eqref{structure_constants_relation_3} to the positive roots  \(\alpha=\alpha_{i+1},\) \(\beta=-\alpha_i-\alpha_{i+1},\) \(\gamma=-(\alpha_{i+1}+2\alpha_{i+2}+\cdots+2\alpha_n),\) \(\delta=\alpha_i+\alpha_{i+1}+2\alpha_{i+2}+\cdots+2\alpha_n\) allows one to conclude.
\item One next deals with the situation where \(r=i,\) but \(i+1<j<n\) is arbitrary (that is, when the considered pair is \((\alpha_i + \cdots +\alpha_j,\alpha_{i+1}+\cdots+ \alpha_{j} +2\alpha_{j+1}+\cdots+2\alpha_n)\) for some \(1\leqslant i<j-1<n-1).\) Arguing by induction on \(1<j-i,\) one then concludes thanks to an application of  \eqref{structure_constants_relation_3} to the positive roots  \(\alpha=\alpha_i+\alpha_{i+1},\) \(\beta=-(\alpha_i+\cdots+\alpha_{j}),\) \(\gamma=-(\alpha_{i+1}+\cdots +\alpha_j +2\alpha_{j+1}+\cdots+2\alpha_n),\) \(\delta=\alpha_{i+1}+2\alpha_{i+2}+ \cdots+2\alpha_n.\)
\item Finally, one deals with the general case by applying \eqref{structure_constants_relation_3} to   \(\alpha=\alpha_i,\) \(\beta=-(\alpha_i+\cdots+\alpha_j),\) \(\gamma=-(\alpha_{r+1}+\cdots +\alpha_j +2\alpha_{j+1}+\cdots+2\alpha_n),\) \(\delta=\alpha_{i+1}+ \cdots + \alpha_r + 2\alpha_{r+1}+ \cdots+2\alpha_n,\) together with an inductive argument. 
\end{enumerate} 
The reader can check the details, and the proof is complete. 
\end{proof}

\subsection{Rational modules}\label{Weights And Multiplities}      

In this section, we recall some elementary properties of finite-dimensional rational \(KG\)-modules, as well as elementary facts on weights and their multiplicities. Unless specified otherwise, the results recorded here can be found in \cite[Chapter XI, Section 31]{Humphreys2}.  Let \(V\)  denote a finite-dimensional, rational \(KG\)-module and for $\mu\in X(T),$ set 
\[
V_\mu=\{v\in V: t v=\mu(t) v \mbox{ for all $t\in T$}\}.
\]
A character $\mu \in X(T)$ with $V_\mu\neq 0$ is called a \emph{$T$-weight of $V$} and $V_\mu$ is said to be its corresponding \emph{weight space}. The dimension of $V_\mu$ is called the \emph{multiplicity of $\mu$ in $V$} and is denoted by $\m_V(\mu).$ Since \(V\) is finite-dimensional, it is semisimple as a \(KT\)-module, that is, can be decomposed into a direct sum of its weight spaces. Write $\Lambda(V)\subset X(T)$ to denote the set of $T$-weights of $V$ and let  
\[
\Lambda^+(V)=\Lambda(V)\cap X^+(T)
\]
denote the set of \emph{dominant} \(T\)-weights. The natural action of the Weyl group $\mathscr{W}$ of $G$ on $X(T)$ induces an action on $\Lambda(V)$ and we say that $\lambda,\mu\in X(T)$ are \emph{conjugate under the action of $\mathscr{W}$} (or $\mathscr{W}$\emph{-conjugate}) if there exists $w\in \mathscr{W}$ such that $w \lambda=\mu.$ It is well-known (see \cite[Section 13.2, Lemma A]{Humphreys1}, for example) that each weight of $V$ is $\mathscr{W}$-conjugate to a unique dominant weight in $\Lambda^+(V).$ Also, if $\lambda\in X^+(T),$ then $w\lambda \preccurlyeq \lambda$ for every $w\in \mathscr{W}.$ Finally, $\Lambda(V)$ is a union of $\mathscr{W}$-orbits and all weights in a $\mathscr{W}$-orbit have the same multiplicity.
\vspace{5mm}

A non-zero vector \(v^+\in V\) is called a \emph{maximal vector}  of weight \(\lambda\) in $V$ for $B$ if \(v^+\in V_\lambda\) and \(Bv^+= \langle v^+\rangle_K.\) Also, we say that a weight $\lambda \in \Lambda(V)$ is a \emph{highest weight} of $V$ if $\{\mu \in \Lambda(V): \lambda \prec \mu\}=\emptyset.$ In general, an arbitrary finite-dimensional, rational $KG$-module $V$ can have many distinct highest weights. However if $V$ is irreducible and $v^+\in V_{\lambda}$ is a maximal vector in $V$ for $B,$ then $V=\langle Gv^+\rangle,$ $\m_V(\lambda)=1,$ and every weight $\mu\in \Lambda(V)$ can be obtained from $\lambda$ by subtracting positive roots, so that $\lambda$ is the unique highest weight of $V.$ Reciprocally, given a dominant character $\lambda\in X^+(T),$ one can construct a finite-dimensional, irreducible, rational  $KG$-module  \(L_G(\lambda)\) with highest weight $\lambda,$ as the quotient of the \emph{Weyl module} \(V_G(\lambda)\)  by its unique maximal submodule \(\rad (\lambda),\) that is,
\[
L_G(\lambda)=\bigquotient{V_G(\lambda)}{\rad (\lambda)}.
\]
\newpage
This correspondence defines a bijection between the set \(X^+(T)\) of dominant  characters of \(T\) and the set of isomorphism classes of irreducible, finite-dimensional, rational  $KG$-modules. For an arbitrary given finite-dimensional, rational $KG$-module $V$ and $\mu\in X^+(T),$ we let $[V,L_G(\mu)]$ denote the number of times $L_G(\mu)$ appears in a composition series of $V.$
\vspace{5mm}

It is only natural to wonder whether a given irreducible $KG$-module is tensor indecomposable or not (in characteristic zero, all irreducible modules are tensor indecomposable) and a partial answer to this question is given by the following well-known result, due to Steinberg (see \cite[Theorem 1]{St2} for a proof). Here  $F:G\rightarrow G$ is a standard Frobenius morphism and for $V$ a $KG$-module, $V^{F^i}$ is the $KG$-module $V$ on which $G$ acts via $g\cdot v= F^i(g)\cdot v,$ for  $g\in G,$ $v\in V.$ Also, we say that  $\lambda$ is \emph{$p$-restricted} if $p=0$ or  if $0 \leqslant \langle \lambda, \alpha \rangle <p$ for every $\alpha\in \Pi.$

\begin{thm}[The Steinberg Tensor Product Theorem]\label{[Preliminaries] Weights and multiplicities: Steinberg tensor product theorem}
Assume $p>0$ and $G$ is simply connected. Let $\lambda\in X^+(T)$ be a dominant character. Then there exist $k\in \Z_{\geqslant 0}$ and $p$-restricted dominant characters $\mu_0,\mu_1,\ldots,\mu_k\in X^+(T)$ such that $\lambda=\sum_{r=0}^k{p^r \mu_r}$ and 
\[
L_G(\lambda) \cong L_G(\mu_0)\otimes L_G(\mu_1)^{F} \otimes \cdots \otimes L_G(\mu_k)^{F^k}.
\] 
\end{thm}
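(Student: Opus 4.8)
The plan is to split the statement into its two logically independent parts: the combinatorial existence of the $p$-adic decomposition of $\lambda$, and the representation-theoretic tensor isomorphism, which is the heart of the matter. For the decomposition, I would use the hypothesis that $G$ is simply connected, which guarantees that the fundamental weights $\lambda_1,\ldots,\lambda_n$ form a $\Z$-basis of $X(T)$; thus $\lambda=\sum_{i=1}^n a_i\lambda_i$ with each $a_i=\langle\lambda,\alpha_i\rangle\in\Z_{\geqslant 0}$. Expanding each coefficient in base $p$ as $a_i=\sum_{r\geqslant 0}a_{i,r}p^r$ with $0\leqslant a_{i,r}<p$, and setting $\mu_r=\sum_{i=1}^n a_{i,r}\lambda_i$, produces $p$-restricted dominant characters with $\lambda=\sum_{r=0}^k p^r\mu_r$ for $k$ large enough. (It is precisely here that simple connectedness is essential, so that each $\mu_r$ is a genuine character of $T$.) By an induction on the number of digits — peeling off the bottom digit $\mu_0$ and writing $\lambda=\mu_0+p\nu$ with $\nu$ dominant — together with the fact that Frobenius twisting commutes with tensor products, the full isomorphism reduces to the single-step statement $L_G(\mu+p\nu)\cong L_G(\mu)\otimes L_G(\nu)^F$ for $\mu$ $p$-restricted and $\nu\in X^+(T)$ arbitrary.

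Next I would record two elementary facts about the twist. First, for a weight $\mu'$ of a module $W$ and $t\in T$ one computes $t\cdot w=F(t)w=\mu'(t^p)w=(p\mu')(t)w$ on $W^F$, so twisting multiplies every weight by $p$; in particular the weights of $L_G(\nu)^F$ are exactly $p$ times those of $L_G(\nu)$. Second, since $F$ is bijective on $K$-points, the functor $W\mapsto W^F$ preserves irreducibility, so $L_G(\nu)^F$ is the irreducible module of highest weight $p\nu$. Consequently, in $M:=L_G(\mu)\otimes L_G(\nu)^F$ the tensor $v^+\otimes w^+$ of highest weight vectors is a maximal vector for $B$ of weight $\mu+p\nu$, and since every weight of $M$ is a sum of a weight of $L_G(\mu)$ and $p$ times a weight of $L_G(\nu)$, all of them are $\preccurlyeq\mu+p\nu$. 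Thus $\mu+p\nu$ is the unique highest weight of $M$, giving a surjection $M\twoheadrightarrow L_G(\mu+p\nu)$, and the whole problem collapses to showing that $M$ is irreducible.

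The irreducibility is the main obstacle, and I would attack it through the first Frobenius kernel $G_1=\kernel F$. The decisive inputs are that a $p$-restricted irreducible $L_G(\mu)$ stays irreducible on restriction to $G_1$ (equivalently, as a module over the restricted enveloping algebra of $\Lie(G)$), whereas $G_1$ acts trivially on any twist $L_G(\nu)^F$. Hence as a $G_1$-module $M\cong L_G(\mu)^{\oplus\dim L_G(\nu)}$ is isotypic. Any nonzero $G$-submodule $N\subseteq M$ is in particular a $G_1$-submodule, hence a sum of copies of $L_G(\mu)$, and the multiplicity space $\Hom_{G_1}(L_G(\mu),M)$ carries an action of the quotient $G/G_1$; identifying the latter with $G$ acting through $F$, one checks that this multiplicity space is isomorphic to $L_G(\nu)^F$, which is irreducible. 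It follows that $M$ admits no proper nonzero $G$-submodule, so $M=L_G(\mu+p\nu)$, and iterating the single-step isomorphism yields the theorem. The delicate points, where the characteristic-$p$ phenomena genuinely enter, are the $G_1$-irreducibility of restricted modules and the $G/G_1$-module identification of the multiplicity space; at the level of the hyperalgebra $\operatorname{Dist}(G)$ both ultimately rest on Lucas' congruence $\binom{m}{j}\equiv\prod_i\binom{m_i}{j_i}\pmod p$, which dictates how the divided powers $e_\alpha^{(m)},f_\alpha^{(m)}$ distribute across the two tensor factors.
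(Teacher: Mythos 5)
The paper does not actually prove this statement: it is quoted as a known result, with the proof deferred to Steinberg's original article (\cite[Theorem 1]{St2}), so there is no internal argument to compare yours against line by line. Your outline is, however, a correct sketch of the standard proof. The reduction (base-$p$ digits of the coefficients $\langle\lambda,\alpha_i\rangle$, using simple connectedness so that each digit-weight $\mu_r$ is a genuine character, then induction to the single-step isomorphism $L_G(\mu+p\nu)\cong L_G(\mu)\otimes L_G(\nu)^F$) is exactly right, and the irreducibility argument via the Frobenius kernel $G_1$ is the Cline--Parshall--Scott/Jantzen packaging of Steinberg's own argument, in which $G_1$ plays the role that the restricted Lie algebra $\Lie(G)$ plays in \cite{St2}; the two decisive inputs you isolate --- that a $p$-restricted irreducible stays irreducible over $G_1$ (Curtis' theorem, which this paper invokes elsewhere in its Lie-algebra form as \cite[Theorem 1]{Curtis}) and that a $G$-submodule of the $G_1$-isotypic module $M$ must have the form $L_G(\mu)\otimes W$ with $W$ a $G/G_1$-submodule of the multiplicity space $\Hom_{G_1}(L_G(\mu),M)\cong L_G(\nu)^F$ --- are precisely where the content lies, and you correctly flag them as the nontrivial steps rather than pretending they are formal. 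One small imprecision: a maximal vector of weight $\mu+p\nu$ in $M$ does not immediately give a surjection $M\twoheadrightarrow L_G(\mu+p\nu)$ (it only generates a highest weight submodule, whose head is $L_G(\mu+p\nu)$); but this intermediate claim is dispensable, since once $M$ is shown irreducible, the weight analysis alone identifies $M$ with $L_G(\mu+p\nu)$, so the logic of your argument is unaffected.
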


In view of Theorem \ref{[Preliminaries] Weights and multiplicities: Steinberg tensor product theorem}, the investigation of all irreducible, finite-dimensional, rational $KG$-modules is reduced to the study of the finitely many ones having $p$-restricted highest weights, on which we thus focus in the remainder of this paper, unless specified otherwise.  
\vspace{5mm}

It is well-known (see \cite[21.3]{Humphreys1}) that the set $\Lambda(\lambda)$ of weights  of $V_G(\lambda)$ is \emph{saturated} (i.e. $\mu-r\alpha\in \Lambda(\lambda)$ for every $\mu \in \Lambda(\lambda),$ $\alpha\in \Phi$ and $0\leqslant r \leqslant \langle \mu, \alpha \rangle$), containing all dominant weights under $\lambda$ together with all their $\mathscr{W}$-conjugates. Obviously $\Lambda(L_G(\lambda))\subseteq \Lambda(\lambda)$ and it turns out that the converse also holds under certain conditions on  the pair $(\Phi,p).$

\begin{thm}[Premet, \cite{Pr}]\label{[Preliminaries] Weights and multiplicities: Premet's theorem} 
Assume \((\Phi,p)\notin \{(B_n,2),(C_n,2),(F_4,2),(G_2,3)\} \) and let $\lambda\in X^+(T)$ be a $p$-restricted dominant weight. Then $\Lambda(L_G(\lambda))=\Lambda(\lambda).$
\end{thm}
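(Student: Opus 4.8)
The plan is to establish the two inclusions separately. The inclusion $\Lambda(L_G(\lambda))\subseteq\Lambda(\lambda)$ is immediate, since $L_G(\lambda)$ is a homomorphic image of the Weyl module $V_G(\lambda)$ and $\Lambda(V_G(\lambda))=\Lambda(\lambda)$ by the preceding discussion; all the content lies in the reverse inclusion $\Lambda(\lambda)\subseteq\Lambda(L_G(\lambda))$. As both sets are stable under $\mathscr{W}$ and $\mathscr{W}$-conjugate weights share the same multiplicity, and since $\Lambda(\lambda)$ is the union of the $\mathscr{W}$-orbits of the dominant weights $\mu\in X^+(T)$ with $\mu\preccurlyeq\lambda$, it suffices to prove that every such dominant $\mu$ lies in $\Lambda(L_G(\lambda))$. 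I would follow the strategy of Premet \cite{Pr}.

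I would argue by downward induction on $\mu$ with respect to $\preccurlyeq$, the base case $\mu=\lambda$ being clear since $\m_{L_G(\lambda)}(\lambda)=1$. For the inductive step, let $\mu\prec\lambda$ be dominant. By a standard property of saturated sets (see \cite{Humphreys1}), there is a simple root $\alpha_i$ with $\nu:=\mu+\alpha_i\in\Lambda(\lambda)$. Writing $\nu^+$ for the dominant $\mathscr{W}$-conjugate of $\nu$, we have $\nu^+\succcurlyeq\nu\succ\mu$ and $\nu^+\preccurlyeq\lambda$, so $\nu^+\in\Lambda(L_G(\lambda))$ by the induction hypothesis, whence $\nu\in\Lambda(L_G(\lambda))$ as well. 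It then remains to descend one step, from $\nu$ to $\mu=\nu-\alpha_i$. For this I would restrict $L_G(\lambda)$ to the rank-one subgroup $G_{\alpha_i}\cong\mathrm{SL}_2$ generated by $U_{\alpha_i}$ and $U_{-\alpha_i}$ and analyse the $\alpha_i$-string through $\nu$, letting the divided powers $X_{-\alpha_i}^{(k)}$ of the hyperalgebra act on a nonzero weight vector of weight $\nu$ in order to produce a nonzero vector of weight $\mu$.

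The main obstacle is precisely this last descent, which is not automatic in characteristic $p$. A weight vector of $\mathrm{SL}_2$-weight $m$ may lie in a composition factor isomorphic to $L(d)$ with $d\geqslant p$ of the restricted module (where $L(d)$ denotes the irreducible $\mathrm{SL}_2$-module of highest weight $d$), and for such factors the lowering operators can annihilate it, creating a genuine gap in the string; the Frobenius twist of the natural two-dimensional module, whose only $\mathrm{SL}_2$-weights are $\pm p$, is the prototypical source of such a gap. Controlling this is where the hypotheses enter: the $p$-restrictedness of $\lambda$ constrains the relevant $\mathrm{SL}_2$-data, and reducing the comparison to the rank-two subsystems of $\Phi$ shows that an actual gap at a weight of $\Lambda(\lambda)$ can be forced only when the ambient rank-two subsystem is of type $B_2$ with $p=2$ or of type $G_2$ with $p=3$. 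Propagating these rank-two obstructions along the multiple bonds of the Dynkin diagram accounts for exactly the excluded pairs $(B_n,2)$, $(C_n,2)$, $(F_4,2)$ and $(G_2,3)$; away from them no such gap occurs, the descent goes through, and the induction closes. I therefore expect the rank-$\leqslant 2$ case analysis, carried out through explicit weight-multiplicity and hyperalgebra computations in the rank-two groups, to be the technical heart of the argument.
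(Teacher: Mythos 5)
First, a remark on the comparison itself: the paper does not prove this statement at all. It is quoted, with attribution, as Premet's theorem \cite{Pr}, and is only ever used as a black box (for instance in the proofs of Propositions \ref{Conditions_for_f(i,j)_to_belong_to_V(i,j)} and \ref{Conditions_for_f(k,n)_to_belong_to_V(k,n)}). So there is no proof in the paper to measure your attempt against; it has to stand on its own.

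Judged on its own, what you have written is an outline with a genuine gap sitting exactly where all the difficulty of the theorem lies. The scaffolding is correct: the inclusion $\Lambda(L_G(\lambda))\subseteq\Lambda(\lambda)$ is immediate, the reduction to dominant weights via $\mathscr{W}$-invariance is fine, and the downward induction based on the standard chain property of saturated sets (every dominant $\mu\prec\lambda$ admits a simple root $\alpha_i$ with $\mu+\alpha_i\in\Lambda(\lambda)$) is legitimate and well-founded. But the whole content of the theorem is the descent step: if $\nu\in\Lambda(L_G(\lambda))$ and $\mu=\nu-\alpha_i\in\Lambda(\lambda),$ then $\mu\in\Lambda(L_G(\lambda)).$ For this you offer only (i) the (true) observation that $\alpha_i$-strings in characteristic $p$ can have gaps, and (ii) the claim that ``reducing to the rank-two subsystems of $\Phi$'' shows a gap can only be forced for $(B_2,2)$ or $(G_2,3).$ Claim (ii) is never proved, and as stated it is essentially a paraphrase of the theorem itself, so the argument is circular at that point. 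Moreover, the proposed reduction mechanism is not justified: restricting $L_G(\lambda)$ to a rank-two Levi subgroup does not obviously control the string through an arbitrary weight vector of weight $\nu,$ because that vector need not lie in the composition factor of the restriction whose highest weight is $p$-restricted (Levi restriction in the spirit of Lemma \ref{[Preliminaries] Parabolic embeddings: restriction to Levi subgroup} only controls weights of the form $\lambda-\sum_{\alpha\in J}c_\alpha\alpha$), and the remaining composition factors of the restriction can have non-restricted highest weights --- which is precisely the situation (Frobenius twists) in which gaps occur. Everything hard is deferred to ``explicit weight-multiplicity and hyperalgebra computations'' that are never carried out; those computations \emph{are} Premet's theorem. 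So the proposal correctly locates the obstacle but does not overcome it.
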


Also, we refer the reader to  \cite[Proposition 1.30]{Te} for a proof of the next elementary result, used implicitly in the remainder of the paper.

\begin{lem}\label{[Preliminaries] Weights and multiplicities: multiplicity of lambda - k alpha_i} 
Let \(\lambda\in X^+(T)\) be a non-zero \(p\)-restricted dominant weight and consider \(1\leqslant i\leqslant n\) such that \(\langle \lambda, \alpha_i\rangle >0.\) Then $\m_{L_G(\lambda)}(\lambda-d\alpha_i)=\m_{V_G(\lambda)}(\lambda-d\alpha_i)=1$ for every $1\leqslant d\leqslant \langle	\lambda,\alpha_i\rangle.$  
\end{lem}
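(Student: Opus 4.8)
The plan is to work in the distribution algebra (hyperalgebra) $U_K=\operatorname{Dist}(G)$ with its Kostant $\Z$-form, using the divided powers $e_\beta^{(c)},f_\beta^{(c)}$ $(\beta\in\Phi^+)$ together with the rank-one relations these satisfy. Fix a highest weight vector $v^+\in V_G(\lambda)_\lambda$ and set $m=\langle\lambda,\alpha_i\rangle$; since $\lambda$ is $p$-restricted we have $1\leqslant m<p$ (or else $p=0$). I would prove both equalities at once by establishing an \emph{upper bound} of $1$ on the Weyl module and a \emph{matching lower bound} already in the irreducible quotient, so that the two multiplicities are squeezed to equal $1$.

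\textbf{Upper bound on $V_G(\lambda)$.} Here I would use that $V_G(\lambda)=U_K^- v^+$ is spanned by the ordered monomials $\prod_{\beta\in\Phi^+}f_\beta^{(c_\beta)}v^+$, each of weight $\lambda-\sum_\beta c_\beta\beta$. For such a monomial to have weight $\lambda-d\alpha_i$ one needs $\sum_\beta c_\beta\beta=d\alpha_i$; comparing the coefficient of each $\alpha_j$ with $j\neq i$ forces $c_\beta=0$ unless the positive root $\beta$ is supported only on $\alpha_i$, and the only such root is $\alpha_i$ itself (the root system being reduced). Hence $V_G(\lambda)_{\lambda-d\alpha_i}$ is spanned by the single vector $f_{\alpha_i}^{(d)}v^+$, giving $\m_{V_G(\lambda)}(\lambda-d\alpha_i)\leqslant 1$; and \emph{a fortiori} $\m_{L_G(\lambda)}(\lambda-d\alpha_i)\leqslant 1$, since $L_G(\lambda)$ is a quotient of $V_G(\lambda)$.

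\textbf{Lower bound in $L_G(\lambda)$ — the main point.} It remains to show $f_{\alpha_i}^{(d)}v^+\neq 0$ in the irreducible quotient for each $0\leqslant d\leqslant m$. Restricting to the rank-one hyperalgebra attached to $\alpha_i$ and using $e_{\alpha_i}v^+=0$, $h_{\alpha_i}v^+=mv^+$, the standard commutation relation collapses to $e_{\alpha_i}^{(d)}f_{\alpha_i}^{(d)}v^+=\binom{h_{\alpha_i}}{d}v^+=\binom{m}{d}v^+$ (only the $k=d$ term survives, as $e_{\alpha_i}^{(d-k)}v^+=0$ for $d-k\geqslant 1$). Because $m<p$, the prime $p$ divides neither $m!$ nor therefore $\binom{m}{d}$, so $\binom{m}{d}$ is a nonzero scalar of $K$ for all $0\leqslant d\leqslant m$. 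As $v^+\neq 0$ in $L_G(\lambda)$, this forces $f_{\alpha_i}^{(d)}v^+\neq 0$ there, hence $\m_{L_G(\lambda)}(\lambda-d\alpha_i)\geqslant 1$. Combining with the upper bound yields $\m_{L_G(\lambda)}(\lambda-d\alpha_i)=\m_{V_G(\lambda)}(\lambda-d\alpha_i)=1$.

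\textbf{The delicate step.} The one nontrivial point is the lower bound in the \emph{irreducible} module: \emph{a priori} $f_{\alpha_i}^{(d)}v^+$ could fall into $\rad(\lambda)$ and vanish in $L_G(\lambda)$. The identity $e_{\alpha_i}^{(d)}f_{\alpha_i}^{(d)}v^+=\binom{m}{d}v^+$ is exactly what rules this out, and it is precisely here that the hypothesis $\langle\lambda,\alpha_i\rangle<p$ is essential: without $p$-restrictedness some $\binom{m}{d}$ could be $\equiv 0\pmod p$, and the corresponding weight space could indeed collapse in the irreducible quotient.
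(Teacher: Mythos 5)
Your proposal is correct, but note that the paper itself contains no proof of this lemma: it simply refers the reader to \cite[Proposition 1.30]{Te}, so there is no internal argument to match against. The route implicit in that citation, and the one most consistent with the paper's own toolkit, is to reduce to the Levi subgroup of type $A_1$ attached to $\alpha_i$ (in the spirit of Lemma \ref{[Preliminaries] Parabolic embeddings: restriction to Levi subgroup}), so that $\m_{L_G(\lambda)}(\lambda-d\alpha_i)$ becomes the multiplicity of the weight $m-2d$ in the irreducible $SL_2$-module of restricted highest weight $m=\langle\lambda,\alpha_i\rangle<p$; since that module coincides with its Weyl module, all its weight spaces are one-dimensional and the lemma follows. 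Your argument is a self-contained, explicit version of exactly this reduction: your upper bound is immediate from the paper's own spanning formula \eqref{Elements_generating_W(lambda)_mu}, once one observes that the only positive root supported on $\alpha_i$ alone is $\alpha_i$ itself, and your lower bound via Kostant's commutation identity $e_{\alpha_i}^{(d)}f_{\alpha_i}^{(d)}v^+=\binom{m}{d}v^+$ together with $p\nmid\binom{m}{d}$ for $0\leqslant d\leqslant m<p$ is precisely the rank-one computation that makes the $A_1$-Weyl module irreducible. What your version buys is independence from the $SL_2$ classification and from the Levi-restriction lemma; what the citation buys is brevity. Both hinge on the point you correctly isolate as the delicate step: $p$-restrictedness of $\langle\lambda,\alpha_i\rangle$ is exactly what keeps the relevant binomial coefficients nonzero in $K$, so that $f_{\alpha_i}^{(d)}v^+$ cannot fall into $\rad(\lambda)$.
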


For a non-empty subset \(J\) of \(\Pi,\) set \(\Phi^{\pm}_J=\Phi^{\pm} \cap \mathbb{Z}J.\) Also define the opposite of the standard parabolic subgroup of \(G\) corresponding to \(J\) to be \(P_J=Q_J L_J,\) where \(
L_J=\langle T ,U_{\pm \alpha}:\alpha \in J\rangle\)  denotes a Levi factor of \(P_J\) with root system \(\Phi_J=\Phi^+_J\sqcup \Phi^-_J\) and \(Q_J=\langle U_{-\beta}:\beta \in \Phi^+-\Phi^+_J\rangle\) is the unipotent radical of \(P_J.\) The following result, whose proof can be found in \cite[Lemma 2.2.8]{BGT}, for example, makes it easier to compute weight multiplicities in certain situations.

\begin{lem}\label{[Preliminaries] Parabolic embeddings: restriction to Levi subgroup}
Let \(\lambda\in X^+(T)\) and let \(J\) be a non-empty subset of \(\Pi.\) Also  consider a  weight \(\mu\in \Lambda(\lambda)\) such that \(\mu=\lambda-\sum_{\alpha \in J}{c_{\alpha} \alpha},\) with $c_\alpha \in \Z_{\geqslant 0}$ for each $\alpha\in J.$ Then $\m_{L_G(\lambda)}(\mu)=\m_{L_{H}(\lambda')}(\mu'),$ where $\mu' =\mu|_{T_H},$ $\lambda'=\lambda|_{T_H}$ and $H=L_J'.$ 
\end{lem}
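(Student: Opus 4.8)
The plan is to realize the $\mu$-weight space of $V:=L_G(\lambda)$ as a weight space of an irreducible $KH$-module, by producing inside $V$ a single $KH$-submodule that simultaneously captures $V_\mu$ and is isomorphic to $L_H(\lambda')$. First I would fix a maximal vector $v^+\in V_\lambda$ for $B$. Since $B_H:=B\cap H$ is a Borel subgroup of $H=L_J'$ with maximal torus $T_H=T\cap H$ and unipotent radical $\langle U_\alpha:\alpha\in\Phi_J^+\rangle$, the vector $v^+$ is also a maximal vector for $B_H$, now of weight $\lambda'=\lambda|_{T_H}$, which is dominant for $H$ because $\langle\lambda,\alpha\rangle\geqslant 0$ for $\alpha\in J$. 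Set $M=\langle\operatorname{Dist}(H)v^+\rangle$, a cyclic highest weight $KH$-module, hence a homomorphic image of the Weyl module $V_H(\lambda')$ whose unique irreducible quotient is $L_H(\lambda')$. The goal then splits into two claims: that $V_\mu=M_{\mu'}$, and that $M\cong L_H(\lambda')$.

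The first claim is the combinatorial heart. Recall that $V=\operatorname{Dist}(G)v^+$ is spanned by the vectors $f_{\beta_1}^{(m_1)}\cdots f_{\beta_t}^{(m_t)}v^+$ with $\beta_i\in\Phi^+$ and $m_i\geqslant 0$, such a vector having $T$-weight $\lambda-\sum_i m_i\beta_i$. When $\mu=\lambda-\sum_{\alpha\in J}c_\alpha\alpha$ with each $c_\alpha\geqslant 0$, such a monomial lies in $V_\mu$ precisely when $\sum_i m_i\beta_i=\sum_{\alpha\in J}c_\alpha\alpha$; comparing, for each $\alpha\in\Pi\setminus J$, the coefficient of $\alpha$ (which is $0$ on the right and a sum of non-negative terms on the left) forces every $\beta_i$ with $m_i>0$ to be supported on $J$, i.e.\ $\beta_i\in\Phi_J^+$. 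Thus $V_\mu$ is spanned by monomials in the $f_\beta^{(m)}$, $\beta\in\Phi_J^+$, applied to $v^+$, giving $V_\mu\subseteq M$; the reverse inclusion is clear, and since $\Z J\to X(T_H)$ is injective (the $\alpha|_{T_H}$, $\alpha\in J$, being the simple roots of $H$), the $T$-weight space $V_\mu$ coincides with the $T_H$-weight space $M_{\mu'}$. The same coefficient argument shows that \emph{every} weight of $M$ has the form $\lambda-\sum_{\alpha\in J}c_\alpha\alpha$, so each weight space of $M$ equals some such $V_\nu$.

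For the second claim I would endow $V$ with its (well-known, non-degenerate) $G$-contravariant form: it is orthogonal on distinct $T$-weight spaces, hence non-degenerate on each $V_\nu$. The anti-automorphism defining contravariance interchanges $e_\beta^{(m)}$ and $f_\beta^{(m)}$ and so preserves $\operatorname{Dist}(H)$; therefore the form restricts to an $H$-contravariant form on $M$. By the previous paragraph every weight space $M_{\nu'}$ equals some $V_\nu$, on which the form is non-degenerate, and distinct $T_H$-weight spaces of $M$ are orthogonal, so the restricted form is non-degenerate on $M$. A highest weight module carrying a non-degenerate contravariant form has trivial radical, whence $M$ is irreducible, i.e.\ $M\cong L_H(\lambda')$. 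Combining the two claims yields $\m_{L_G(\lambda)}(\mu)=\dim_K V_\mu=\dim_K M_{\mu'}=\m_{L_H(\lambda')}(\mu')$, as required.

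The routine coefficient comparison presents no real difficulty; the genuine obstacle is the irreducibility of $M$ (in essence the theorem of Smith that the Levi-generated top layer of an irreducible module is irreducible). I expect the contravariant-form argument above to be the cleanest route, the one delicate point being the non-degeneracy of the restricted form on $M$ — which is exactly where the weight-space identification $M_{\nu'}=V_\nu$ is indispensable, as it is what allows non-degeneracy on $V$ to descend to $M$.
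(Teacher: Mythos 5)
Your proof is correct, but note that the paper itself offers no argument for this lemma: it is quoted directly from \cite[Lemma 2.2.8]{BGT}, whose proof in turn rests on Smith's theorem that the fixed points of the unipotent radical of a parabolic in an irreducible module form an irreducible module for the Levi factor. Your first step --- the support argument showing that every monomial $f_{\beta_1}^{(m_1)}\cdots f_{\beta_t}^{(m_t)}v^+$ of weight $\mu=\lambda-\sum_{\alpha\in J}c_\alpha\alpha$ can only involve roots from $\Phi_J^+$, so that $V_\mu=M_{\mu'}$ for $M=\operatorname{Dist}(H)v^+$, with the injectivity of $\Z J\to X(T_H)$ handling the passage from $T$-weights to $T_H$-weights --- is the same computation that underlies the cited result. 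Where you genuinely diverge is in proving the irreducibility of $M$: Smith's argument (and hence the one behind the paper's citation) uses the triangular decomposition of $\operatorname{Dist}(G)$ along the parabolic together with a grading of $V$ by height in the simple roots outside $J$, whereas you restrict the non-degenerate contravariant form of $L_G(\lambda)$. This works exactly as you say: $M$ is a sum of \emph{full} $T$-weight spaces of $V$, distinct weight spaces are orthogonal, so non-degeneracy descends to $M$; and a highest weight module carrying a non-degenerate contravariant form is irreducible, since any proper submodule misses the highest weight line and therefore lies in the radical of the form. The citation buys brevity; your route buys a self-contained and arguably cleaner proof, whose only external inputs are the standard facts that $L_G(\lambda)=\operatorname{Dist}(U^-)v^+$, that it carries a non-degenerate contravariant form, and that a cyclic highest weight $KH$-module is a homomorphic image of $V_H(\lambda')$.
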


We conclude this section by computing the values of various weight multiplicities in certain Weyl modules for \(G\) of type \(A_n\) $(n\geqslant 2)$ or \(B_n\) $(n\geqslant 3).$ As mentioned in the introduction, the calculations shall essentially be carried out by applying the results presented in \cite{Cavallin}.

\begin{prop}\label{Various_weight_multiplicities}
Let \(G,\) \(\lambda\) and \(\mu\) be as in Table  \textnormal{\ref{Some_weight_multiplicities_in_Weyl_modules}}. Then the multiplicity \(\m_{V_G(\lambda)}(\mu)\) of \(\mu\) in \(V_G(\lambda)\) is given in the fourth column of the table.
  
\begin{table}[h]
\center
\begin{tabular}{cccc}
\hline\hline  \\[-1.5ex]
$G$			&	$\lambda$	& $\mu$	&	$\m_{V_G(\lambda)}(\mu)$	\\[0.15cm] 
\hline\hline  \\[-1.5ex]
$A_2$		& $a\lambda_1+b\lambda_2$ & $(a-2r+1)\lambda_1+(b+r-2)\lambda_2$  &$ 2$\\[0.15cm]
\hline \\[-1.5ex]
$A_n$ $(n\geqslant 3)$ & $a\lambda_1+b\lambda_n$  & $(a-2r+1)\lambda_1+ (r-1)\lambda_2+ (b-1)\lambda_n$  &$ n$ \\[0.15cm]  
\hline \\[-1.5ex]
$B_n$ $(n\geqslant 3)$ & $  a\lambda_1$ & $(a-1)\lambda_1$  &$1$\\[0.2cm] 
\\ [-1.5ex]
 & $  c\lambda_1$ & $(c-2)\lambda_1$  &$n$\\[0.15cm] 
\\ [-1.5ex]
 &  $\lambda_2$  & $0$ &$n$ \\[0.2cm]
\\ [-1.5ex]
 &  $a\lambda_1 + \lambda_i$  & $(a-1)\lambda_1+\lambda_{i-1}$ &$i(n-i+2)-1$ \\[0.1cm]
\hline\cr
\end{tabular}
\caption{Some weight multiplicities in characteristic zero. Here \(a,b\in \Z_{\geqslant 1},\) \(1\leqslant r\leqslant a,\) \(c\in \Z_{\geqslant 2},\) and \(1<i<n.\)}
\label{Some_weight_multiplicities_in_Weyl_modules}
\end{table}
\end{prop}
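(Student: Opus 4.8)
The plan is to prove the proposition one row of Table \ref{Some_weight_multiplicities_in_Weyl_modules} at a time, exploiting that in characteristic zero every Weyl module $V_G(\lambda)$ is irreducible, so that the entries of the table are ordinary weight multiplicities of irreducible modules for the corresponding complex simple Lie algebra. The three most elementary rows, namely the type-$B_n$ ones with $\lambda\in\{a\lambda_1,c\lambda_1,\lambda_2\}$, I would dispose of directly by realizing the relevant module inside a tensor construction built from the natural $(2n+1)$-dimensional module $V_{\mathrm{nat}}=V_G(\lambda_1)$ and counting weight vectors in a fixed weight basis; the remaining rows I would settle by feeding the pair $(\lambda,\mu)$ into the explicit multiplicity formula of \cite{Cavallin}, after rewriting $\lambda-\mu$ in the basis of simple roots.

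For the first elementary row, $V_G(a\lambda_1)$ occurs as a direct summand of the symmetric power $S^a(V_{\mathrm{nat}})$, and in the standard weight basis $\{x_0,x_{\pm 1},\dots,x_{\pm n}\}$ of $V_{\mathrm{nat}}$ (of respective weights $0,\pm\epsilon_1,\dots,\pm\epsilon_n$) the weight $(a-1)\lambda_1=(a-1)\epsilon_1$ is afforded by the single monomial $x_1^{a-1}x_0$, so its multiplicity is $1$. The row $\lambda=c\lambda_1$, $\mu=(c-2)\lambda_1$ is handled identically: a short count shows that $(c-2)\epsilon_1$ is afforded by exactly $n+1$ monomials in $S^c(V_{\mathrm{nat}})$, and subtracting the unique contribution coming from the summand $V_G((c-2)\lambda_1)$ in the decomposition $S^c(V_{\mathrm{nat}})=V_G(c\lambda_1)\oplus V_G((c-2)\lambda_1)\oplus\cdots$ leaves $\m_{V_G(c\lambda_1)}((c-2)\lambda_1)=n$. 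Finally, for $\lambda=\lambda_2$ one uses the identification $V_G(\lambda_2)\cong\wedge^2 V_{\mathrm{nat}}\cong\Lie(G)$ of the adjoint module, whose zero weight space is the Cartan subalgebra of dimension $\rank G=n$; equivalently, the weight-zero vectors $x_k\wedge x_{-k}$ $(1\leqslant k\leqslant n)$ form a basis of $(\wedge^2 V_{\mathrm{nat}})_0$.

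For the two type-$A$ rows I would invoke \cite{Cavallin} directly, using as an independent check the translation of the multiplicity into a Kostka number: after passing to $\mathfrak{gl}$-coordinates and adjusting the content by a suitable determinant twist so that it becomes a composition of the same size as the shape, the multiplicity equals the number of semistandard tableaux of that shape and content, which one verifies to be $2$ in the $A_2$ case and $n$ in the $A_n$ case. A convenient sanity check is the specialization $a=b=1$, where $\mu$ is the zero weight of the adjoint module and the multiplicity is $\rank G=n$ (respectively $2$ for $A_2$).

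The genuinely substantive entry, and the one I expect to be the main obstacle, is the final $B_n$ row $\lambda=a\lambda_1+\lambda_i$, $\mu=(a-1)\lambda_1+\lambda_{i-1}$ with $1<i<n$. Here $\lambda-\mu=\epsilon_1+\epsilon_i=\alpha_1+\cdots+\alpha_{i-1}+2(\alpha_i+\cdots+\alpha_n)$ involves every simple root, so there is no reduction to a proper Levi subgroup via Lemma \ref{[Preliminaries] Parabolic embeddings: restriction to Levi subgroup}, and the quadratic closed form $i(n-i+2)-1$ can only be extracted from the full combinatorial recursion of \cite{Cavallin}. The difficulty is to organize that recursion so that the resulting summation telescopes to the stated expression, and to check that the hypotheses $1<i<n$ and $a\geqslant 1$ keep the computation within the stable range in which the count is a genuine polynomial in $i$; the remaining entries, being weights lying two steps below $\lambda$, then serve as degenerate consistency checks on this computation.
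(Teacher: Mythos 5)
Your handling of the first five rows is sound. In characteristic zero (to which one may always reduce, since Weyl modules have characteristic-independent characters) the symmetric-power realization $V_G(a\lambda_1)\subset S^a(V_{\mathrm{nat}})$, the harmonic decomposition $S^c(V_{\mathrm{nat}})=V_G(c\lambda_1)\oplus V_G((c-2)\lambda_1)\oplus\cdots,$ and the identification $V_G(\lambda_2)\cong\Lie(G)$ give rows three through five by direct counting — a more elementary route than the paper, which instead runs the recursion of \cite{Cavallin} for the $a\lambda_1$ and $c\lambda_1$ entries — and the two type-$A$ rows are treated exactly as in the paper, by citing \cite{Cavallin}. The genuine gap is the last row: you never prove the entry $i(n-i+2)-1$. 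You identify it as "the main obstacle," say the recursion of \cite{Cavallin} must be "organized so that the resulting summation telescopes," and appeal to a "stable range in which the count is a genuine polynomial in $i$," but none of this is a computation or an argument; the entry is simply left unestablished. Since this is precisely the multiplicity needed later (see Proposition \ref{Elements_generating_W(lambda)_nu}), the proposal does not prove the proposition.

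Moreover, the premises on which you defer this computation are mistaken: Levi reduction is unavailable only for $\mu$ itself, and no "full" or iterated recursion is needed. A single application of \cite[Theorem 2]{Cavallin} with $j=1$ — which is all the paper uses — gives
\begin{align*}
\m_{V_G(\lambda)}(\mu)	=	\m_{V_G(\lambda)}(\lambda) &+ \sum_{r=i}^{n}{\m_{V_G(\lambda)}(\lambda-(\alpha_i+\cdots+\alpha_r))} \cr
&+\sum_{s=i}^{n-1}{\m_{V_G(\lambda)}(\lambda-(\alpha_i+\cdots+\alpha_s+2\alpha_{s+1}+\cdots+2\alpha_n))}\cr
&+ \m_{V_G(\lambda)}(\lambda-2(\alpha_i+\cdots+\alpha_n))\cr
&+\sum_{t=2}^{i-1}{\m_{V_G(\lambda)}(\lambda-(\alpha_t+\cdots+\alpha_{i-1}+2\alpha_i+\cdots+2\alpha_n))},
\end{align*}
and every term on the right is accessible by what has already been proved. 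In $\epsilon$-coordinates each weight in the first two sums and the fourth term is $\mathscr{W}$-conjugate to $\lambda$, except $\lambda-(\alpha_i+\cdots+\alpha_n)$, which reduces via Lemma \ref{[Preliminaries] Parabolic embeddings: restriction to Levi subgroup} (Levi of type $B_{n-i+1}$ on $\alpha_i,\ldots,\alpha_n$) to the zero weight of the natural module; so all these multiplicities equal $1$. Each weight in the last sum equals $\lambda-(\epsilon_t+\epsilon_i)$, hence is $\mathscr{W}$-conjugate to $a\lambda_1+\lambda_{i-2}=\lambda-(\alpha_{i-1}+2\alpha_i+\cdots+2\alpha_n)$, and Lemma \ref{[Preliminaries] Parabolic embeddings: restriction to Levi subgroup} applied to the Levi of type $B_{n-i+2}$ on $\alpha_{i-1},\ldots,\alpha_n$ identifies its multiplicity with the zero-weight multiplicity of $V(\lambda_2)$ there, i.e. with $n-i+2$ — the very row you proved via the adjoint module. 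The total is $1+(n-i+1)+(n-i)+1+(i-2)(n-i+2)=i(n-i+2)-1$, with no telescoping or polynomiality considerations entering at any point. Supplying this computation (or an equivalent one) is what your proposal is missing.
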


\begin{proof}
First assume \(G\) is of type \(A_n\) \((n\geqslant 2)\) and let \(\lambda,\mu\) be as in the first or second row of Table \ref{Some_weight_multiplicities_in_Weyl_modules}. In each case, one easily checks that \(\mu=\lambda-(r\alpha_1+\alpha_2+\cdots+\alpha_n)\) and hence an application of \cite[Proposition 1]{Cavallin} yields \(\m_{V_G(\lambda)}(\mu)= \m_{V_G(\lambda)}(\lambda-(\alpha_1+\cdots+\alpha_n)).\) The result then immediately follows from \cite[Proposition 3]{Cavallin} in this situation. In the remainder of the proof, assume \(G\) is of type \(B_n\) \((n\geqslant 3).\) If \(\lambda,\mu\in X^+(T)\) are as in the third row of Table \ref{Some_weight_multiplicities_in_Weyl_modules}, then \(\mu=\lambda-(\alpha_1+\cdots+\alpha_n)\) and applying \cite[Theorem 2]{Cavallin} with \(j=1\) (adopting the notation in the latter paper) yields 
\[
\m_{V_G(\lambda)}(\mu)=\sum_{r=2}^n{\m_{V_G(\lambda)}(\lambda-(\alpha_r+\cdots+\alpha_n))} + \m_{V_G(\lambda)}(\lambda).
\]
Since \(\lambda-(\alpha_r+\cdots+\alpha_n) \notin \Lambda(\lambda)\) for   \(2\leqslant r\leqslant n\) and  \(\m_{V_G(\lambda)}(\lambda)=1,\) the desired assertion holds. Next assume \(\lambda=c\lambda_1\) and \(\mu=(c-2)\lambda_1\) for some \(c\geqslant 2.\) Again, one easily checks that \(\mu=\lambda-2(\alpha_1+\cdots+\alpha_n)\) and thus an application of \cite[Theorem 2]{Cavallin} with \(j=1\) yields 
\begin{align*}
2\m_{V_G(\lambda)}(\mu) = \m_{V_G(\lambda)}(\lambda) &+ \sum_{r=1}^{n-1}{\m_{V_G(\lambda)}(\lambda-(\alpha_1+\cdots + \alpha_r+2\alpha_{r+1}+\cdots+2\alpha_n))}\cr
 &+ \sum_{s=1}^{n}{\m_{V_G(\lambda)}(\lambda-(\alpha_1+\cdots + \alpha_s))}.
\end{align*}
For \(1\leqslant r\leqslant n-1\) and  \(1\leqslant s\leqslant n,\) the weights  \(\lambda-(\alpha_1+\cdots + \alpha_r+2\alpha_{r+1}+\cdots+2\alpha_n)\) and   \(\lambda-(\alpha_1+\cdots + \alpha_s)\) are \(\mathscr{W}\)-conjugate to either \(\lambda\) or \((c-1)\lambda_1.\) The latter two having  multiplicity \(1\) by above, the assertion holds in this situation as well.  In the case where \(\lambda=\lambda_2\) and \(\mu=0,\) then the result immediately follows from the fact that \(V_G(\lambda)\cong \Lie(G)^*\) and hence \(\m_{V_G(\lambda)}(\mu)=\dim \Lie(T)=n.\) Finally, consider \(\lambda,\mu\) as in the last row of Table \ref{Some_weight_multiplicities_in_Weyl_modules}. As usual, one checks that \(\mu=\lambda-(\alpha_1+\cdots+\alpha_{i-1}+2\alpha_i+\cdots+2\alpha_n),\) so that applying \cite[Theorem 2]{Cavallin} with \(j=1\) yields 
\begin{align*}
\m_{V_G(\lambda)}(\mu)	=	\m_{V_G(\lambda)}(\lambda) &+ \sum_{r=i}^{n}{\m_{V_G(\lambda)}(\lambda-(\alpha_i+\cdots+\alpha_r))} \cr
&+\sum_{s=i}^{n-1}{\m_{V_G(\lambda)}(\lambda-(\alpha_i+\cdots+\alpha_s+2\alpha_{s+1}+\cdots+2\alpha_n))}\cr
&+ \m_{V_G(\lambda)}(\lambda-2(\alpha_i+\cdots+\alpha_n))\cr
&+\sum_{t=2}^{i-1}{\m_{V_G(\lambda)}(\lambda-(\alpha_t+\cdots+\alpha_{i-1}+2\alpha_i+\cdots+2\alpha_n))}.
\end{align*}   
Now observe that \(\m_{V_G(\lambda)}(\lambda-(\alpha_t+\cdots+\alpha_{i-1}+2\alpha_i+\cdots+2\alpha_n))=n-i+2\) for every \(2\leqslant t\leqslant i-1\) by above. (Consider the Levi subgroup corresponding to the simple roots $\alpha_{i-1},\ldots,\alpha_n.$) The remaining weight multiplicities being equal to \(1,\) the desired assertion follows.
\end{proof}

\subsection{Generating sets for weight spaces}     

Let \(\mathfrak{g}_\C\) denote the complex Lie algebra  having same type as \(G\) and fix  a standard Chevalley basis $\mathscr{B}=\{f_{\alpha},h_{\alpha_r},e_\alpha: \alpha\in \Phi^+, 1\leqslant r\leqslant n\}$ of $\mathfrak{g}_\C,$ as in Section \ref{structure constants and Chevalley basis}. Also let \(\mathfrak{g}_\Z=\langle \mathscr{B}\rangle_\Z\) and set \(\mathfrak{g}_K=\mathfrak{g}_\Z \otimes_\Z K,\) so that \(\mathfrak{g}_K\) is isomorphic to the Lie algebra \(\Lie(G)\) of \(G.\) For \(\lambda\in X^+(T)\) a dominant character, we denote by \(\Delta(\lambda)\) the irreducible finite-dimensional \(\mathfrak{g}_\C\)-module having highest weight \(\lambda\) and fix a maximal vector \(v^{\lambda}\in \Delta(\lambda)\) of weight \(\lambda\) for the  Borel subalgebra of \(\mathfrak{g}_\C\) corresponding to \(\Pi.\) In addition, we write \(\mathfrak{U}_\Z\) to denote the subring of the universal enveloping algebra \(\mathfrak{U}\) of \(\mathfrak{g}_\C\) defined by
\[
\mathfrak{U}_\Z = \mathbb{Z} \left [\frac{e_\alpha^r}{r !}, \frac{f_\alpha^r}{r !} : r\in \mathbb{Z}_{\geqslant 0}, \alpha\in \Phi^+\right].
\]  

It is well-known that \(V_\Z(\lambda)=\mathfrak{U}_\Z v^\lambda\) is minimal among all \(\mathfrak{U}_\Z\)-invariant lattices in \(\Delta(\lambda)\) and that the \(KG\)-module   \(V_\Z(\lambda)\otimes_\Z K\) is the Weyl module \(V_G(\lambda)\) of highest weight \(\lambda.\) The action of \(G\) on the latter is obtained by exponentiating the action of \(\mathfrak{g}_\Z\) on \(V_\Z(\lambda),\) that is, 
\[
x_\alpha(c)v=v+\sum_{r=1}^\infty{\frac{c^r}{r !}e_{\alpha}^r v},~x_{-\alpha}(c)v=v+\sum_{r=1}^\infty{\frac{c^r}{r !}f_{\alpha}^r v},
\]
for every \(c\in K,\) \(\alpha\in \Phi^+\) and  \(v\in V_G(\lambda).\) In particular, one immediately gets that \(v^{\lambda}\otimes_\Z 1\) (which we denote by \(v^{\lambda}\) again) is a maximal vector of weight \(\lambda\) in \(V_G(\lambda)\) for \(B.\) Moreover, one can deduce that for any character $\mu \in X(T),$ we have 
\begin{equation} 
V_G(\lambda)_{\mu} =\left\langle \frac{f_{\gamma_1}^{k_1}}{k_1!}\cdots \frac{f_{\gamma_r}^{k_r}}{k_r !}v^{\lambda} : \gamma_1 <  \ldots < \gamma_r \in \Phi^+, \quad \mu +\sum_{i=1}^r{k_i\gamma_i}=\lambda\right\rangle_K,
\label{Elements_generating_W(lambda)_mu}
\end{equation}
where $< $ is the ordering on $\Phi^+$ introduced in Section \ref{structure constants and Chevalley basis}.  We refer the reader to \cite[Chapter VII]{Humphreys1} or to \cite[Chapters 1, 2, 3]{St1} for a detailed account of the related theory.

\begin{lem}\label{elements spanning V(lambda)mu: a first simplification}
Adopt the notation introduced above  and let $\beta_1 < \beta_2 \leqslant  \ldots  \leqslant  \beta_r\in \Phi^+$ be such that $\lambda-\beta_1 \notin \Lambda(\lambda).$  Also let \(k \in \mathbb{Z}_{\geqslant 0}\) and set \(\mu=\lambda- k\beta_1-\sum_{i=2}^r{\beta_i}.\) Then
\[
f_{\beta_1}^{k}f_{\beta_2} \cdots f_{\beta_r}v^{\lambda}\in \left\langle f_{\gamma_1} \cdots f_{\gamma_s} v^{\lambda} : \beta_1 < \gamma_1 \leqslant  \gamma_2 \leqslant  \ldots \leqslant \gamma_s\in \Phi^+,\quad \mu +\sum_{j=1}^s{ \gamma_j}=\lambda \right\rangle_{\Z}.
\] 
\end{lem}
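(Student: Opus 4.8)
The plan is to prove the statement by induction on $k$, the two key inputs being that $f_{\beta_1}v^{\lambda}=0$ and that commuting $f_{\beta_1}$ past a product of $f_\gamma$'s can only ever create root vectors $f_\gamma$ with $\gamma>\beta_1$. To begin, I would record the starting observation: since $\lambda-\beta_1\notin\Lambda(\lambda)$ we have $V_G(\lambda)_{\lambda-\beta_1}=0$, whence $f_{\beta_1}v^{\lambda}=0$. I would also isolate a purely algebraic \emph{straightening} fact, working in the universal enveloping algebra $\mathfrak{U}$: any product $f_{\delta_1}\cdots f_{\delta_m}$ with all $\delta_i>\beta_1$ is a $\Z$-linear combination of ordered monomials $f_{\epsilon_1}\cdots f_{\epsilon_t}$ with $\beta_1<\epsilon_1\leqslant\cdots\leqslant\epsilon_t$. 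This follows by repeatedly applying $f_{\delta}f_{\delta'}=f_{\delta'}f_{\delta}+[f_{\delta},f_{\delta'}]$ together with $[f_{\delta},f_{\delta'}]=[e_{-\delta},e_{-\delta'}]=N_{(-\delta,-\delta')}f_{\delta+\delta'}$, which lies in $\Z f_{\delta+\delta'}$ by \eqref{structure_constants_relation_0}. Two remarks make the rewriting legitimate: the structure constants are integers, and if $\delta,\delta'>\beta_1$ then $\delta+\delta'>\delta>\beta_1$ for the ordering of Section \ref{structure constants and Chevalley basis}, so every root vector that occurs stays strictly above $\beta_1$. Termination is the usual double induction, on the number of factors and then on the number of inversions, since each swap either reduces the number of inversions (in the transposed term) or reduces the number of factors (in the commutator term).

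With these in hand, the main argument is the induction on $k$. When $k=0$ the element $f_{\beta_2}\cdots f_{\beta_r}v^{\lambda}$ is already of the required shape, because $\beta_1<\beta_2\leqslant\cdots\leqslant\beta_r$. For $k\geqslant 1$ I would write $f_{\beta_1}^{k}f_{\beta_2}\cdots f_{\beta_r}v^{\lambda}=f_{\beta_1}^{k-1}\bigl(f_{\beta_1}f_{\beta_2}\cdots f_{\beta_r}v^{\lambda}\bigr)$ and move the inner $f_{\beta_1}$ to the right through the product via the derivation identity
\[
f_{\beta_1}f_{\beta_2}\cdots f_{\beta_r}=f_{\beta_2}\cdots f_{\beta_r}f_{\beta_1}+\sum_{i=2}^{r}f_{\beta_2}\cdots f_{\beta_{i-1}}[f_{\beta_1},f_{\beta_i}]f_{\beta_{i+1}}\cdots f_{\beta_r}.
\]
Applied to $v^{\lambda}$, the first term vanishes because $f_{\beta_1}v^{\lambda}=0$, while each summand of the second term replaces $f_{\beta_1}$ by an integer multiple of $f_{\beta_1+\beta_i}$. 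As all the $\beta_i$ and all the newly created $\beta_1+\beta_i$ are $>\beta_1$, this exhibits $f_{\beta_1}f_{\beta_2}\cdots f_{\beta_r}v^{\lambda}$ as a $\Z$-combination of products $w\,v^{\lambda}$ in which $w$ is a product of root vectors $f_\gamma$ with $\gamma>\beta_1$. Left-multiplying by $f_{\beta_1}^{k-1}$ and applying the straightening fact to reorder each $w$ (the prefix $f_{\beta_1}^{k-1}$ being left untouched), I obtain a $\Z$-combination of terms $f_{\beta_1}^{k-1}f_{\epsilon_1}\cdots f_{\epsilon_t}v^{\lambda}$ with $\beta_1<\epsilon_1\leqslant\cdots\leqslant\epsilon_t$. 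Each of these has exactly the form treated by the induction hypothesis with $k-1$ in place of $k$, and weight-homogeneity guarantees that the target weight $\mu$ is the same throughout, so the conclusion follows.

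The step requiring the most care is the straightening fact: one must verify that the property ``every root vector occurring is $>\beta_1$'' is preserved under all the commutations and that the rewriting terminates, both of which rest on the order-theoretic remark $\delta,\delta'>\beta_1\Rightarrow\delta+\delta'>\beta_1$ and on the integrality of the structure constants. The conceptual heart, however, is the elimination of $\beta_1$: the hypothesis $\lambda-\beta_1\notin\Lambda(\lambda)$ is precisely what lets one push a single $f_{\beta_1}$ all the way to $v^{\lambda}$, where it dies, at the cost of commutators that only raise roots above $\beta_1$; iterating this $k$ times removes all occurrences of $\beta_1$.
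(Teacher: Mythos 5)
Your proof is correct and follows essentially the same route as the paper: both arguments eliminate the factors $f_{\beta_1}$ one at a time by commuting them towards $v^{\lambda}$, where they die because $\lambda-\beta_1\notin\Lambda(\lambda)$ forces $f_{\beta_1}v^{\lambda}=0$, using integrality of the structure constants and the observation that any commutator $[f_{\beta_1},f_{\beta_i}]$ (or $[f_{\delta},f_{\delta'}]$ with $\delta,\delta'>\beta_1$) only produces root vectors for roots $>\beta_1$. The difference is purely organizational: the paper runs a double induction (on $r$ for the case $k=1$, then on $k$), whereas you use the full derivation identity together with an explicitly stated and proved PBW-style straightening lemma, which makes precise a reordering step that the paper's induction leaves implicit.
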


\begin{proof}
We first show the result for $k=1,$ proceeding by induction on $r.$ In the case where $r=2,$ we have $f_{\beta_1}f_{\beta_2}v^{\lambda}= -N_{(\beta_1,\beta_2)}f_{\beta_1+\beta_2}v^{\lambda} + f_{\beta_2}f_{\beta_1}v^{\lambda},$ where $N_{(\beta_1,\beta_2)}\in \Z.$ Also, since $\lambda-\beta_1 \notin \Lambda(\lambda),$ we have $f_{\beta_2}f_{\beta_1}v^{\lambda}=0$ and hence $f_{\beta_1}f_{\beta_2}v^{\lambda} \in \langle f_{\beta_1+\beta_2}v^{\lambda}\rangle_\Z$ as desired. So assume \(r>2\) and consider $\beta_1 < \beta_2 \leqslant   \ldots \leqslant   \beta_r \in \Phi^+$ as in the statement of the lemma. We have
\[
f_{\beta_1}\cdots f_{\beta_r}v^{\lambda}=-N_{(\beta_1,\beta_2)}f_{\beta_1+\beta_2}f_{\beta_3}\cdots f_{\beta_r}v^{\lambda} + f_{\beta_2}f_{\beta_1}f_{\beta_3}\cdots f_{\beta_r}v^{\lambda},
\]
where again \(N_{(\beta_1,\beta_2)}\in \Z,\) and since each of \(f_{\beta_1+\beta_2}f_{\beta_3}\cdots f_{\beta_r}v^{\lambda},\)   \(f_{\beta_1}f_{\beta_3}\cdots f_{\beta_r}v^{\lambda}\) satisfies the desired condition by induction, the result holds in the case where \(k=1.\) Finally, assume \(k>1,\) \(r\geqslant 2,\) and consider $\beta_1 < \beta_2 \leqslant   \ldots \leqslant \beta_r \in \Phi^+$ as in the statement of the lemma. Thanks to our induction hypothesis,  the vector \(f_{\beta_1}^{k}f_{\beta_2}\cdots f_{\beta_r}v^{\lambda}\) can be written as a  \(\Z\)-linear combination of elements of the form \(f_{\beta_1}f_{\gamma_1}\cdots f_{\gamma_s}v^\lambda\) with \(\beta_1<\gamma_1\leqslant\gamma_2\leqslant\ldots  \leqslant\gamma_s\in \Phi^+.\) Consequently  the result follows from the \(k=1\) case above.
\end{proof}

Finally, for  \(\lambda\in X^+(T)\) a dominant character, set
\[
\Phi^+_\lambda=\{\gamma \in \Phi^+: \lambda-\gamma\in \Lambda(\lambda)\},
\]
and write \(m_\lambda=|\Phi^+ |-|\Phi^+_{\lambda}|.\) The following generalization of Lemma \ref{elements spanning V(lambda)mu: a first simplification} shows that under certain conditions on $\lambda$ and $\mu,$ the set \(\Phi^+\) can be replaced by \(\Phi^+_\lambda\)  in the description of \(V_G(\lambda)_\mu\) given in \eqref{Elements_generating_W(lambda)_mu}.
\vspace{5mm}

\begin{prop}\label{A simplification for V(lambda)mu}
Adopt the notation introduced above  and consider a weight \(\mu\in \Lambda(\lambda)\)   such that \(\mu=\lambda-\sum_{r=1}^n{c_r\alpha_r}\) for some \(0\leqslant c_1,\ldots,c_n< p.\)  Then
\[
V_G(\lambda)_{\mu} =\left\langle \frac{f_{\gamma_1}^{k_1}}{k_1!}\cdots \frac{f_{\gamma_r}^{k_r}}{k_r !}v^{\lambda} : \gamma_1 <  \ldots < \gamma_r \in \Phi^+_\lambda, \quad  \mu +\sum_{i=1}^r{k_i\gamma_i}=\lambda\right\rangle_K.
\]
\end{prop}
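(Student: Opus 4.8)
The plan is to show that each spanning vector of $V_G(\lambda)_\mu$ provided by \eqref{Elements_generating_W(lambda)_mu} already lies in the span $W$ of those monomials $\frac{f_{\gamma_1}^{k_1}}{k_1!}\cdots\frac{f_{\gamma_r}^{k_r}}{k_r!}v^\lambda$ whose roots $\gamma_i$ all belong to $\Phi^+_\lambda.$ The first observation, and the only place the hypothesis on $\mu$ enters, is that every exponent occurring is small: if $\gamma_i=\sum_r d_{ir}\alpha_r$ and $\sum_i k_i\gamma_i=\lambda-\mu=\sum_r c_r\alpha_r,$ then choosing for each $i$ an $r$ with $d_{ir}\geqslant 1$ gives $k_i\leqslant c_r<p.$ Hence each $k_i!$ is a unit in $K,$ so divided powers and ordinary powers differ only by invertible scalars; this lets me move freely between the integral, plain-power statement of Lemma \ref{elements spanning V(lambda)mu: a first simplification} and the divided-power monomials appearing here, and in particular to work with the plain-power monomial $f_{\gamma_1}^{k_1}\cdots f_{\gamma_r}^{k_r}v^\lambda.$ (When $p=0$ the factorials are automatically invertible and the same argument applies verbatim.)

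Given a standard monomial $m=f_{\gamma_1}^{k_1}\cdots f_{\gamma_r}^{k_r}v^\lambda$ with $\gamma_1<\cdots<\gamma_r\in\Phi^+,$ define $\sigma(m)$ to be the smallest $\gamma_i$ lying outside $\Phi^+_\lambda,$ setting $\sigma(m)=\infty$ if all $\gamma_i\in\Phi^+_\lambda.$ I will prove $m\in W$ by downward induction on $\sigma(m)$ with respect to the ordering $<$ on the finite set $\Phi^+.$ If $\sigma(m)=\infty$ then $m\in W$ by definition, which covers the base case. For the inductive step, suppose $\sigma(m)=\gamma_j$ and that every standard monomial $m'$ of weight $\mu$ with $\sigma(m')>\gamma_j$ lies in $W.$ Factor $m=f_{\gamma_1}^{k_1}\cdots f_{\gamma_{j-1}}^{k_{j-1}}\,w,$ where $w=f_{\gamma_j}^{k_j}\cdots f_{\gamma_r}^{k_r}v^\lambda$ is the tail beginning at the first bad root.

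In $w$ the smallest root is $\gamma_j,$ the remaining ones occur weakly increasing above it, and $\lambda-\gamma_j\notin\Lambda(\lambda)$ since $\gamma_j\notin\Phi^+_\lambda$; thus Lemma \ref{elements spanning V(lambda)mu: a first simplification} applies and writes $w$ as a $\Z$-linear combination of products $f_{\eta_1}\cdots f_{\eta_s}v^\lambda$ with $\gamma_j<\eta_1\leqslant\cdots\leqslant\eta_s.$ (If $j=r$ the tail is $f_{\gamma_r}^{k_r}v^\lambda=0,$ because $f_{\gamma_r}v^\lambda=0,$ and then $m=0\in W.$) Prepending the factors $f_{\gamma_1}^{k_1},\dots,f_{\gamma_{j-1}}^{k_{j-1}},$ whose roots satisfy $\gamma_1<\cdots<\gamma_{j-1}<\gamma_j<\eta_1,$ keeps the list of roots weakly increasing, so each resulting product is again a standard monomial of weight $\mu$; grouping equal roots into divided powers only introduces the unit scalars discussed above. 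In every such monomial the roots not exceeding $\gamma_{j-1}$ lie in $\Phi^+_\lambda$ and all remaining roots exceed $\gamma_j,$ so its $\sigma$-value is strictly larger than $\gamma_j.$ By the induction hypothesis each lies in $W,$ hence $m\in W,$ completing the induction.

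The main obstacle is organizational rather than computational: a naive induction on the \emph{smallest} root of $m$ breaks down when that root lies in $\Phi^+_\lambda$ but a larger one does not, since commuting the small good factors past the bad one would create uncontrolled roots $\gamma_i+\gamma_{i'}$ and force a reordering into standard form. Isolating instead the \emph{smallest bad} root and applying the lemma only to the tail above it avoids any reordering, because the prepended good factors are automatically the smallest letters of the word and the result stays in standard form; this is the key point that makes the induction close. The one genuinely arithmetic input is the bound $k_i<p,$ which is precisely what the restriction $0\leqslant c_r<p$ on $\mu$ guarantees and what legitimizes invoking the integral Lemma \ref{elements spanning V(lambda)mu: a first simplification} in the divided-power setting.
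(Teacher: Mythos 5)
Your proof is correct and takes essentially the same route as the paper: both arguments rest on Lemma \ref{elements spanning V(lambda)mu: a first simplification} to eliminate the roots lying outside \(\Phi^+_\lambda\), combined with the observation that the hypothesis \(0\leqslant c_r<p\) forces every exponent occurring in a monomial of weight \(\mu\) to be \(<p\), so that the factorials are units of \(K\) and divided powers may be exchanged for plain powers. Your downward induction on the smallest root outside \(\Phi^+_\lambda\) appearing in a standard monomial is simply a careful, explicit organization of the rewriting that the paper compresses into its induction on \(m_\lambda\) (``one easily sees''), and it has the added merit of isolating the degenerate case \(f_{\gamma_r}^{k_r}v^{\lambda}=0\) explicitly.
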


\begin{proof}
Let \(\beta_1<\ldots<\beta_r\in \Phi^+\) and \(k_1,\ldots,k_r\in \mathbb{Z}_{>  0}\) be such that \(\mu +\sum_{i=1}^r{k_i\beta_i}=\lambda.\)
Arguing by induction on \(m_\lambda\) (the case where \(m_\lambda=1\) immediately following from  Lemma \ref{elements spanning V(lambda)mu: a first simplification}), one easily sees that \(f_{\beta_1}^{k_1} \cdots  f_{\beta_r}^{k_r} v^\lambda\) can be written as a \(\Z\)-linear combination of elements of the form \(f_{\gamma_1}^{l_1}\cdots f_{\gamma_s}^{l_s}v^\lambda,\) where \(l_1,\ldots,l_s\in \mathbb{Z}_{> 0}\) and \(\gamma_1 < \ldots < \gamma_s\in \Phi^+_\lambda.\)  Now thanks to our assumption on \(\mu,\)   we obviously have \(0< k_1,\ldots,k_r <p,\)  showing that the element
\[
\frac{f_{\beta_1}^{k_1}}{k_1!}\cdots \frac{f_{\beta_r}^{k_r}}{k_r !}v^{\lambda} \in V_G(\lambda)_\mu
\]
is a \(K\)-linear combination of elements of the form \(\tfrac{l_1 ! \cdots l_s !}{k_1! \cdots k_r !} \tfrac{f_{\gamma_1}^{l_1}}{l_1 !}\cdots \frac{f_{\gamma_s}^{l_s}}{l_s!}v^\lambda.\)  Finally, since   \(k_1,\ldots,k_r\in \mathbb{Z}_{> 0}\) and \(\beta_1<\ldots<\beta_r\in \Phi^+\)  were arbitrarily chosen (such that \(\mu +\sum_{i=1}^r{k_i\beta_i}=\lambda\)),  the proof is complete.  
\end{proof}

\section{Proof of Theorem \ref{Main result}}     
\label{[Section] proof of the main result}       

Let $K$ be an algebraically closed field of characteristic $p \geqslant 0,$ $Y$ a simply connected simple algebraic group of type $B_n$ $(n\geqslant 3)$ over $K,$ and $X$ the subgroup of $Y$ of type $D_n$ embedded in the usual way. Fix a Borel subgroup $B_Y=U_Y T_Y$ of $Y,$ where $T_Y$ is a maximal torus of $Y$ and $U_Y$ the unipotent radical of $B_Y,$ let $\Pi(Y)=\{\alpha_1,\ldots,\alpha_n\}$ denote a corresponding base of the root system $\Phi(Y)=\Phi^+(Y)\sqcup \Phi^-(Y)$ of $Y,$ and let $\{\lambda_1,\ldots,\lambda_n\}$ be the associated set of  fundamental dominant weights for $T_Y.$ Here we have 
\begin{equation*}
X=\langle T_Y, U_{\alpha}: \alpha \in \Phi(Y)\mbox{ is a long root}\rangle.
\end{equation*}
Let $B_X=U_X T_X$ be a  Borel subgroup of $X,$ where $T_X=T_Y$ is a maximal torus of $X$ and $U_X=U_Y\cap X$ the unipotent radical of $B_X,$ and denote by $\Pi(X)=\{\beta_1,\ldots,\beta_n\}$ the corresponding base of the root system $\Phi(X)=\Phi^+(X)\sqcup \Phi^-(X)$ of $X.$ Here $\beta_i=\alpha_i$ for every $1\leqslant i<n,$ $\beta_n=\alpha_{n-1}+2\alpha_n,$ while the corresponding fundamental dominant $T_X$-weights $\omega_1,\ldots,\omega_n$ satisfy 
\begin{equation}
\lambda_i|_{T_X}=\omega _i, \mbox{ for $1\leqslant i<n-1,$ } \lambda_{n-1}|_{T_X}=\omega_{n-1} +\omega_n,\mbox{ and  } \lambda_n|_{T_X}=\omega_n.
\label{[Dn<Bn] Weight restrictions}
\end{equation}

In addition,  fix  a Chevalley basis \(\mathscr{B}=\{f_{\alpha},h_{\alpha_r},e_{\alpha}: \alpha \in \Phi^+(Y), 1\leqslant r\leqslant n\}\) of $\Lie(Y),$  as in Section \ref{structure constants and Chevalley basis}, and let $V=L_Y(\lambda)$ be a non-trivial irreducible \(KY\)-module having $p$-restricted highest weight $\lambda\in X^+(T_Y).$  Finally, set  $\omega=\lambda|_{T_X}$ and let $v^+$ denote a maximal vector of weight $\lambda$ in $V$ for $B_Y.$ Since $B_X\subset B_Y,$ the latter is a maximal vector for $B_X$ as well, showing that $\omega$ affords the highest weight of a $KX$-composition factor of $V.$ In this final section, we prove the main results of this paper, namely Theorem \ref{Main result} and Corollary \ref{main_corol}.
\vspace{5mm}

First suppose that \(\langle \lambda,\alpha_n\rangle \neq 0\) and observe that in this situation, the element $f_{\alpha_n}v^+$ is non-zero and satisfies $x_{\beta}(c)f_{\alpha_n}v^+=f_{\alpha_n}v^+$ for every $\beta\in \Pi(X)$ and $c\in K.$ In other words, $f_{\alpha_n}v^+$ is a maximal vector in $V$ for $B_X $ and the $T_X$-weight $\omega'=(\lambda-\alpha_n)|_{T_X}$ affords the highest weight of a second $KX$-composition factor of $V.$ One then easily sees that $\omega$ and $\omega'$ are  interchanged by the graph automorphism \(\theta \) of order \(2\) of $X,$   so that 
\begin{equation}
\Lambda(\omega)\cap \Lambda(\omega') =\emptyset.
\label{[Dn<Bn] Proof of the main result: the weights of V(omega) and V(omega') are all disctinct}
\end{equation}

Also observe that if \(\langle \lambda, \alpha_n\rangle>1,\) then $(\lambda-2\alpha_n)|_{T_X} \in \Lambda(V|_X)$ is neither in $\Lambda(\omega)$ nor $\Lambda(\omega'),$ giving the existence of  a third $KX$-composition factor of \(V.\) Therefore we may and shall assume \(\langle \lambda, \alpha_n\rangle=1,\) and since $L_X(\omega)$ and $L_X(\omega')$ are interchanged by $\theta,$ \cite[Theorem 3.3]{Fo1} applies, yielding the following result. (The assertion on the complete reducibility of $V|_X$ immediately follows from \eqref{[Dn<Bn] Proof of the main result: the weights of V(omega) and V(omega') are all disctinct}.)

\begin{thm}[The case $\langle \lambda, \alpha_n \rangle \neq 0$]\label{le_cas_an>0}
Let \(V\) be an irreducible \(KY\)-module having \(p\)-restricted highest weight $\lambda\in X^+(T_Y),$ with $\langle \lambda, \alpha_n \rangle \neq 0.$ In addition, if \(\lambda\notin \Z\lambda_n,\) let $1\leqslant k<n$ be maximal such that $\langle \lambda, \alpha_k\rangle \neq 0.$ Then  $X$ has exactly two composition factors on $V$ if and only if one of the following holds.
\begin{enumerate}
\item \label{first_condition_in_theorem_ford} \(\lambda=\lambda_n.\)
 \item \(\lambda\) is not as in \textnormal{\ref{first_condition_in_theorem_ford}},  \(\langle \lambda, \alpha_n \rangle=1,\) and the following divisibility conditions are satisfied.
					\begin{enumerate}
					\item $p\mid a_i+a_j + j - i$ for every $1\leqslant i<j<n$ such that $a_i a_j \neq 0$ and $a_r=0$ for $i<r<j.$ 
					\item $p\mid 2(a_k +n- k )+1.$
					\end{enumerate}
\end{enumerate}
Moreover, if $X$ has exactly two composition factors on $V,$ then $V|_X$ is completely reducible.
\end{thm}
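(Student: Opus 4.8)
The plan is to exhibit a second $B_X$-maximal vector, use it to force $\langle \lambda,\alpha_n\rangle=1$, and then read off the stated equivalence from Ford's classification, both directions of the biconditional being packaged together. Since $\langle\lambda,\alpha_n\rangle\neq 0$ we have $f_{\alpha_n}v^+\neq 0$, and I would first check that it is a maximal vector for $B_X$: commuting each $e_{\beta_i}$ past $f_{\alpha_n}$ (recall $\beta_i=\alpha_i$ for $i<n$ and $\beta_n=\alpha_{n-1}+2\alpha_n$), using $e_{\beta_i}v^+=0$ together with the facts that $\alpha_i-\alpha_n$ is never a root of $Y$ for $i<n$ and that $[e_{\beta_n},f_{\alpha_n}]$ is a multiple of $e_{\alpha_{n-1}+\alpha_n}$, which again annihilates $v^+$, shows $e_{\beta_i}f_{\alpha_n}v^+=0$ for all $i$. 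This produces a composition factor $L_X(\omega')$ with $\omega'=(\lambda-\alpha_n)|_{T_X}$; in the standard $\varepsilon$-coordinates one has $\alpha_n|_{T_X}=\varepsilon_n$, so $\omega-\omega'=\varepsilon_n$, whence $\omega'=\omega^\theta$, and since $\varepsilon_n$ has odd coordinate sum, $\omega$ and $\omega'$ lie in distinct cosets of $X(T_X)$ modulo the root lattice $\Z\Phi(X)$ of $X$. In particular $\Lambda(\omega)\cap\Lambda(\omega')=\emptyset$.

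Next I would rule out $\langle\lambda,\alpha_n\rangle\geq 2$. In that range $\lambda-2\alpha_n$ is a weight of $V=L_Y(\lambda)$ by Lemma \ref{[Preliminaries] Weights and multiplicities: multiplicity of lambda - k alpha_i}, and its restriction $\omega-2\varepsilon_n$ is $\preccurlyeq_X$ neither $\omega$ nor $\omega'$ (neither $2\varepsilon_n$ nor $\varepsilon_n$ being a non-negative integer combination of the simple roots of $X$); hence it lies in neither $\Lambda(\omega)$ nor $\Lambda(\omega')$ and forces a third composition factor. Thus exactly two factors can occur only if $\langle\lambda,\alpha_n\rangle=1$. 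Under this normalization $L_X(\omega)$ and $L_X(\omega')$ are genuinely interchanged by the graph automorphism $\theta$, which is exactly the configuration governed by \cite[Theorem 3.3]{Fo1}; applying it gives the equivalence. Either $\lambda=\lambda_n$, the spin weight, for which $V|_X$ is the sum $L_X(\omega_{n-1})\oplus L_X(\omega_n)$ of the two half-spin modules for every $p$ and two factors always occur (case \ref{first_condition_in_theorem_ford}); or $\lambda\neq\lambda_n$, and Ford's theorem records exactly when no further factor appears, namely the divisibility conditions (a) and (b) on the $a_i$.

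For the final complete-reducibility claim I would exploit the coset separation obtained above. Since each root subgroup $U_\beta$ ($\beta\in\Phi(X)$) moves a weight only by multiples of $\beta$, it preserves the coset of a weight modulo $\Z\Phi(X)$; as $X=\langle T_X,U_\beta:\beta\in\Phi(X)\rangle$, the sum of all weight spaces of $V$ lying in a fixed such coset is an $X$-submodule. Every weight of $V|_X$ lies in the coset of $\omega$ or of $\omega'$, and these are distinct, so $V=V_{[\omega]}\oplus V_{[\omega']}$ as $KX$-modules. Each summand then contains exactly one of the two composition factors and is therefore irreducible, giving $V|_X=L_X(\omega)\oplus L_X(\omega')$; this is the content of the parenthetical deduction of complete reducibility from \eqref{[Dn<Bn] Proof of the main result: the weights of V(omega) and V(omega') are all disctinct}.

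The step I expect to be the main obstacle is the faithful invocation of \cite[Theorem 3.3]{Fo1}. Ford states his result for the disconnected group $X^\circ\langle\theta\rangle$ in terms of its irreducible action, so one must verify that irreducibility of $V$ for $X^\circ\langle\theta\rangle$ together with reducibility of $V|_{X^\circ}$ is equivalent to $X^\circ$ having exactly the two $\theta$-conjugate factors $L_X(\omega)$ and $L_X(\omega')$, and then match his parametrization of the family (his Table II, No.\ $U_2$) term by term against the conditions $p\mid a_i+a_j+j-i$ and $p\mid 2(a_k+n-k)+1$ recorded here. One must check in particular that $\lambda=\lambda_n$ is correctly separated off, the index $k$ being undefined there and the divisibility conditions to be read as vacuous.
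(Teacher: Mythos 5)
Your proposal is correct and follows essentially the same route as the paper: the same second maximal vector $f_{\alpha_n}v^+$ for $B_X$, the same use of the weight $\lambda-2\alpha_n$ to force $\langle\lambda,\alpha_n\rangle=1$, the same reduction to \cite[Theorem 3.3]{Fo1}, and the same deduction of complete reducibility from $\Lambda(\omega)\cap\Lambda(\omega')=\emptyset$. Your coset-mod-$\Z\Phi(X)$ justification of that disjointness is in fact slightly more careful than the paper's appeal to the $\theta$-interchange (which literally holds only once $a_n=1$ is assumed), but this is a refinement of detail, not a different argument.
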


As Theorem \ref{le_cas_an>0} yields a proof of Theorem \ref{Main result} in the case where \(\langle \lambda,\alpha_n\rangle \neq 0,\) we may and shall assume $\langle \lambda, \alpha_n \rangle =0$ in the remainder of this section. In addition, we might as well assume $p\neq 2$ (so we can apply the results in Sections \ref{Weight spaces for G of type Al} and \ref{Weight spaces for G of type Bn}), since otherwise \(X\) acts irreducibly on $V=L_Y(\lambda)$ by \cite[Theorem 1, Table 1 $(\mbox{MR}_4)$]{Se}. Here again, we can define
\[
k=\max \{1\leqslant r<n : \langle \lambda, \alpha_r\rangle \neq 0\}
\] 
(as \(\lambda \neq 0\)),  and since $p\neq 2,$  observe that $w^+=f_{\alpha_k+\cdots+\alpha_n}v^+\in V$ is non-zero. Moreover,  $x_{\beta}(c)w^+=w^+$ for every $\beta\in \Pi(X)$ and $c\in K.$ Therefore $w^+$ is a maximal vector in $V$ for $B_X$ and thus the $T_X$-weight $\omega'=(\lambda-(\alpha_k+\cdots+\alpha_n))|_{T_X}$ affords the highest weight of a second $KX$-composition factor of $V.$ Finally, notice that \(\omega'=\omega-(\beta_1+\cdots+\beta_{n-2} +\tfrac{1}{2}\beta_{n-1}+\tfrac{1}{2}\beta_n)\) and hence  
\begin{equation}
\Lambda(\omega)\cap \Lambda(\omega') =\emptyset.
\label{[Dn<Bn] Proof of the main result: L(w) and L(w') in direct sum}
\end{equation}

The next result shows that a necessary condition for \(X\) to have exactly two composition factors on \(V\) is for \(\omega'\) to be \(p\)-restricted. In addition, it also provides a proof of Theorem \ref{Main result} in the case where \(V\) has highest weight \(\lambda=\lambda_k.\)

\begin{lem}\label{omega_and_omega'_have_to_be_restricted}
Adopt the notation introduced above. If \(\omega'\) is not  \(p\)-restricted, then \(X\) acts with more than two composition factors on \(V.\) Also if \(\lambda=\lambda_k,\) then \(V|_X\) has exactly two composition factors and is completely reducible.
\end{lem}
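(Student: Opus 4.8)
The plan is to treat the two assertions separately, in both cases leaning on the disjointness $\Lambda(\omega)\cap\Lambda(\omega')=\emptyset$ recorded in \eqref{[Dn<Bn] Proof of the main result: L(w) and L(w') in direct sum}. I would open with a direct root-theoretic computation of the coordinates $\langle\omega',\beta_i\rangle$ of $\omega'=(\lambda-(\alpha_k+\cdots+\alpha_n))|_{T_X}$. One finds $\langle\omega',\beta_{k-1}\rangle=a_{k-1}+1$, while every other coordinate of $\omega'$ is a non-negative integer strictly less than $p$ (here the maximality of $k$ and the hypothesis $a_n=0$ are used). Since $\lambda$ is $p$-restricted, $a_{k-1}\le p-1$, so the only coordinate of $\omega'$ that can reach $p$ is $\langle\omega',\beta_{k-1}\rangle$; hence $\omega'$ fails to be $p$-restricted precisely when $k\geqslant 2$ and $a_{k-1}=p-1$, in which case $\langle\omega',\beta_{k-1}\rangle=p$.

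For the first assertion, assume $\langle\omega',\beta_{k-1}\rangle=p$ and consider the weight $\mu=\omega'-\beta_{k-1}$. I would establish three facts. First, $\mu\in\Lambda(V|_X)$: since $\beta_{k-1}=\alpha_{k-1}$, we have $\mu=(\lambda-(\alpha_{k-1}+\cdots+\alpha_n))|_{T_X}$, and the $T_Y$-weight $\lambda-(\alpha_{k-1}+\cdots+\alpha_n)$ is obtained from the weight $\lambda-(\alpha_k+\cdots+\alpha_n)$ of $w^+$ by subtracting $\alpha_{k-1}$ once, which is admissible because the latter weight pairs positively with $\alpha_{k-1}$; saturation of $\Lambda(\lambda)$ together with Premet's Theorem~\ref{[Preliminaries] Weights and multiplicities: Premet's theorem} (applicable as $p\neq 2$) then gives $\lambda-(\alpha_{k-1}+\cdots+\alpha_n)\in\Lambda(V)$. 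Second, $\m_{L_X(\omega)}(\mu)=0$: as $\langle\omega',\beta_{k-1}\rangle>0$ we have $\mu\in\Lambda(\omega')$, whence $\mu\notin\Lambda(\omega)$ by \eqref{[Dn<Bn] Proof of the main result: L(w) and L(w') in direct sum}. Third, $\m_{L_X(\omega')}(\mu)=0$: restricting to the Levi subgroup $H=L_J'$ of type $A_1$ attached to $J=\{\beta_{k-1}\}$ via Lemma~\ref{[Preliminaries] Parabolic embeddings: restriction to Levi subgroup}, the relevant $A_1$-highest weight is $\langle\omega',\beta_{k-1}\rangle=p$, and by the Steinberg Tensor Product Theorem~\ref{[Preliminaries] Weights and multiplicities: Steinberg tensor product theorem} the module $L_{A_1}(p)$ carries only the two weights $\pm p$, so the $A_1$-weight $p-2$ of $\mu$ does not occur. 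Combining the three facts, $\mu$ is a weight of $V$ lying in no composition factor of highest weight $\omega$ or $\omega'$, whence $X$ has at least three composition factors on $V$.

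For the second assertion, take $\lambda=\lambda_k$ with $1\leqslant k<n$, and argue through the natural module $W$ of $Y$, of dimension $2n+1$. For $p\neq 2$ and $k\leqslant n-1$ one has $V=L_Y(\lambda_k)=\Lambda^k W$. Writing $W=\langle e\rangle\perp W'$, where $\langle e\rangle$ is the $X$-fixed non-singular line defining $X$ and $W'$ is the $2n$-dimensional natural module for $X$, the functorial decomposition of exterior powers of a direct sum yields an $X$-module splitting
\[
\Lambda^k W|_X \;=\; \Lambda^k W' \,\oplus\, \Lambda^{k-1}W',
\]
because $\langle e\rangle$ is one-dimensional and $X$-stable. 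This already exhibits $V|_X$ as a direct sum of two $X$-submodules, so it remains only to identify the summands. For $p\neq 2$ the exterior powers of the natural $D_n$-module are irreducible, giving $\Lambda^k W'=L_X(\omega)$ and $\Lambda^{k-1}W'=L_X(\omega')$ (with $\omega=\omega_k$, $\omega'=\omega_{k-1}$ when $k<n-1$; $\omega=\omega_{n-1}+\omega_n$, $\omega'=\omega_{n-2}$ when $k=n-1$; and $W'\oplus K$ when $k=1$). Hence $V|_X$ has exactly two composition factors and is completely reducible.

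The main obstacle I anticipate lies entirely in the second assertion, namely the identifications $L_Y(\lambda_k)=\Lambda^k W$ and the irreducibility of the $D_n$ exterior powers $\Lambda^j W'$ for $j\leqslant n-1$. This is precisely where $p\neq 2$ and the bound $k\leqslant n-1$ are genuinely used: for orthogonal groups the defining form lies in the symmetric square and so induces no contraction on exterior powers, which is what keeps these Weyl modules irreducible away from characteristic $2$. I would confirm the needed cases by a dimension count ($\dim L_Y(\lambda_k)=\binom{2n+1}{k}$) or by citing the standard description of the fundamental modules. By contrast, the first assertion is essentially formal once the root computation is in place, the only delicate point being that the candidate weight $\mu$ survives in the irreducible quotient $V=L_Y(\lambda)$ rather than merely in $V_Y(\lambda)$ — and this is exactly what Premet's theorem guarantees.
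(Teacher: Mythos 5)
Your proposal is correct, and its two halves relate to the paper's proof differently. For the first assertion you take essentially the paper's route: the witness weight is the same one, $(\lambda-(\alpha_{k-1}+\cdots+\alpha_n))|_{T_X}=\omega'-\beta_{k-1}$, and the conclusion is drawn the same way (a weight of $V|_X$ occurring in neither $L_X(\omega)$ nor $L_X(\omega')$ forces a third composition factor). The only difference is bookkeeping: the paper compresses all verifications into a single appeal to the Steinberg tensor product theorem, whereas you justify the three facts separately — Premet's Theorem \ref{[Preliminaries] Weights and multiplicities: Premet's theorem} (legitimate here, as $p\neq 2$ and $\lambda$ is $p$-restricted) to see that the weight survives in $L_Y(\lambda)$, the disjointness \eqref{[Dn<Bn] Proof of the main result: L(w) and L(w') in direct sum} to exclude $\Lambda(\omega)$, and Lemma \ref{[Preliminaries] Parabolic embeddings: restriction to Levi subgroup} applied to the $A_1$-Levi attached to $\beta_{k-1}$ together with Steinberg to exclude $L_X(\omega')$. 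This is a sound, and indeed more complete, writing-out of the paper's rather terse citation.

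For the second assertion ($\lambda=\lambda_k$) your mechanism is genuinely different. The paper cites \cite[Proposition 4.2.2]{McNinch} to obtain $V\cong V_Y(\lambda)$, $L_X(\omega)\cong V_X(\omega)$, $L_X(\omega')\cong V_X(\omega')$, and then verifies $\dim V=\dim L_X(\omega)+\dim L_X(\omega')$ by Weyl's degree formula, deducing complete reducibility afterwards from \eqref{[Dn<Bn] Proof of the main result: L(w) and L(w') in direct sum}. You instead realize $V=\Lambda^k W$ and split off the $X$-fixed line to get the $X$-module decomposition $\Lambda^k W|_X\cong \Lambda^k W'\oplus\Lambda^{k-1}W'$, identifying the summands with $L_X(\omega)$ and $L_X(\omega')$. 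Both arguments rest on the same nontrivial input, namely the irreducibility in characteristic $\neq 2$ of the relevant exterior powers of the natural modules for $B_n$ and $D_n$; this is exactly what the McNinch citation supplies, and you should cite it (or an equivalent source) rather than leave it at ``standard,'' since your fallback dimension count $\dim L_Y(\lambda_k)=\binom{2n+1}{k}$ is itself equivalent to that irreducibility statement. What your route buys is that the direct sum is exhibited rather than inferred: complete reducibility and the exact count of factors come for free, with no dimension computation. What the paper's route buys is brevity given McNinch; its Weyl-degree-formula check is, in effect, the characteristic-zero shadow of your branching identity.
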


\begin{proof}
First suppose that  $\omega'$ is not $p$-restricted. Since \(\langle \lambda,  \alpha_n\rangle =0,\) we then have  \(p\mid a_{k-1}+1\) by \eqref{[Dn<Bn] Weight restrictions}, so that $(\lambda-(\alpha_{k-1}+\cdots+\alpha_n))|_{T_X} \notin \Lambda(\omega)\cup \Lambda(\omega')$ by Theorem \ref{[Preliminaries] Weights and multiplicities: Steinberg tensor product theorem}. Consequently the latter weight occurs in a third $KX$-composition factor of \(V\) and the desired assertion holds. Next assume \(\lambda=\lambda_k,\) in which case an application of \cite[Proposition 4.2.2 Parts (e),(f)]{McNinch} yields \(V\cong V_Y(\lambda),\) as well as  \(L_X(\omega)\cong V_X(\omega)\) and \(L_X(\omega')\cong V_X(\omega').\) A straightforward computation (using Weyl's degree formula \cite[Corollary 24.3]{Humphreys1}, for example) then yields \(\dim V=\dim L_X(\omega) + \dim L_X(\omega'),\) showing that \(X\) acts with exactly two composition factors on \(V.\) Finally, the assertion on the complete reducibility follows directly from \eqref{[Dn<Bn] Proof of the main result: L(w) and L(w') in direct sum}. 
\end{proof}

In view of Lemma \ref{omega_and_omega'_have_to_be_restricted}, we assume  \(\lambda\neq \lambda_k\) and suppose that each of the weights \(\omega,\omega'\in X^+(T_X)\) is \(p\)-restricted for the remainder of this section, which in particular implies \(0\leqslant a_{k-1}<p-1.\) Now observe that \(X\) acts with exactly two composition factors on \(V\) if and only if \(\langle Xv^+\rangle\) and \(\langle Xw^+\rangle\) are irreducible and \(V=\langle Xv^+\rangle \oplus \langle Xw^+\rangle.\)  The next result shows that the latter equality can be translated into an equality of modules for the Lie algebra \(\Lie(X)\) of $X.$

\begin{lem}\label{X_has_two_c.f._on_V_iff_V=Lie(X)v^+_+_Lie(X)w^+}
Adopt the notation introduced above. Then \(X\) acts with exactly two composition factors on \(V\) if and only if 
\[
V=\Lie(X)v^+ \oplus \Lie(X)w^+.
\]
\end{lem}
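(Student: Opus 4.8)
The plan is to combine the observation recorded immediately before the lemma --- that $X$ has exactly two composition factors on $V$ if and only if $\langle Xv^+\rangle$ and $\langle Xw^+\rangle$ are irreducible and $V=\langle Xv^+\rangle\oplus\langle Xw^+\rangle$ --- with the fact that, because $\omega=\lambda|_{T_X}$ and $\omega'=(\lambda-(\alpha_k+\cdots+\alpha_n))|_{T_X}$ are $p$-restricted, the restricted irreducibles $L_X(\omega)$ and $L_X(\omega')$ are generated over $\Lie(X)$ by their highest weight vectors. Thus whenever $\langle Xv^+\rangle$ is irreducible it equals $\Lie(X)v^+$ (similarly for $w^+$), while in general $\Lie(X)v^+\subseteq\langle Xv^+\rangle$ and $\Lie(X)w^+\subseteq\langle Xw^+\rangle$. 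The forward implication is then immediate: exactly two composition factors gives, via that observation, irreducibility of the two cyclic modules together with $V=\langle Xv^+\rangle\oplus\langle Xw^+\rangle$, which by restrictedness reads $V=\Lie(X)v^+\oplus\Lie(X)w^+$.

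For the converse I would first reduce $V=\Lie(X)v^+\oplus\Lie(X)w^+$ to $V=\langle Xv^+\rangle\oplus\langle Xw^+\rangle$. Every weight of $\langle Xv^+\rangle$ lies in $\omega+\Z\Phi(X)$ and every weight of $\langle Xw^+\rangle$ in $\omega'+\Z\Phi(X)$; since $\omega-\omega'=(\alpha_k+\cdots+\alpha_n)|_{T_X}$ is congruent modulo $\Z\Phi(X)$ to $\alpha_n|_{T_X}=e_n\notin\Z\Phi(X)$, these cosets are distinct, so the two submodules meet in $0$ and any sum of them is direct. Combining the inclusions $\Lie(X)v^+\subseteq\langle Xv^+\rangle$, $\Lie(X)w^+\subseteq\langle Xw^+\rangle$ with $V=\Lie(X)v^+\oplus\Lie(X)w^+\subseteq\langle Xv^+\rangle\oplus\langle Xw^+\rangle\subseteq V$ and comparing dimensions forces equality throughout, so that $V=\langle Xv^+\rangle\oplus\langle Xw^+\rangle$. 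It therefore remains to show that this last decomposition forces both summands to be irreducible; granting this, the opening observation finishes the proof. I will also need the multiplicities $\m_V(\omega)=\m_V(\omega')=1$: the first is clear since $\lambda$ is the unique $T_Y$-weight of $V$ restricting to $\omega$, and for the second, $\m_V(\omega')=\m_V(\lambda-(\alpha_k+\cdots+\alpha_n))$ reduces, via restriction to the Levi factor of type $B_{n-k+1}$ on $\{\alpha_k,\ldots,\alpha_n\}$ (Lemma \ref{[Preliminaries] Parabolic embeddings: restriction to Levi subgroup}), to the multiplicity of $(a_k-1)\lambda_1$ in a type $B_{n-k+1}$ Weyl module, which equals $1$ by Proposition \ref{Various_weight_multiplicities}.

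The irreducibility is where I expect the real work, and the tool I would use is self-duality. As $Y$ is of type $B_n$ we have $-w_0=1$, so $V\cong V^*$ carries a non-degenerate $X$-invariant bilinear form $B$, which pairs $V_\mu$ with $V_{-\mu}$. Because $\omega$ (hence $\omega'$) is an integral combination of the weights $e_1+\cdots+e_r$ $(r\leqslant n-1)$, one has $2\omega\in\Z\Phi(X)$, so $-\omega\equiv\omega$ and $-\omega'\equiv\omega'$ modulo $\Z\Phi(X)$; together with the coset computation above this shows $B$ pairs $\langle Xv^+\rangle$ and $\langle Xw^+\rangle$ trivially, whence $B$ restricts non-degenerately to each of them and $\langle Xv^+\rangle\cong\langle Xv^+\rangle^*$, $\langle Xw^+\rangle\cong\langle Xw^+\rangle^*$. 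Now $\langle Xv^+\rangle$ is cyclic with simple head $L_X(\omega)$, so by self-duality it has simple socle $L_X(\omega)^*\cong L_X(\omega)$ (note $\omega$ is fixed by $-w_{0,X}$, its coefficients at $\omega_{n-1}$ and $\omega_n$ coinciding); were it reducible, $L_X(\omega)$ would appear both in its socle and as the head of the nonzero cyclic quotient $\langle Xv^+\rangle/\soc\langle Xv^+\rangle$, forcing $[\langle Xv^+\rangle:L_X(\omega)]\geqslant 2$ against $\m_V(\omega)=1$. Hence $\langle Xv^+\rangle=L_X(\omega)$, and the same argument with $\m_V(\omega')=1$ gives $\langle Xw^+\rangle=L_X(\omega')$. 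The main obstacle is exactly this step: the direct-sum hypothesis alone places no bound on the lengths of the cyclic summands, and only the interplay of self-duality with the two multiplicity-one computations excludes longer, uniserial-type configurations.
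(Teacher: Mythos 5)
Your proposal is correct and follows essentially the same route as the paper's proof: the preliminary observation together with Curtis's theorem (that $p$-restricted irreducibles remain irreducible over $\Lie(X)$) handles the forward direction, disjointness of the cosets $\omega+\Z\Phi(X)$ and $\omega'+\Z\Phi(X)$ gives directness of $\langle Xv^+\rangle+\langle Xw^+\rangle$, and self-duality of $V|_X$ combined with the one-dimensionality of the $\omega$- and $\omega'$-weight spaces yields irreducibility of the two summands. The only difference lies in how the last step is executed: the paper dualizes the quotient $L_X(\omega)$ of the self-dual module $V|_X$ to obtain a submodule $U\cong L_X(\omega)$, which must contain $v^+$ since $(V|_X)_\omega=\langle v^+\rangle_K$, whereas you restrict the invariant form to each cyclic summand and run a socle--head multiplicity count; both arguments rest on exactly the same ingredients.
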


\begin{proof}
First assume \(X\) has exactly two composition factors on \(V.\) Then  \(V=\langle Xv^+\rangle \oplus \langle Xw^+\rangle\) (as seen above), with both \(\langle Xv^+\rangle\) and \(\langle Xw^+\rangle\) irreducible. Now since  \(\omega,\omega'\in X^+(T_X)\) are \(p\)-restricted,  we get that \(\langle Xv^+\rangle\) and \(\langle Xw^+\rangle\) are irreducible as modules for \(\Lie(X)\) by \cite[Theorem 1]{Curtis}. Hence \(\langle Xv^+\rangle = \Lie(X)v^+\) and \(\langle Xw^+\rangle=\Lie(X)w^+,\) so that \(V=\Lie(X)v^+ \oplus \Lie(X)w^+ \) as desired. Conversely, suppose that \(V=\Lie(X)v^+ \oplus \Lie(X)w^+.\) Then \(V=\langle Xv^+\rangle + \langle X w^+\rangle,\) and since \(\langle Xv^+\rangle,\) \(\langle X w^+\rangle\) are homomorphic images of \(V_X(\omega),\) \(V_X(\omega'),\) respectively, we deduce from \eqref{[Dn<Bn] Proof of the main result: L(w) and L(w') in direct sum} that  
\[
V=\langle Xv^+\rangle \oplus \langle X w^+\rangle.
\]
In particular \(V|_X\) contains a quotient isomorphic to \(L_X(\omega),\) and since \(V\) is self-dual as a \(KY\)-module, its restriction to \(X\) is self-dual as well. Therefore there exists a submodule \(U\) of \(V|_X\) such that \(U\cong L_X(\omega).\) As \((V|_X)_\omega=\langle v^+\rangle_K,\) we then get that \(\langle X v^+\rangle \subseteq U \cong L_X(\omega).\) A similar argument shows that \(\langle X w^+\rangle \cong L_X(\omega'),\) thus completing the proof.
\end{proof}

The following result, whose proof is similar to that of \cite[Lemma 3.4]{Fo2}, gives a necessary and sufficient condition for \(V|_{\Lie(X)}\) to be equal to  the direct sum of \(\Lie(X)v^+\) and \(\Lie(X)w^+,\) which by Lemma \ref{X_has_two_c.f._on_V_iff_V=Lie(X)v^+_+_Lie(X)w^+} translates to a necessary and sufficient condition for \(X\) to act with exactly two composition factors on \(V.\)

\begin{prop}\label{[Dn<Bn] Proof of the main result: Key Proposition}
Adopt the notation introduced above. Then \(V=\Lie(X)v^+ \oplus \Lie(X)w^+\) if and only if  for every $1\leqslant i\leqslant k,$ we have $f_{i,n}v^+\in \Lie(X) w^+$ and $f_{i,n} w^+\in \Lie(X) v^+.$
\end{prop}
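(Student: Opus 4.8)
The plan is to restate the desired identity as the assertion that a certain $\Lie(X)$-submodule of $V$ is in fact stable under all of $\Lie(Y)$, and then to exploit the symmetric-pair structure of $(\Lie(Y),\Lie(X))$. Write $\Lie(Y)=\Lie(X)\oplus \mathfrak{m}$, where $\mathfrak{m}=\langle e_{i,n},f_{i,n}:1\leqslant i\leqslant n\rangle_K$ is the span of the short root vectors of $\Lie(Y)$. Since the short roots $\pm e_i=\pm(\alpha_i+\cdots+\alpha_n)$ afford the weights of the natural module for $X$, the subspace $\mathfrak{m}$ is stable under the adjoint action of $X$ (hence of $\Lie(X)$), and any sum of two short roots which is a root is long, so $[\mathfrak{m},\mathfrak{m}]\subseteq \Lie(X)$ and $[\Lie(X),\mathfrak{m}]\subseteq \mathfrak{m}$. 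Setting $W=\Lie(X)v^+ + \Lie(X)w^+$, the two summands have disjoint weights by \eqref{[Dn<Bn] Proof of the main result: L(w) and L(w') in direct sum}, so the sum is direct and it suffices to decide when $W=V$.

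For the forward implication I would simply note that if $W=V$, then for each $1\leqslant i\leqslant k$ both $f_{i,n}v^+$ and $f_{i,n}w^+$ lie in $V=\Lie(X)v^+\oplus\Lie(X)w^+$. Each is a $T_X$-weight vector, and since the decomposition respects $T_X$-weights, its components in the two summands have the same weight. The key point is a coset argument: restricted to $T_X$, every short root $\bar e_i$ represents the same nontrivial coset of the root lattice $\mathbb{Z}\Phi(X)$ (because $\bar e_i-\bar e_j\in\mathbb{Z}\Phi(X)$ while $\bar e_i\notin\mathbb{Z}\Phi(X)$). Hence the weight $\omega-\bar e_i$ of $f_{i,n}v^+$ lies in the coset of $\omega'=\omega-\bar e_k$, while the weight $\omega'-\bar e_i$ of $f_{i,n}w^+$ lies in the coset of $\omega$. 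As the weights of $\Lie(X)v^+$ (resp. $\Lie(X)w^+$) all lie in the coset of $\omega$ (resp. $\omega'$), the offending components must vanish, forcing $f_{i,n}v^+\in\Lie(X)w^+$ and $f_{i,n}w^+\in\Lie(X)v^+$.

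For the reverse implication I would show that the hypotheses force $W$ to be a $\Lie(Y)$-submodule of $V$; since $\lambda$ is $p$-restricted and $p\neq 2$, Curtis's theorem \cite[Theorem 1]{Curtis} makes $V$ irreducible as a $\Lie(Y)$-module, whence $W=V$. As $W$ is already $\Lie(X)$-stable, $\Lie(Y)$-stability reduces to $\mathfrak{m}\cdot W\subseteq W$; using the grading $[\Lie(X),\mathfrak{m}]\subseteq\mathfrak{m}$ one commutes a single short root vector past the hyperalgebra $u_K(\Lie(X))$, which rewrites $x\cdot(Dv^+)$ as a sum of terms $D_i(x_iv^+)$ with $x_i\in\mathfrak{m}$ and $D_i\in u_K(\Lie(X))$, thereby reducing everything to $\mathfrak{m}v^+\subseteq W$ and $\mathfrak{m}w^+\subseteq W$. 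These I would verify directly: $e_{i,n}v^+=0$ as $v^+$ is maximal for $B_Y$; $f_{i,n}v^+\in W$ by hypothesis when $i\leqslant k$, and $f_{i,n}v^+=0$ when $i>k$ since $v^+$ then generates the trivial module for the Levi factor attached to $\{\alpha_i,\ldots,\alpha_n\}$; finally $e_{i,n}w^+$ and $f_{i,n}w^+$ are treated by writing $w^+=f_{k,n}v^+$ and moving $e_{i,n}$, resp. $f_{i,n}$, through $f_{k,n}$, the resulting commutators being $X$-root vectors (via the structure constants of Lemma \ref{structure_constants_for_B}) applied to $v^+$ and hence landing in $\Lie(X)v^+\subseteq W$, once one invokes $f_{i,n}v^+=0$ for $i>k$.

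The main obstacle I anticipate is the reverse direction, and within it the reduction of $\Lie(Y)$-stability of $W$ to the action of $\mathfrak{m}$ on the two generators $v^+$ and $w^+$: this commutation must be carried out in the hyperalgebra in characteristic $p$, so that divided powers of the $X$-root vectors are accommodated, and it relies on $\mathfrak{m}$ being stable under the full hyperalgebra of $X$ rather than merely under $\Lie(X)$. The accompanying case analysis for $i>k$—both the vanishing $f_{i,n}v^+=0$ and the explicit brackets $[e_{i,n},f_{k,n}]$ and $[f_{i,n},f_{k,n}]$, where $p\neq 2$ is exactly what keeps the relevant structure constants nonzero—is the other point that will require care.
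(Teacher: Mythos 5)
Your proof is correct and, at bottom, it is the paper's proof in different packaging. The forward direction coincides with the paper's: the paper simply asserts that $\Lie(X)v^+\cap V_{\lambda-(\alpha_i+\cdots+\alpha_n)}=0$, and your coset computation (all restricted short roots $\bar{e}_i$ lie in a single nontrivial coset of $\Z\Phi(X)$, so the weights of $\Lie(X)v^+$ and of $f_{i,n}v^+$ live in different cosets) is precisely the justification of that assertion. In the reverse direction both arguments rest on the same two pillars --- Curtis's theorem applied to the $p$-restricted weight $\lambda$, and the bracket facts that a long plus a short root is short while two short roots sum to a long root --- and both reduce everything to the action of the short negative root vectors on $v^+$ and $w^+$, including the vanishing $f_{i,n}v^+=0$ for $i>k$ and the computation $f_{i,n}w^+=[f_{i,n},f_{k,n}]v^+\in\Lie(X)v^+$ for $i>k$. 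The organizational difference is that the paper never forms $\mathfrak{m}$ nor proves that $U=\Lie(X)v^+\oplus\Lie(X)w^+$ is a $\Lie(Y)$-submodule; it instead writes $V=\Lie(Y)v^+$ (Curtis plus PBW, so only monomials in \emph{negative} root vectors applied to $v^+$ need be treated) and runs a two-stage induction on such monomials. Your submodule formulation is arguably cleaner, at the cost of also having to control $e_{i,n}w^+$, which you handle correctly since $[e_{i,n},f_{k,n}]\in\Lie(X)$. One correction of emphasis: the ``main obstacle'' you anticipate is not one, and it matters to see why. The spaces $\Lie(X)v^+$ and $\Lie(X)w^+$ are submodules for the ordinary (equivalently, restricted) enveloping algebra, so elements of $W$ are spans of monomials $y_1\cdots y_m v^+$ and $y_1\cdots y_m w^+$ with $y_j\in\Lie(X)$, and the inclusion $\mathfrak{m}\,U(\Lie(X))\subseteq U(\Lie(X))\,\mathfrak{m}$ needed for your commutation step follows from $[\Lie(X),\mathfrak{m}]\subseteq\mathfrak{m}$ by induction on $m$; divided powers never enter. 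Indeed, if your argument genuinely required stability of $W$ under the full hyperalgebra $\mathrm{Dist}(X)$, you would be proving the a priori different statement $V=\langle Xv^+\rangle\oplus\langle Xw^+\rangle$ rather than the proposition as stated, so staying inside $U(\Lie(Y))$ is not merely permissible but necessary.
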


\begin{proof}
First assume \(V=\Lie(X)v^+ \oplus \Lie(X)w^+\) and consider $1\leqslant i\leqslant k.$ Also let $v\in \Lie(X)v^+$ and $w\in\Lie(X) w^+$ be the unique elements such that $f_{i,n}v^+ = v+w.$ As $f_{i,n}v^+$ lies in the weight space $V_{\lambda-(\alpha_i+\cdots+\alpha_n)},$ so do $v$ and $w.$ Observe however that $\Lie(X)v^+ \cap V_{\lambda-(\alpha_i+\cdots+\alpha_n)}= 0,$ forcing $v=0$ and thus $f_{i,n}v^+=w \in \Lie(X) w^+.$ A similar argument shows that $f_{i,n}w^+ \in \Lie(X) v^+,$ thus the desired result. 

Conversely, suppose that $f_{i,n}v^+\in \Lie(X)w^+$ and $f_{i,n} w^+\in \Lie(X)v^+$ for every $1\leqslant i\leqslant k,$ and write $U=\Lie(X)v^+ \oplus \Lie(X)w^+\subseteq V.$ We first show that 
\[
f_{\gamma_1}\cdots f_{\gamma_s}v^+ \in  U
\]
for every $\gamma_1\in \Phi^+(Y)$ short and $\gamma_2,\ldots,\gamma_s\in \Phi^+(Y)$ long. We proceed by induction on \(s\geqslant 2,\) first considering the situation where \(s=2.\) Here we have \(f_{\gamma_1}f_{\gamma_2}v^+=N_{(\gamma_1,\gamma_2)}f_{\gamma_1+\gamma_2}v^+ + f_{\gamma_2}f_{\gamma_1}v^+,\) where \(N_{(\gamma_1,\gamma_2)}=0\) if \(\gamma_1+\gamma_2\notin \Phi^+(Y).\) Since \(f_{\gamma_1}v^+\) belongs to \(\Lie(X)w^+\) by assumption and  \(\gamma_2\) is a long root in \(\Phi^+(Y),\) we deduce that \(f_{\gamma_2}f_{\gamma_1}v^+\in \Lie(X)w^+.\) So it only remains to check that \(N_{(\gamma_1,\gamma_2)}f_{\gamma_1+\gamma_2}v^+\in \Lie(X)w^+.\) If \(\gamma_1+\gamma_2\notin \Phi^+(Y),\) then \(N_{(\gamma_1,\gamma_2)}=0\) and there is nothing to do, while if on the other hand \(\gamma_1+\gamma_2\) is a root, then one easily checks that the latter has to be short, allowing us to conclude thanks to our initial assumption. Next consider \(s>2\) and observe that we have
\begin{equation*}
f_{\gamma_1}\cdots f_{\gamma_s} v^+		= f_{\gamma_2}f_{\gamma_1}f_{\gamma_3}\cdots f_{\gamma_s}v^+ + N_{(\gamma_1,\gamma_2)}f_{\gamma_1+\gamma_2}f_{\gamma_3}\cdots f_{\gamma_s} v^+.
\end{equation*}
Thanks to our induction assumption and the above discussion, each of the vectors   $f_{\gamma_1}f_{\gamma_3}\cdots f_{\gamma_s}v^+$ and $f_{\gamma_1+\gamma_2}f_{\gamma_3}\cdots f_{\gamma_s}v^+$ lies inside $U,$ and as $\gamma_2$ is long, we get that  $f_{\gamma_1}\cdots f_{\gamma_s}v^+ \in U $ as desired. Arguing in a similar fashion, one shows that \(f_{\gamma_1}\cdots f_{\gamma_s}w^+ \in  U\) as well, where  $\gamma_1,\gamma_2,\ldots,\gamma_s$ are as above. 

To complete the proof, notice that since  \(\lambda\in X^+(T_Y)\) is \(p\)-restricted, the \(KY\)-module \(V\) is irreducible as a \(\Lie(Y)\)-module by \cite[Theorem 1]{Curtis} and hence \(V=\Lie(Y)v^+.\) Therefore in order to show that $U=V,$  it suffices to prove that $f_{\gamma_1}\cdots f_{\gamma_s}v^+ \in U$  for any positive roots $\gamma_1,\ldots,\gamma_s\in \Phi^+(Y).$ Again, we proceed by induction on \(s\geqslant 1,\) first observing that  the desired assertion holds in the case where \(s=1\) by our initial assumption. Now assume $s\geqslant 2$ and consider \(\gamma_1,\ldots,\gamma_s\in \Phi^+(Y).\) Thanks to our induction hypothesis, we  get that \(f_{\gamma_2}\cdots f_{\gamma_s}v^+\in U,\) and hence  \(f_{\gamma_1}\cdots f_{\gamma_s}v^+\) can be rewritten as a linear combination of elements of the form \(f_{\gamma_1}f_{\delta_1}\cdots f_{\delta_t}v^+,\) where \(\delta_1,\ldots,\delta_t\) are long roots in \(\Phi^+(Y).\) The result then obviously holds if \(\gamma_1\) is long, while if on the other hand \(\gamma_1\) is short, then the assertion follows from the above discussion. Consequently $U=V,$ thus completing the proof.
\end{proof}

In view of Lemma  \ref{X_has_two_c.f._on_V_iff_V=Lie(X)v^+_+_Lie(X)w^+} and Proposition \ref{[Dn<Bn] Proof of the main result: Key Proposition}, one observes that the key to the proof of Theorem \ref{Main result} resides in determining all pairs $(\lambda,p)$ such that $f_{i,n}v^+\in \Lie(X)w^+ $ and  $f_{i,n}w^+\in \Lie(X)v^+$ for every $1\leqslant i\leqslant k.$ The rest of this section is  devoted to the proof of the three following technical lemmas, which  provide a partial answer to this question. We refer the reader to Appendices \ref{Weight spaces for G of type Al} and  \ref{Weight spaces for G of type Bn} for notation such as $V_{r,k},$ where $1\leqslant r <k.$

\begin{lem}\label{Lemma3.5.1,Ford}
Suppose that $f_{r,k}v^+ \in V_{r,k}$ for every $1\leqslant r<k.$ Then  $f_{i,n}v^+ \in \Lie(X) w^+$  for every $1\leqslant i<k.$ Conversely, if \(1\leqslant i<k\)  is such that \(f_{i,n}v^+\in \Lie(X)w^+,\) then \(f_{i,k}\in V_{i,k}.\)
\end{lem}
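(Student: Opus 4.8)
The backbone of the argument is a single Chevalley commutator identity that trades the short‑root vector $f_{i,n}v^+$ for the long‑root vector $f_{i,k}v^+$. First I would record that $f_{k+1,n}v^+=0$: since $k$ is maximal with $\langle\lambda,\alpha_k\rangle\neq 0$ and $\langle\lambda,\alpha_n\rangle=0$, a direct weight computation shows that $\lambda-(\alpha_{k+1}+\cdots+\alpha_n)$ is not $\preccurlyeq\lambda$ (its dominant $\mathscr{W}$‑conjugate fails to lie under $\lambda$), hence does not belong to $\Lambda(\lambda)$ and the vector vanishes. Granting this, fix $1\leqslant i<k$ and apply $f_{k+1,n}$ to the weight vector $f_{i,k}v^+$. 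Commuting $f_{k+1,n}$ past $f_{i,k}$, using $f_{k+1,n}v^+=0$ and $(\alpha_{k+1}+\cdots+\alpha_n)+(\alpha_i+\cdots+\alpha_k)=\alpha_i+\cdots+\alpha_n$, yields
\[
f_{k+1,n}\,f_{i,k}\,v^+=N_{(-(\alpha_{k+1}+\cdots+\alpha_n),\,-(\alpha_i+\cdots+\alpha_k))}\,f_{i,n}\,v^+ .
\]
By \eqref{structure_constants_relation_0} the structure constant equals $\pm(q+1)$ with $q=0$ (the relevant root string has length one), so it is $\pm 1$; in particular $f_{i,n}v^+$ and $f_{k+1,n}f_{i,k}v^+$ coincide up to a nonzero sign.

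For the forward implication, suppose $f_{r,k}v^+\in V_{r,k}$ for every $1\leqslant r<k$. For a given $i$ the displayed identity reduces the goal $f_{i,n}v^+\in\Lie(X)w^+$ to showing $f_{k+1,n}(V_{i,k})\subseteq\Lie(X)w^+$. I would verify this on the explicit generating set for $V_{i,k}$ furnished by the appendix, whose elements are products of long‑root lowering operators applied to $v^+$. For each such generator I would commute the short‑root operator $f_{k+1,n}$ rightward through the long‑root operators; since $f_{k+1,n}v^+=0$, the fully commuted term dies, and each surviving term replaces a single long factor $\gamma_j$ by the root $(\alpha_{k+1}+\cdots+\alpha_n)+\gamma_j$, with coefficients governed by Lemma \ref{structure_constants_for_B}. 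The task is then to re‑express each such term as a product of long‑root operators acting on $w^+=f_{k,n}v^+$, placing it in $\Lie(X)w^+$.

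For the converse, assume $f_{i,n}v^+\in\Lie(X)w^+$ for a fixed $i<k$. By the same identity $f_{k+1,n}f_{i,k}v^+\in\Lie(X)w^+$, and I would conclude $f_{i,k}v^+\in V_{i,k}$ from the description of $V_{i,k}$ developed in the appendix, namely that it is precisely the part of the weight space $V_{\lambda-(\alpha_i+\cdots+\alpha_k)}$ that $f_{k+1,n}$ carries into $\Lie(X)w^+$. Showing that this characterization is sharp — that a weight vector whose $f_{k+1,n}$‑image lies in $\Lie(X)w^+$ must itself lie in $V_{i,k}$ — is where the weight‑multiplicity computations of Proposition \ref{Various_weight_multiplicities} and the explicit bases enter, since they pin down the dimensions of the weight spaces involved and of their images under $f_{k+1,n}$.

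The main obstacle is the forward inclusion $f_{k+1,n}(V_{i,k})\subseteq\Lie(X)w^+$: one must push the short‑root operator through arbitrary products of long‑root operators and check that every short‑root ``error term'' produced by the Chevalley relations can be rewritten via long roots acting on $w^+$, with no leakage into the complement of $\Lie(X)w^+$. This bookkeeping — tracking the signs of the structure constants of Lemma \ref{structure_constants_for_B} across an inductive commutation — is the genuine technical content, and it is the reason the hypothesis is imposed uniformly over all $r<k$ rather than only for the single index $i$.
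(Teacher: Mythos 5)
Your opening identity $f_{i,n}v^+=\pm f_{k+1,n}f_{i,k}v^+$ (valid because $f_{k+1,n}v^+=0$ and $N_{(\alpha_i+\cdots+\alpha_k,\,\alpha_{k+1}+\cdots+\alpha_n)}=1$) is exactly the pivot of the paper's proof of the forward direction, but in both directions you stop short of the step that carries the content. For the forward direction, the inclusion $f_{k+1,n}(V_{i,k})\subseteq \Lie(X)w^+$, which you label ``the main obstacle,'' is not open-ended bookkeeping: it has a clean resolution by induction on $k-i$. For a generator $f_{(m)}v^+=f_{i,m_1}f_{m_1+1,m_2}\cdots f_{m_s+1,k}v^+$ of $V_{i,k}$, the operator $f_{k+1,n}$ commutes with \emph{every} factor except the last one (the sum of $\alpha_{k+1}+\cdots+\alpha_n$ with any $\alpha_{m_r+1}+\cdots+\alpha_{m_{r+1}}$, $m_{r+1}<k$, is not a root), so $f_{k+1,n}f_{(m)}v^+=f_{i,m_1}\cdots f_{m_{s-1}+1,m_s}f_{m_s+1,n}v^+$. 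Since $m_s+1>i$, the vector $f_{m_s+1,n}v^+$ is either $w^+$ itself (when $m_s+1=k$) or lies in $\Lie(X)w^+$ by the induction hypothesis --- this is precisely where the hypothesis for \emph{all} $r<k$ is consumed --- and applying the remaining long-root operators stays inside the $\Lie(X)$-submodule $\Lie(X)w^+$. The base case $i=k-1$ is explicit: $f_{k-1,n}v^+=\xi f_{\alpha_{k-1}}w^+$. You sensed the induction (``the reason the hypothesis is imposed uniformly'') but never set it up, so your forward direction is incomplete rather than wrong.

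The converse is where the genuine gap lies. You deduce $f_{k+1,n}f_{i,k}v^+\in\Lie(X)w^+$ and then invoke a ``characterization'' of $V_{i,k}$ as the full preimage of $\Lie(X)w^+$ under $f_{k+1,n}$ inside the weight space, attributing it to the appendix. No such characterization exists there (the appendix only characterizes when $f_{i,j}v^+\in V_{i,j}$ via divisibility conditions), and its sharpness half --- if $f_{k+1,n}v\in\Lie(X)w^+$ then $v\in V_{i,k}$ --- applied to $v=f_{i,k}v^+$ \emph{is} the statement being proved; moreover its other half is the forward direction, which requires the uniform hypothesis that the converse does not grant you. So this part of your argument is circular, and the proposed dimension count is never carried out (nor is it clear it could be, since it would again require knowing whether $f_{i,n}v^+\in\Lie(X)w^+$). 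The paper's converse avoids this entirely by working one weight lower: from $\Lie(X)w^+\cap V_{\lambda-(\alpha_i+\cdots+\alpha_n)}\subset V_{i,n}$ it writes $f_{i,n}v^+$ as a combination of generators $f_{i,j}f_{j+1,n}v^+$ and $f_{(m)}f_{j+1,n}v^+$ with $j\leqslant k-1$ (the terms involving $f_{t,n}$, $t>k$, vanish), and then applies the raising operators $e_{\alpha_n},e_{\alpha_{n-1}},\ldots,e_{\alpha_{k+1}}$ to both sides, producing a nonzero multiple of $f_{i,k}v^+$ on the left and elements of $V_{i,k}$ on the right. You would need to replace your sketch by an argument of this kind.
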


\begin{proof}
We start by showing the second assertion, considering \(1\leqslant i<k\) as in the statement of the latter. Since $\Lie(X) w^+\cap V_{\lambda-(\alpha_i+\cdots + \alpha_n)} \subset V_{i,n},$ we immediately get that $f_{i,n}v^+ \in V_{i,n}.$ Moreover, as \(f_{t,n}v^+=0\) for \(k<t\leqslant n,\) there exists $\{\xi_j,\xi^j_{(m)}: i\leqslant j\leqslant k-1, (m)\in P(i,j)\}\subset K$ such that
\[
f_{i,n}v^+= \sum_{j=i}^{k-1}{\left( \xi_j f_{i,j}f_{j+1,n}v^+ + \sum_{(m)\in P(i,j)}{\xi^j_{(m)}f_{(m)}f_{j+1,n}v^+}\right)}.
\]
Applying successively $e_{\alpha_n},e_{\alpha_{n-1}},\ldots,e_{\alpha_{k+1}}$ gives a non-zero multiple of $f_{i,k}v^+$ on the left-hand side and elements lying inside $V_{i,k}$ on the right-hand side, so that \(f_{i,k}v^+\in V_{i,k}\) as desired. 

Conversely, assume $f_{r,k}v^+ \in V_{r,k}$ for every $1\leqslant r<k$ and consider \(1\leqslant i<k.\) We proceed by induction on \(1\leqslant k-i<k.\) In the case where \(i=k-1,\) observe that \(f_{k+1,n}v^+=0\) and \(V_{k-1,k}=\langle f_{\alpha_{k-1}}f_{\alpha_k}v^+\rangle_K,\) so that 
\[
f_{k-1,n}v^+	 =	 f_{k+1,n}f_{k-1,k}v^+						 
			 =	\xi f_{\alpha_{k-1}}f_{k+1,n}f_{\alpha_k}v^+		 
			 =	   \xi f_{\alpha_{k-1}}f_{k,n}v^+
			 = 	  \xi f_{\alpha_{k-1}}w^+
\]
for some \(\xi\in K.\) Therefore the result holds in this situation since \(\alpha_{k-1}\in \Phi^+(Y)\) is long. Next consider \(1\leqslant i<k-1\) and observe that since  \(f_{k+1,n}v^+=0,\) we have \(f_{i,n}v^+= f_{k+1,n}f_{i,k}v^+.\) Furthermore,  as \(f_{i,k}v^+\in V_{i,k}\) by assumption, there exists \(\{\xi_{(m)} :(m)\in P(i,k)\} \subset K\) such that 
\[
f_{i,k}v^+= \sum_{(m)\in P(i,k)}{\xi_{(m)}f_{(m)}v^+}.
\]
Now notice that for any fixed \( (m)=(m_r)_{r=1} ^s \in P(i,k),\) the element \(f_{k+1,n}\) commutes with \(f_{i,m_1}\) as well as \(f_{m_r+1,m_{r+1}}\) for \(1\leqslant r<s.\)  Consequently \(f_{k+1,n}f_{(m)}v^+= f_{i,m_1}\cdots f_{m_{s-1}+1,m_s}f_{m_s+1,n}v^+\) and our induction assumption applies, showing that \(f_{k+1,n}f_{(m)}v^+\in \Lie(X)w^+.\) Since \((m)\) was arbitrarily chosen in \(P(i,k),\) the result follows.
\end{proof}

Next consider the \(T_Y\)-weight $\mu=\lambda-2(\alpha_k+\cdots+\alpha_n) \in \Lambda^+(\lambda).$ By  \eqref{V_k,n}, together with Lemma \ref{[Preliminaries] Parabolic embeddings: restriction to Levi subgroup} (applied to the $B_{n-k+1}$-parabolic corresponding to the simple roots $\alpha_k,\ldots,\alpha_n$), the weight space $V_{\mu}$ is spanned by  the vectors
\begin{equation}
\left\{f_{k,n}w^+\right\} \cup \left\{f_{k,j}F_{k,j+1}v^{+}\right\}_{k\leqslant j<n}.
\label{V_k,n (second use)}
\end{equation}

As in Appendix \ref{Weight spaces for G of type Bn}, we write $V^2_{k,n}$ to designate the span of all the generators in \eqref{V_k,n (second use)} except for $f_{k,n}w^+.$ Clearly, we have $V^2_{k,n}\subset \Lie(X)v^+,$ since the elements of $V^2_{k,n}$ are of the form $$f_{\gamma_1}f_{\gamma_2}v^+,$$ where both $\gamma_1$ and $\gamma_2$ are long roots in $\Phi^+(Y).$ Conversely, one easily sees that $\Lie(X)v^+\cap V_{\mu}\subset V^2_{k,n},$ so that the next result holds. (The case where $a_k=1$ immediately follows from Proposition \ref{[Dn<Bn] Preliminaries for L of type Bn: technical proposition concerning the weight lambda-2...2 for lambda=lambda_1}.)

\begin{lem}\label{Lemma3.5.1,Ford(part2)}
Adopt the notation introduced above. If $a_k=1,$ then $ f_{k,n}w^+ \in \Lie(X)v^+,$ while if on the other hand $a_k>1,$ then  $ f_{k,n} w^+ \in V^2_{k,n}$ if and only if  $ f_{k,n}w^+ \in \Lie(X)v^+.$
\end{lem}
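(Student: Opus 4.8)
The plan is to reduce the asserted equivalence to the two inclusions already flagged in the discussion preceding the statement, namely $V^2_{k,n}\subseteq \Lie(X)v^+$ and $\Lie(X)v^+\cap V_\mu\subseteq V^2_{k,n}$, and then to read off the two cases $a_k=1$ and $a_k>1$ from them. First I would record the spanning set \eqref{V_k,n (second use)} of $V_\mu$, so that every vector of this weight space is a $K$-combination of $f_{k,n}w^+$ and of the generators of $V^2_{k,n}$.

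For the first inclusion I would simply observe that each generator $f_{k,j}F_{k,j+1}v^+$ of $V^2_{k,n}$ has the form $f_{\gamma_1}f_{\gamma_2}v^+$ with $\gamma_1=\alpha_k+\cdots+\alpha_j$ and $\gamma_2=\alpha_k+\cdots+\alpha_j+2\alpha_{j+1}+\cdots+2\alpha_n$ both long roots of $\Phi^+(Y)$; since $X$ is generated by the long-root subgroups, $\Lie(X)$ contains all long-root vectors, and hence $V^2_{k,n}\subseteq\Lie(X)v^+$ at once.

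The heart of the argument is the reverse inclusion $\Lie(X)v^+\cap V_\mu\subseteq V^2_{k,n}$. Here I would use that $\Lie(X)v^+$ is spanned by monomials $f_{\delta_1}\cdots f_{\delta_t}v^+$ in long-root vectors, and that those lying in $V_\mu$ satisfy $\sum_i\delta_i=2(\alpha_k+\cdots+\alpha_n)$. Applying the reductions of Lemma \ref{elements spanning V(lambda)mu: a first simplification} and Proposition \ref{A simplification for V(lambda)mu}, together with the fact that the long roots of $Y$ form a closed subsystem of type $D_n$ (so that any commutator of two long-root vectors is again a long-root vector or zero), I would rewrite such a monomial as a $K$-combination of degree-two products of long-root vectors. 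A direct check then shows that the only decompositions of $2(\alpha_k+\cdots+\alpha_n)$ into two long positive roots are the pairs $(\alpha_k+\cdots+\alpha_j,\ \alpha_k+\cdots+\alpha_j+2\alpha_{j+1}+\cdots+2\alpha_n)$ for $k\leqslant j<n$, which are precisely the generators of $V^2_{k,n}$; in particular no short-root vector such as $f_{k,n}$ can appear, since the reduction never leaves the $D_n$ subsystem. To align with the surrounding machinery I would alternatively pass, via Lemma \ref{[Preliminaries] Parabolic embeddings: restriction to Levi subgroup}, to the Levi factor of type $B_{n-k+1}$ on the nodes $\alpha_k,\ldots,\alpha_n$, where the generated submodule has highest weight $a_k\lambda_1$ and the structure of this weight space is exactly what Proposition \ref{[Dn<Bn] Preliminaries for L of type Bn: technical proposition concerning the weight lambda-2...2 for lambda=lambda_1} governs.

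Finally I would assemble the two cases. When $a_k>1$, the equivalence is immediate: if $f_{k,n}w^+\in V^2_{k,n}$ then $f_{k,n}w^+\in\Lie(X)v^+$ by the first inclusion, and conversely, since $f_{k,n}w^+\in V_\mu$, membership in $\Lie(X)v^+$ forces $f_{k,n}w^+\in\Lie(X)v^+\cap V_\mu\subseteq V^2_{k,n}$ by the second. When $a_k=1$ the statement is the stronger unconditional claim $f_{k,n}w^+\in\Lie(X)v^+$, and for this I would invoke Proposition \ref{[Dn<Bn] Preliminaries for L of type Bn: technical proposition concerning the weight lambda-2...2 for lambda=lambda_1}, which treats exactly the weight $\lambda-2(\alpha_k+\cdots+\alpha_n)$ in the $\lambda_1$-type situation and places $f_{k,n}w^+$ inside $V^2_{k,n}$. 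The main obstacle is precisely the reverse inclusion: one must ensure that, working in the irreducible quotient $L_Y(\lambda)$ rather than in $V_Y(\lambda)$, no long-root monomial acquires a nonzero $f_{k,n}w^+$-component under reduction, which is where the closedness of the $D_n$ subsystem (equivalently, the explicit $\lambda_1$-analysis of the cited proposition) does the decisive work.
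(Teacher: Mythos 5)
Your proposal is correct and follows essentially the same route as the paper: the two inclusions $V^2_{k,n}\subset \Lie(X)v^+$ and $\Lie(X)v^+\cap V_\mu\subset V^2_{k,n}$ give the equivalence for $a_k>1$ formally, and the case $a_k=1$ is settled by Proposition \ref{[Dn<Bn] Preliminaries for L of type Bn: technical proposition concerning the weight lambda-2...2 for lambda=lambda_1} applied through the Levi subgroup of type $B_{n-k+1}$ on $\alpha_k,\ldots,\alpha_n$, which identifies $f_{k,n}w^+=(f_{k,n})^2v^+$ with $2f_{\alpha_k}F_{k,k+1}v^+\in V^2_{k,n}$. Your closed-subsystem argument (commutators of long-root vectors never produce short-root vectors, so straightening a long-root monomial of weight $\mu$ lands in the span of the pairs $f_{k,j}F_{k,j+1}v^+$) is exactly the content the paper leaves implicit behind ``one easily sees,'' so it is a welcome elaboration rather than a departure.
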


Finally, assume $\langle \lambda, \alpha_k\rangle =1$ and let $1\leqslant l<k$ be such that $\langle \lambda, \alpha_l\rangle \neq 0,$ but $\langle \lambda, \alpha_r \rangle =0$ for every $l<r<k.$ (Such an integer \(l\) exists, since \(\lambda\neq \lambda_k.\)) Also suppose that $f_{l,k}v^+\in V_{l,k}$ and write $\mu=\lambda-(\alpha_l+\cdots+\alpha_{k-1}+2\alpha_k+\cdots+2\alpha_n).$ Using Proposition \ref{Une premiere simplification de l'ensemble generateur de V(nu)} together with Lemma \ref{[Preliminaries] Parabolic embeddings: restriction to Levi subgroup} (applied to the Levi subgroup of type $B_{n-l+1}$  corresponding to the simple roots $\alpha_l,\ldots,\alpha_n$) shows that the weight space $V_{\mu}$ is spanned by the vectors
\begin{align}
\{F_{l,k}v^+\} 			&\cup  \{f_{l,i}F_{i+1,k}v^+\}_{l\leqslant i\leqslant k-2} 					 			\cr		
					 				&\cup  \{f_{l,i}f_{i+1,k-1}f_{\alpha_k}F_{k,k+1}v^+\}_{l\leqslant i\leqslant k-2}		\cr
					 				&\cup  \{f_{l,i}f_{k,j}F_{i+1,j+1}v^+\}_{l\leqslant i\leqslant k-2,k\leqslant j <n} 	 		\cr
					 				&\cup  \{f_{k,j}F_{l,j+1}v^+\}_{k\leqslant j<n} 									\cr
							 		&\cup  \{f_{l,n}w^+\}.
\label{generators_for_V_nu (second use)}
\end{align}

As in Appendix \ref{Weight spaces for G of type Bn}, we write $V_{l,k,n}$ to designate the span of all the generators in \eqref{generators_for_V_nu (second use)} except for $f_{l,n}w^+.$ Clearly, we have $V_{l,k,n}\subset \Lie(X)v^+,$ since the elements of $V_{l,k,n}$ are of the form $$f_{\gamma_1} \cdots f_{\gamma_r}v^+,$$ with $\gamma_1,\ldots,\gamma_r$ long roots in $\Phi^+(Y).$ Conversely, one easily sees that $\Lie(X)v^+\cap V_{\mu}\subset V_{l,k,n},$ so that the following result holds.

\begin{lem}\label{Lemma3.5.1,Ford(part3)}
Assume \(a_k=1\) and suppose that \(f_{l,k}v^+\in V_{l,k}.\) Then adopting the notation introduced above, we have that $f_{l,n}w^+ \in V_{l,k,n}$ if and only if $  f_{l,n}w^+ \in \Lie(X)v^+.$
\end{lem}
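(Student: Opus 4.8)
The plan is to deduce the equivalence directly from the two facts recorded immediately before the statement, so that the proof reduces to assembling them and then isolating where the genuine work lies. Throughout I would keep in mind that $f_{l,n}w^+=f_{l,n}f_{k,n}v^+$ is a weight vector of weight $\mu=\lambda-(\alpha_l+\cdots+\alpha_{k-1}+2\alpha_k+\cdots+2\alpha_n)$, so that $f_{l,n}w^+\in V_\mu$ unconditionally. The forward implication is then immediate: every generator of $V_{l,k,n}$ appearing in \eqref{generators_for_V_nu (second use)} has the shape $f_{\gamma_1}\cdots f_{\gamma_r}v^+$ with $\gamma_1,\dots,\gamma_r$ long roots of $\Phi^+(Y)$, and since the negative part of $\Lie(X)$ is spanned by the $f_\gamma$ with $\gamma$ a long positive root of $Y$, each such generator lies in $\Lie(X)v^+$; hence $V_{l,k,n}\subseteq \Lie(X)v^+$, and in particular $f_{l,n}w^+\in V_{l,k,n}$ forces $f_{l,n}w^+\in \Lie(X)v^+$.

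For the converse I would reduce everything to the single containment $\Lie(X)v^+\cap V_\mu\subseteq V_{l,k,n}$: granting it, if $f_{l,n}w^+\in \Lie(X)v^+$ then $f_{l,n}w^+\in \Lie(X)v^+\cap V_\mu\subseteq V_{l,k,n}$, as required. To prove this containment I would first recall that $\Lie(X)v^+$ equals $U(\mathfrak{n}_X^-)v^+$, since $v^+$ is a maximal vector for $B_X$ and so is annihilated by every long positive-root vector of $Y$; by the Poincar\'e--Birkhoff--Witt theorem this space is spanned by the monomials $f_{\gamma_1}\cdots f_{\gamma_s}v^+$ with all $\gamma_i$ long positive roots. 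Intersecting with $V_\mu$ imposes $\sum_i\gamma_i=\alpha_l+\cdots+\alpha_{k-1}+2\alpha_k+\cdots+2\alpha_n$. I would then rewrite each such weight-$\mu$ long-root monomial in terms of the spanning set \eqref{generators_for_V_nu (second use)} of $V_\mu$, using the straightening relations of Lemma \ref{elements spanning V(lambda)mu: a first simplification} and Proposition \ref{A simplification for V(lambda)mu}.

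The step I expect to be the main obstacle is checking that this straightening never produces the exceptional short-root generator $f_{l,n}w^+=f_{l,n}f_{k,n}v^+$; the hypotheses $a_k=1$ and $f_{l,k}v^+\in V_{l,k}$ enter only to guarantee, via Proposition \ref{Une premiere simplification de l'ensemble generateur de V(nu)} and Lemma \ref{[Preliminaries] Parabolic embeddings: restriction to Levi subgroup}, that \eqref{generators_for_V_nu (second use)} really spans $V_\mu$. The key observation is that the long roots of $Y$ form the closed subsystem of type $D_n$ attached to $X$: the sum of two long roots, when it is a root at all, is again long, since a short root $e_i$ has odd coordinate sum whereas a sum of two long roots has even coordinate sum. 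Hence every structure constant $N_{(\gamma_i,\gamma_j)}$ entering the straightening pairs long roots and yields long roots, so the reduction of a long-root monomial stays inside the span of the long-root generators, namely $V_{l,k,n}$; in particular its $f_{l,n}w^+$-component vanishes. This gives $\Lie(X)v^+\cap V_\mu\subseteq V_{l,k,n}$ --- in fact equality, in view of the forward direction --- and the stated equivalence follows exactly as in Lemma \ref{Lemma3.5.1,Ford(part2)}.
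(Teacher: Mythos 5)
Your proposal reproduces the skeleton of the paper's argument (which lives in the discussion immediately preceding the lemma): the forward implication follows because every generator of $V_{l,k,n}$ is a monomial in long-root vectors and $\Lie(X)v^+=U(\Lie(X))v^+$ is stable under these, and the converse reduces to the containment $\Lie(X)v^+\cap V_{\mu}\subset V_{l,k,n}$, which is exactly the step the paper dismisses with ``one easily sees.'' Your PBW reduction (elements of $\Lie(X)v^+\cap V_\mu$ are spanned by long-root monomials of weight $\mu$) is also correct. The gap is in your justification of the containment. Closure of the long roots under addition shows only that straightening via Lemma \ref{elements spanning V(lambda)mu: a first simplification} and Proposition \ref{A simplification for V(lambda)mu} keeps you inside the span of \emph{ordered long-root monomials with roots in $\Phi^+_\lambda$}; it does not show that this span equals $V_{l,k,n}$, and identifying the two is precisely what has to be proved. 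Concretely, the ordered long-root monomials of weight $\mu$ are the long-root members of the Weyl-module basis \eqref{generators_for_W(lambda)_nu} (in the present indexing), and these include $f_{l,k-1}f_{\alpha_k}F_{k,k+1}v^+$ and $f_{l,j}F_{k,j+1}v^+$ for $k\leqslant j<n$, none of which appears in the list \eqref{generators_for_V_nu (second use)}. Your argument never addresses these extra monomials, so it is circular exactly at the sentence ``the reduction of a long-root monomial stays inside the span of the long-root generators, namely $V_{l,k,n}$.''

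Moreover, the missing reductions cannot be supplied by the root-length observation: showing that $f_{l,k-1}f_{\alpha_k}F_{k,k+1}v^+$ and $f_{l,j}F_{k,j+1}v^+$ lie in $V_{l,k,n}$ is the content of Proposition \ref{Une premiere simplification de l'ensemble generateur de V(nu)}, whose proof is not formal straightening. It uses relations valid only after passing to the quotient $\overline{V(\lambda)}$ --- this is where the hypotheses $a_k=1$ and $f_{l,k}v^+\in V_{l,k}$ actually enter --- and those relations mix the two root lengths: one rewrites $f_{\alpha_k}F_{k,k+1}v^+=\tfrac12 (f_{k,n})^2v^+$ (Proposition \ref{[Dn<Bn] Preliminaries for L of type Bn: technical proposition concerning the weight lambda-2...2 for lambda=lambda_1}, applied to a Levi subgroup) and then invokes \eqref{[Dn<Bn] Preliminaries: f_(1,r) = sum f_(1,s) f_(s+1,r) for 2<r<n}, so short-root monomials such as $f_{k,n}f_{l,n}v^+$ genuinely occur in the intermediate steps and their coefficients cancel only at the end of the computation. (Note in particular that the identity just displayed equates a short-root monomial with a long-root one, so ``the straightening never produces short-root terms'' is not even an invariant property of a vector.) To complete your proof you must either cite Proposition \ref{Une premiere simplification de l'ensemble generateur de V(nu)} for the extra monomials or redo its computation; with that supplied, your argument coincides with the paper's.
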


Finally, we are now in a position to prove Theorem \ref{Main result}.
\begin{proof}[Proof of Theorem \ref{Main result}]
Let us first assume that \(X\) acts with exactly two composition factors on \(V.\) By Theorem \ref{le_cas_an>0}, we may and shall assume $\langle \lambda, \alpha_n \rangle =0$ in the remainder of the proof, as well as $p\neq 2,$ (since otherwise \(X\) acts irreducibly on $V $ by \cite[Theorem 1, Table 1 $(\mbox{MR}_4)$]{Se}). Furthermore, an application of Lemma \ref{omega_and_omega'_have_to_be_restricted} shows that \(\omega'\) is restricted  and also allows us to assume the existence of \(1\leqslant l<k<n\) maximal such that \(a_la_k\neq 0.\) Thanks to Lemma \ref{X_has_two_c.f._on_V_iff_V=Lie(X)v^+_+_Lie(X)w^+} and Proposition \ref{[Dn<Bn] Proof of the main result: Key Proposition}, we then  have \(f_{r,n}v^+\in \Lie(X)w^+\) and \(f_{r,n}w^+\in \Lie(X)v^+\) for every \(1\leqslant r \leqslant k,\) which in particular yields \(f_{r,k}v^+\in V_{r,k}\) for every \(1\leqslant r\leqslant k\) by Lemma \ref{Lemma3.5.1,Ford}. Consequently, applying Theorem  \ref{How_to_relate_divisibility_conditions_and_generators_for_V_i,j} (to the Levi subgroup of type \(A_k\) corresponding to the simple roots \(\alpha_1,\ldots,\alpha_k\)) yields  the divisibility conditions  in \ref{[Dn<Bn] Theorem: Remaining cases (1)}	 of Theorem \ref{Main result}. We next show that the divisibility condition in \ref{[Dn<Bn] Theorem: Remaining cases (2)}	 of Theorem \ref{Main result} is satisfied as well, first considering the case where \(a_k>1.\) Here  \(f_{k,n}w^+\in V_{k,n}^2\) by Lemma  \ref{Lemma3.5.1,Ford(part2)} and hence an   application of Proposition \ref{Conditions_for_f(k,n)_to_belong_to_V(k,n)} (to the Levi subgroup of type \(B_{n-k+1}\) corresponding to the simple roots \(\alpha_k,\ldots,\alpha_n\)) yields the desired result. Finally , suppose that \(a_k=1,\) in which case \(f_{l,n}w^+\in V_{l,k,n}\) by Lemma \ref{Lemma3.5.1,Ford(part3)}.  If \(l=k-1,\) then  the result holds by Proposition  \ref{condition_if_l=k-1}, while if on the other hand \(1\leqslant l<k-1,\) then applying Proposition \ref{condition_if_l<k-1} yields the desired divisibility condition.
\vspace{5mm}

To complete the proof, let us consider \((\lambda,p)\) as in \ref{[Dn<Bn] Theorem: the case lambda=lambda_i for 0<i<n}, \ref{[Dn<Bn] Theorem: the case lambda=lambda_n} or \ref{[Dn<Bn] Theorem: Remaining cases} of Theorem \ref{Main result}. By Theorem \ref{le_cas_an>0} and Lemma \ref{omega_and_omega'_have_to_be_restricted}, it suffices to show that \(V|_X\) has exactly two composition factors in the case where \((\lambda,p)\) is as in \ref{[Dn<Bn] Theorem: Remaining cases} of Theorem \ref{Main result}, with  \(\langle \lambda, \alpha_n\rangle =0.\) Fix \(1\leqslant r<k\) and consider \(f_{r,k}v^+.\) If \(a_r=0,\) then \(f_{r,k}v^+= - f_{\alpha_r}f_{r,k}v^+ \in V_{r,k},\) while if \(a_r\neq 0,\) then an application of Theorem \ref{How_to_relate_divisibility_conditions_and_generators_for_V_i,j} (to the Levi subgroup corresponding to the simple roots \(\alpha_r,\ldots,\alpha_k\)) yields \(f_{r,k}v^+\in V_{r,k}.\) Since \(r\) was arbitrarily chosen, we get  that \(f_{i,k}v^+\in \Lie(X)w^+\) for every \(1\leqslant i< k\) by Lemma \ref{Lemma3.5.1,Ford}. Therefore by Lemma \ref{X_has_two_c.f._on_V_iff_V=Lie(X)v^+_+_Lie(X)w^+} and Proposition \ref{[Dn<Bn] Proof of the main result: Key Proposition}, it only remains to show that \(f_{i,n}w^+\in \Lie(X)v^+\) for \(1\leqslant i\leqslant k\) as well. 

First notice  that if \(i=k,\) then the assertion immediately follows from Proposition \ref{[Dn<Bn] Preliminaries for L of type Bn: technical proposition concerning the weight lambda-2...2 for lambda=lambda_1} or \ref{Conditions_for_f(k,n)_to_belong_to_V(k,n)} (depending on whether \(a_k=1\) or not)  together with Lemma \ref{Lemma3.5.1,Ford(part2)}, so we assume \(1\leqslant i<k\) in the remainder of the proof. Now if \(l<i<k,\) then \(f_{i,k-1}v^+=0\) and using Lemma \ref{structure_constants_for_B}, we successively get
\begin{align*}
f_{i,n}w^+	&=	-f_{i,k-1}f_{k,n}w^+ + f_{k,n}f_{i,k-1}f_{k,n}v^+		\cr
			&=  -f_{i,k-1}f_{k,n}w^+ - f_{k,n}f_{i,n}v^+					\cr
			&=	-f_{i,k-1}f_{k,n}w^+ +2F_{i,k}v^+ - f_{i,n}w^+ ,
\end{align*}
so that \(f_{i,n}w^+	= F_{i,k}v^+ - \frac{1}{2}f_{i,k-1} f_{k,n}  w^+.\) Clearly \(F_{i,k}v^+\in \Lie(X)v^+,\) while \(f_{i,k-1} f_{k,n}  w^+\in \Lie(X)v^+\) thanks to the fact that \(\alpha_i+\cdots+\alpha_{k-1}\in \Phi^+(Y)\) is long, hence the desired result in this situation. 

We next proceed by induction on \(0\leqslant l-i\leqslant l-1,\) first considering the case where \(i=l.\) If  $a_k=1,$ we have $f_{l,n}w^+\in V_{l,k,n}$ by Proposition \ref{condition_if_l=k-1} or \ref{condition_if_l<k-1} (depending on whether $l=k-1$ or not), and hence an application of Lemma \ref{Lemma3.5.1,Ford(part3)} yields \(f_{l,n}w^+\in \Lie(X)w^+.\) If on the other hand  \(a_k>1,\)  applying Remark \ref{A_maximal_vector_in_V_(A_n)(a,0,....,0,b)} to the Levi subgroup corresponding to the simple roots \(\alpha_l,\ldots,\alpha_k\) yields
\[ 
a_k f_{l,n}v^+	=	a_k f_{k+1,n}f_{l,k}v^+									
				=	 \sum_{s=l}^{k-1} f_{l,s} f_{k+1,n}f_{s+1,k}v^+ 		  
				=   \sum_{s=l}^{k-1} f_{l,s} f_{s+1,n}v^+,
\]
so that  
\begin{align*}
a_kf_{l,n}w^+ 	&= 2a_kF_{l,k}v^+ + a_kf_{k,n}f_{l,n}v^+			\cr
				&= 2a_kF_{l,k}v^+ + (1-\delta_{l,k-1})\sum_{s=l}^{k-2} f_{l,s} f_{k,n} f_{s+1,n}v^+ + f_{k,n}f_{l,k-1}w^+ 											\cr
				&= 2a_kF_{l,k}v^+ + (1-\delta_{l,k-1}) \sum_{s=l}^{k-2} f_{l,s} f_{k,n} f_{s+1,n}v^+ + f_{l,k-1}f_{k,n}w^+	+ f_{l,n}w^+	,
\intertext{and since $a_k>1,$ we finally obtain }
f_{l,n}w^+ 		&= (a_k-1)^{-1} \left( 2a_k F_{l,k}v^+ + (1-\delta_{l,k-1}) \sum_{s=l}^{k-2} f_{l,s} f_{k,n} f_{s+1,n}v^+ + f_{l,k-1}f_{k,n}w^+\right).
\end{align*}
Clearly  \(F_{l,k}v^+\in \Lie(X)v^+,\) while each of the remaining terms on the right-hand side belongs to \(\Lie(X)v^+\) thanks to our induction hypothesis,  from which we deduce that \(f_{l,n}w^+\in \Lie(X)v^+\) as desired. Finally, assume \(1\leqslant i<l.\) Again, using Lemma \ref{structure_constants_for_B}, we successively get
\begin{align*}
f_{i,n}w^+	&=	-f_{i,k-1}f_{k,n}w^+ + f_{k,n}f_{i,k-1}f_{k,n}v^+		\cr
			&=  -f_{i,k-1}f_{k,n}w^+ - f_{k,n}f_{i,n}v^+	+ (f_{k,n})^2 f_{i,k-1} v^+	\cr
			&=	-f_{i,k-1}f_{k,n}w^+  +2F_{i,k}v^+	- f_{i,n}w^+ + (f_{k,n})^2  f_{i,k-1} v^+,
\end{align*}
so it only remains to show that \((f_{k,n})^2f_{i,k-1}v^+\in \Lie(X)v^+.\) By Theorem \ref{How_to_relate_divisibility_conditions_and_generators_for_V_i,j} and Lemma \ref{If_divisibility_conditions_holds,_then_f(i,j)_in_V(i,j)} (applied to suitable Levi subgroups of $Y$), we get that \(f_{i,k-1}\in V_{i,k-1},\) and hence the existence of a subset \(\{\xi_j,\xi^j_{(m)} : i\leqslant j\leqslant k-2,~(m)\in P(i,j)\}\) of $K$  such that 
\[
f_{i,k-1}v^+=\sum_{j=i}^{k-2}{\left(\xi_j f_{i,j}f_{j+1,k-1}v^+ + \sum_{(m)\in P(i,j)}{\xi^j_{(m)} f_{(m)}f_{j+1,k-1}v^+}\right)}.
\]
Now for any \(i\leqslant j\leqslant k-2\) and any \((m)\in P(i,j),\) we have that \(f_{i,j}\) and \(f_{(m)}\) commute with \((f_{k,n})^2.\) Moreover, we successively get (using Lemma \ref{structure_constants_for_B})
\begin{align*}
(f_{k,n})^2 f_{j+1,k-1}v^+	&=	f_{k,n}f_{j+1,n}v^+ + f_{k,n}f_{j+1,k-1}f_{k,n}v^+ 	\cr
							&=	f_{k,n}f_{j+1,n}v^+ + f_{j+1,n}f_{k,n}v^+  +f_{j+1,k-1}f_{k,n}w^+.			\cr
							&=	2f_{k,n}f_{j+1,n}v^+ + 2 F_{j+1,k}v^+  +f_{j+1,k-1}f_{k,n}w^+.			\cr
\end{align*}
Clearly \(F_{j+1,k}v^+\in \Lie(X)v^+\) and \(f_{j+1,k-1}f_{k,n}w^+\in \Lie(X)v^+\) thanks to the \(i=k\) case and the fact that \(\alpha_{j+1}+\cdots+\alpha_{k-1}\) is long. Finally, our induction assumption implies that \(f_{k,n}f_{j+1,n}v^+\in \Lie(X)v^+\) as well, thus completing the proof. 
\end{proof}

Finally, we present the proof of Corollary \ref{main_corol}.

\begin{proof}[Proof of Corollary \ref{main_corol}]     
Let \(\delta\in X^+(T_Y)\) be as in the statement of the corollary, and write \(\delta=\delta_0 + p\delta_1 +\cdots +p^r\delta_r,\) where \(\delta_i\) is \(p\)-restricted for   \(0\leqslant i\leqslant r\) and \(\delta_r\neq 0.\) Setting \(V=L_Y(\delta),\) an application of Theorem  \ref{[Preliminaries] Weights and multiplicities: Steinberg tensor product theorem} yields
\[
V \cong L_Y(\delta_0)\otimes L_Y(\delta_1)^F \otimes \cdots \otimes L_Y(\delta_r)^{F^r},
\] 
from which one deduces that if \(V|_X\) has exactly two composition factors,  then there exists a unique \(0\leqslant j\leqslant r\) such that \(L_Y(\delta_j)|_X\) has exactly two composition factors, while \(L_Y(\delta_i)|_X\) is irreducible for every \(0\leqslant i\leqslant r\) different from \(j.\) If $p\neq 2,$ then \cite[Theorem 1, Table 1 (MR$_4$)]{Se} forces $\delta=p^r\delta_r,$ that is $\delta$ is as in \ref{main_corol1}  of Corollary \ref{main_corol}. If on the other hand $p=2,$ then $\delta_j=\lambda_n$ by Theorem \ref{Main result}, while \cite[Theorem 1, Table 1 (MR$_4$)]{Se} yields $\langle \delta_i,\alpha_n\rangle =0$  for every \(1\leqslant i <r\) different from $j.$ Hence $\delta$ is as in \ref{main_corol2}  of Corollary \ref{main_corol} in this case, yielding the desired assertion.
\vspace{5mm}

Reciprocally, assume \((\delta,p)\) is as in \ref{main_corol1} or \ref{main_corol2} of Corollary \ref{main_corol}. In the former case, we have $\delta=p^r\lambda$ for some $r\in \Z_{\geqslant 0}$ and some $\lambda \in X^+(T_Y)$ such that $L_Y(\lambda)|_X\cong L_X(\omega)\oplus L_X(\omega'),$ where $\omega,\omega'\in X^+(T_X)$ are $p$-restricted. Consequently Theorem \ref{[Preliminaries] Weights and multiplicities: Steinberg tensor product theorem} yields
$$L_Y(\delta)|_X \cong L_Y(\lambda)^{F^r}\cong L_X(\omega)^{F^r}\oplus L_X(\omega')^{F^r},$$
so that $X$ acts with exactly two composition factors on $L_Y(\delta)$ as desired. Finally, assumme \((\delta,p)\) is as in \ref{main_corol2} of Corollary \ref{main_corol}, i.e. $p=2$ and there exists $r\in \Z_{\geqslant 0} $ such that $\delta=\delta_0+2\delta_1+\cdots+2^r\delta_r,$ where $\delta_j=\lambda_n$ for some $0\leqslant j\leqslant r$ and $\langle \delta_i,\alpha_n\rangle=0$ for $1\leqslant i\leqslant r$ different from $j.$ An application of Theorem \ref{[Preliminaries] Weights and multiplicities: Steinberg tensor product theorem} then yields
\begin{align*}
L_Y(\delta)|_X &\cong L_Y(\delta_0)|_X \otimes \left(L_Y(\delta_1)|_X\right)^{F} \otimes \cdots \otimes \left(L_Y(\delta_r)|_X\right)^{F^r},
\end{align*}
from which one easily concludes thanks to \cite[Theorem 1, Table 1 (MR$_4$)]{Se} and Theorem \ref{Main result}, thus completing the proof.
\end{proof}

\section*{Acknowledgements}     

I would like to express my deepest thanks to my Ph.D. advisor Professor Donna M. Testerman, for her constant support and her precious guidance during my doctoral  studies, from which this paper is drawn. I would also like to extend my appreciation to Professors Timothy C. Burness and Frank L\"ubeck, for their very helpful comments and suggestions on the earlier versions of this paper. 

\newpage
\bibliography{References}   
\bibliographystyle{amsalpha}     

\appendix
\section{Weight spaces for \texorpdfstring{$G$}{LG} of type \texorpdfstring{$A_l$}{LG}}                
\label{Weight spaces for G of type Al}     

Let $G$ be a simple algebraic group of type $A_l$ $(l\geqslant 2)$ over $K,$ fix a Borel subgroup $B=UT$ of $G$ as usual, and let $\Pi=\{\gamma_1,\ldots,\gamma_l\}$ be a corresponding base of the root system $\Phi=\Phi^+\sqcup \Phi^-$ of $G,$ where  $\Phi^+$ and $\Phi^-$ denote the sets of positive and negative roots of $G,$ respectively. Let $\{\sigma_1,\ldots,\sigma_l\}$ be the set of fundamental weights corresponding to our choice of base $\Pi$ and consider a standard Chevalley basis 
\[
\mathscr{B}=\{f_{\gamma},h_{\gamma_r},e_\gamma: \gamma\in \Phi^+, 1\leqslant r\leqslant l\}
\]
for the Lie algebra $\Lie(G)$ of \(G,\) as in Section \ref{structure constants and Chevalley basis}. For a dominant character $\sigma \in X^+(T),$ simply write $V(\sigma)$ (respectively, $L(\sigma)$) to denote the Weyl module for $G$ corresponding to $\sigma$ (respectively, the irreducible $KG$-module having highest weight $\sigma$). 

Throughout the first half of this section, we consider  $\sigma=a\sigma_1+b\sigma_l,$ where $a,b\in \Z_{\geqslant 1},$ and set $\mu=\sigma-(\gamma_1+\cdots+\gamma_l).$ Also for $1\leqslant r \leqslant s \leqslant l,$ we adopt the notation
\[
f_{r,s}=f_{\gamma_r+\cdots +\gamma_s},
\]
where \(f_{r,r}=f_{\gamma_r}\) for \(1\leqslant r\leqslant l\) by convention. By \eqref{Elements_generating_W(lambda)_mu} and our choice of ordering $\leqslant$ on $\Phi^+,$ the weight space $V(\sigma)_{\mu}$ is spanned by $f_{1,l}v^{\sigma}$ together with elements of the form $f_{1,r_1}f_{r_1+1,r_2}\cdots f_{r_m+1,l}v^{\sigma},$ where $v^{\sigma}\in V(\sigma)_{\sigma}$ denotes a maximal vector in $V(\sigma)$ for $B$ and $1\leqslant r_1<r_2<\ldots <r_m<l.$ Now observe that $\sigma-(\gamma_r+\cdots+\gamma_s)$ is a $T$-weight of $V(\sigma)$ if and only if  $r=1$ or $s=l.$ Therefore the list
\begin{equation}
\left\{f_{1,r}f_{r+1,l}v^{\sigma}\right\}_{1\leqslant r\leqslant l-1} \cup \left\{f_{1,l}v^{\sigma}\right\}
\label{[Dn<Bn] Preliminaries for L of type An: Generating elements for V(lambda)_mu}
\end{equation}
forms a generating set for \(V(\sigma)_\mu\) by Proposition  \ref{A simplification for V(lambda)mu}. Furthermore, an application of Proposition  \ref{Various_weight_multiplicities} yields $\dim V(\sigma)_{\mu}=l,$ forcing the generating elements of \eqref{[Dn<Bn] Preliminaries for L of type An: Generating elements for V(lambda)_mu} to be linearly independent, so that the following holds.

\begin{prop}\label{[Dn<Bn] Preliminaries for L of type An: A basis for V(lambda)_mu}
Let $\sigma=a  \sigma_1 + b\sigma_l \in X^+(T),$ where $a,b\in \Z_{\geqslant 1},$ and set $\mu=\sigma-(\gamma_1+\cdots+\gamma_l).$ Then $\mu$ is dominant and the set \eqref{[Dn<Bn] Preliminaries for L of type An: Generating elements for V(lambda)_mu} forms a basis of the weight space $V(\sigma)_{\mu}.$ 
\end{prop}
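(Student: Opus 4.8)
The plan is to prove the two assertions separately: the dominance of $\mu$ by a direct pairing computation, and the basis property by the standard device of producing a spanning set whose cardinality matches a known dimension. Concretely, I would first extract an explicit spanning set of size $l$ for $V(\sigma)_\mu$ from Proposition \ref{A simplification for V(lambda)mu}, then quote $\dim V(\sigma)_\mu = l$ from Proposition \ref{Various_weight_multiplicities}, and finally observe that a spanning set of size equal to the dimension is automatically linearly independent, hence a basis.

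For dominance, I would record that $\langle \sigma, \gamma_1 \rangle = a$, $\langle \sigma, \gamma_l \rangle = b$, and $\langle \sigma, \gamma_i \rangle = 0$ for $1 < i < l$, and that the type $A_l$ Cartan matrix gives $\langle \gamma_1 + \cdots + \gamma_l, \gamma_i \rangle$ equal to $1$ for $i \in \{1,l\}$ and $0$ otherwise. Subtracting yields $\langle \mu, \gamma_1 \rangle = a - 1$, $\langle \mu, \gamma_l \rangle = b - 1$, and $\langle \mu, \gamma_i \rangle = 0$ for $1 < i < l$; since $a, b \geq 1$ these are all non-negative, so $\mu \in X^+(T)$.

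The heart of the argument is the spanning claim. The key input is the observation recorded just above, that for a positive root $\beta = \gamma_r + \cdots + \gamma_s$ one has $\langle \sigma, \beta^\vee \rangle = \delta_{r,1}\,a + \delta_{s,l}\,b$, so that $\sigma - \beta \in \Lambda(\sigma)$ precisely when $r = 1$ or $s = l$; equivalently, these are exactly the roots of this shape lying in $\Phi^+_\sigma$. Since $\mu = \sigma - \sum_{i=1}^l \gamma_i$ has every coefficient equal to $1 < p$ (a field characteristic is never $1$), Proposition \ref{A simplification for V(lambda)mu} applies and presents $V(\sigma)_\mu$ as the span of the vectors $\tfrac{f_{\beta_1}^{k_1}}{k_1!}\cdots\tfrac{f_{\beta_t}^{k_t}}{k_t!}v^{\sigma}$ with $\beta_1 < \cdots < \beta_t \in \Phi^+_\sigma$ and $\sum_i k_i\beta_i = \gamma_1 + \cdots + \gamma_l$. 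As each simple root occurs with coefficient $1$ in the highest root, every $k_i$ must equal $1$ and the $\beta_i$ must partition the interval $[1,l]$ into consecutive blocks; the requirement $\beta_i \in \Phi^+_\sigma$ then forces each block to begin at $1$ or end at $l$, leaving only the single block $[1,l]$ together with the two-block partitions $\{[1,r],[r+1,l]\}$ for $1 \leq r \leq l-1$. This recovers exactly the list \eqref{[Dn<Bn] Preliminaries for L of type An: Generating elements for V(lambda)_mu}.

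Finally, I would invoke Proposition \ref{Various_weight_multiplicities} (the $A_2$ row when $l = 2$ and the $A_n$ row when $l \geq 3$, in each case with the parameter $r$ taken to be $1$) to read off $\dim V(\sigma)_\mu = l$. Since the spanning set just produced has exactly $l$ elements, it must be linearly independent, hence a basis. The only genuine subtlety in the self-contained part of the proof is the clean identification of $\Phi^+_\sigma$ and the verification that the hypotheses of Proposition \ref{A simplification for V(lambda)mu} hold; the deeper content — namely that the multiplicity equals $l$ — is imported from the computations of \cite{Cavallin} through Proposition \ref{Various_weight_multiplicities}, and it is precisely this dimension input that makes the linear-independence step automatic.
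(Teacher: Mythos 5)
Your proposal is correct and follows essentially the same route as the paper: both extract the spanning set \eqref{[Dn<Bn] Preliminaries for L of type An: Generating elements for V(lambda)_mu} from Proposition \ref{A simplification for V(lambda)mu} after identifying that $\sigma-(\gamma_r+\cdots+\gamma_s)\in\Lambda(\sigma)$ exactly when $r=1$ or $s=l$, then quote $\dim V(\sigma)_\mu=l$ from Proposition \ref{Various_weight_multiplicities} and conclude linear independence by counting. Your explicit Cartan-matrix verification of dominance and the block-partition analysis are just fuller write-ups of steps the paper leaves to the reader.
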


We now study the relation between the quadruple $(a,b,l,p)$ and the existence of a maximal vector of weight $\mu$ in $V(\sigma)$ for $B.$ For  $A=(A_r)_{1\leqslant r\leqslant l} \in K^l,$ set 
\begin{equation}
u(A) =\sum_{r=1}^{l-1}{A_r f_{1,r} f_{r+1,l}v^\sigma} + A_lf_{1,l}v^\sigma \in V(\sigma)_{\mu}.
\label{[Dn<Bn] Definition of u(A) for A=(Ai,...,Aj) and G of type An}
\end{equation}

\begin{lem}\label{[Dn<Bn] Prelimiaries for L of type An: existence of a maximal vector of weight lambda-1...1}
Let $\sigma,$ $\mu$ be as above, and adopt the notation of \eqref{[Dn<Bn] Definition of u(A) for A=(Ai,...,Aj) and G of type An}. Then the following assertions are equivalent. 
\begin{enumerate}
\item \label{ai+aj=i-j_part_1} There exists $0\neq A\in K^l$ such that $x_{\gamma}(c)u(A)=u(A)$ for every $\gamma \in \Pi$ and  \(c\in K.\)
\item \label{ai+aj=i-j_part_2} There exists $ A\in K^{l-1} \times K^{*}$ such that $x_{\gamma}(c)u(A)=u(A)$ for every $\gamma \in \Pi$ and  \(c\in K.\)
\item \label{ai+aj=i-j_part_3} The divisibility condition $p \mid a + b + l - 1$ is satisfied.
\end{enumerate}
\end{lem}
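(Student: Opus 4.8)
The plan is to use that, as $u(A)$ is a weight vector of weight $\mu,$ the condition ``$x_{\gamma}(c)u(A)=u(A)$ for all $\gamma\in\Pi$ and $c\in K$'' is equivalent to ``$e_{\gamma}u(A)=0$ for every $\gamma\in\Pi.$'' Indeed $x_{\gamma}(c)u(A)=\sum_{m\geqslant 0}c^{m}\tfrac{e_{\gamma}^{m}}{m!}u(A),$ and for $m\geqslant 2$ the vector $\tfrac{e_{\gamma}^{m}}{m!}u(A)$ lies in the weight space of weight $\mu+m\gamma;$ since $\sigma-(\mu+2\gamma)=(\gamma_{1}+\cdots+\gamma_{l})-2\gamma$ has a strictly negative coefficient on $\gamma,$ this weight is not $\preccurlyeq\sigma$ and the space vanishes. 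Thus \ref{ai+aj=i-j_part_1} and \ref{ai+aj=i-j_part_2} amount to solving the homogeneous linear system $\{e_{\gamma_{j}}u(A)=0:1\leqslant j\leqslant l\}$ in $A=(A_{1},\dots,A_{l}),$ which I would extract from \eqref{[Dn<Bn] Definition of u(A) for A=(Ai,...,Aj) and G of type An} using $e_{\gamma_{j}}v^{\sigma}=0$ and the brackets of Lemma \ref{structure_constants_for_A}.

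First I would treat an interior node $2\leqslant j\leqslant l-1.$ Here the only relevant nonzero brackets are those removing $\gamma_{j}$ from the right end of $f_{1,j}$ or the left end of $f_{j,l},$ and a short computation gives $e_{\gamma_{j}}u(A)=(A_{j-1}-A_{j})f_{1,j-1}f_{j+1,l}v^{\sigma}$ up to sign. Since $f_{1,j-1}f_{j+1,l}v^{\sigma}\neq 0,$ the equation $e_{\gamma_{j}}u(A)=0$ is equivalent to $A_{j-1}=A_{j},$ so that $A_{1}=A_{2}=\cdots=A_{l-1}.$

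The delicate point, which I expect to be the main obstacle, is the analysis of the two boundary operators $e_{\gamma_{1}}$ and $e_{\gamma_{l}},$ whose images a priori involve several spanning vectors of $V(\sigma)_{\sigma-(\gamma_{2}+\cdots+\gamma_{l})}$ and $V(\sigma)_{\sigma-(\gamma_{1}+\cdots+\gamma_{l-1})}$ respectively. The crux is that both of these weight spaces are one-dimensional: applying Lemma \ref{[Preliminaries] Parabolic embeddings: restriction to Levi subgroup} to the Levi factor of type $A_{l-1}$ on $\{\gamma_{2},\dots,\gamma_{l}\}$ (respectively $\{\gamma_{1},\dots,\gamma_{l-1}\}$) reduces the relevant multiplicity to that of the highest root subtracted from the restriction of $\sigma,$ which has Dynkin labels $(0,\dots,0,b)$ (respectively $(a,0,\dots,0)$), and a direct count in the corresponding symmetric power shows each multiplicity equals $1.$ Moreover, because $\langle\sigma,\gamma_{i}\rangle=0$ for $2\leqslant i\leqslant l-1,$ the same lemma shows that $\sigma-(\gamma_{r}+\cdots+\gamma_{s})$ is not a weight of $V(\sigma),$ hence $f_{\gamma_{r}+\cdots+\gamma_{s}}v^{\sigma}=0,$ whenever $\{r,\dots,s\}\subseteq\{2,\dots,l-1\}.$ Combining these vanishings with the identity $[f_{2,r},f_{r+1,l}]=f_{2,l}$ coming from Lemma \ref{structure_constants_for_A}, I would show that each auxiliary generator $f_{2,r}f_{r+1,l}v^{\sigma}$ collapses to $f_{2,l}v^{\sigma},$ while the corresponding generators appearing in $e_{\gamma_{l}}u(A)$ vanish identically. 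Reading off the scalar $a+1$ produced by the term $f_{1,1}f_{2,l}v^{\sigma}$ (via $[e_{\gamma_1},f_{1,1}]=h_{\gamma_{1}}$) and the scalar $b$ produced by $f_{1,l-1}f_{l,l}v^{\sigma}$ (via $h_{\gamma_{l}}$), then substituting $A_{1}=\cdots=A_{l-1},$ yields exactly the two scalar equations $(a+l-1)A_{1}+A_{l}=0$ and $bA_{1}-A_{l}=0.$

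Finally I would assemble the system. A nonzero solution exists if and only if these two boundary equations are compatible, that is $(a+b+l-1)A_{1}=0;$ a nonzero $A_{1}$ can be chosen precisely when $p\mid a+b+l-1,$ which proves \ref{ai+aj=i-j_part_1}$\Leftrightarrow$\ref{ai+aj=i-j_part_3}. For the equivalence with \ref{ai+aj=i-j_part_2}, the implication \ref{ai+aj=i-j_part_2}$\Rightarrow$\ref{ai+aj=i-j_part_1} is immediate, and conversely, when $p\mid a+b+l-1$ the solution space is spanned by the single vector with $A_{1}=\cdots=A_{l-1}=1$ and $A_{l}=b,$ whose last coordinate lies in $K^{*},$ so it witnesses \ref{ai+aj=i-j_part_2}. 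Throughout, the nonvanishing of the interior target vectors $f_{1,j-1}f_{j+1,l}v^{\sigma}$ and the fact that a nonzero $A$ gives a nonzero $u(A)$ (so that the system faithfully encodes \ref{ai+aj=i-j_part_1}) follow from the weight-multiplicity statements of Proposition \ref{Various_weight_multiplicities}, Proposition \ref{[Dn<Bn] Preliminaries for L of type An: A basis for V(lambda)_mu} and Lemma \ref{[Preliminaries] Weights and multiplicities: multiplicity of lambda - k alpha_i}.
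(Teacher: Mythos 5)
Your proposal is correct and takes essentially the same approach as the paper's proof: reduce the invariance condition to $e_{\gamma}u(A)=0$ for every $\gamma\in\Pi$ (using that $V(\sigma)_{\mu+2\gamma}=0$), extract the linear system whose interior equations force $A_1=\cdots=A_{l-1}$ and whose two boundary equations (coming from $e_{\gamma_1}$ and $e_{\gamma_l}$, after the collapsing identities $f_{2,r}f_{r+1,l}v^{\sigma}=\pm f_{2,l}v^{\sigma}$ and the vanishing of $f_{r+1,l-1}v^{\sigma}$) are compatible exactly when $p\mid a+b+l-1$, the solution line then having nonzero last coordinate. The only discrepancy is a sign: with the paper's normalization of structure constants one gets $e_{\gamma_l}u(A)=(A_l+bA_{l-1})f_{1,l-1}v^{\sigma}$, so the solution line is $\langle(1,\ldots,1,-b)\rangle_K$ rather than $(1,\ldots,1,b)$, which is immaterial for the three equivalences.
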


\begin{proof}
Let $A=(A_r)_{1\leqslant r\leqslant l} \in K^l$ and set $u=u(A).$ First observe that \(\tfrac{e_\gamma^i}{i!}u \in V(\sigma)_{\mu+i\gamma}\)  for \(i\in \mathbb{Z}_{\geqslant 0}\) and \(\gamma \in \Phi^+,\) and hence \(x_{\gamma}(c)u=u+ce_\gamma u\) (as \(V(\sigma)_{\mu+i\gamma}=0\) for \(i>1\)). In particular, this shows that  \(x_\gamma (c) u=u\) for  every \(c\in K\) if and only if \(e_{\gamma}u=0\) and so \ref{ai+aj=i-j_part_1} (respectively, \ref{ai+aj=i-j_part_2})  is equivalent to the existence of $0\neq A\in K^l$ (respectively, $ A\in K^{l-1} \times K^{*}$) such that \(e_\gamma u(A)=0\) for every \(\gamma\in \Pi.\)  Now applying Lemma \ref{structure_constants_for_A} successively yields
\begin{align*}
e_{\gamma_1}u				&= 	\sum_{r=1}^{l-1}{A_r e_{\gamma_1}f_{1,r} f_{r+1,l}v^\sigma} + A_ne_{\gamma_1}f_{1,l}v^\sigma 	\cr
							&=	(a+1) A_1 f_{2,l}v^{\sigma} -\sum_{r=2}^{l-1}{A_r f_{2,r}f_{r+1,l}v^{\sigma}} -A_l f_{2,l}v^{\sigma}\cr
										&= \left( (a +1)A_1 +\sum_{r=2}^{l-1}{A_r} -A_l \right)f_{2,l}v^{\sigma},\cr
e_{\gamma_r}		u			&= (A_r-A_{r-1})f_{1,r-1}f_{r+1,l}v^{\sigma}, 	\cr
e_{\gamma_l}u 				&= (A_l + b A_{l-1})f_{1,l-1}v^{\sigma},
\end{align*}
where $1<r<l.$ Observe that $e_{\gamma_{l-1}} \cdots e_{\gamma_2} f_{2,l}v^{\sigma}=\pm f_{\gamma_l}v^{\sigma} \neq 0,$ showing that $f_{2,l}v^{\sigma}\neq 0.$ Similarly, one checks that each of the vectors $f_{\gamma_1}f_{3,l}v^{\sigma},\ldots,f_{1,l-2}f_{\gamma_l}v^{\sigma},$ $f_{1,l-1}v^{\sigma}$ is non-zero. Therefore  	$e_{\gamma }u(A)=0$ for every $\gamma \in \Pi$ if and only if $A$ is a solution to the system of equations   
\begin{equation}
\left\{\begin{array}{rll}
A_l  &= (a+1)A_1 + \sum_{r=2}^{l-1}{A_r}								\cr
A_{r-1}  		 	&= A_r  \mbox{ for $  1 < r < l$}				\cr
A_l 					&= -b  A_{l-1}.

\end{array}
\right.
\label{first_system_of_equations}
\end{equation}

Finally, one easily sees that \eqref{first_system_of_equations} admits a non-trivial solution $A$ if and only if $p \mid a + b + l - 1$ (showing that \ref{ai+aj=i-j_part_1} and \ref{ai+aj=i-j_part_3} are equivalent), in which case $A\in \langle (1,\ldots,1,-b) \rangle_K$ (so that \ref{ai+aj=i-j_part_1} and \ref{ai+aj=i-j_part_2} are equivalent), completing the proof.
\end{proof}

Let $\sigma,$ $\mu$ be as above and consider an irreducible $KG$-module  $V=L(\sigma)$ having highest weight $\sigma.$ Take $V=V(\sigma)/\rad(\sigma)$ and write $v^+$ to denote the image of $v^{\sigma}$ in $V,$ that is, $v^+$ is a maximal vector of weight \(\sigma\) in $V$ for $B.$ By Proposition \ref{[Dn<Bn] Preliminaries for L of type An: A basis for V(lambda)_mu}, the weight space $V_{\mu}$ is spanned by the vectors
\begin{equation}
\left\{f_{1,r}f_{r+1,l}v^+\right\}_{1\leqslant r\leqslant l-1} \cup \left\{f_{1,l}v^+\right\}.
\label{generators_for_V_lambda-alpha_i-...-alphaj}
\end{equation}

We write $V_{1,l}$ to denote the span of all the generators in \eqref{generators_for_V_lambda-alpha_i-...-alphaj} except for $f_{1,l}v^+.$ The following result gives a precise description of the weight space $V_{\mu},$ as well as a characterization for \(\mu\) to afford the highest weight of a composition factor of \(V(\sigma).\)

\begin{prop}\label{Conditions_for_f(i,j)_to_belong_to_V(i,j)}
Let  $G$ be a simple algebraic group  of type $A_l$ over \(K\) and fix $a,b\in \Z_{\geqslant 1}.$ Consider an irreducible $KG$-module $V=L(\sigma)$ having $p$-restricted highest weight $\sigma=a  \sigma_1 + b \sigma_l$ and let $\mu=\sigma-(\gamma_1+\cdots+\gamma_l)\in \Lambda^+(\sigma).$ Then  the following assertions are equivalent.
\begin{enumerate}
\item \label{conditions_for_f(i,i)_to_belong_to_V(i,j)_part_1} The weight $\mu$ affords the highest weight of a composition factor of $V(\sigma).$
\item \label{conditions_for_f(i,i)_to_belong_to_V(i,j)_part_2} The generators in \eqref{generators_for_V_lambda-alpha_i-...-alphaj} are linearly dependent.
\item \label{conditions_for_f(i,i)_to_belong_to_V(i,j)_part_3} The element $f_{1,l}v^+$ lies inside $V_{1,l}.$
\item \label{conditions_for_f(i,i)_to_belong_to_V(i,j)_part_4} The divisibility condition $p \mid a + b + l - 1$ is satisfied.
\end{enumerate}
\end{prop}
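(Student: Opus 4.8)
The plan is to prove the four conditions equivalent by running the cycle \ref{conditions_for_f(i,i)_to_belong_to_V(i,j)_part_1}$\Rightarrow$\ref{conditions_for_f(i,i)_to_belong_to_V(i,j)_part_4}$\Rightarrow$\ref{conditions_for_f(i,i)_to_belong_to_V(i,j)_part_3}$\Rightarrow$\ref{conditions_for_f(i,i)_to_belong_to_V(i,j)_part_2}$\Rightarrow$\ref{conditions_for_f(i,i)_to_belong_to_V(i,j)_part_1}. First I would record two facts used throughout. Rewriting in terms of fundamental weights gives $\mu=(a-1)\sigma_1+(b-1)\sigma_l$, and by Proposition \ref{Various_weight_multiplicities} we have $\m_{V(\sigma)}(\mu)=l$; since the $l$ elements of \eqref{generators_for_V_lambda-alpha_i-...-alphaj} span $V_\mu$ and $V=L(\sigma)=V(\sigma)/\rad(\sigma)$, this yields $\dim V_\mu=l-\dim\rad(\sigma)_\mu$. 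In particular condition \ref{conditions_for_f(i,i)_to_belong_to_V(i,j)_part_2} (linear dependence of the generators) is equivalent to $\rad(\sigma)_\mu\neq 0$.

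The technical heart is the following claim, which I would prove first: $\rad(\sigma)_\nu=0$ for every weight $\nu\succ\mu$; equivalently, no composition factor of $V(\sigma)$ other than $L(\sigma)$ has highest weight strictly above $\mu$. To see this, fix such a $\nu$ and write $\sigma-\nu=\sum_{i\in S}\gamma_i$ with $\emptyset\neq S\subsetneq\{1,\dots,l\}$ (the coefficients are forced into $\{0,1\}$ because $\mu\preccurlyeq\nu\prec\sigma$). Applying Lemma \ref{[Preliminaries] Parabolic embeddings: restriction to Levi subgroup} to the Levi factor $H=L_J'$ attached to $J=\{\gamma_i:i\in S\}$ reduces $\m_{L(\sigma)}(\nu)$ to a product, over the connected components $C$ of $S$, of multiplicities in the type-$A$ irreducibles $L_{H_C}(\sigma|_{T_{H_C}})$. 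Since $\sigma$ is supported at the two end nodes, $\sigma|_{T_{H_C}}$ equals $a\varpi_1$ or $b\varpi_1$ when $C$ meets $\{1,l\}$ and is $0$ otherwise, while $\nu|_{T_{H_C}}$ is obtained by subtracting every simple root of $H_C$ (the components being mutually non-adjacent, the roots from other components restrict trivially). A monomial count in $\SymP^a$ (resp. $\SymP^b$) shows the resulting weight has multiplicity exactly $1$ in the Weyl module when $C$ meets an end, and that $\nu$ fails to be a weight otherwise; as $a,b<p$, Premet's theorem (Theorem \ref{[Preliminaries] Weights and multiplicities: Premet's theorem}) forces the same value $1$ in $L_{H_C}$. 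Running the identical reduction for $V(\sigma)$ through the char-free description \eqref{Elements_generating_W(lambda)_mu} (only the $f_\gamma$ with $\gamma\in\Z J$ can contribute to the weight $\nu$) gives $\m_{V(\sigma)}(\nu)=\m_{L(\sigma)}(\nu)$, i.e.\ $\rad(\sigma)_\nu=0$.

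Granting the claim, the implications are short. For \ref{conditions_for_f(i,i)_to_belong_to_V(i,j)_part_1}$\Rightarrow$\ref{conditions_for_f(i,i)_to_belong_to_V(i,j)_part_4}: if $\mu$ affords the highest weight of a composition factor then $\rad(\sigma)_\mu\neq 0$, and any $0\neq z\in\rad(\sigma)_\mu$ satisfies $e_\gamma z\in\rad(\sigma)_{\mu+\gamma}=0$ for every $\gamma\in\Pi$ by the claim; hence $z$ is a maximal vector of weight $\mu$ in $V(\sigma)$, and Lemma \ref{[Dn<Bn] Prelimiaries for L of type An: existence of a maximal vector of weight lambda-1...1} gives $p\mid a+b+l-1$. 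For \ref{conditions_for_f(i,i)_to_belong_to_V(i,j)_part_4}$\Rightarrow$\ref{conditions_for_f(i,i)_to_belong_to_V(i,j)_part_3}, that same lemma produces the maximal vector $\sum_{r=1}^{l-1}f_{1,r}f_{r+1,l}v^\sigma-b\,f_{1,l}v^\sigma\in\rad(\sigma)$, whose image in $V$ vanishes; as $p\nmid b$ this exhibits $f_{1,l}v^+$ as an element of $V_{1,l}$. The step \ref{conditions_for_f(i,i)_to_belong_to_V(i,j)_part_3}$\Rightarrow$\ref{conditions_for_f(i,i)_to_belong_to_V(i,j)_part_2} is immediate. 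Finally, for \ref{conditions_for_f(i,i)_to_belong_to_V(i,j)_part_2}$\Rightarrow$\ref{conditions_for_f(i,i)_to_belong_to_V(i,j)_part_1}: dependence gives $\dim V_\mu<l$, hence $\rad(\sigma)_\mu\neq 0$, and by the claim $\mu$ is a maximal weight of $\rad(\sigma)$, so $[V(\sigma):L(\mu)]=l-\dim V_\mu\geq 1$ and $\mu$ affords the highest weight of a composition factor.

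The main obstacle is the isolated claim; everything else is bookkeeping around the explicit maximal vector of Lemma \ref{[Dn<Bn] Prelimiaries for L of type An: existence of a maximal vector of weight lambda-1...1}. The one point that demands care is that the Levi reduction be legitimate for the Weyl module as well as for $L(\sigma)$ — I would justify the former directly from \eqref{Elements_generating_W(lambda)_mu} rather than quoting Lemma \ref{[Preliminaries] Parabolic embeddings: restriction to Levi subgroup} verbatim — and that the relevant end multiplicities really equal $1$, for which the multiplicity-one weights of $\SymP^a$ obtained by subtracting all simple roots, combined with Lemma \ref{[Preliminaries] Weights and multiplicities: multiplicity of lambda - k alpha_i} and Premet's theorem, suffice.
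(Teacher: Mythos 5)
Your proposal is correct and takes essentially the same route as the paper: both rest on the maximal-vector computation of Lemma \ref{[Dn<Bn] Prelimiaries for L of type An: existence of a maximal vector of weight lambda-1...1}, on Theorem \ref{[Preliminaries] Weights and multiplicities: Premet's theorem}, and on the fact that $\rad(\sigma)$ has no weights strictly above $\mu$, with the implications merely rearranged into the single cycle \ref{conditions_for_f(i,i)_to_belong_to_V(i,j)_part_1}$\Rightarrow$\ref{conditions_for_f(i,i)_to_belong_to_V(i,j)_part_4}$\Rightarrow$\ref{conditions_for_f(i,i)_to_belong_to_V(i,j)_part_3}$\Rightarrow$\ref{conditions_for_f(i,i)_to_belong_to_V(i,j)_part_2}$\Rightarrow$\ref{conditions_for_f(i,i)_to_belong_to_V(i,j)_part_1}. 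The only real difference is one of detail: your Levi-reduction and monomial-counting argument spells out the vanishing $\rad(\sigma)_\nu=0$ for $\nu\succ\mu$, which the paper compresses into ``one easily sees that $\m_{V(\sigma)}(\nu)=1$ for every $\mu\prec\nu\prec\sigma$'' combined with Premet's theorem.
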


\begin{proof}
Clearly both \ref{conditions_for_f(i,i)_to_belong_to_V(i,j)_part_1} and \ref{conditions_for_f(i,i)_to_belong_to_V(i,j)_part_3} imply \ref{conditions_for_f(i,i)_to_belong_to_V(i,j)_part_2}. Also if \ref{conditions_for_f(i,i)_to_belong_to_V(i,j)_part_2} holds, then $\rad (\sigma) \cap V(\sigma)_{\mu}\neq 0,$ so $L(\nu)$ occurs as a composition factor of $V(\sigma)$ for some $\nu\in \Lambda^+(\sigma)$ such that $\mu \preccurlyeq \nu \prec \sigma.$ Now one easily sees that $\m_{V(\sigma)}(\nu)=1$ for every $\mu \prec \nu \prec \sigma,$ which by Theorem \ref{[Preliminaries] Weights and multiplicities: Premet's theorem} forces \(\nu=\mu,\) so that  \ref{conditions_for_f(i,i)_to_belong_to_V(i,j)_part_1} holds. Still assuming  \ref{conditions_for_f(i,i)_to_belong_to_V(i,j)_part_2}, this also shows that there exists $0\neq A\in K^l$ such that $u(A) \in V(\sigma)_{\mu}$ is a maximal vector in $V(\sigma)$ for $B,$ where we adopt the notation of \eqref{[Dn<Bn] Definition of u(A) for A=(Ai,...,Aj) and G of type An}. Therefore \ref{conditions_for_f(i,i)_to_belong_to_V(i,j)_part_2} implies \ref{conditions_for_f(i,i)_to_belong_to_V(i,j)_part_4} as well by Lemma \ref{[Dn<Bn] Prelimiaries for L of type An: existence of a maximal vector of weight lambda-1...1}. Finally, suppose that \ref{conditions_for_f(i,i)_to_belong_to_V(i,j)_part_4} holds. By Lemma \ref{[Dn<Bn] Prelimiaries for L of type An: existence of a maximal vector of weight lambda-1...1}, there exists $A\in K^{l-1}\times K^*$ such that $x_{\gamma}(c)u(A)=u(A)$ for every $\gamma\in \Pi$ and \(c\in K.\)  Consequently, we also get $x_{\gamma}(c)(u(A) + \rad(\sigma))=u(A) + \rad(\sigma)$ for every $\gamma\in \Pi$ and \(c\in K,\) that is, $u(A)+\rad(\sigma)\in \langle v^+ \rangle_K \cap V_{\mu}=0.$ Hence   \ref{conditions_for_f(i,i)_to_belong_to_V(i,j)_part_3} holds and the proof is complete.
\end{proof}

\begin{remark}\label{A_maximal_vector_in_V_(A_n)(a,0,....,0,b)}
Let \(\sigma,\) \(\mu\) be as above and assume $p \mid a + b  +l -1.$ By Proposition \ref{Conditions_for_f(i,j)_to_belong_to_V(i,j)}, $\mu$ affords the highest weight of a composition factor of $V(\sigma)$ and $f_{1,l}v^+\in V_{1,l}.$ Moreover, the proof of Lemma \ref{[Dn<Bn] Prelimiaries for L of type An: existence of a maximal vector of weight lambda-1...1} showed that
\[
u^+ = f_{1,l}v^{\sigma} -b^{-1}\sum_{r=1}^{l-1}{f_{1,r}f_{r+1,l}v^{\sigma}}
\]
is a maximal vector of weight $\mu$ in $V(\sigma)$ for $B,$ leading to a precise description of $f_{1,l}v^+$ in terms of a basis of $V_{1,l}.$ This information shall prove of capital importance later in the paper.
\end{remark}

The remainder of this section is devoted to the proof of Theorem  \ref{How_to_relate_divisibility_conditions_and_generators_for_V_i,j}, which consists of a generalization of Proposition \ref{Conditions_for_f(i,j)_to_belong_to_V(i,j)}  to irreducible \(KG\)-modules having arbitrary \(p\)-restricted highest weights. For $1\leqslant i<j\leqslant l,$ set  
\[
P(i,j)=\left\{(m^{i,j}_{r})_{r=1} ^s: 1\leqslant s\leqslant j-i,~ i\leqslant m^{i,j}_1<\ldots < m^{i,j}_s < j\right\}
\]
and for any sequence $(m)=(m^{i,j}_r)_{r=1}^s\in P(i,j),$ write \(f_{(m)}=f_{i,m_1}f_{m_1+1,m_2}\cdots f_{m_s+1,j}.\) (Also adopt the convention $P(i,i)=\emptyset$ for $1\leqslant i\leqslant l.$) 

Consider an irreducible \(KG\)-module having  \(p\)-restricted highest weight \(\sigma=\sum_{r=1}^n{a_r\sigma_r}\in X^+(T).\) By  \eqref{Elements_generating_W(lambda)_mu}, we get that for every \(1\leqslant i \leqslant j\leqslant l,\) the weight space $V_{\sigma-(\gamma_i+\cdots+\gamma_j)}$ is  spanned by the vectors
\begin{equation*}
\{f_{(m)}v^+ : (m)\in P(i,j)\} \cup \{f_{i,j}v^+\},
\end{equation*} 
where \(v^+\) is a maximal vector of weight \(\sigma\) in \(V\) for \(B.\) As before, we let $V_{i,j}$ denote the span of all the above generators except for $f_{i,j}v^+.$

\begin{lem}\label{If_divisibility_conditions_holds,_then_f(i,j)_in_V(i,j)}
Let \(V\) be as above and assume \(f_{r,s}v^+\in V_{r,s}\)  for every \(1\leqslant r<s \leqslant l\) such that \(a_r a_s\neq 0\) and \(a_t=0\) for \(r<t<s.\) Then $f_{i,j}v^+\in V_{i,j}$ for every \(1\leqslant i<j\leqslant l\) such that \(\{i<k\leqslant j: a_k\neq 0\}\neq \emptyset.\)
\end{lem}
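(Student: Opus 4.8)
The plan is to run the whole argument off the single commutator identity $[f_{i,r},f_{r+1,j}]=f_{i,j}$, valid for $i\le r<j$ by Lemma~\ref{structure_constants_for_A} together with \eqref{structure_constants_relation_1}, and to split into the two cases $a_i=0$ and $a_i\ne 0$. The case $a_i=0$ is immediate and needs no hypothesis: there $f_{\gamma_i}v^+=0$, so $f_{i,j}v^+=[f_{\gamma_i},f_{i+1,j}]v^+=f_{\gamma_i}f_{i+1,j}v^+=f_{(i)}v^+$, and since $(i)\in P(i,j)$ this already exhibits $f_{i,j}v^+$ as a spanning vector of $V_{i,j}$.

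When $a_i\ne 0$, the condition $\{i<t\le j:a_t\ne 0\}\ne\emptyset$ lets me set $k=\min\{t>i:a_t\ne 0\}$, so that $i<k\le j$ and $(i,k)$ is one of the consecutive nonzero pairs to which the hypothesis applies; thus $f_{i,k}v^+\in V_{i,k}$. If $j=k$ this is the assertion. If $j>k$, I would use the identity to write $f_{i,j}v^+=f_{i,k}f_{k+1,j}v^+-f_{k+1,j}f_{i,k}v^+$; the first summand is $f_{(k)}v^+\in V_{i,j}$, so the task reduces to showing $f_{k+1,j}f_{i,k}v^+\in V_{i,j}$. Expanding $f_{i,k}v^+=\sum_{(m)\in P(i,k)}c_{(m)}f_{(m)}v^+$ and commuting $f_{k+1,j}$ rightwards through each $f_{(m)}$, Lemma~\ref{structure_constants_for_A} shows that $\gamma_{k+1}+\cdots+\gamma_j$ added to a factor-root of $f_{(m)}$ is never a root except with the last factor $f_{m_s+1,k}$, where it produces $f_{m_s+1,j}$; a brief calculation then gives $f_{k+1,j}f_{(m)}v^+=f_{(m_1,\dots,m_s,k)}v^+-f_{(m_1,\dots,m_s)}v^+$ with both index sequences read inside $P(i,j)$, so each term lies in $V_{i,j}$.

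The delicate point, and the only way the argument could fail, is that the reordering might in principle collapse a product to the excluded generator $f_{i,j}v^+$. This never happens because the leading factor $f_{i,m_1}$ obeys $m_1\le m_s<k<j$, hence commutes with $f_{k+1,j}$ and is never absorbed; every term produced therefore keeps a split at $m_1<j$ and so lands in $V_{i,j}$ rather than in $\langle f_{i,j}v^+\rangle_K$. With the structure-constant signs supplied by Lemma~\ref{structure_constants_for_A}, this bookkeeping is the main technical step but is otherwise routine, and completes the proof.
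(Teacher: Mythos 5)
Your proposal is correct and follows essentially the same route as the paper's own proof: the split into the cases $a_i=0$ and $a_i\neq 0$, the choice of $k$ minimal with $a_k\neq 0$, the reduction to showing $f_{k+1,j}f_{i,k}v^+\in V_{i,j}$, and the expansion of $f_{i,k}v^+$ over $P(i,k)$ followed by commuting $f_{k+1,j}$ past every factor except the last are exactly the steps in the paper. The only blemish is a sign: with the paper's conventions one has $[f_{i,r},f_{r+1,j}]=-f_{i,j}$ (equivalently $f_{i,j}=f_{r+1,j}f_{i,r}-f_{i,r}f_{r+1,j}$), not $[f_{i,r},f_{r+1,j}]=f_{i,j}$ as you state, but since every step only concerns membership in the linear span $V_{i,j}$, this discrepancy is harmless.
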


\begin{proof}
Let $1\leqslant i<  j\leqslant l$ be as in the statement of the lemma. If \(a_i=0,\) then \(f_{i,j}v^+=-f_{\gamma_i}f_{i+1,j}v^+\) by Lemma \ref{structure_constants_for_A} and hence the assertion obviously holds. Therefore assume \(a_i\neq 0\) in the remainder of the proof and let \(i<k\leqslant j\) be minimal such that \(a_k\neq 0.\)  Since \(f_{i,j}v^+= f_{k+1,j}f_{i,k}v^+   - f_{i,k}f_{k+1,j}v^+\) (thanks to Lemma \ref{structure_constants_for_A} again), we get  that   $f_{i,j}v^+ \in V_{i,j}$ if and only if $ f_{k+1,j}f_{i,k}v^+\in V_{i,j}.$ Now $f_{i,k}v^+ \in V_{i,k}$ by assumption and thus there exists \(\{\xi_{(m)} :(m)\in P(i,k)\}\subset K\) such that
\[
f_{i,k}v^+ = \sum_{(m)\in P(i,k)}{ \xi_{(m)}f_{(m)}v^+}.
\]
Observe that for any fixed \((m)=(m_r)_{r=1}^s \in P(i,k)\) the element \(f_{k+1,j}\) commutes with \(f_{i,m_1}\) as well as with \(f_{m_r+1,m_{r+1}}\) for \(1\leqslant r < s.\) Therefore we have  
\[
f_{k+1,j}f_{(m)}v^+ = f_{i,m_1}f_{m_1+1,m_2}\cdots f_{m_{s-1}+1,m_s}f_{k+1,j} f_{m_s+1,k}v^+.
\]
Finally, \(f_{k+1,j}f_{m_s+1,k}v^+ = f_{m_s+1,j}v^+ + f_{m_s+1,k}f_{k+1,j}v^+\) by Lemma \ref{structure_constants_for_A} and hence \(f_{k+1,j}f_{(m)}v^+ \in V_{i,j}.\) Since \((m)\) was arbitrarily chosen in \( P(i,k),\)  the result follows. 
\end{proof}

The next result is a special case of \cite[Proposition 3.1]{Fo1} (where \(i\) and \(m\) are replaced by \(1\) and \(l,\) respectively), to which we refer the reader for a proof.

\begin{prop}\label{Ford's_Proposition}
Let \(V\) be as above, with \(a_1a_l \neq 0.\)  Also let \(1 < j \leqslant l\) be minimal such that \(a_j\neq 0\) and suppose that \(f_{r,l}v^+\in V_{r,l}\) for every \(1\leqslant r<l.\) Then \(f_{1,j}v^+\in V_{1,j}.\) 
\end{prop}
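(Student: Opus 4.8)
First I would pin down what the conclusion actually asserts. Since $j$ is minimal with $1<j$ and $a_j\neq0$, we have $a_2=\cdots=a_{j-1}=0$, so the restriction of $\sigma$ to the Levi subgroup $H$ of type $A_j$ attached to $\{\gamma_1,\ldots,\gamma_j\}$ is $a_1\sigma_1+a_j\sigma_j$, a $p$-restricted weight with only its two end coefficients nonzero. All the generators of the weight space $V_{\sigma-(\gamma_1+\cdots+\gamma_j)}$ (the vectors $f_{(m)}v^+$ for $(m)\in P(1,j)$, together with $f_{1,j}v^+$) lie in $\langle Hv^+\rangle$, and by Lemma \ref{[Preliminaries] Parabolic embeddings: restriction to Levi subgroup} this weight space coincides with the corresponding weight space of $L_H(a_1\sigma_1+a_j\sigma_j)$, with $\mu'=\sigma-(\gamma_1+\cdots+\gamma_j)$ dominant for $H$ by Proposition \ref{[Dn<Bn] Preliminaries for L of type An: A basis for V(lambda)_mu}. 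Applying Proposition \ref{Conditions_for_f(i,j)_to_belong_to_V(i,j)} inside $H$ then shows that $f_{1,j}v^+\in V_{1,j}$ is equivalent to the single divisibility condition $p\mid a_1+a_j+j-1$. So the whole problem reduces to deriving this congruence from the hypotheses.

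To produce it I would transport the hypothesis $f_{1,l}v^+\in V_{1,l}$ down to level $j$ by one raising operator. Since $v^+$ is maximal, $e_{j+1,l}v^+=0$ (I abbreviate $e_{j+1,l}=e_{\gamma_{j+1}+\cdots+\gamma_l}$), and because $(\gamma_{j+1}+\cdots+\gamma_l)-(\gamma_1+\cdots+\gamma_l)=-(\gamma_1+\cdots+\gamma_j)$ is a root whose structure constant is $\pm1$ (Lemma \ref{structure_constants_for_A}), one gets $e_{j+1,l}f_{1,l}v^+=[e_{j+1,l},f_{1,l}]v^+=\pm f_{1,j}v^+$. Writing $f_{1,l}v^+=\sum_{(m)\in P(1,l)}\xi_{(m)}f_{(m)}v^+$ (legitimate as $f_{1,l}v^+\in V_{1,l}$) and applying $e_{j+1,l}$ yields $\pm f_{1,j}v^+=\sum_{(m)}\xi_{(m)}\,e_{j+1,l}f_{(m)}v^+$. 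I would then evaluate each $e_{j+1,l}f_{(m)}v^+$ by commuting $e_{j+1,l}$ through $f_{(m)}=f_{I_0}\cdots f_{I_s}$ (where $I_0=[1,m_1]$, $I_t=[m_t+1,m_{t+1}]$, $m_{s+1}=l$): only factors $I_t$ sharing an endpoint with $[j+1,l]$ give nonzero brackets, each again $\pm$ a root vector by Lemma \ref{structure_constants_for_A}, or a toral element contributing a scalar $a_{t}+\cdots+a_l$ when $I_t=[j+1,l]$. Using that $a_2=\cdots=a_{j-1}=0$ forces $f_{r,s}v^+=0$ whenever $[r,s]\subseteq[2,j-1]$, together with the remaining hypotheses $f_{r,l}v^+\in V_{r,l}$ for $r>1$, the surviving terms are either products $f_{(m')}v^+$ with $(m')\in P(1,j)$, hence in $V_{1,j}$, or collapse to a scalar multiple of the pure vector $f_{1,j}v^+$.

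Collecting everything, the identity takes the shape $\pm f_{1,j}v^+\equiv c\,f_{1,j}v^+\pmod{V_{1,j}}$ for an explicit scalar $c\in K$ built from the $\xi_{(m)}$ and from partial sums $a_t+\cdots+a_l$; the conclusion $f_{1,j}v^+\in V_{1,j}$ is then immediate once $c\neq\pm1$ in $K$. Verifying this is the step I expect to be the main obstacle, because it requires pinning down the expansion coefficients $\xi_{(m)}$ of $f_{1,l}v^+$, above all the coefficient of the pure split $f_{1,j}f_{j+1,l}v^+$, and naively the two sides of the commutation identity can match trivially and give no information. To break this I would feed in the explicit two-ended maximal vector recorded in Remark \ref{A_maximal_vector_in_V_(A_n)(a,0,....,0,b)}: combined with the Levi reduction of the first paragraph it determines the relevant coefficients, lets one compute $c$, and shows the surviving congruence is exactly $p\mid a_1+a_j+j-1$. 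The genuinely laborious part is the bookkeeping in the commutation, namely tracking precisely which $(m)\in P(1,l)$ feed the $f_{1,j}v^+$-component and with which scalar, and this is where I would follow the argument of \cite[Proposition 3.1]{Fo1} most closely.
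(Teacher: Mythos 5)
First, a point of reference: the paper itself gives no proof of this proposition --- it is quoted as a special case of \cite[Proposition 3.1]{Fo1} (with $i,m$ specialised to $1,l$) and the reader is sent there --- so your attempt has to be judged on its own merits. Your first paragraph is correct and is the right opening move: since $a_2=\cdots=a_{j-1}=0$, every vector in sight lies in $\langle Hv^+\rangle\cong L_H(a_1\sigma_1'+a_j\sigma_j')$ for the Levi subgroup $H$ of type $A_j$, and Proposition \ref{Conditions_for_f(i,j)_to_belong_to_V(i,j)} converts the desired conclusion into the congruence $p\mid a_1+a_j+j-1$. Your formula $e_{\gamma_{j+1}+\cdots+\gamma_l}f_{1,l}v^+=\pm f_{1,j}v^+$ and your analysis of which factors of $f_{(m)}$ bracket nontrivially with this raising operator are also correct.

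The genuine gap is exactly the step you flag yourself, and the repair you propose for it does not work. Applying $e_{\gamma_{j+1}+\cdots+\gamma_l}$ to $f_{1,l}v^+=\sum_{(m)}\xi_{(m)}f_{(m)}v^+$ yields $(1-c)f_{1,j}v^+\in V_{1,j}$ with $c$ linear in the unknown coefficients $\xi_{(m)}$, and the entire content of the proposition is that $c=1$ is impossible. Remark \ref{A_maximal_vector_in_V_(A_n)(a,0,....,0,b)} cannot ``determine the relevant coefficients'': it applies only to highest weights of the shape $a\sigma_1+b\sigma_l$, whereas in the nontrivial case $j<l$ your $\sigma$ carries at least the three nonzero labels $a_1,a_j,a_l$; moreover the expansion of $f_{1,l}v^+$ is in general not even unique, because the $f_{(m)}v^+$ may be linearly dependent in $L(\sigma)$, so there is no well-defined ``coefficient of the pure split'' to compute. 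What is actually required is (i) an induction (on $l$, or on the support of $\sigma$) that first extracts, from the hypotheses $f_{r,l}v^+\in V_{r,l}$ with $r\geqslant 2$ restricted to Levi subgroups, the downstream congruences $p\mid a_{i'}+a_{j'}+j'-i'$ --- only then does the Remark, applied to those genuinely two-ended Levi weights, give usable expansions; (ii) relations from further raising operators beyond $e_{\gamma_{j+1}+\cdots+\gamma_l}$; and (iii) a case analysis eliminating degenerate configurations in which the linear relations collapse to $0=0$. Already the smallest case $l=3$, $j=2$, $a_1a_2a_3\neq 0$ exhibits all three: $e_{\gamma_3}$ alone gives only $(1-a_3\xi_2)f_{1,2}v^+\in V_{1,2}$; one then needs $e_{\gamma_2}$ and $e_{\gamma_2+\gamma_3}$, the identity $f_{2,3}v^+=a_3^{-1}f_{\gamma_2}f_{\gamma_3}v^+$ (available only after the $r=2$ hypothesis has produced $p\mid a_2+a_3+1$), and finally the observation that $a_3=p-1$ would force $a_2\equiv 0\pmod p$, to conclude $\xi_2=-1$ and $1-a_3\xi_2=1+a_3\neq 0$ in $K$. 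None of this machinery, nor the induction organizing it in general, appears in your sketch; deferring it by ``following \cite[Proposition 3.1]{Fo1} most closely'' leaves the mathematical core of the statement unproved.
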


We conclude this section by the following theorem, which shall   play a key role in the proof of Theorem \ref{Main result}. Here again, one could easily generalize the result (considering   \(1\leqslant r<s\leqslant l\) instead of \(1\leqslant r<l\) in the statement of the latter, for example).

\begin{thm}\label{How_to_relate_divisibility_conditions_and_generators_for_V_i,j}
Let \(G\) be a simple algebraic group of type \(A_l\) over \(K,\) and consider an irreducible \(KG\)-module \(V=L(\sigma)\) having \(p\)-restricted highest weight \(\sigma=\sum_{r=1}^l{a_r \sigma_r},\) with \(a_l\neq 0.\) Then \(f_{r,l}v^+\in V_{r,l}\) for every \(1\leqslant r<l\) if and only if \(p\mid a_i+a_j+j-i\) for every \(1\leqslant i<j\leqslant l\) such that \(a_ia_j\neq 0\) and \(a_s=0\) for \(i<s<j.\)
\end{thm}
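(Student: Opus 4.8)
The plan is to reduce everything to the two-node case already settled in Proposition~\ref{Conditions_for_f(i,j)_to_belong_to_V(i,j)}, by repeatedly restricting to Levi subgroups. The technical heart will be the single equivalence, valid for every pair $(i,j)$ with $a_ia_j\neq 0$ and $a_s=0$ for $i<s<j$,
\[
f_{i,j}v^+\in V_{i,j}\iff p\mid a_i+a_j+j-i,
\]
after which both directions of the theorem follow quickly.

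First I would establish this displayed equivalence. Fix such a pair $(i,j)$ and restrict to the Levi subgroup $H$ of type $A_{j-i+1}$ attached to the simple roots $\gamma_i,\ldots,\gamma_j$. Its highest weight $\sigma|_{T_H}$ has the shape $a_i\sigma'_1+a_j\sigma'_{j-i+1}$ with both end coefficients nonzero and $p$-restricted, and the weight $\sigma-(\gamma_i+\cdots+\gamma_j)$ is reached by subtracting only roots of $H$. By Lemma~\ref{[Preliminaries] Parabolic embeddings: restriction to Levi subgroup} the multiplicity of this weight is the same in $V$ and in $L_H(\sigma|_{T_H})$; since all the spanning vectors $\{f_{(m)}v^+:(m)\in P(i,j)\}\cup\{f_{i,j}v^+\}$ already lie in $\langle Hv^+\rangle$, and since distinct nonnegative sums of positive roots of $H$ stay distinct upon restriction to $T_H$ (so no $T$-weight spaces collapse), the linear relations among these vectors are identical in $V$ and in $L_H(\sigma|_{T_H})$. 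Under the identifications $f_{i,j}\leftrightarrow f_{1,j-i+1}$ and $V_{i,j}\leftrightarrow V_{1,j-i+1}$, the equivalence is then precisely \ref{conditions_for_f(i,i)_to_belong_to_V(i,j)_part_3}$\iff$\ref{conditions_for_f(i,i)_to_belong_to_V(i,j)_part_4} of Proposition~\ref{Conditions_for_f(i,j)_to_belong_to_V(i,j)}, because $a_i+a_j+(j-i+1)-1=a_i+a_j+j-i$.

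Granting the equivalence, the two implications go as follows. For sufficiency, assume all the divisibility conditions hold; then $f_{r,s}v^+\in V_{r,s}$ for every consecutive nonzero pair $(r,s)$, so Lemma~\ref{If_divisibility_conditions_holds,_then_f(i,j)_in_V(i,j)} gives $f_{i,j}v^+\in V_{i,j}$ whenever $(i,j]$ contains a nonzero coefficient, and taking $j=l$ with $a_l\neq 0$ yields $f_{r,l}v^+\in V_{r,l}$ for all $1\leqslant r<l$. For necessity, assume $f_{r,l}v^+\in V_{r,l}$ for every $r$ and fix a consecutive nonzero pair $(i,j)$. Restricting to the Levi $H'$ of type $A_{l-i+1}$ on $\gamma_i,\ldots,\gamma_l$, the same Levi-restriction argument transfers the hypothesis to $H'$; there the first and last coefficients are nonzero and $j$ is the minimal index beyond the first with nonzero coefficient, so Proposition~\ref{Ford's_Proposition} forces $f_{i,j}v^+\in V_{i,j}$, and the equivalence delivers $p\mid a_i+a_j+j-i$.

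I expect the main obstacle to be making the Levi reductions genuinely faithful on weight spaces, rather than merely on dimensions: one must know that the canonical surjection $\langle Hv^+\rangle\twoheadrightarrow L_H(\sigma|_{T_H})$ is injective on the weight space in question, so that a linear dependence over $H$ lifts to the identical dependence in $V$. This is exactly what Lemma~\ref{[Preliminaries] Parabolic embeddings: restriction to Levi subgroup} secures through the equality of multiplicities, once one checks that $T$-weight spaces and $T_H$-weight spaces coincide inside $\langle Hv^+\rangle$; the remaining bookkeeping (matching the generators $f_{(m)}$ and the index shifts $r\mapsto r-i+1$) is routine.
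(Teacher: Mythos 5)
Your proposal is correct and follows essentially the same route as the paper: the forward direction via Proposition \ref{Conditions_for_f(i,j)_to_belong_to_V(i,j)} applied to Levi subgroups combined with Lemma \ref{If_divisibility_conditions_holds,_then_f(i,j)_in_V(i,j)}, and the converse via Proposition \ref{Ford's_Proposition} applied to the Levi subgroup on $\gamma_i,\ldots,\gamma_l$ followed by Proposition \ref{Conditions_for_f(i,j)_to_belong_to_V(i,j)} again. Your extra care in justifying the Levi transfer (via Lemma \ref{[Preliminaries] Parabolic embeddings: restriction to Levi subgroup} and the non-collapsing of weight spaces) makes explicit what the paper leaves implicit, but it is the same argument.
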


\begin{proof}
First assume the divisibility conditions stated in the theorem hold and let \(1\leqslant r<l.\) If \(a_r=0,\) then \(f_{\gamma_r}v^+ =0 \)  and thus  \(f_{r,l}v^+ = -  f_{\gamma_r}f_{r+1,l}v^+ \in V_{r,l}\) as desired. If  on the other hand \(a_r\neq 0,\) then successively applying Proposition \ref{Conditions_for_f(i,j)_to_belong_to_V(i,j)} to  suitable Levi subgroups of \(G\) shows that \(f_{i,j}v^+\in V_{i,j}\) for every \(r\leqslant i<j\leqslant l\) such that \(a_ia_j\neq 0\) and \(a_k=0\) for \(i<k<j.\) The result then follows from Lemma \ref{If_divisibility_conditions_holds,_then_f(i,j)_in_V(i,j)}. Conversely, assume \(f_{r,l}v^+\in V_{r,l}\) for every \(1\leqslant r< l\) and let \(1\leqslant i<j\leqslant l\) be as in the statement of the theorem. Applying  Proposition \ref{Ford's_Proposition} to the Levi subgroup corresponding to the simple roots \(\gamma_i,\ldots,\gamma_l\) then yields \(f_{i,j}v^+\in V_{i,j}.\) Finally, an application of  Proposition \ref{Conditions_for_f(i,j)_to_belong_to_V(i,j)} yields \(p\mid a_i+a_j+j-i,\) thus completing the proof.
\end{proof}
\newpage

\section{Weight spaces for \texorpdfstring{$G$}{LG} of type \texorpdfstring{$B_n$}{LG}}										
\label{Weight spaces for G of type Bn}     

Let $G$ be a simple algebraic group of type $B_n$ $(n\geqslant 3)$ over $K,$ fix a Borel subgroup $B=UT$ of $G$ as usual, and let $\Pi=\{\alpha_1,\ldots,\alpha_n\}$ be a corresponding base of the root system $\Phi=\Phi^+\sqcup \Phi^-$ of $G,$ where  $\Phi^+$ and $\Phi^-$ denote the sets of positive and negative roots of $G,$ respectively. Let $\{\lambda_1,\ldots,\lambda_n\}$ be the set of fundamental weights corresponding to our choice of base $\Pi$ and consider a standard Chevalley basis 
\[
\mathscr{B}=\{f_{\alpha},h_{\alpha_r},e_\alpha: \alpha\in \Phi^+, 1\leqslant r\leqslant n\}
\]
for the Lie algebra $\Lie(G)$ of \(G,\) as in Section \ref{structure constants and Chevalley basis}. For $\lambda \in X^+(T),$ simply write $V(\lambda)$ (respectively, $L(\lambda)$) to denote the Weyl module for $G$ corresponding to $\lambda$ (respectively, the irreducible $KG$-module having highest weight $\lambda$). Although most of the results presented here hold for $K$ having arbitrary characteristic, we shall assume $p\neq 2$ throughout this section for simplicity. Finally, we adopt the notation $f_{i,j}=f_{\alpha_i+\cdots +\alpha_j},$ for  $1\leqslant i\leqslant j\leqslant n$ (where \(f_{i,i}=f_{\alpha_i}\) for \(1\leqslant i\leqslant n\) by convention), as well as
\[
F_{r,s}=f_{\alpha_r+\cdots+\alpha_{s-1}+2\alpha_s+\cdots+2\alpha_n},
\]
for  $1\leqslant r < s \leqslant n.$ (Here we set \(F_{r,r+1}=f_{\alpha_r+2\alpha_{r+1}+\cdots+2\alpha_n}\) and \(F_{r,n}=f_{\alpha_r+\cdots+\alpha_{n-1}+2\alpha_n}\) for \(1\leqslant r < n\) by convention.)

\subsection{Study of \texorpdfstring{$L(a\lambda_1)$}{LG} \texorpdfstring{$(a\in \Z_{>0})$}{LG}}     

Let $a\in \Z_{>0}$ and consider the dominant character $\lambda=a\lambda_1\in X^+(T).$ Also write $\mu=\lambda-2(\alpha_1+\cdots+\alpha_n).$ By Proposition \ref{A simplification for V(lambda)mu} (recall that \(p\neq 2\) here) together with our choice of ordering $\leqslant$ on $\Phi^+,$ one sees that
\[
V(\lambda)_{\mu}=\left\langle \tfrac{1}{2}(f_{1,n})^2v^{\lambda}, f_{1,j}F_{1,j+1}v^{\lambda} :1\leqslant j < n\right\rangle_K,
\]
where $v^{\lambda}\in V(\lambda)_{\lambda}$ denotes a maximal vector of weight $\lambda$ in $V(\lambda)$ for $B.$ Again, since we are assuming $p\neq 2,$ we get that  $\tfrac{1}{2}(f_{1,n})^2v^{\lambda} \in V(\lambda)_{\mu}$ if and only if $ (f_{1,n})^2v^{\lambda} \in V(\lambda)_{\mu}$, so that the weight space $V(\lambda)_{\mu}$ is spanned by the vectors   
 \begin{equation}
\{f_{1,j}F_{1,j+1}v^{\lambda}\}_{1\leqslant j < n} \cup \{(f_{1,n})^2v^{\lambda}\}.
\label{W_k,n}
\end{equation}

Now if $a=1,$ then $\mu$ is $\mathscr{W}$-conjugate to $\lambda,$  which has multiplicity $1$ in $V(\lambda).$ Furthermore, successively applying $e_{\alpha_1},\ldots,e_{\alpha_n}$ to  the element $f_{\alpha_1}F_{1,2}v^{\lambda}$ shows that it is non-zero, hence  $V(\lambda)_{\mu}=\langle f_{\alpha_1}F_{1,2}v^{\lambda}\rangle_K.$ Finally, we leave to the reader to check (using Lemma \ref{structure_constants_for_B} together with the fact that $V(\lambda)_{\lambda-(2\alpha_1+\alpha_2+\cdots+\alpha_n)}=0$)  that the following result holds.

\begin{prop}\label{[Dn<Bn] Preliminaries for L of type Bn: technical proposition concerning the weight lambda-2...2 for lambda=lambda_1}
Let $\lambda=\lambda_1$ and consider $\mu=\lambda-2(\alpha_1+\cdots+\alpha_n)\in \Lambda(\lambda).$ Then $V(\lambda)_{\mu}=\langle f_{\alpha_1}F_{1,2}v^{\lambda}\rangle_K$ and the following assertions hold.
\begin{enumerate}
\item \label{[Dn<Bn] Preliminaries for L of type Bn: technical proposition concerning the weight lambda-2...2 for lambda=lambda_1 (part 1)} $f_{1,j}F_{1,j+1}v^{\lambda}= f_{\alpha_1}F_{1,2}v^{\lambda}$ for every $1\leqslant j<n.$
\item \label{[Dn<Bn] Preliminaries for L of type Bn: technical proposition concerning the weight lambda-2...2 for lambda=lambda_1 (part 2)} $(f_{1,n})^2 v^{\lambda} = 2f_{\alpha_1}F_{1,2}v^{\lambda}.$
\end{enumerate}
\end{prop}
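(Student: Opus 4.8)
The plan is to first record that the weight space in question is one-dimensional, and then, for each generator listed in \eqref{W_k,n}, to determine the scalar by which it differs from $f_{\alpha_1}F_{1,2}v^{\lambda}$ using only the commutation relations governed by Lemma~\ref{structure_constants_for_B} together with the vanishing of a few intermediate weight spaces. Since $\lambda_1=\alpha_1+\cdots+\alpha_n$, we have $\mu=\lambda-2(\alpha_1+\cdots+\alpha_n)=-\lambda_1$, which is $\mathscr{W}$-conjugate to $\lambda=\lambda_1$; hence $\m_{V(\lambda)}(\mu)=\m_{V(\lambda)}(\lambda)=1$. As $f_{\alpha_1}F_{1,2}v^{\lambda}\neq 0$ (seen, as in the discussion preceding the statement, by successively applying $e_{\alpha_1},\dots,e_{\alpha_n}$), we conclude $V(\lambda)_{\mu}=\langle f_{\alpha_1}F_{1,2}v^{\lambda}\rangle_K$. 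In particular each generator in \eqref{W_k,n} is a scalar multiple of $f_{\alpha_1}F_{1,2}v^{\lambda}$, so the entire content of the two displayed identities is the computation of these scalars.

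For the first identity I would argue by induction on $j$, the case $j=1$ being the tautology $f_{\alpha_1}F_{1,2}v^{\lambda}=f_{1,1}F_{1,2}v^{\lambda}$; the inductive step is the equality $f_{1,j}F_{1,j+1}v^{\lambda}=f_{1,j-1}F_{1,j}v^{\lambda}$ for $2\leqslant j<n$. The key is to evaluate $f_{1,j-1}f_{\alpha_j}F_{1,j+1}v^{\lambda}$ in two ways. On the one hand, part~\ref{structure_constants_for_B_part3} of Lemma~\ref{structure_constants_for_B} together with \eqref{structure_constants_relation_1} gives $[f_{\alpha_j},F_{1,j+1}]=F_{1,j}$, and since $\langle\lambda,\alpha_j\rangle=0$ for $j\geqslant 2$ we have $f_{\alpha_j}v^{\lambda}=0$, whence this vector equals $f_{1,j-1}F_{1,j}v^{\lambda}$. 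On the other hand, part~\ref{structure_constants_for_B_part1} of Lemma~\ref{structure_constants_for_B} gives $[f_{1,j-1},f_{\alpha_j}]=f_{1,j}$, so the same vector equals $f_{\alpha_j}f_{1,j-1}F_{1,j+1}v^{\lambda}+f_{1,j}F_{1,j+1}v^{\lambda}$; here the first summand vanishes because $f_{1,j-1}F_{1,j+1}v^{\lambda}$ sits in a weight space whose weight does not lie in $\Lambda(\lambda)$, the singled-out equality $V(\lambda)_{\lambda-(2\alpha_1+\alpha_2+\cdots+\alpha_n)}=0$ being the prototype of this phenomenon. Comparing the two expressions yields the inductive step, and hence $f_{1,j}F_{1,j+1}v^{\lambda}=f_{\alpha_1}F_{1,2}v^{\lambda}$ for every $1\leqslant j<n$.

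For the second identity I would run the analogous computation built around the short root $\alpha_n$, which is precisely where the factor $2$ enters. Here part~\ref{structure_constants_for_B_part1} of Lemma~\ref{structure_constants_for_B} gives $[f_{1,n-1},f_{\alpha_n}]=f_{1,n}$, while part~\ref{structure_constants_for_B_part4} of Lemma~\ref{structure_constants_for_B} (reflecting that $\alpha_n$ is short) yields $[f_{1,n},f_{\alpha_n}]=-2F_{1,n}$. Evaluating $f_{1,n-1}f_{\alpha_n}f_{1,n}v^{\lambda}$ in two ways — using $f_{\alpha_n}v^{\lambda}=0$ and the vanishing of $f_{1,n-1}f_{1,n}v^{\lambda}$ (once more a weight outside $\Lambda(\lambda)$) — produces $2f_{1,n-1}F_{1,n}v^{\lambda}$ on one side and $(f_{1,n})^2v^{\lambda}$ on the other. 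Combined with the first identity, this gives $(f_{1,n})^2v^{\lambda}=2f_{\alpha_1}F_{1,2}v^{\lambda}$, the hypothesis $p\neq 2$ ensuring that no division degenerates.

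I expect the main obstacle to be purely a matter of bookkeeping: reading off the correct values and signs of the structure constants from Lemma~\ref{structure_constants_for_B} via the antisymmetry relation \eqref{structure_constants_relation_1} — in particular the value $2$ responsible for the factor in the second identity — and checking at each step that the discarded intermediate vector genuinely lies in a zero weight space, equivalently that its weight has the wrong length to be $\mathscr{W}$-conjugate to $\lambda_1$ or to $0$.
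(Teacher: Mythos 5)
Your proposal is correct and follows essentially the route the paper intends: multiplicity one for $\mu=-\lambda_1$ via $\mathscr{W}$-conjugacy, non-vanishing of $f_{\alpha_1}F_{1,2}v^{\lambda}$, and then the two identities obtained by evaluating $f_{1,j-1}f_{\alpha_j}F_{1,j+1}v^{\lambda}$ (resp.\ $f_{1,n-1}f_{\alpha_n}f_{1,n}v^{\lambda}$) in two ways using Lemma \ref{structure_constants_for_B} and the vanishing of weight spaces outside $\Lambda(\lambda_1)$ --- precisely the argument sketched before the statement and the hint given when parts \ref{[Dn<Bn] Preliminaries for L of type Bn: technical proposition concerning the weight lambda-2...2 for lambda=lambda_1 (part 1)} and \ref{[Dn<Bn] Preliminaries for L of type Bn: technical proposition concerning the weight lambda-2...2 for lambda=lambda_1 (part 2)} are left to the reader. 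One bookkeeping caveat: under the paper's conventions, \eqref{structure_constants_relation_1} gives $N_{(-\alpha,-\beta)}=-N_{(\alpha,\beta)}$, so all four brackets you quote carry the opposite sign ($[f_{\alpha_j},F_{1,j+1}]=-F_{1,j}$, $[f_{1,j-1},f_{\alpha_j}]=-f_{1,j}$, $[f_{1,n-1},f_{\alpha_n}]=-f_{1,n}$, and $[f_{1,n},f_{\alpha_n}]=+2F_{1,n}$); since each of your two-way evaluations uses exactly one such bracket on each side, these uniform sign discrepancies cancel, so this is a slip in the stated intermediate values rather than a gap, and your final identities are exactly right.
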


For the remainder of this section, we assume $a>1,$ in which case the weight $\mu$ is dominant. An application of Proposition \ref{Various_weight_multiplicities} gives $\dim V(\lambda)_{\mu}=n,$ so that the generating elements of \eqref{W_k,n} are linearly independent, leading to the following result.

\begin{prop}\label{A_basis_of_W(lambda)_(lambda-2alphak-...-2alphan)}
Let $\lambda=a\lambda_1\in X^+(T),$ where $a\in \Z_{>1},$ and consider $\mu=\lambda-2(\alpha_1+\cdots+\alpha_n).$ Then $\mu$ is dominant and the set \eqref{W_k,n} forms a basis of the weight space $V(\lambda)_{\mu}.$ 
\end{prop}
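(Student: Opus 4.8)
The plan is to prove both assertions by a dimension count that takes full advantage of the spanning set produced in the preceding discussion. First I would verify dominance of $\mu$: writing roots and fundamental weights in the standard $\varepsilon$-coordinates for type $B_n$, one has $\alpha_1+\cdots+\alpha_n=\varepsilon_1$ and $\lambda_1=\varepsilon_1$, whence $\mu=a\varepsilon_1-2\varepsilon_1=(a-2)\lambda_1$. Since the hypothesis $a>1$ gives $a-2\geqslant 0$, the character $\mu=(a-2)\lambda_1$ lies in $X^+(T)$, settling the first claim.

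For the basis assertion, the starting point is that $V(\lambda)_\mu$ is already known to be spanned by the vectors in \eqref{W_k,n}: this follows from Proposition \ref{A simplification for V(lambda)mu} --- applicable because $p\neq 2$ ensures the coefficients $c_r=2$ appearing in $\lambda-\mu=2(\alpha_1+\cdots+\alpha_n)$ satisfy $0\leqslant c_r<p$ --- together with the observation that, as $p\neq 2$, the generator $\tfrac{1}{2}(f_{1,n})^2v^\lambda$ may be replaced by $(f_{1,n})^2v^\lambda$. The next step is simply to count the members of \eqref{W_k,n}: the family $\{f_{1,j}F_{1,j+1}v^\lambda\}_{1\leqslant j<n}$ contributes $n-1$ vectors, and adjoining $(f_{1,n})^2v^\lambda$ raises the total to exactly $n$.

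It then remains to compute the dimension of the weight space. Here I would invoke Proposition \ref{Various_weight_multiplicities} with $G$ of type $B_n$, $\lambda=c\lambda_1$ for $c=a\geqslant 2$, and $\mu=(c-2)\lambda_1$; the corresponding row of Table \ref{Some_weight_multiplicities_in_Weyl_modules} yields $\m_{V_G(\lambda)}(\mu)=n$, that is, $\dim V(\lambda)_\mu=n$. A spanning set whose cardinality equals the dimension of the space it spans is automatically a basis, so the $n$ vectors in \eqref{W_k,n} must be linearly independent and therefore form a basis of $V(\lambda)_\mu$, which finishes the argument.

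The main point to stress is that there is no genuine obstacle: the real content has been discharged earlier, in the characteristic-zero multiplicity computation of Proposition \ref{Various_weight_multiplicities} and in the reduction of the generating set carried out in Proposition \ref{A simplification for V(lambda)mu}. The only steps requiring attention are bookkeeping ones --- confirming that \eqref{W_k,n} has exactly $n$ entries rather than $n+1$, and checking that $\mu=(a-2)\lambda_1$ so that the intended row of Table \ref{Some_weight_multiplicities_in_Weyl_modules} is the one that applies --- and both are routine.
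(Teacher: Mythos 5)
Your proposal is correct and follows essentially the same route as the paper: the paper likewise observes that $a>1$ makes $\mu=(a-2)\lambda_1$ dominant, cites Proposition \ref{Various_weight_multiplicities} to get $\dim V(\lambda)_{\mu}=n$, and concludes that the $n$ spanning vectors of \eqref{W_k,n} (already obtained via Proposition \ref{A simplification for V(lambda)mu} and the $p\neq 2$ replacement of $\tfrac{1}{2}(f_{1,n})^2v^{\lambda}$ by $(f_{1,n})^2v^{\lambda}$) must be linearly independent. Your explicit $\varepsilon$-coordinate verification of dominance is merely a spelled-out version of what the paper asserts without comment.
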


We now study the relation between the triple $(a,n,p)$ and the existence of a maximal vector of weight $\mu$ in $V(\lambda)$ for $B.$ For  $A=(A_r)_{1\leqslant r\leqslant n} \in K^n,$ set
\begin{equation}
w(A) = \sum_{j=1}^{n-1}{A_j f_{1,j}F_{1,j+1}v^{\lambda}} + A_n(f_{1,n})^2v^{\lambda}.
\label{[Dn<Bn] Definition of w(A) for A=(A1,...,An), mu=lambda-2....2 and G of type Bn}
\end{equation}

\begin{lem}\label{Conditions_pour_que_la_combinaison_lineaire_de_vecteurs_generateurs_de_V(k,n)_soit_tuee_par_tous_les_e_alpha}
Let $\lambda,$ $\mu$ be as above and adopt the notation of \eqref{[Dn<Bn] Definition of w(A) for A=(A1,...,An), mu=lambda-2....2 and G of type Bn}. Then the following assertions are equivalent. 
\begin{enumerate}
\item \label{V(k,n)_part_1} There exists $0 \neq A \in K^n$ such that $x_{\alpha}(c)w(A)=0$ for every $\alpha\in \Pi$ and   \(c\in K.\)
\item \label{V(k,n)_part_2} There exists $A\in K^{n-1} \times K^{*}$ such that $x_{\alpha}(c)w(A)=0$ for every $\alpha\in \Pi$ and   \(c\in K.\)
\item \label{V(k,n)_part_3} The divisibility condition $p \mid 2(a+n)-3$ is satisfied.
\end{enumerate}
\end{lem}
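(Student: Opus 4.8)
The plan is to mimic the proof of Lemma~\ref{[Dn<Bn] Prelimiaries for L of type An: existence of a maximal vector of weight lambda-1...1}, translating the condition that $w(A)$ be fixed by the root subgroups $U_{\alpha_r}$ (equivalently, be a maximal vector for $B$) into a homogeneous linear system in the coordinates $A=(A_r)$, and then reading off its solvability. First I would reduce the group-theoretic statement to a Lie-algebra one: since $x_{\alpha_r}(c)w(A)=\sum_{i\geqslant 0}\tfrac{c^i}{i!}e_{\alpha_r}^i w(A)$, the vector $w(A)$ is fixed by every $x_{\alpha_r}(c)$ precisely when $\tfrac{e_{\alpha_r}^i}{i!}w(A)=0$ for all $i\geqslant 1$. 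For $1\leqslant r<n$ a direct check gives $\mu+2\alpha_r\notin\Lambda(\lambda)$, so this collapses to the single equation $e_{\alpha_r}w(A)=0$; for the short root $r=n$ one verifies that $e_{\alpha_n}w(A)=0$ already forces $\tfrac{e_{\alpha_n}^2}{2}w(A)=0$ (here $p\neq 2$ is used). Hence \ref{V(k,n)_part_1} (resp.\ \ref{V(k,n)_part_2}) becomes the existence of $0\neq A\in K^n$ (resp.\ $A\in K^{n-1}\times K^{*}$) with $e_{\alpha_r}w(A)=0$ for all $1\leqslant r\leqslant n$.

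Next I would compute each $e_{\alpha_r}w(A)$ by applying $e_{\alpha_r}$ to the spanning vectors $f_{1,j}F_{1,j+1}v^{\lambda}$ and $(f_{1,n})^2v^{\lambda}$ and simplifying via the Chevalley commutator formula and the structure constants of Lemma~\ref{structure_constants_for_B}. For $1<r<n$ only the summands indexed by $j=r-1$ and $j=r$ survive, and both turn out to be scalar multiples of the \emph{same} monomial in $V(\lambda)_{\mu+\alpha_r}$; provided this monomial is non-zero (which I would confirm as in the $A_l$ case by applying a suitable sequence of raising operators), vanishing of its coefficient yields a relation of the shape $A_{r-1}=A_r$. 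For the short root $r=n$, $e_{\alpha_n}$ meets only $f_{1,n-1}F_{1,n}v^{\lambda}$ and $(f_{1,n})^2v^{\lambda}$, and the short-root structure constants (of the form $N_{(\alpha_n,\cdots)}=\pm 2$) produce a relation $A_{n-1}=c\,A_n$ with $c\neq 0$. Finally, for $r=1$ the image lands in $V(\lambda)_{\mu+\alpha_1}=V(\lambda)_{\lambda-F_{1,2}}$, which one checks via Proposition~\ref{A simplification for V(lambda)mu} to be one-dimensional, spanned by $F_{1,2}v^{\lambda}$; reducing each contribution to this basis vector — the diagonal term giving $e_{\alpha_1}(f_{\alpha_1}F_{1,2}v^{\lambda})=h_{\alpha_1}F_{1,2}v^{\lambda}$, hence the scalar $\langle\lambda,\alpha_1\rangle=a$ — produces the last scalar equation.

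To finish, I would solve the system. The relations from $1<r<n$ force $A_1=\cdots=A_{n-1}$, the relation from $r=n$ makes $A_n$ a fixed non-zero multiple of $A_{n-1}$, and substituting both into the $r=1$ equation collapses everything to a single scalar condition which, after clearing the harmless factor $2$, reads $2(a+n)-3\equiv 0$ in $K$. Thus the system admits a non-trivial solution exactly when $p\mid 2(a+n)-3$, giving \ref{V(k,n)_part_1}$\Leftrightarrow$\ref{V(k,n)_part_3}; and since every non-trivial solution has $A_n$ a non-zero scalar times $A_{n-1}\neq 0$, one also gets \ref{V(k,n)_part_1}$\Leftrightarrow$\ref{V(k,n)_part_2}. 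The main obstacle is the careful sign bookkeeping for the structure constants throughout the computation of the $e_{\alpha_r}w(A)$, together with the verification that the target monomials in each $V(\lambda)_{\mu+\alpha_r}$ are non-zero; the reduction of the $r=1$ image to the one-dimensional space $\langle F_{1,2}v^{\lambda}\rangle_K$ is the most delicate single step, since it is exactly there that the precise constant $2(a+n)-3$ is pinned down.
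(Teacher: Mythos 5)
Your proposal is correct and follows essentially the same route as the paper: reduce invariance under the root subgroups to the vanishing of each $e_{\alpha_r}w(A)$, compute these images via the structure constants of Lemma \ref{structure_constants_for_B}, and solve the resulting linear system (relations $A_1=\cdots=A_{n-1}$, $A_{n-1}=4A_n$, plus the $r=1$ equation), whose non-trivial solutions necessarily have $A_n\neq 0$ and exist exactly when $p\mid 2(a+n)-3$. Your handling of the short root $\alpha_n$ --- where $\mu+2\alpha_n$ genuinely is a weight, so the quadratic term of $x_{\alpha_n}(c)$ must be killed using $e_{\alpha_n}w=0$ together with $p\neq 2$ --- is in fact more explicit than the paper, which at this point simply invokes the type $A$ argument.
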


\begin{proof}
Let $A=(A_r)_{1\leqslant r\leqslant n}\in K^n$ and set $w=w(A).$ Arguing as in the proof of Lemma \ref{[Dn<Bn] Prelimiaries for L of type An: existence of a maximal vector of weight lambda-1...1}, one sees that \(x_\alpha (c) w=w\) for every \(\alpha \in \Pi\) and \(c\in K\) if and only if \(e_{\alpha}w=0\) for every \(\alpha\in \Pi.\) Also applying Lemma \ref{structure_constants_for_B} successively yields
\begin{align*}
e_{\alpha_1}w		&= aA_1F_{1,2}v^{\lambda} - \sum_{j=2}^{n-1}{A_j f_{2,j}F_{1,j+1}v^{\lambda}} -A_nf_{2,n}f_{1,n}v^{\lambda}  \cr
								&=\bigg(aA_1 +\sum_{j=2}^{n-1}{A_j} + 2A_n\bigg)F_{1,2}v^{\lambda}, \cr 
e_{\alpha_r}w 		&= (A_r-A_{r-1})f_{1,r-1}F_{1,r+1}v^{\lambda}, 	\cr
e_{\alpha_n}w	 	&=(4A_n-A_{n-1})f_{1,n-1}f_{1,n}v^{\lambda},
\end{align*}
where $1<r<n.$ As in the proof of Lemma \ref{[Dn<Bn] Prelimiaries for L of type An: existence of a maximal vector of weight lambda-1...1}, one checks that each of the vectors $F_{1,2}v^{\lambda},$ $f_{\alpha_1}F_{1,3}v^{\lambda},\ldots,f_{1,n-2}F_{1,n}v^{\lambda},$ $f_{1,n-1}f_{1,n}v^{\lambda}$ is non-zero, so that $e_{\alpha}w(A)=0$ for every $\alpha\in \Pi$ if and only if $A\in K^n$ is a solution to the system of equations 
\begin{equation}
\left\{\begin{array}{rll}
2A_n+aA_1 		&= -\sum_{r=2}^{n-1}{A_r}								\cr
A_{r-1}   		&= A_r \mbox{ for  $1<r <n $}			\cr
A_{n-1} 			&= 4A_n.
\end{array}
\right.
\label{second_system_of_equations}
\end{equation}

One easily sees that \eqref{second_system_of_equations} admits a non-trivial solution $A\in K^n$ if and only if $p\mid 2(a+n)-3$ (showing that \ref{V(k,n)_part_1} and \ref{V(k,n)_part_3} are equivalent), in which case $A\in \langle(4,\ldots,4,1)\rangle_K$ (so that \ref{V(k,n)_part_1} and \ref{V(k,n)_part_2} are equivalent), completing the proof.
\end{proof}

Let $\lambda,$ $\mu$ be as above and consider an irreducible $KG$-module $V=L(\lambda)$ having  highest weight $\lambda.$ Take $V=V(\lambda)/\rad(\lambda)$ and write $v^+$ to denote the image of $v^{\lambda}$ in $V=L(\lambda),$ that is, $v^+$ is a maximal vector of weight \(\lambda\) in $V$ for $B.$  By \eqref{W_k,n} and our choice of ordering $\leqslant$ on $\Phi^+,$ the weight space $V_{\mu}$ is spanned by the vectors   
\begin{equation}
\left\{f_{1,j}F_{1,j+1}v^{+}\right\}_{1\leqslant j<n} \cup \left\{(f_{1,n})^2v^{+}\right\}.
\label{V_k,n}
\end{equation}

We write $V_{1,n}^2$ to denote the span of all the generators in \eqref{V_k,n} except for $(f_{1,n})^2v^+.$ The following result gives a precise description of the weight space $V_{\mu},$ as well as a characterization for \(\mu\) to afford the highest weight of a composition factor of \(V(\lambda).\)

\begin{prop}\label{Conditions_for_f(k,n)_to_belong_to_V(k,n)}

Let  $G$ be a simple algebraic group  of type $B_n$ over $K$ and fix $a\in \Z_{>1}.$  Also consider an irreducible $KG$-module $V=L(\lambda)$ having $p$-restricted highest weight $\lambda=a\lambda_1\in X^+(T)$ and let $\mu=\lambda-2(\alpha_1+\cdots+\alpha_n).$ Then  the following assertions are equivalent.
\begin{enumerate}
\item \label{conditions_for_f(i,i)_to_belong_to_V(k,n)_part_1} The weight $\mu$ affords the highest weight of a composition factor of $V(\lambda).$
\item \label{conditions_for_f(i,i)_to_belong_to_V(k,n)_part_2} The generators in \eqref{V_k,n} are linearly dependent.
\item \label{conditions_for_f(i,i)_to_belong_to_V(k,n)_part_3} The element $(f_{1,n})^2v^+$ lies inside $V^2_{1,n}.$
\item \label{conditions_for_f(i,i)_to_belong_to_V(k,n)_part_4} The divisibility condition $p\mid 2(a+n)-3$ is satisfied.
\end{enumerate}
\end{prop}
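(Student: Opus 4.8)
The plan is to mirror the proof of the type $A_l$ analogue, Proposition \ref{Conditions_for_f(i,j)_to_belong_to_V(i,j)}, using the two structural inputs already available in this situation: Proposition \ref{A_basis_of_W(lambda)_(lambda-2alphak-...-2alphan)}, which (as $a>1$) tells us that the $n$ vectors in \eqref{W_k,n} form a basis of $V(\lambda)_{\mu}$, and Lemma \ref{Conditions_pour_que_la_combinaison_lineaire_de_vecteurs_generateurs_de_V(k,n)_soit_tuee_par_tous_les_e_alpha}, which relates the existence of a maximal vector $w(A)$ of weight $\mu$ (either with $0\neq A\in K^n$ or with $A\in K^{n-1}\times K^{*}$) to the divisibility condition \ref{conditions_for_f(i,i)_to_belong_to_V(k,n)_part_4}. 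Since $V=V(\lambda)/\rad(\lambda)$ and $v^+$ is the image of $v^{\lambda}$, the generators in \eqref{V_k,n} are exactly the images of the basis \eqref{W_k,n} under the projection $V(\lambda)\twoheadrightarrow V$; consequently assertion \ref{conditions_for_f(i,i)_to_belong_to_V(k,n)_part_2} holds if and only if $\rad(\lambda)\cap V(\lambda)_{\mu}\neq 0$, and the implications \ref{conditions_for_f(i,i)_to_belong_to_V(k,n)_part_1}$\Rightarrow$\ref{conditions_for_f(i,i)_to_belong_to_V(k,n)_part_2} and \ref{conditions_for_f(i,i)_to_belong_to_V(k,n)_part_3}$\Rightarrow$\ref{conditions_for_f(i,i)_to_belong_to_V(k,n)_part_2} are then immediate. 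I would therefore close the cycle \ref{conditions_for_f(i,i)_to_belong_to_V(k,n)_part_4}$\Rightarrow$\ref{conditions_for_f(i,i)_to_belong_to_V(k,n)_part_3}$\Rightarrow$\ref{conditions_for_f(i,i)_to_belong_to_V(k,n)_part_2}$\Rightarrow$\ref{conditions_for_f(i,i)_to_belong_to_V(k,n)_part_4}, together with \ref{conditions_for_f(i,i)_to_belong_to_V(k,n)_part_2}$\Rightarrow$\ref{conditions_for_f(i,i)_to_belong_to_V(k,n)_part_1}.

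For \ref{conditions_for_f(i,i)_to_belong_to_V(k,n)_part_4}$\Rightarrow$\ref{conditions_for_f(i,i)_to_belong_to_V(k,n)_part_3}, Lemma \ref{Conditions_pour_que_la_combinaison_lineaire_de_vecteurs_generateurs_de_V(k,n)_soit_tuee_par_tous_les_e_alpha} yields $A\in K^{n-1}\times K^{*}$ with $w(A)$ a maximal vector of $V(\lambda)$. Since the simple root groups generate $U$, the image of $w(A)$ in the irreducible module $V=L(\lambda)$ is a $U$-fixed vector of weight $\mu\neq\lambda$, hence zero; as $A_n\neq 0$, solving the resulting relation for $(f_{1,n})^2v^+$ places it inside $V^2_{1,n}$, which is \ref{conditions_for_f(i,i)_to_belong_to_V(k,n)_part_3}.

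The heart of the argument is the passage from \ref{conditions_for_f(i,i)_to_belong_to_V(k,n)_part_2} to \ref{conditions_for_f(i,i)_to_belong_to_V(k,n)_part_1} and \ref{conditions_for_f(i,i)_to_belong_to_V(k,n)_part_4}. Linear dependence of \eqref{V_k,n} gives $0\neq\rad(\lambda)\cap V(\lambda)_{\mu}$, so $\mu$ is a weight of $\rad(\lambda)$ and some composition factor $L(\nu)$ of $V(\lambda)$ satisfies $\mu\preccurlyeq\nu\prec\lambda$ with $\mu\in\Lambda(L(\nu))$. To force $\nu=\mu$, I would show, exactly as in the type $A_l$ case, that every weight $\nu$ with $\mu\prec\nu\prec\lambda$ has $\m_{V(\lambda)}(\nu)=1$: since $p\neq 2$, Premet's Theorem \ref{[Preliminaries] Weights and multiplicities: Premet's theorem} gives $\nu\in\Lambda(L(\lambda))$, so the head $L(\lambda)$ already contributes multiplicity one to $\nu$, whence a second composition factor with highest weight $\nu$ would force $\m_{V(\lambda)}(\nu)\geq 2$, a contradiction. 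This establishes \ref{conditions_for_f(i,i)_to_belong_to_V(k,n)_part_1}; moreover it shows that no weight of $\rad(\lambda)$ strictly exceeds $\mu$, so $\mu$ is maximal in $\Lambda(\rad(\lambda))$. Hence any nonzero vector of weight $\mu$ in $\rad(\lambda)$ is a maximal vector, and by Proposition \ref{A_basis_of_W(lambda)_(lambda-2alphak-...-2alphan)} it is of the form $w(A)$ with $0\neq A\in K^n$; Lemma \ref{Conditions_pour_que_la_combinaison_lineaire_de_vecteurs_generateurs_de_V(k,n)_soit_tuee_par_tous_les_e_alpha} then delivers \ref{conditions_for_f(i,i)_to_belong_to_V(k,n)_part_4}.

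The main obstacle is precisely this multiplicity-one claim, which is genuinely more delicate than in type $A_l$ because $\mu=\lambda-2(\alpha_1+\cdots+\alpha_n)$ sits two root-sum steps below $\lambda$, creating many more intermediate dominant weights. I would first reduce to dominant $\nu$ by $\mathscr{W}$-conjugacy, noting that weights in one orbit share their multiplicity and that the dominant conjugate of any $\nu$ with $\mu\prec\nu\prec\lambda$ still lies weakly between $\mu$ and $\lambda$. Writing $\lambda-\nu=\sum_i c_i\alpha_i$, the relation $\nu\succ\mu$ forces $c_i\leqslant 2$ for all $i$, while dominance of $\nu$ forces $c_1\geqslant\cdots\geqslant c_n$; combining these gives $c_n\leqslant 1$ for every dominant $\nu\neq\mu$, and the multiplicities of the resulting weights are all equal to $1$ by Lemma \ref{[Preliminaries] Weights and multiplicities: multiplicity of lambda - k alpha_i} together with Proposition \ref{Various_weight_multiplicities}. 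This combinatorial verification is where the real work of the proof is concentrated.
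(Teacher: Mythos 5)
Your proposal is correct and is essentially the paper's own argument: the paper likewise treats (i),(iii)$\Rightarrow$(ii) as immediate, obtains (ii)$\Rightarrow$(i) from Premet's theorem combined with multiplicity-one for the dominant weights strictly between $\mu$ and $\lambda$, extracts from (ii) a maximal vector $w(A)$, $0\neq A\in K^n$, inside $\rad(\lambda)$ (using that $\mu$ is then a maximal weight of $\rad(\lambda)$) to get (iv) via Lemma \ref{Conditions_pour_que_la_combinaison_lineaire_de_vecteurs_generateurs_de_V(k,n)_soit_tuee_par_tous_les_e_alpha}, and deduces (iii) from (iv) by pushing the solution $w(A)$ with $A_n\neq 0$ into $V=V(\lambda)/\rad(\lambda)$.

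The one point to flag is your closing citation: Lemma \ref{[Preliminaries] Weights and multiplicities: multiplicity of lambda - k alpha_i} and Proposition \ref{Various_weight_multiplicities} do not in fact cover the intermediate dominant weights $\lambda-2\alpha_1-\alpha_2$ and $\lambda-(2\alpha_1+\alpha_2+\cdots+\alpha_n)$ (dominant when $a>2$), which still need a short direct check (for instance, by Proposition \ref{A simplification for V(lambda)mu} each of these weight spaces of $V(\lambda)$ is spanned by the single vector $f_{\alpha_1}f_{\alpha_1+\alpha_2}v^{\lambda}$, respectively $f_{\alpha_1}f_{1,n}v^{\lambda}$, so has dimension one); this is the same gloss the paper itself makes when it asserts the multiplicities without proof --- and indeed the paper's explicit list of intermediate weights omits $\lambda-2\alpha_1$, which is dominant once $a\geqslant 4$ and which your systematic bound $c_1\geqslant\cdots\geqslant c_n$, $c_i\leqslant 2$ correctly captures (harmlessly, since its multiplicity is $1$ by the cited Lemma).
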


\begin{proof}
First observe that the weights $\nu \in \Lambda^+(\lambda)$ such that $\mu \prec \nu \prec \lambda$ are $\lambda-\alpha_1,$ $\lambda-2\alpha_1-\alpha_2$ (if $a>2$), $\lambda-(\alpha_1+\cdots+\alpha_n)$ and $\lambda-(2\alpha_1+\alpha_2+\cdots+\alpha_n)$ (if $a>2$), which all satisfy $\m _{V(\lambda)}(\nu)=1.$ Therefore none of the latter can afford the highest weight of a composition factor of \(V(\lambda)\) by Theorem \ref{[Preliminaries] Weights and multiplicities: Premet's theorem}. Proceeding exactly as in the proof of Proposition \ref{Conditions_for_f(i,j)_to_belong_to_V(i,j)}, using Lemma \ref{Conditions_pour_que_la_combinaison_lineaire_de_vecteurs_generateurs_de_V(k,n)_soit_tuee_par_tous_les_e_alpha} instead of Lemma \ref{[Dn<Bn] Prelimiaries for L of type An: existence of a maximal vector of weight lambda-1...1} then yields the desired result. We leave the details to the reader.
\end{proof}

\subsection{Study of \texorpdfstring{$L(\lambda_i)$}{LG} \texorpdfstring{$(1 < i < n)$}{LG}}     

Let $\lambda=\lambda_2$ and consider $\mu=\lambda-(\alpha_{1}+2\alpha_2+\cdots+2\alpha_n).$ (Observe that $\mu$ is the zero weight.) By (\ref{Elements_generating_W(lambda)_mu}), our choice of ordering $\leqslant$ on $\Phi^+,$ and Proposition \ref{[Dn<Bn] Preliminaries for L of type Bn: technical proposition concerning the weight lambda-2...2 for lambda=lambda_1} (applied to the   Levi subgroup corresponding to the simple roots \(\alpha_2,\ldots,\alpha_n\)), one checks that the weight space $V(\lambda)_{\mu}$ is spanned by the vectors
\begin{align*}
\{F_{1,2}v^{\lambda}\}			&\cup 				\{f_{\alpha_1}f_{\alpha_2}F_{2,3}v^{\lambda}\} 					\cr
								&\cup 				\{f_{1,j}F_{2,j+1}v^{\lambda}\}_{2\leqslant j < n}						\cr
								&\cup 				\{f_{2,j}F_{1,j+1} v^{\lambda}\}_{2\leqslant j < n} 					\cr
								&\cup					\{f_{2,n}f_{1,n}v^{\lambda}\},													\end{align*} 
where \(v^\lambda\) is a maximal vector of weight \(\lambda\) in \(V(\lambda)\) for \(B.\)

\begin{prop}\label{basis_of_W_lambda-alpha_k-1-2alpha_k-...-2alpha_n}
Let $\lambda=\lambda_2$ and set $\mu=\lambda-(\alpha_1+2\alpha_2+\cdots+2\alpha_n)\in \Lambda^+(\lambda).$ Then $\m_{V(\lambda)}(\mu)=n$ and a basis of $V(\lambda)_{\mu}$ is given by 
\begin{align}
\{F_{1,2}v^{\lambda}\} 			&\cup \{f_{\alpha_1}f_{\alpha_2}F_{2,3}v^{\lambda}\}				\cr
																				&\cup \{f_{2,j}F_{1,j+1} v^{\lambda}\}_{2\leqslant j < n}.
\label{W(lambda)_lambda-alpha_k-1-2alpha_k-...-2alpha_n}
\end{align}
\end{prop}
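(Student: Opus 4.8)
The plan is to read off the dimension from Proposition~\ref{Various_weight_multiplicities} and then prove directly that the $n$ displayed vectors are linearly independent, so that no explicit reduction of the larger $(2n-1)$-element spanning set exhibited just before the statement is needed. First I would note that $\lambda=\lambda_2$ is the highest root of $B_n$, so that $\mu=\lambda-(\alpha_1+2\alpha_2+\cdots+2\alpha_n)$ is precisely the zero weight; hence the $(\lambda_2,0)$ row of Table~\ref{Some_weight_multiplicities_in_Weyl_modules} gives $\m_{V(\lambda)}(\mu)=n$ (Weyl module weight multiplicities being the characteristic-free values recorded there). Since the displayed set \eqref{W(lambda)_lambda-alpha_k-1-2alpha_k-...-2alpha_n} consists of exactly $n$ vectors, all of weight $\mu$, it is a basis of $V(\lambda)_\mu$ as soon as these vectors are linearly independent; independence of $n$ vectors in an $n$-dimensional space then automatically forces them to span, so that the surplus generators are redundant.

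To establish independence I would identify each displayed vector, up to a nonzero scalar, with a coroot in the zero weight space. Write $\widetilde\alpha=\alpha_1+2\alpha_2+\cdots+2\alpha_n$ and $h_\gamma=[e_\gamma,f_\gamma]$ for $\gamma\in\Phi^+$. Since each nonzero weight of $V(\lambda)$ is a root, occurring with multiplicity one, every intermediate vector produced by a partial lowering monomial is a scalar multiple of a fixed root vector, and repeated use of the commutator formula together with Lemma~\ref{structure_constants_for_B} shows that $F_{1,2}v^\lambda$, $f_{\alpha_1}f_{\alpha_2}F_{2,3}v^\lambda$ and $f_{2,j}F_{1,j+1}v^\lambda$ are nonzero multiples of $h_{\widetilde\alpha}$, $h_{\alpha_1}$ and $h_{\alpha_2+\cdots+\alpha_j}$ respectively. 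For instance $F_{2,3}v^\lambda$ lies in the root space of $\alpha_1+\alpha_2$, then $f_{\alpha_2}F_{2,3}v^\lambda$ in that of $\alpha_1$, and a final application of $f_{\alpha_1}$ lands in $\langle h_{\alpha_1}\rangle_K$; all the intervening structure constants are $\pm1$ because the roots involved are long.

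It then remains to check that the $n$ coroots $h_{\widetilde\alpha}$, $h_{\alpha_1}$ and $h_{\alpha_2+\cdots+\alpha_j}$ $(2\le j<n)$ are linearly independent over $K$. These are the coroots of the $n$ long roots $\widetilde\alpha$, $\alpha_1$ and $\alpha_2+\cdots+\alpha_j$, and expressing them in the basis $h_{\alpha_1},\dots,h_{\alpha_n}$ of simple coroots yields a unimodular transition matrix (the vectors $h_{\alpha_1},\ h_{\alpha_2+\cdots+\alpha_j}$ span the first $n-1$ coordinates triangularly, and $h_{\widetilde\alpha}$ is the only one with a nonzero $h_{\alpha_n}$-coefficient). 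Hence they stay independent over $K$, which forces all coefficients in any relation among the displayed vectors to vanish and completes the argument.

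The hard part will be the bracket bookkeeping in the second step: one must track, via Lemma~\ref{structure_constants_for_B}, the chain of root vectors produced by each successive $f$ and confirm that the resulting scalar is a unit, i.e. that no intermediate structure constant vanishes modulo $p$. This is comfortable here since the relevant roots are long (so each constant is $\pm1$) and $p\neq2$ is assumed throughout this section. A direct alternative would be to eliminate the surplus generators $\{f_{1,j}F_{2,j+1}v^\lambda\}_{2\le j<n}$ and $f_{2,n}f_{1,n}v^\lambda$ by commuting root vectors, but this is less clean: for example $f_{2,n}f_{1,n}v^\lambda$ is a multiple of the \emph{short} coroot $h_{\alpha_2+\cdots+\alpha_n}$ and only reduces to the displayed basis through the coroot identity $h_{\alpha_2+\cdots+\alpha_n}=h_{\widetilde\alpha}-h_{\alpha_1}$, so the coroot picture is unavoidable in any case and I would prefer to make it the backbone of the proof from the start.
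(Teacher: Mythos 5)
Your proof is correct, but it takes a genuinely different route from the paper's. The paper establishes the result by showing that the $n$ displayed vectors \emph{span}: starting from the $(2n-1)$-element spanning set exhibited just before the statement, it uses Lemma~\ref{structure_constants_for_B} together with Proposition~\ref{[Dn<Bn] Preliminaries for L of type Bn: technical proposition concerning the weight lambda-2...2 for lambda=lambda_1} (applied to the $B_{n-1}$-Levi subgroup on $\alpha_2,\ldots,\alpha_n$) to rewrite the surplus generators explicitly, namely $f_{1,j}F_{2,j+1}v^{\lambda} = -f_{2,j}F_{1,j+1}v^{\lambda} - f_{\alpha_1}f_{\alpha_2}F_{2,3}v^{\lambda}$ and $f_{2,n}f_{1,n}v^{\lambda} = -F_{1,2}v^{\lambda} - f_{\alpha_1}f_{\alpha_2}F_{2,3}v^{\lambda}$; combined with $\m_{V(\lambda)}(\mu)=n$ from Proposition~\ref{Various_weight_multiplicities}, this yields the basis. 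You instead prove \emph{linear independence} of the displayed vectors directly, exploiting the special feature that $\lambda_2$ is the highest root $\widetilde\alpha=\alpha_1+2\alpha_2+\cdots+2\alpha_n$, so that $\Delta(\lambda_2)\cong \mathfrak{g}_{\C}$ and the displayed vectors become, up to sign, the coroots $h_{\widetilde\alpha}$, $h_{\alpha_1}$, $h_{\alpha_2+\cdots+\alpha_j}$ $(2\leqslant j<n)$, whose transition matrix to the simple coroots is unimodular. Your scalar bookkeeping is accurate (all intervening structure constants are $\pm1$ since only long roots occur, and these form a closed $D_n$-subsystem), as is the coroot identity $h_{\alpha_2+\cdots+\alpha_n}=h_{\widetilde\alpha}-h_{\alpha_1}$. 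What each approach buys: the paper's reduction is uniform with the rest of the appendix, where the same commutator manipulations recur for $a\lambda_1+\lambda_2$ and $a\lambda_1+\lambda_k$ (there $\lambda$ is no longer the highest root, so your adjoint-module identification is unavailable), and it records the explicit expressions of the redundant generators in the same style; your argument is shorter and more conceptual, but is special to $\lambda=\lambda_2$.

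One point should be made explicit to close your argument. Unimodularity of your $n$ coroots with respect to the simple coroots gives independence over $K$ only once one knows that the simple coroots themselves remain independent in $V(\lambda)_{\mu}$, i.e.\ that the $\Z$-span of your vectors has index prime to $p$ in the full zero-weight lattice $\left(V_{\Z}(\lambda)\right)_{0}$ of the minimal admissible lattice $V_{\Z}(\lambda)=\mathfrak{U}_{\Z}v^{\lambda}$. This does hold: since the Chevalley lattice $\mathfrak{g}_{\Z}$ is itself admissible for the adjoint representation, $V_{\Z}(\lambda)\subseteq \mathfrak{g}_{\Z}$, whence $\left(V_{\Z}(\lambda)\right)_{0}\subseteq \left(\mathfrak{g}_{\Z}\right)_{0}=\Z\langle h_{\alpha_1},\ldots,h_{\alpha_n}\rangle$, and by your unimodularity computation the latter equals the $\Z$-span of the displayed vectors. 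So your vectors form a $\Z$-basis of $\left(V_{\Z}(\lambda)\right)_{0}$ and hence a $K$-basis of $V(\lambda)_{\mu}$ after applying $-\otimes_{\Z}K$ (this even re-proves the multiplicity statement). With this supplement your proof is complete.
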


\begin{proof}
The assertion on \(\dim V(\lambda)_\mu\) holds by Proposition \ref{Various_weight_multiplicities}, hence it remains to show that $f_{1,j}F_{2,j+1}v^{\lambda}$ $(2\leqslant j<n)$ and $f_{2,n}f_{1,n}v^{\lambda}$ can be expressed as linear combinations of elements of \eqref{W(lambda)_lambda-alpha_k-1-2alpha_k-...-2alpha_n}. Let then $2\leqslant j<n$ be fixed. By Lemma \ref{structure_constants_for_B} and Proposition \ref{[Dn<Bn] Preliminaries for L of type Bn: technical proposition concerning the weight lambda-2...2 for lambda=lambda_1} \ref{[Dn<Bn] Preliminaries for L of type Bn: technical proposition concerning the weight lambda-2...2 for lambda=lambda_1 (part 1)}, applied to the $B_{n-1}$-Levi subgroup corresponding to the simple roots $\alpha_2,\ldots,\alpha_n$ (noticing that the structure constants  were chosen in a compatible way in Section \ref{structure constants and Chevalley basis}), we have 
\begin{align*}
f_{1,j}F_{2,j+1}v^{\lambda} 	&= f_{2,j}f_{\alpha_1}F_{2,j+1}v^{\lambda} -f_{\alpha_1}f_{2,j}F_{2,j+1}v^{\lambda} 		\cr
															&= - f_{2,j}F_{1,j+1}v^{\lambda} - f_{\alpha_1}f_{\alpha_2}F_{2,3}v^{\lambda},
\end{align*}
that is, $f_{1,j}F_{2,j+1}v^{\lambda} \in \left\langle f_{2,j}F_{1,j+1}v^{\lambda}, f_{\alpha_1}f_{\alpha_2}F_{2,3}v^{\lambda} \right\rangle_K.$ In a similar way, Lemma   \ref{structure_constants_for_B} and case \ref{[Dn<Bn] Preliminaries for L of type Bn: technical proposition concerning the weight lambda-2...2 for lambda=lambda_1 (part 2)} of Proposition \ref{[Dn<Bn] Preliminaries for L of type Bn: technical proposition concerning the weight lambda-2...2 for lambda=lambda_1}  (applied to the $B_{n-1}$-Levi subgroup corresponding to the simple roots $\alpha_2,\ldots,\alpha_n$) yield 
\begin{align*}
f_{2,n}f_{1,n}v^{\lambda} 			&=  -2F_{1,2}v^{\lambda} + f_{1,n}f_{2,n}v^{\lambda} \cr
																&=  -2F_{1,2}v^{\lambda} -f_{\alpha_1}(f_{2,n})^2 v^{\lambda} + f_{2,n}f_{\alpha_1}f_{2,n}v^{\lambda} \cr
																&=  -2F_{1,2}v^{\lambda} -2f_{\alpha_1}f_{\alpha_2}F_{2,3} v^{\lambda} - f_{2,n}f_{1,n}v^{\lambda},
\end{align*}
so that $f_{2,n}f_{1,n}v^{\lambda} = - F_{1,2}v^{\lambda} - f_{\alpha_1}f_{\alpha_2}F_{2,3}v^{\lambda}.$ Therefore $f_{2,n}f_{1,n}v^{\lambda}$ lies in the subspace of $V(\lambda)_\mu$ generated by $ F_{1,2}v^{\lambda}$ and $f_{\alpha_1}f_{\alpha_2}F_{2,3}v^{\lambda}$ as desired, thus completing the proof.
\end{proof}

Let $\lambda,$ $\mu$ be as above and consider an irreducible $KG$-module $V=L(\lambda)$ having highest weight $\lambda.$ As usual, take $V=V(\lambda)/\rad(\lambda)$ and write $v^+$ to denote the image of $v^{\lambda}$ in $V,$ that is, $v^+$ is a maximal vector of weight \(\lambda\) in $V$ for $B.$ By Proposition \ref{basis_of_W_lambda-alpha_k-1-2alpha_k-...-2alpha_n}, the weight space $V_{\mu}$ is spanned by the vectors
\begin{align}
\left\{F_{1,2}v^+\right\}	&\cup	\left\{f_{\alpha_1}f_{\alpha_2}F_{2,3}v^+\right\}				\cr
						 	&\cup \left\{f_{2,j}F_{1,j+1} v^+\right\}_{2\leqslant j < n}.
\label{[Dn<Bn] Preliminaries for L of type Bn: a basis of L(lambda2)_mu, where mu=lambda-12...2}
\end{align}
Now by \cite[Theorems 4.4 and 5.1]{Luebeck}, the $KG$-module $V(\lambda)$ is irreducible (recall that $p\neq 2$),  which in particular yields the following result.

\begin{prop}\label{[Dn<Bn] Preliminaries for L of type Bn: irreducibility of V(lambda2) and basis of L(lambda2)_mu, where mu=lambda-12...2}
Consider an irreducible $KG$-module $V=L(\lambda)$ having highest weight $\lambda=\lambda_2.$ Then $V=V(\lambda)$ and the $T$-weight $\mu=\lambda-(\alpha_1+2\alpha_2+\cdots+2\alpha_n)$ is dominant. Also $\m_V(\mu)=n$ and the set \eqref{[Dn<Bn] Preliminaries for L of type Bn: a basis of L(lambda2)_mu, where mu=lambda-12...2} forms a basis of $V_{\mu}.$
\end{prop}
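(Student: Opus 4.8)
The plan is to deduce the proposition almost immediately from the irreducibility statement recalled just above the statement, together with Proposition \ref{basis_of_W_lambda-alpha_k-1-2alpha_k-...-2alpha_n}, since essentially all of the computational content has already been established there.

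First I would invoke \cite[Theorems 4.4 and 5.1]{Luebeck} (recalling that $p\neq 2$ throughout this section) to conclude that the Weyl module $V(\lambda)$ of highest weight $\lambda=\lambda_2$ is irreducible. Since $L(\lambda)=V(\lambda)/\rad(\lambda)$ is the irreducible quotient of $V(\lambda)$ by its unique maximal submodule, the irreducibility of $V(\lambda)$ forces $\rad(\lambda)=0$, whence $V=L(\lambda)\cong V(\lambda)$. I would fix this isomorphism so that the image $v^+$ of $v^\lambda$ is a maximal vector of weight $\lambda$ in $V$ for $B$.

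Given the identification $V=V(\lambda)$, the remaining assertions are precisely what Proposition \ref{basis_of_W_lambda-alpha_k-1-2alpha_k-...-2alpha_n} records for $V(\lambda)$: namely that $\mu=\lambda-(\alpha_1+2\alpha_2+\cdots+2\alpha_n)$ is dominant, that $\m_{V(\lambda)}(\mu)=n$, and that the set \eqref{W(lambda)_lambda-alpha_k-1-2alpha_k-...-2alpha_n} is a basis of $V(\lambda)_\mu$. Transporting along the isomorphism $V=V(\lambda)$, the set \eqref{W(lambda)_lambda-alpha_k-1-2alpha_k-...-2alpha_n} (written in terms of $v^\lambda$) is carried exactly to the set \eqref{[Dn<Bn] Preliminaries for L of type Bn: a basis of L(lambda2)_mu, where mu=lambda-12...2} (written in terms of $v^+$), so that the latter is a basis of $V_\mu$ and $\m_V(\mu)=n$, as required.

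I do not expect a genuine obstacle here, as the hard work --- the weight-multiplicity computation and the linear-dependence relations reducing the spanning set of \eqref{Elements_generating_W(lambda)_mu} to \eqref{W(lambda)_lambda-alpha_k-1-2alpha_k-...-2alpha_n} --- was already carried out in the proof of Proposition \ref{basis_of_W_lambda-alpha_k-1-2alpha_k-...-2alpha_n}, while the irreducibility of $V(\lambda)$ is cited from \cite{Luebeck}. The only point requiring a moment of care is to note that the generating set \eqref{[Dn<Bn] Preliminaries for L of type Bn: a basis of L(lambda2)_mu, where mu=lambda-12...2} corresponds vector by vector to the basis \eqref{W(lambda)_lambda-alpha_k-1-2alpha_k-...-2alpha_n} under the isomorphism, which is immediate from the definition of the $\Lie(G)$-action on $V(\lambda)$ and the fact that $v^+$ is the image of $v^\lambda$.
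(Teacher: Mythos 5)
Your proposal is correct and is essentially identical to the paper's own argument: the paper likewise obtains the spanning set \eqref{[Dn<Bn] Preliminaries for L of type Bn: a basis of L(lambda2)_mu, where mu=lambda-12...2} from Proposition \ref{basis_of_W_lambda-alpha_k-1-2alpha_k-...-2alpha_n} and then cites \cite[Theorems 4.4 and 5.1]{Luebeck} (using $p\neq 2$) to conclude $V(\lambda)$ is irreducible, so that $V=V(\lambda)$ and the spanning set is in fact a basis with $\m_V(\mu)=n$.
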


Finally,  consider an irreducible $KG$-module $V=L(\lambda)$ having highest weight $\lambda=\lambda_i$ for some $1<i<n$ and set  $\mu=\lambda-(\alpha_1+\cdots+\alpha_{i-1} +2\alpha_i+\cdots+2\alpha_n).$ Proceeding exactly as in the proof of Proposition \ref{basis_of_W_lambda-alpha_k-1-2alpha_k-...-2alpha_n}, one easily deduces that the weight space $V_{\mu}$ is spanned by the vectors
\begin{align}
\left\{F_{1,i}v^+\right\} 							&\cup \left\{f_{1,i-1}f_{\alpha_i}F_{i,i+1}v^+\right\}		\cr																			&\cup \left\{f_{i,j}F_{1,j+1} v^+\right\}_{i\leqslant j < n},
\label{[Dn<Bn] Preliminaries for L of type Bn: a basis of L(lambda_i)_mu, where mu=lambda-1...12...2}
\end{align}
where $v^+$ is a maximal vector of weight $\lambda$ in $V$ for $B.$ Hence applying Lemma \ref{[Preliminaries] Parabolic embeddings: restriction to Levi subgroup} to the $B_{n-i+2}$-Levi subgroup of $G$ corresponding to the simple roots $\alpha_{i-1},\ldots,\alpha_n,$ together with Proposition \ref{[Dn<Bn] Preliminaries for L of type Bn: irreducibility of V(lambda2) and basis of L(lambda2)_mu, where mu=lambda-12...2}, yields the following result. The details are left to the reader.

\begin{prop}\label{basis_of_W_lambda-alpha_i-2alpha_k-...-2alpha_n}
Let \(1<i<n\) and consider an irreducible $KG$-module $V=L(\lambda)$ having highest weight $\lambda=\lambda_i.$ Then the $T$-weight $\mu=\lambda-(\alpha_1+\cdots+\alpha_{i-1} +2\alpha_i+\cdots+2\alpha_n)$ is dominant, $\m_V(\mu)=n-i+2,$ and the set \eqref{[Dn<Bn] Preliminaries for L of type Bn: a basis of L(lambda_i)_mu, where mu=lambda-1...12...2} forms a basis of $V_{\mu}.$ 
\end{prop}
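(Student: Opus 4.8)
The plan is to deduce all three assertions from the already-settled case $\lambda=\lambda_2$ (Proposition \ref{[Dn<Bn] Preliminaries for L of type Bn: irreducibility of V(lambda2) and basis of L(lambda2)_mu, where mu=lambda-12...2}) by restricting to a suitable Levi subgroup, and then to close the basis statement with a dimension count. The first step is to identify $\mu$ explicitly: writing $\varepsilon_1,\ldots,\varepsilon_n$ for the standard basis of $X(T)_{\R}$, a short computation gives $\alpha_1+\cdots+\alpha_{i-1}+2\alpha_i+\cdots+2\alpha_n=\varepsilon_1+\varepsilon_i$, so that $\mu=\lambda_i-(\varepsilon_1+\varepsilon_i)=\varepsilon_2+\cdots+\varepsilon_{i-1}$. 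This weight is $\mathscr{W}$-conjugate to the dominant weight $\lambda_{i-2}=\varepsilon_1+\cdots+\varepsilon_{i-2}$ (for $i=2$ it degenerates to $\mu=0$), and since all members of a $\mathscr{W}$-orbit have equal multiplicity, I obtain $\m_V(\mu)=\m_V(\lambda_{i-2})$ at once.

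The crucial observation for the multiplicity is that, while $\mu$ itself is not of the form $\lambda-\sum_{\alpha\in J}c_{\alpha}\alpha$ for $J=\{\alpha_{i-1},\ldots,\alpha_n\}$, its dominant conjugate is: indeed $\lambda_i-\lambda_{i-2}=\alpha_{i-1}+2\alpha_i+\cdots+2\alpha_n$, a non-negative combination of the roots in $J$. I would therefore apply Lemma \ref{[Preliminaries] Parabolic embeddings: restriction to Levi subgroup} with this $J$, whose Levi factor $H=L_J'$ is simple of type $B_{n-i+2}$. Setting $\lambda'=\lambda|_{T_H}$ and $\mu''=\lambda_{i-2}|_{T_H}$, the pairings $\langle\lambda_i,\alpha_j\rangle=\delta_{ij}$ show that $\lambda'$ is the second fundamental weight of $H$, and $\mu''=\lambda'-(\alpha_{i-1}+2\alpha_i+\cdots+2\alpha_n)$ is precisely the weight handled for $H$ in Proposition \ref{[Dn<Bn] Preliminaries for L of type Bn: irreducibility of V(lambda2) and basis of L(lambda2)_mu, where mu=lambda-12...2}. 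That result yields $\m_{L_H(\lambda')}(\mu'')=\rank H=n-i+2$, and hence $\m_V(\mu)=n-i+2$. Note that this argument takes place entirely in $L(\lambda_i)$ and never requires irreducibility of the Weyl module $V(\lambda_i)$, which is why it survives the passage from $i=2$ to general $i$.

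To finish, I would confirm that $V_{\mu}$ is spanned by the $n-i+2$ vectors displayed in \eqref{[Dn<Bn] Preliminaries for L of type Bn: a basis of L(lambda_i)_mu, where mu=lambda-1...12...2}, proceeding exactly as in the proof of Proposition \ref{basis_of_W_lambda-alpha_k-1-2alpha_k-...-2alpha_n}: starting from the generating set of $V(\lambda)_{\mu}$ given by \eqref{Elements_generating_W(lambda)_mu} and trimmed by Proposition \ref{A simplification for V(lambda)mu} (legitimate here since $p\neq 2$ makes all coefficients of $\mu$ smaller than $p$), I would pass to the quotient $L(\lambda)=V(\lambda)/\rad(\lambda)$ and repeatedly invoke the structure-constant identities of Lemma \ref{structure_constants_for_B}, together with Proposition \ref{[Dn<Bn] Preliminaries for L of type Bn: technical proposition concerning the weight lambda-2...2 for lambda=lambda_1} applied to the $B_{n-i+1}$-Levi on $\{\alpha_i,\ldots,\alpha_n\}$, to rewrite the remaining generators (those of the form $f_{1,j}F_{i,j+1}v^+$ and $f_{i,n}f_{1,n}v^+$) in terms of the listed vectors. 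Having $n-i+2$ spanning vectors in a weight space now known to be $(n-i+2)$-dimensional forces them to be linearly independent, hence a basis.

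The main obstacle, I expect, is twofold. First, one must feed the parabolic lemma the dominant conjugate $\lambda_{i-2}$ rather than $\mu$ itself, the hypothesis $\mu=\lambda-\sum_{\alpha\in J}c_{\alpha}\alpha$ genuinely failing for $\mu$ but holding for $\lambda_{i-2}$; overlooking this $\mathscr{W}$-conjugacy step invalidates the reduction. Second, the spanning-set collapse requires careful bookkeeping with the signs of the structure constants fixed in Section \ref{structure constants and Chevalley basis} and with the $B_{n-i+1}$-Levi computations, exactly the kind of calculation that the original text defers with the phrase ``the details are left to the reader''.
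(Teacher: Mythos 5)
Your proof is correct and follows essentially the same route as the paper's: the paper likewise establishes spanning by imitating the proof of Proposition \ref{basis_of_W_lambda-alpha_k-1-2alpha_k-...-2alpha_n}, then obtains the multiplicity $n-i+2$ by combining Lemma \ref{[Preliminaries] Parabolic embeddings: restriction to Levi subgroup} for the $B_{n-i+2}$-Levi subgroup on $\{\alpha_{i-1},\ldots,\alpha_n\}$ with Proposition \ref{[Dn<Bn] Preliminaries for L of type Bn: irreducibility of V(lambda2) and basis of L(lambda2)_mu, where mu=lambda-12...2}, the basis assertion following from the dimension count. Your explicit passage to the dominant $\mathscr{W}$-conjugate $\lambda_{i-2}=\lambda_i-(\alpha_{i-1}+2\alpha_i+\cdots+2\alpha_n)$ is a genuine clarification rather than a detour: as you observe, the hypothesis of Lemma \ref{[Preliminaries] Parabolic embeddings: restriction to Levi subgroup} fails for $\mu$ itself, since $\mu$ is not of the form $\lambda-\sum_{\alpha\in J}c_\alpha\alpha$ for $J=\{\alpha_{i-1},\ldots,\alpha_n\}$; the conjugacy step is exactly what makes the paper's terse citation of that lemma legitimate.

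One point you should state explicitly rather than pass over: your own computation $\mu=\varepsilon_2+\cdots+\varepsilon_{i-1}=\lambda_{i-1}-\lambda_1$ gives $\langle\mu,\alpha_1\rangle=-1$ for $i\geqslant 3$, so the assertion in the statement that $\mu$ is dominant is false except when $i=2$ (where $\mu=0$); $\mu$ is merely $\mathscr{W}$-conjugate to the dominant weight $\lambda_{i-2}.$ Your proposal thus proves the two assertions that are actually true (the multiplicity and the basis) and silently omits the third, which cannot be proved. Since only the multiplicity and basis claims are ever used later (for instance in Proposition \ref{Une premiere simplification de l'ensemble generateur de V(nu)} and in Lemma \ref{Conditions_pour_que_la_combinaison_lineaire_de_vecteurs_generateurs_de_V(l,k,n)_soit_tuee_par_tous_les_e_alpha}), this is an erratum in the paper rather than a defect of your argument, but a complete write-up should flag the discrepancy instead of leaving that part of the statement unaddressed.
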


\subsection{Study of \texorpdfstring{$L(a\lambda_1+\lambda_2)$}{LG} \texorpdfstring{$(a \in \Z_{>0})$}{LG}}    

Let $a\in \Z_{>0}$ and set  $\lambda=a\lambda_1+\lambda_2.$ Also write $\mu_{1,2}=\lambda-\alpha_1-\alpha_2$ and  consider $\mu=\lambda-(\alpha_1+2\alpha_2+\cdots+2\alpha_n).$  By Proposition \ref{A simplification for V(lambda)mu}, our choice of ordering $\leqslant$ on $\Phi^+$ and Proposition \ref{[Dn<Bn] Preliminaries for L of type Bn: technical proposition concerning the weight lambda-2...2 for lambda=lambda_1} (applied to the Levi subgroup of type $B_{n-1}$ corresponding to the simple roots $\alpha_2,\ldots,\alpha_n$), one sees that the weight space $V(\lambda)_{\mu}$ is spanned by the vectors  
\begin{align}
\{F_{1,2}v^{\lambda}\} 			&\cup \{f_{\alpha_1}f_{\alpha_2}F_{2,3}v^{\lambda}\} 						\cr 
																				&\cup \{f_{1,j}F_{2,j+1} v^{\lambda}\}_{1 < j < n} 							\cr
																				&\cup \{f_{2,j}F_{1,j+1}v^{\lambda}\}_{1 < j < n} 								\cr
																				&\cup \{f_{2,n}f_{1,n}v^{\lambda}\},
\label{case_l=k-1}
\end{align}
where $v^{\lambda}\in V(\lambda)_{\lambda}$ denotes a maximal vector of weight \(\lambda\)  in $V(\lambda)$ for $B.$ As usual, an application of Proposition \ref{Various_weight_multiplicities} gives $\dim V(\lambda)_{\mu}= 2n-1,$ so that the generating elements of \eqref{case_l=k-1} are linearly independent. The following assertion thus holds.

\begin{prop}\label{A_basis_for_W(lambda)_(lambda-alphak-1-2alphak-...-2alphan)}
Let $\lambda=a\lambda_1+\lambda_2\in X^+(T),$ where $a\in \Z_{>0},$ and set  $\mu=\lambda-(\alpha_1+2\alpha_2+\cdots+2\alpha_n).$ Then $\mu$ is dominant and the set \eqref{case_l=k-1} forms a basis of the weight space $V(\lambda)_{\mu}.$ 
\end{prop}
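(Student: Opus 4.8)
The plan is to follow the same three-step template already used for Propositions \ref{[Dn<Bn] Preliminaries for L of type An: A basis for V(lambda)_mu} and \ref{A_basis_of_W(lambda)_(lambda-2alphak-...-2alphan)}: first confirm that $\mu$ is dominant, then recall that the spanning property of \eqref{case_l=k-1} has already been established in the paragraph immediately preceding the statement, and finally deduce linear independence by matching the cardinality of \eqref{case_l=k-1} against $\dim V(\lambda)_{\mu}.$ In other words, once the two counts agree, the spanning set is forced to be a basis and nothing further is needed.

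For dominance I would compute $\langle \mu, \alpha_j\rangle$ directly. Writing $\beta=\alpha_1+2\alpha_2+\cdots+2\alpha_n,$ so that $\mu=\lambda-\beta,$ and using the Cartan integers of type $B_n$ (in particular $\langle \alpha_{n-1},\alpha_n\rangle=-2$ and $\langle \alpha_n,\alpha_{n-1}\rangle=-1$), a short telescoping calculation gives $\langle \beta,\alpha_1\rangle=0,$ $\langle \beta,\alpha_2\rangle=1,$ and $\langle \beta,\alpha_j\rangle=0$ for $j\geqslant 3.$ Since $\langle \lambda,\alpha_1\rangle=a,$ $\langle \lambda,\alpha_2\rangle=1$ and $\langle \lambda,\alpha_j\rangle=0$ otherwise, this yields $\mu=a\lambda_1,$ which is dominant because $a\in\Z_{>0}.$ This computation also serves to identify $\mu$ with the weight $(a-1)\lambda_1+\lambda_{i-1}$ of the last row of Table \ref{Some_weight_multiplicities_in_Weyl_modules} in the case $i=2,$ which is what allows the multiplicity formula there to apply.

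Next I would simply count the vectors in \eqref{case_l=k-1}: the two singletons $F_{1,2}v^{\lambda}$ and $f_{\alpha_1}f_{\alpha_2}F_{2,3}v^{\lambda},$ the two families $\{f_{1,j}F_{2,j+1}v^{\lambda}\}_{1<j<n}$ and $\{f_{2,j}F_{1,j+1}v^{\lambda}\}_{1<j<n}$ (each of cardinality $n-2$), and the singleton $f_{2,n}f_{1,n}v^{\lambda},$ for a total of $2n-1$ vectors. On the other hand, Proposition \ref{Various_weight_multiplicities}, applied with $i=2,$ gives $\m_{V(\lambda)}(\mu)=i(n-i+2)-1=2(n-2+2)-1=2n-1.$ Since the $2n-1$ elements of \eqref{case_l=k-1} span the $(2n-1)$-dimensional space $V(\lambda)_{\mu},$ they must be linearly independent, and hence form a basis.

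I do not anticipate any genuine obstacle: both the explicit spanning set and the weight multiplicity are supplied by earlier results, so the argument reduces to the bookkeeping of comparing two integers. The only place demanding a little care is the dominance/identification step of the second paragraph, which must be carried out so as to match $\mu$ to the correct row of Table \ref{Some_weight_multiplicities_in_Weyl_modules}; everything else is routine, and as in the analogous propositions the details of the dimension count may reasonably be left to the reader.
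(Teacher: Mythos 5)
Your proposal is correct and follows essentially the same route as the paper: spanning of \eqref{case_l=k-1} comes from the discussion preceding the statement, and Proposition \ref{Various_weight_multiplicities} (last row of Table \ref{Some_weight_multiplicities_in_Weyl_modules} with $i=2$) gives $\dim V(\lambda)_{\mu}=2n-1,$ which equals the number of generators, forcing linear independence. Your explicit check that $\mu=a\lambda_1$ is dominant (and thereby matches the correct row of the table) is a detail the paper leaves implicit, and your computation of it is accurate.
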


Suppose for the remainder of this section that $p \mid a+2,$ so that $\mu_{1,2}$ affords the highest weight of a composition factor of $V(\lambda)$ by Proposition \ref{Conditions_for_f(i,j)_to_belong_to_V(i,j)} (applied to a suitable Levi subgroup). Also denote by $u^+$ the corresponding maximal vector in $V(\lambda)_{\mu_{1,2}}$  given in Remark \ref{A_maximal_vector_in_V_(A_n)(a,0,....,0,b)}, and set $$\overline{V(\lambda)}= V(\lambda)/\langle G u^+\rangle_K.$$

\begin{lem}\label{[Dn<Bn] Preliminaries for L of type Bn: reduction to the study of V(lambda)/<Lu> for lambda = a lambda1 + lambda2}
Assume $p\mid a+2$ and adopt the notation introduced above. Then $[\langle G u^+\rangle_K,L(\mu)]=0.$ In particular $$[V(\lambda),L(\mu)]=[\overline{V(\lambda)},L(\mu)].$$
\end{lem}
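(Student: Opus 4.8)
The plan is to first dispose of the ``in particular'' clause, which is purely formal: the short exact sequence \(0\to \langle G u^+\rangle_K \to V(\lambda)\to \overline{V(\lambda)}\to 0\) gives, by additivity of composition multiplicities, \([V(\lambda),L(\mu)]=[\langle G u^+\rangle_K,L(\mu)]+[\overline{V(\lambda)},L(\mu)]\), so the equality of multiplicities is immediate from the vanishing statement. For the vanishing statement, I would use that \(u^+\) is a maximal vector of weight \(\mu_{1,2}\) in \(V(\lambda)\) for \(B\); hence \(\langle G u^+\rangle_K\) is a homomorphic image of the Weyl module \(V(\mu_{1,2})\), giving \([\langle G u^+\rangle_K,L(\mu)]\leqslant [V(\mu_{1,2}),L(\mu)]\). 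A short Cartan-invariant computation then identifies the two relevant weights: one finds \(\mu_{1,2}=(a-1)\lambda_1+\lambda_3\) (with the \(\lambda_3\)-coefficient replaced by \(2\) when \(n=3\), since \(\alpha_3\) is then short), while the hypothesis \(p\mid a+2\) yields the clean expression \(\mu=a\lambda_1\).

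Next I would analyse \([V(\mu_{1,2}),L(\mu)]\) through the Levi subgroup \(L_J\) of type \(B_{n-1}\) attached to \(J=\{\alpha_2,\ldots,\alpha_n\}\). Since \(\mu_{1,2}-\mu=\alpha_2+2\alpha_3+\cdots+2\alpha_n\) involves no \(\alpha_1\), both weights lie in the top graded piece of \(V(\mu_{1,2})\) for the grading by the \(\alpha_1\)-coefficient, and that piece is isomorphic, as an \(L_J\)-module, to a Weyl module for \(L_J\); its \(B_{n-1}\)-highest weight is that of \(\wedge^2\) of the natural \(B_{n-1}\)-module (the adjoint module), whereas \(\mu\) restricts to the trivial \(B_{n-1}\)-weight. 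A maximal vector of weight \(\mu\) in \(V(\mu_{1,2})\) must then be \(L_J\)-maximal, and it is automatically annihilated by \(e_{\alpha_1}\) because \(\mu+\alpha_1\) is not a weight (\(\mu_{1,2}-(\mu+\alpha_1)\) is not a non-negative combination of simple roots). Hence such a vector exists exactly when the adjoint Weyl module for \(B_{n-1}\) has a trivial composition factor, i.e. exactly when \(p\mid 2(n-1)-1=2n-3\). When \(p\nmid 2n-3\) we get \([V(\mu_{1,2}),L(\mu)]=0\) and the lemma follows from the crude bound.

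The hard part will be the residual case \(p\mid 2n-3\), in which \(L(\mu)\) really is a composition factor of \(V(\mu_{1,2})\), occurring with multiplicity one (by the same Premet/multiplicity-one considerations used in the proofs of Propositions \ref{Conditions_for_f(i,j)_to_belong_to_V(i,j)} and \ref{Conditions_for_f(k,n)_to_belong_to_V(k,n)}), so the bound \([\langle G u^+\rangle_K,L(\mu)]\leqslant 1\) no longer settles matters. Here I must show that this single copy dies in the surjection \(V(\mu_{1,2})\twoheadrightarrow\langle G u^+\rangle_K\); equivalently, that the \(L_J\)-module \(M=\Lie(L_J)u^+\subseteq V(\lambda)\) generated by \(u^+\) is the \emph{irreducible} adjoint module of \(B_{n-1}\) rather than the full Weyl module, i.e. that its zero weight space has dimension \(n-2\) rather than \(n-1\). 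I would verify this by an explicit lowering-operator computation: writing \(u^+\) in closed form via Remark \ref{A_maximal_vector_in_V_(A_n)(a,0,....,0,b)}, expressing the candidate weight-\(\mu\) \(L_J\)-maximal vector in terms of the basis of \(V(\lambda)_\mu\) furnished by Proposition \ref{A_basis_for_W(lambda)_(lambda-alphak-1-2alphak-...-2alphan)}, and checking that its image in \(V(\lambda)\) vanishes. This explicit vanishing in the linked case is where the genuine work lies; once it is established, \(\langle G u^+\rangle_K\) has simple head \(L(\mu_{1,2})\) and no composition factor isomorphic to \(L(\mu)\), completing the proof.
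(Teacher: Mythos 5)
Your reduction is the same as the paper's: by Jantzen's lemma, $\langle Gu^+\rangle_K$ is a homomorphic image of $V(\mu_{1,2})$, so it suffices to prove $[V(\mu_{1,2}),L(\mu)]=0$, and you correctly localize this in the Levi $L_J$ of type $B_{n-1}$, where $\mu_{1,2}$ restricts to the adjoint highest weight and $\mu$ to the zero weight. But the pivotal claim you make at that point is false: the adjoint Weyl module for $B_{n-1}$ does \emph{not} acquire a trivial composition factor when $p\mid 2n-3$; for $p\neq 2$ it is irreducible, full stop. This is exactly the content of Proposition \ref{[Dn<Bn] Preliminaries for L of type Bn: irreducibility of V(lambda2) and basis of L(lambda2)_mu, where mu=lambda-12...2} (via L\"ubeck's theorem; equivalently, $\mathfrak{so}_{2n-1}$ is a simple Lie algebra in odd characteristic), and it is the one fact the paper's two-line proof invokes, making the vanishing hold unconditionally. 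The divisibility $p\mid 2n-3$ you imported is the criterion of Proposition \ref{condition_if_l=k-1} for $\mu$ to afford a composition factor of $V(\lambda)=V(a\lambda_1+\lambda_2)$ itself, equivalently of $\overline{V(\lambda)}$ --- which is precisely what the present lemma is designed to detect --- not of $V(\mu_{1,2})$. You have transplanted the condition onto the wrong Weyl module.

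As a consequence, your ``hard residual case'' $p\mid 2n-3$ is vacuous, and it is also the only place where you offer a plan (a lowering-operator computation you ``would'' carry out) rather than a proof; so the proposal is incomplete exactly where it believes the genuine work lies, and in fact no work is needed there. Two smaller points. First, even in your ``good'' case the inference ``no maximal vector of weight $\mu$ in $V(\mu_{1,2})$, hence $[V(\mu_{1,2}),L(\mu)]=0$'' is not valid: a composition factor of a module need not be generated by a maximal vector of that module. The correct deduction is either by weight multiplicities ($\m_{V(\mu_{1,2})}(\mu)=\m_{L(\mu_{1,2})}(\mu)=n-1$ by Levi restriction together with irreducibility of the adjoint Weyl module, whereas an extra factor $L(\mu)$ would force $\m_{V(\mu_{1,2})}(\mu)\geqslant n$), or by noting that any such factor would produce the trivial module as an $L_J$-composition factor of the degree-zero graded piece of $V(\mu_{1,2})$, which is the (irreducible, non-trivial) adjoint Weyl module for $L_J$. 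Second, the identity $\mu=a\lambda_1$ holds independently of $p\mid a+2$; that divisibility is needed only for the maximal vector $u^+$ to exist in the first place.
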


\begin{proof}
By \cite[Lemma 2.13 b)]{Jantzen}, the \(KG\)-module $\langle G u^+\rangle_K$ is an image of \(V(\mu_{1,2}),\) in which $L(\mu)$ cannot occur as a composition factor by Proposition \ref{[Dn<Bn] Preliminaries for L of type Bn: irreducibility of V(lambda2) and basis of L(lambda2)_mu, where mu=lambda-12...2}. The result then follows.
\end{proof}

In view of Lemma \ref{[Dn<Bn] Preliminaries for L of type Bn: reduction to the study of V(lambda)/<Lu> for lambda = a lambda1 + lambda2}, we are led to investigate the structure of the quotient $\overline{V(\lambda)}.$ Write $\bar{v}^{\lambda}$ for the image of $v^{\lambda}$ in $\overline{V(\lambda)}.$ By Lemma \ref{structure_constants_for_B} and Remark \ref{A_maximal_vector_in_V_(A_n)(a,0,....,0,b)}, we  successively get 
\begin{equation}
f_{1,r}\bar{v}^{\lambda} = f_{3,r}f_{1,2}\bar{v}^{\lambda} = f_{3,r}f_{\alpha_1}f_{\alpha_2}\bar{v}^{\lambda}= f_{\alpha_1} f_{3,r} f_{\alpha_2}\bar{v}^{\lambda}= f_{\alpha_1}f_{2,r}\bar{v}^{\lambda} 
\label{[Dn<Bn] Preliminaries: f_(1,r) = f_(1) f_(2,r) for 2<r<n}
\end{equation}
for $2 < r \leqslant n.$ Also, since $\langle G u^+\rangle_K$ is an image of $V(\mu_{1,2})$ and $\m_{L(\mu_{1,2})}(\mu)=n-1$ by Proposition \ref{[Dn<Bn] Preliminaries for L of type Bn: irreducibility of V(lambda2) and basis of L(lambda2)_mu, where mu=lambda-12...2}, we have $\dim \overline{V(\lambda)}_\mu = n.$ Those observations can be used to determine a basis for the weight space $\overline{V(\lambda)}_{\mu},$ as the following result shows.

\newpage
\begin{prop}\label{Une premiere simplification de l'ensemble generateur de V(mu)}
Let $a\in \Z_{>0}$ be such that $p\mid a+2$ and set  $\lambda=a\lambda_1+\lambda_2.$ Also consider $\mu=\lambda-(\alpha_1+2\alpha_2+\cdots+2\alpha_n) $ and let $u^+$ be the maximal vector in $V(\lambda)_{\mu_{1,2}}$ for $B$ given in Remark \textnormal{\ref{A_maximal_vector_in_V_(A_n)(a,0,....,0,b)}}. Finally,  write $\bar{v}^{\lambda}$ for the image of $v^{\lambda}$ in $\overline{V(\lambda)}= V(\lambda)/\langle G u^+\rangle_K.$ Then \(\dim \overline{V(\lambda)}_\mu=n\) and a basis of the weight space $\overline{V(\lambda)}_{\mu}$ is given by
\begin{equation}
\{F_{1,2}\bar{v}^{\lambda}\}	\cup \{f_{2,j}F_{1,j+1}\bar{v}^{\lambda}\}_{1 < j < n} \cup \{f_{2,n}f_{1,n}\bar{v}^{\lambda}\}.
\label{elements_generating_W_{lambda-alpha_k-1-2alpha_k-...-2alpha_n}}
\end{equation} 
\end{prop}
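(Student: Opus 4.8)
The plan is to start from the explicit basis of the ambient weight space $V(\lambda)_\mu$ furnished by Proposition \ref{A_basis_for_W(lambda)_(lambda-alphak-1-2alphak-...-2alphan)}, i.e. the $2n-1$ vectors listed in \eqref{case_l=k-1}, and to show that in the quotient $\overline{V(\lambda)}$ the $n-1$ surplus generators not appearing in \eqref{elements_generating_W_{lambda-alpha_k-1-2alpha_k-...-2alpha_n}}---namely $f_{\alpha_1}f_{\alpha_2}F_{2,3}\bar v^\lambda$ and the vectors $f_{1,j}F_{2,j+1}\bar v^\lambda$ for $1<j<n$---can each be rewritten as $K$-linear combinations of the $n$ listed vectors. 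Since the equality $\dim \overline{V(\lambda)}_\mu = n$ has already been established in the discussion preceding the statement, it then suffices to prove that \eqref{elements_generating_W_{lambda-alpha_k-1-2alpha_k-...-2alpha_n}} spans $\overline{V(\lambda)}_\mu$, the dimension count forcing it to be a basis. The engine driving every reduction is the quotient relation \eqref{[Dn<Bn] Preliminaries: f_(1,r) = f_(1) f_(2,r) for 2<r<n}, namely $f_{1,r}\bar v^\lambda = f_{\alpha_1}f_{2,r}\bar v^\lambda$ (valid for $2\leqslant r\leqslant n$, the case $r=2$ being the defining relation $f_{1,2}\bar v^\lambda = f_{\alpha_1}f_{\alpha_2}\bar v^\lambda$ of Remark \ref{A_maximal_vector_in_V_(A_n)(a,0,....,0,b)}), together with the structure constants of Lemma \ref{structure_constants_for_B}.

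First I would dispose of $f_{\alpha_1}f_{\alpha_2}F_{2,3}\bar v^\lambda$ by computing $f_{2,n}f_{1,n}\bar v^\lambda$. Writing $f_{1,n}\bar v^\lambda = f_{\alpha_1}f_{2,n}\bar v^\lambda$ and commuting $f_{2,n}$ past $f_{\alpha_1}$ (here $\alpha_1 + \cdots + \alpha_n$ is a short root and $[f_{2,n}, f_{\alpha_1}] = -f_{1,n}$ by Lemma \ref{structure_constants_for_B}), one is left with $f_{\alpha_1}(f_{2,n})^2\bar v^\lambda$ and $f_{1,n}f_{2,n}\bar v^\lambda$; applying the second assertion of Proposition \ref{[Dn<Bn] Preliminaries for L of type Bn: technical proposition concerning the weight lambda-2...2 for lambda=lambda_1} on the $B_{n-1}$-Levi with simple roots $\alpha_2, \ldots, \alpha_n$ gives $(f_{2,n})^2\bar v^\lambda = 2 f_{\alpha_2}F_{2,3}\bar v^\lambda$. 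Collecting terms (and using $p\neq 2$) should yield the clean identity $f_{\alpha_1}f_{\alpha_2}F_{2,3}\bar v^\lambda = f_{2,n}f_{1,n}\bar v^\lambda$.

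Next I would handle the family $f_{1,j}F_{2,j+1}\bar v^\lambda$, $1<j<n$. As $f_{1,j}$ and $F_{2,j+1}$ commute (their root sum is not a root), I move $f_{1,j}$ onto $\bar v^\lambda$ and replace $f_{1,j}\bar v^\lambda$ by $f_{\alpha_1}f_{2,j}\bar v^\lambda$; commuting $F_{2,j+1}$ past $f_{\alpha_1}$ (with $[F_{2,j+1}, f_{\alpha_1}] = -F_{1,j+1}$ from Lemma \ref{structure_constants_for_B}) and using the first assertion of Proposition \ref{[Dn<Bn] Preliminaries for L of type Bn: technical proposition concerning the weight lambda-2...2 for lambda=lambda_1} on the same $B_{n-1}$-Levi (so that $f_{2,j}F_{2,j+1}\bar v^\lambda = f_{\alpha_2}F_{2,3}\bar v^\lambda$) produces $f_{1,j}F_{2,j+1}\bar v^\lambda = f_{\alpha_1}f_{\alpha_2}F_{2,3}\bar v^\lambda - f_{2,j}F_{1,j+1}\bar v^\lambda$, which by the previous step equals $f_{2,n}f_{1,n}\bar v^\lambda - f_{2,j}F_{1,j+1}\bar v^\lambda$. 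Both summands lie in \eqref{elements_generating_W_{lambda-alpha_k-1-2alpha_k-...-2alpha_n}}, so every surplus generator is accounted for and the spanning statement follows.

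The main obstacle is twofold. Unlike the $\lambda=\lambda_2$ situation of Proposition \ref{basis_of_W_lambda-alpha_k-1-2alpha_k-...-2alpha_n}, here $f_{\alpha_1}\bar v^\lambda \neq 0$, so each commutation spawns extra terms of the shape $F_{\bullet,\bullet}f_{\alpha_1}\bar v^\lambda$; keeping track of these and of the structure-constant signs is the delicate bookkeeping, and it is precisely the defining relation of the quotient that absorbs the unwanted terms. The second point requiring care is the legitimacy of invoking Proposition \ref{[Dn<Bn] Preliminaries for L of type Bn: technical proposition concerning the weight lambda-2...2 for lambda=lambda_1} for $\bar v^\lambda$ on the $B_{n-1}$-Levi: this is justified because $\bar v^\lambda$ is a maximal vector whose restriction to that Levi carries a highest weight of $\lambda_1$-type, so that $\langle H\bar v^\lambda\rangle$ is a homomorphic image of the corresponding Weyl module and the relations of that proposition descend to it (compare Lemma \ref{[Preliminaries] Parabolic embeddings: restriction to Levi subgroup}).
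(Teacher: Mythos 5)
Your overall architecture coincides with the paper's: reduce the $n-1$ surplus generators of \eqref{case_l=k-1} to the span of \eqref{elements_generating_W_{lambda-alpha_k-1-2alpha_k-...-2alpha_n}}, then let the dimension count $\dim\overline{V(\lambda)}_\mu=n$ force linear independence; and your justification for applying Proposition \ref{[Dn<Bn] Preliminaries for L of type Bn: technical proposition concerning the weight lambda-2...2 for lambda=lambda_1} to $\bar v^{\lambda}$ on the $B_{n-1}$-Levi is sound. However, both computational identities on which your reduction rests are false, and the first one is not a harmless sign slip: it contradicts the very statement you are proving. With the paper's structure constants one has $[f_{\alpha_1},f_{2,n}]=-f_{1,n}$ (so $[f_{2,n},f_{\alpha_1}]=+f_{1,n}$, not $-f_{1,n}$ as you assert) and $[f_{2,n},f_{1,n}]=-2F_{1,2}$. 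Running your own computation with these signs, the term $f_{2,n}f_{1,n}\bar v^{\lambda}$ cancels from both sides:
\[
f_{2,n}f_{1,n}\bar v^{\lambda}=f_{2,n}f_{\alpha_1}f_{2,n}\bar v^{\lambda}
=f_{\alpha_1}(f_{2,n})^2\bar v^{\lambda}+f_{1,n}f_{2,n}\bar v^{\lambda}
=2f_{\alpha_1}f_{\alpha_2}F_{2,3}\bar v^{\lambda}+f_{2,n}f_{1,n}\bar v^{\lambda}+2F_{1,2}\bar v^{\lambda},
\]
so what you actually obtain is $f_{\alpha_1}f_{\alpha_2}F_{2,3}\bar v^{\lambda}=-F_{1,2}\bar v^{\lambda}$ (this is the identity the paper derives), not $f_{\alpha_1}f_{\alpha_2}F_{2,3}\bar v^{\lambda}=f_{2,n}f_{1,n}\bar v^{\lambda}$. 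Your claimed identity cannot be repaired: combined with the correct one it would give $f_{2,n}f_{1,n}\bar v^{\lambda}=-F_{1,2}\bar v^{\lambda}$, making two members of the alleged basis proportional and forcing $\dim\overline{V(\lambda)}_\mu\leqslant n-1$, which contradicts the dimension count you are relying on.

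The second error is the assertion that $f_{1,j}$ and $F_{2,j+1}$ commute ``because their root sum is not a root.'' In fact $(\alpha_1+\cdots+\alpha_j)+(\alpha_2+\cdots+\alpha_j+2\alpha_{j+1}+\cdots+2\alpha_n)=\alpha_1+2\alpha_2+\cdots+2\alpha_n$, which is precisely the (short) root $\varepsilon_1+\varepsilon_2$ attached to $F_{1,2}$; by Lemma \ref{structure_constants_for_B}\ref{structure_constants_for_B_part5} one gets $[f_{1,j},F_{2,j+1}]=F_{1,2}$, and this dropped commutator is essential. (Your sign for $[F_{2,j+1},f_{\alpha_1}]$ is also off: Lemma \ref{structure_constants_for_B}\ref{structure_constants_for_B_part2} gives $[F_{2,j+1},f_{\alpha_1}]=+F_{1,j+1}$.) Carrying out the commutations correctly, using the quotient relation \eqref{[Dn<Bn] Preliminaries: f_(1,r) = f_(1) f_(2,r) for 2<r<n}, yields $f_{1,j}F_{2,j+1}\bar v^{\lambda}=2F_{1,2}\bar v^{\lambda}+f_{2,j}F_{1,j+1}\bar v^{\lambda}+f_{\alpha_1}f_{\alpha_2}F_{2,3}\bar v^{\lambda}=F_{1,2}\bar v^{\lambda}+f_{2,j}F_{1,j+1}\bar v^{\lambda}$, whereas your formula $f_{2,n}f_{1,n}\bar v^{\lambda}-f_{2,j}F_{1,j+1}\bar v^{\lambda}$ has both the wrong $F_{1,2}$-component and the wrong sign on $f_{2,j}F_{1,j+1}\bar v^{\lambda}$. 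So while the skeleton of your argument is the right one, the Chevalley bookkeeping that constitutes its substance fails and would need to be redone with the correct commutators.
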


\begin{proof}
We start by showing that each of  $f_{\alpha_1}f_{\alpha_2}F_{2,3}\bar{v}^{\lambda},$ $f_{1,j}F_{2,j+1}\bar{v}^{\lambda}$ $(1 < j < n)$ can be written as a linear combination of elements of \eqref{elements_generating_W_{lambda-alpha_k-1-2alpha_k-...-2alpha_n}}. Let $1< j<n $  be fixed. By Lemma \ref{structure_constants_for_B}, case \ref{[Dn<Bn] Preliminaries for L of type Bn: technical proposition concerning the weight lambda-2...2 for lambda=lambda_1 (part 1)} of Proposition \ref{[Dn<Bn] Preliminaries for L of type Bn: technical proposition concerning the weight lambda-2...2 for lambda=lambda_1}, and  \eqref{[Dn<Bn] Preliminaries: f_(1,r) = f_(1) f_(2,r) for 2<r<n}, we successively get
\begin{align*}
f_{1,j}F_{2,j+1}\bar{v}^{\lambda} &=  		 F_{1,2}\bar{v}^{\lambda} + F_{2,j+1}f_{\alpha_{1}}f_{2,j} \bar{v}^{\lambda} 																																											\cr
																	&= F_{1,2}\bar{v}^{\lambda} + F_{1,j+1}f_{2,j}\bar{v}^{\lambda} + f_{\alpha_1}f_{2,j}F_{2,j+1}\bar{v}^{\lambda}																											\cr
																	&= 2F_{1,2}\bar{v}^{\lambda} + f_{2,j}F_{1,j+1}\bar{v}^{\lambda} + f_{\alpha_1}f_{\alpha_2}F_{2,3}\bar{v}^{\lambda},													
\end{align*}
 so that $f_{1,j}F_{2,j+1}\bar{v}^{\lambda}  \in \left\langle F_{1,2}\bar{v}^{\lambda}, f_{\alpha_1}f_{\alpha_2}F_{2,3}\bar{v}^{\lambda}, f_{2,j}F_{1,j+1}\bar{v}^{\lambda}\right\rangle_K.$ It then remains to show that $f_{\alpha_1}f_{\alpha_2}F_{2,3}\bar{v}^{\lambda}$ is a linear combination of elements of \eqref{elements_generating_W_{lambda-alpha_k-1-2alpha_k-...-2alpha_n}}. By Lemma \ref{structure_constants_for_B}, case \ref{[Dn<Bn] Preliminaries for L of type Bn: technical proposition concerning the weight lambda-2...2 for lambda=lambda_1 (part 2)} of Proposition  \ref{[Dn<Bn] Preliminaries for L of type Bn: technical proposition concerning the weight lambda-2...2 for lambda=lambda_1}, and  \eqref{[Dn<Bn] Preliminaries: f_(1,r) = f_(1) f_(2,r) for 2<r<n}, we have
\begin{align*}
f_{\alpha_1}f_{\alpha_2}F_{2,3}\bar{v}^{\lambda}	&=			\tfrac{1}{2}f_{\alpha_1}(f_{2,n})^2\bar{v}^{\lambda}																						\cr
																									&=			\tfrac{1}{2}(f_{2,n}f_{\alpha_1}f_{2,n}\bar{v}^{\lambda}-f_{1,n}f_{2,n}\bar{v}^{\lambda})				\cr
																									&=			\tfrac{1}{2}(f_{2,n}f_{1,n}\bar{v}^{\lambda}-f_{1,n}f_{2,n}\bar{v}^{\lambda})										\cr
																									&=			-F_{1,2}\bar{v}^{\lambda},
\end{align*}
hence $f_{\alpha_1}f_{\alpha_2}F_{2,3}\bar{v}^{\lambda} \in \langle F_{1,2}\bar{v}^{\lambda} \rangle_K,$ showing that $\overline{V(\lambda)}_{\mu}$ is spanned by the vectors in \eqref{elements_generating_W_{lambda-alpha_k-1-2alpha_k-...-2alpha_n}}. Finally, the assertion on $\dim \overline{V(\lambda)}_{\mu}$ given above allows us to conclude.
\end{proof}

We now study the relation between the pair $(n,p)$ and the existence of a maximal vector of weight $\mu$ in  $\overline{V(\lambda)}$ for $B.$ For  $A=(A_r)_{1\leqslant r\leqslant n}\in K^n,$ set 
\begin{equation}
\bar{w}(A) = A_{1} F_{1,2}\bar{v}^{\lambda} +\sum_{j=2}^{n-1}{A_jf_{2,j}F_{1,j+1}\bar{v}^{\lambda}} +A_nf_{2,n}f_{1,n}\bar{v}^{\lambda}.
\label{[Dn<Bn] Definition of w(A) for A=(A1,...,An), mu=lambda-12...2 and G of type Bn}
\end{equation}

\begin{lem}\label{Conditions_pour_que_la_combinaison_lineaire_de_vecteurs_generateurs_de_V(k-1,k,n)_soit_tuee_par_tous_les_e_alpha}
Let $\lambda,$ $\mu$ be as above, with $p\mid a+2,$ and adopt the notation of \eqref{[Dn<Bn] Definition of w(A) for A=(A1,...,An), mu=lambda-12...2 and G of type Bn}.  Then the following assertions are equivalent. 
\begin{enumerate}
\item \label{V(k-1,n)_part_1} There exists $0 \neq A \in K^n $ such that $x_{\alpha}(c)\bar{w}(A)=0$ for every $\alpha \in \Pi$ and  \(c\in K.\)
\item \label{V(k-1,n)_part_2} There exist $A \in K^{n-1}\times K^*$  such that $x_{\alpha}(c)\bar{w}(A)=0$ for every $\alpha \in \Pi$ and  \(c\in K.\)
\item \label{V(k-1,n)_part_3} The divisibility condition $p \mid 2n-3$ is satisfied.
\end{enumerate}
\end{lem}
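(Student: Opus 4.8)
The plan is to follow verbatim the strategy used in the proof of Lemma \ref{Conditions_pour_que_la_combinaison_lineaire_de_vecteurs_generateurs_de_V(k,n)_soit_tuee_par_tous_les_e_alpha}, which in turn mirrors Lemma \ref{[Dn<Bn] Prelimiaries for L of type An: existence of a maximal vector of weight lambda-1...1}. First I would reduce the unipotent-fixed condition to a condition on raising operators: for $\bar{w}=\bar{w}(A)$ and $\alpha\in\Pi$, since $e_\alpha^r\bar{w}=e_\alpha^{r-1}(e_\alpha\bar{w})$, one has $x_\alpha(c)\bar{w}=\bar{w}$ for every $c\in K$ if and only if $e_\alpha\bar{w}=0$. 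Thus both \ref{V(k-1,n)_part_1} and \ref{V(k-1,n)_part_2} translate into the existence of a suitable $A$ (with $0\neq A$, resp. $A\in K^{n-1}\times K^{*}$) lying in the common kernel of $e_{\alpha_1},\ldots,e_{\alpha_n}$ acting on $\overline{V(\lambda)}_{\mu}$.

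The heart of the argument is the explicit computation of $e_{\alpha_i}\bar{w}(A)$ for each simple root $\alpha_i$, where $\bar{w}(A)$ is as in \eqref{[Dn<Bn] Definition of w(A) for A=(A1,...,An), mu=lambda-12...2 and G of type Bn}. I would apply each $e_{\alpha_i}$ across the three families of generators in \eqref{elements_generating_W_{lambda-alpha_k-1-2alpha_k-...-2alpha_n}}, evaluating every bracket by means of the structure constants recorded in Lemma \ref{structure_constants_for_B}, and, crucially, using the relations that hold in the quotient $\overline{V(\lambda)}=V(\lambda)/\langle G u^+\rangle_K$ --- namely \eqref{[Dn<Bn] Preliminaries: f_(1,r) = f_(1) f_(2,r) for 2<r<n} together with the collapses established in the proof of Proposition \ref{Une premiere simplification de l'ensemble generateur de V(mu)} --- to rewrite each $e_{\alpha_i}\bar{w}(A)$ as a scalar multiple of a single weight vector. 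As in the earlier lemmas, I would verify that these target vectors are nonzero by applying an appropriate sequence of raising operators and invoking the relevant multiplicity-one statement, so that the vanishing of $e_{\alpha_i}\bar{w}(A)$ is equivalent to the vanishing of its scalar coefficient.

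The outcome is then a homogeneous linear system in $A_1,\ldots,A_n$ analogous in shape to \eqref{second_system_of_equations}: the equations coming from the interior simple roots link consecutive coordinates $A_{r-1}$ and $A_r$, while the boundary equations arising from $e_{\alpha_1}$ and $e_{\alpha_n}$ close the system. Solving it by elimination, I would read off that it admits a nontrivial solution precisely when $p\mid 2n-3$ (yielding the equivalence of \ref{V(k-1,n)_part_1} and \ref{V(k-1,n)_part_3}), and that in that case the solution space is one-dimensional and spanned by a vector whose last coordinate is a unit, which is exactly the equivalence of \ref{V(k-1,n)_part_1} and \ref{V(k-1,n)_part_2}.

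I expect the main obstacle to be the bookkeeping in the second step: unlike in the Weyl module, the quotient $\overline{V(\lambda)}$ carries the extra relations coming from $\langle G u^+\rangle_K$, and it is precisely these relations (fed in through \eqref{[Dn<Bn] Preliminaries: f_(1,r) = f_(1) f_(2,r) for 2<r<n} and the reductions of Proposition \ref{Une premiere simplification de l'ensemble generateur de V(mu)}) that modify the coefficients relative to the $a\lambda_1$ case and are responsible for the divisor $2n-3$ rather than $2(a+n)-3$; note this is \emph{not} obtained by the naive substitution $a\equiv-2$ forced by $p\mid a+2$. Keeping track of which brackets survive modulo $\langle G u^+\rangle_K$, and ensuring that each $e_{\alpha_i}\bar{w}(A)$ genuinely reduces to a multiple of a single basis element rather than a nontrivial combination, is where the care is needed; the concluding linear algebra is then routine.
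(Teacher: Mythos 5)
Your proposal is correct and follows essentially the same route as the paper's proof: reduce the fixed-vector condition to $e_{\alpha}\bar{w}(A)=0$ for all $\alpha\in\Pi$, compute each $e_{\alpha_i}\bar{w}(A)$ as a scalar multiple of a single nonzero weight vector by means of Lemma \ref{structure_constants_for_B}, the quotient relation \eqref{[Dn<Bn] Preliminaries: f_(1,r) = f_(1) f_(2,r) for 2<r<n} and Proposition \ref{[Dn<Bn] Preliminaries for L of type Bn: technical proposition concerning the weight lambda-2...2 for lambda=lambda_1}, and then solve the resulting homogeneous system (the paper's \eqref{third_system_of_equations}), which admits a nontrivial solution precisely when $p\mid 2n-3$, in which case the solution space is spanned by $(4,\ldots,4,1)$, whose last coordinate is a unit. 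The one discrepancy is your parenthetical justification of the reduction step via $e_{\alpha}^{r}\bar{w}=e_{\alpha}^{r-1}(e_{\alpha}\bar{w})$: since in characteristic $p$ the group acts through divided powers $e_{\alpha}^{(r)}$, which are not determined by $e_{\alpha}$ alone, the paper instead argues (by reference to Lemma \ref{[Dn<Bn] Prelimiaries for L of type An: existence of a maximal vector of weight lambda-1...1}) through the vanishing of the weight spaces $\overline{V(\lambda)}_{\mu+r\alpha}$, so in executing your plan you should adopt that weight-space argument rather than the operator factorization.
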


\begin{proof} 
Let $A=(A_r)_{1\leqslant r\leqslant n}\in K^n$ and set $\bar{w}=\bar{w}(A).$ Arguing as in the proofs of Lemmas \ref{[Dn<Bn] Prelimiaries for L of type An: existence of a maximal vector of weight lambda-1...1} 
 and \ref{Conditions_pour_que_la_combinaison_lineaire_de_vecteurs_generateurs_de_V(k,n)_soit_tuee_par_tous_les_e_alpha}, one sees that \(x_\alpha (c) \bar{w} =\bar{w}\) for every \(\alpha \in \Pi\) and  \(c\in K\) if and only if \(e_{\alpha}\bar{w}=0\) for every \(\alpha\in \Pi.\) Then using Lemma \ref{structure_constants_for_B} and Proposition \ref{[Dn<Bn] Preliminaries for L of type Bn: technical proposition concerning the weight lambda-2...2 for lambda=lambda_1}, we get 
\begin{align*}
e_{\alpha_1}\bar{w} &= -\sum_{i=2}^{n-1}{A_i f_{2,i}F_{2,i+1}\bar{v}^{\lambda}} - A_n(f_{2,n})^2\bar{v}^{\lambda} = -\left(\sum_{i=2}^{n-1}{A_i} + 2A_n\right)f_{\alpha_2}F_{2,3}\bar{v}^{\lambda}, \cr
\intertext{while Lemma \ref{structure_constants_for_B} yields}
e_{\alpha_2}\bar{w}	&= -A_1 F_{1,3}\bar{v}^{\lambda} + A_2 h_{\alpha_2}F_{1,3}\bar{v}^{\lambda} -\sum_{i=3}^{n-1}{A_if_{3,i}F_{1,i+1}\bar{v}^{\lambda}} - A_nf_{3,n}f_{1,n}\bar{v}^{\lambda}\cr 
										&= \left(-A_1 + 2A_2 + \sum_{r=3}^{n-1}{A_r} + 2A_n\right)F_{1,3}\bar{v}^{\lambda}. \cr
\intertext{Similarly, one easily checks that $e_{\alpha_r}\bar{w}	=  (A_r - A_{r-1}) f_{2,r-1}F_{1,r+1}\bar{v}^{\lambda},$ for every $2<r<n,$ and finally, we have}
e_{\alpha_n}\bar{w} &= 2A_n(f_{2,n-1}f_{1,n}\bar{v}^{\lambda} + f_{2,n}f_{1,n-1}\bar{v}^{\lambda}) - A_{n-1}f_{2,n-1}f_{1,n}\bar{v}^{\lambda}\cr
										&= ( 4A_n-A_{n-1} ) f_{2,n-1}f_{1,n}\bar{v}^{\lambda},
\end{align*}
where the last equality comes from the fact that $V(\lambda)_{\lambda-(2\alpha_2+\cdots+2\alpha_{n-1}+\alpha_n)}=0$ and \eqref{[Dn<Bn] Preliminaries: f_(1,r) = f_(1) f_(2,r) for 2<r<n} applied to $f_{2,n}f_{1,n-1}\bar{v}^{\lambda}.$ One checks that each of the vectors $f_{\alpha _2}F_{2,3}\bar{v}^{\lambda},$ $F_{1,3}\bar{v}^{\lambda},$ $f_{\alpha_2}F_{1,4}\bar{v}^{\lambda},\ldots,f_{2,n-2}F_{1,n}\bar{v}^{\lambda},$ and $f_{2,n-1}f_{1,n}\bar{v}^{\lambda}$ is non-zero. Consequently, $e_{\alpha}\bar{w}(A)=0$ for every $\alpha \in \Pi$ if and only if $A\in K^n$ is a solution to the system of equations 
\begin{equation}
\left\{\begin{array}{llll}
2A_n			&= -\sum_{j=2}^{n-1}{A_j} 										\\
A_1   		&= 2(A_2+A_n) +\sum_{j=3}^{n-1}{A_j} 					\\
A_{r-1}		&= A_r \mbox{ for $2< r \leqslant n-1$}			\\
A_{n-1} 	&= 4 A_n.
\end{array}
\right.
\label{third_system_of_equations}
\end{equation}

Now one easily sees that \eqref{third_system_of_equations} admits a non-trivial solution $A$ if and only if $p\mid 2n-3$ (showing that \ref{V(k-1,n)_part_1} and \ref{V(k-1,n)_part_3} are equivalent), in which case $A\in \langle(4,\ldots,4,1)\rangle_K$ (so that \ref{V(k-1,n)_part_1} and \ref{V(k-1,n)_part_2} are equivalent), completing the proof.
\end{proof}

Let $\lambda$ and $\mu$ be as above, with $p \mid a + 2,$  and consider an irreducible $KG$-module $V=L(\lambda)$ having highest weight $\lambda.$ As usual, take $V=V(\lambda)/\rad(\lambda),$ so that 
\[
V \cong \bigquotient{\overline{V(\lambda)}}{\overline{\rad(\lambda)}},
\]
where $\overline{\rad(\lambda)}=\rad(\lambda)/\langle G u^+\rangle_K.$ Also  write $v^+$ to denote the image of $\bar{v}^{\lambda}$ in $V,$ that is, $v^+$ is a maximal vector of weight \(\lambda\) in $V$ for $B.$ By Proposition \ref{Une premiere simplification de l'ensemble generateur de V(mu)}, the weight space $V_\mu$ is spanned by the vectors
\begin{equation} 
\left\{F_{1,2}v^+\right\}  \cup \left\{f_{2,j}F_{1,j+1}v^+\right\}_{1 < j < n} \cup \left\{f_{2,n}f_{1,n}v^+\right\}.
 \label{elements_generating_V_{lambda-alpha_k-1-2alpha_k-...-2alpha_n}}
\end{equation}

We write $V_{1,2,n}$ to denote the span of all the generators in \eqref{elements_generating_V_{lambda-alpha_k-1-2alpha_k-...-2alpha_n}} except for $f_{2,n}f_{1,n}v^+$. As usual, the following result gives a precise description of the weight space $V_{\mu},$ as well as a characterization for \(\mu\) to afford the highest weight of a composition factor of \(V(\lambda).\)

\begin{prop}\label{condition_if_l=k-1}
Let $G$ be a simple algebraic group of type $B_n$ over $K$ and consider an irreducible $KG$-module  $V=L(\lambda)$  having $p$-restricted highest weight $\lambda=a\lambda_1+\lambda_2,$ where  $a\in \Z_{>0}$ is such that  $p\mid a+2.$ Also consider $\mu=\lambda-(\alpha_1 + 2\alpha_2+\cdots+2\alpha_n)\in \Lambda^+(\lambda).$ Then the following assertions are equivalent.
\begin{enumerate}
\item \label{conditions_for_f(i,i)_to_belong_to_V(k-1,k,n)_part_1} The weight $\mu$ affords the highest weight of a composition factor of $V(\lambda).$
\item \label{conditions_for_f(i,i)_to_belong_to_V(k-1,k,n)_part_2} The generators in \eqref{elements_generating_V_{lambda-alpha_k-1-2alpha_k-...-2alpha_n}} are linearly dependent.
\item \label{conditions_for_f(i,i)_to_belong_to_V(k-1,k,n)_part_3} The element $f_{2,n}f_{1,n}v^+$ lies inside $V_{1,2,n}.$
\item \label{conditions_for_f(i,i)_to_belong_to_V(k-1,k,n)_part_4} The divisibility condition $p \mid 2n-3$ is satisfied.
\end{enumerate}
\end{prop}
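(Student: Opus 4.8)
The plan is to follow the template of the proof of Proposition \ref{Conditions_for_f(i,j)_to_belong_to_V(i,j)}, but carried out inside the quotient $\overline{V(\lambda)}=V(\lambda)/\langle Gu^+\rangle_K$ rather than inside $V(\lambda)$ itself. The first reduction is to record, via Lemma \ref{[Dn<Bn] Preliminaries for L of type Bn: reduction to the study of V(lambda)/<Lu> for lambda = a lambda1 + lambda2}, that $\mu$ affords the highest weight of a composition factor of $V(\lambda)$ if and only if it does so for $\overline{V(\lambda)}$. Next I would observe that the $n$ vectors in \eqref{elements_generating_V_{lambda-alpha_k-1-2alpha_k-...-2alpha_n}} are precisely the images in $V=L(\lambda)=\overline{V(\lambda)}/\overline{\rad(\lambda)}$ of the basis \eqref{elements_generating_W_{lambda-alpha_k-1-2alpha_k-...-2alpha_n}} of $\overline{V(\lambda)}_\mu$ furnished by Proposition \ref{Une premiere simplification de l'ensemble generateur de V(mu)}. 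Consequently assertion \ref{conditions_for_f(i,i)_to_belong_to_V(k-1,k,n)_part_2} is equivalent to $\overline{\rad(\lambda)}_\mu\neq 0$, and assertion \ref{conditions_for_f(i,i)_to_belong_to_V(k-1,k,n)_part_3} is just the special shape of such a linear dependence in which the coefficient of $f_{2,n}f_{1,n}v^+$ is nonzero.

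With this dictionary in place, the implications \ref{conditions_for_f(i,i)_to_belong_to_V(k-1,k,n)_part_3}$\Rightarrow$\ref{conditions_for_f(i,i)_to_belong_to_V(k-1,k,n)_part_2} and \ref{conditions_for_f(i,i)_to_belong_to_V(k-1,k,n)_part_1}$\Rightarrow$\ref{conditions_for_f(i,i)_to_belong_to_V(k-1,k,n)_part_2} are immediate: the former because $f_{2,n}f_{1,n}v^+\in V_{1,2,n}$ is itself a relation among the generators, the latter because a composition factor $L(\mu)$ of $\overline{V(\lambda)}$ with $\mu\prec\lambda$ lies in $\overline{\rad(\lambda)}$, forcing $\dim V_\mu<n$. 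The heart of the argument is the converse \ref{conditions_for_f(i,i)_to_belong_to_V(k-1,k,n)_part_2}$\Rightarrow$\ref{conditions_for_f(i,i)_to_belong_to_V(k-1,k,n)_part_4} together with \ref{conditions_for_f(i,i)_to_belong_to_V(k-1,k,n)_part_2}$\Rightarrow$\ref{conditions_for_f(i,i)_to_belong_to_V(k-1,k,n)_part_1}. Assuming \ref{conditions_for_f(i,i)_to_belong_to_V(k-1,k,n)_part_2}, pick $0\neq A\in K^n$ with $\bar w(A)\in\overline{\rad(\lambda)}_\mu$, where $\bar w(A)$ is as in \eqref{[Dn<Bn] Definition of w(A) for A=(A1,...,An), mu=lambda-12...2 and G of type Bn}. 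To see that $\bar w(A)$ is a maximal vector I would show that $\overline{\rad(\lambda)}$ carries no weight $\nu$ with $\mu\prec\nu\prec\lambda$: enumerating the dominant weights strictly between $\mu$ and $\lambda$, all of them except $\mu_{1,2}=\lambda-\alpha_1-\alpha_2$ satisfy $\m_{V(\lambda)}(\nu)=1$, so by Premet's theorem (Theorem \ref{[Preliminaries] Weights and multiplicities: Premet's theorem}, applicable since $p\neq 2$) they cannot afford highest weights of composition factors, whence $\m_{\overline{\rad(\lambda)}}(\nu)=0$. Thus $\mu$ is a maximal weight of $\overline{\rad(\lambda)}$, every element of $\overline{\rad(\lambda)}_\mu$ is a maximal vector, and $\bar w(A)$ generates a composition factor of highest weight $\mu$, giving \ref{conditions_for_f(i,i)_to_belong_to_V(k-1,k,n)_part_1}; Lemma \ref{Conditions_pour_que_la_combinaison_lineaire_de_vecteurs_generateurs_de_V(k-1,k,n)_soit_tuee_par_tous_les_e_alpha} (equivalence of its parts \ref{V(k-1,n)_part_1} and \ref{V(k-1,n)_part_3}) then yields $p\mid 2n-3$, i.e. \ref{conditions_for_f(i,i)_to_belong_to_V(k-1,k,n)_part_4}.

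Finally I would close the cycle with \ref{conditions_for_f(i,i)_to_belong_to_V(k-1,k,n)_part_4}$\Rightarrow$\ref{conditions_for_f(i,i)_to_belong_to_V(k-1,k,n)_part_3}: by the equivalence of parts \ref{V(k-1,n)_part_2} and \ref{V(k-1,n)_part_3} of Lemma \ref{Conditions_pour_que_la_combinaison_lineaire_de_vecteurs_generateurs_de_V(k-1,k,n)_soit_tuee_par_tous_les_e_alpha}, the condition $p\mid 2n-3$ produces $A\in K^{n-1}\times K^*$ with $\bar w(A)$ a maximal vector of weight $\mu$ in $\overline{V(\lambda)}$; its image in the irreducible module $V=L(\lambda)$ is a maximal vector of weight $\mu\prec\lambda$, hence zero, so the relation $\bar w(A)=0$ holds in $V_\mu$, and since $A_n\neq 0$ one solves for $f_{2,n}f_{1,n}v^+$ to obtain $f_{2,n}f_{1,n}v^+\in V_{1,2,n}$. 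The one genuinely delicate point, and the place where this proof departs from that of Proposition \ref{Conditions_for_f(i,j)_to_belong_to_V(i,j)}, is the bookkeeping around $\mu_{1,2}$: unlike every other intermediate weight, $\mu_{1,2}$ has $\mu$ in its weight support with multiplicity $n-1$ and genuinely affords a composition factor of $V(\lambda)$, so I must verify that $\m_{\overline{\rad(\lambda)}}(\mu_{1,2})=0$, i.e. that this factor has been entirely absorbed into $\langle Gu^+\rangle_K$. This is exactly what the passage to $\overline{V(\lambda)}$, Lemma \ref{[Dn<Bn] Preliminaries for L of type Bn: reduction to the study of V(lambda)/<Lu> for lambda = a lambda1 + lambda2}, and the computation $\dim\overline{V(\lambda)}_\mu=n=(2n-1)-(n-1)$ underpinning Proposition \ref{Une premiere simplification de l'ensemble generateur de V(mu)} are engineered to guarantee, and confirming these intermediate multiplicities is the main obstacle.
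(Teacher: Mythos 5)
Your overall route is the same as the paper's: the dictionary identifying \ref{conditions_for_f(i,i)_to_belong_to_V(k-1,k,n)_part_2} with $\overline{\rad(\lambda)}_\mu\neq 0$, the passage between $V(\lambda)$ and $\overline{V(\lambda)}$ via Lemma \ref{[Dn<Bn] Preliminaries for L of type Bn: reduction to the study of V(lambda)/<Lu> for lambda = a lambda1 + lambda2}, and the appeal to Lemma \ref{Conditions_pour_que_la_combinaison_lineaire_de_vecteurs_generateurs_de_V(k-1,k,n)_soit_tuee_par_tous_les_e_alpha} for both implications involving \ref{conditions_for_f(i,i)_to_belong_to_V(k-1,k,n)_part_4}; your two easy implications and \ref{conditions_for_f(i,i)_to_belong_to_V(k-1,k,n)_part_4}$\Rightarrow$\ref{conditions_for_f(i,i)_to_belong_to_V(k-1,k,n)_part_3} are correct and agree with the paper. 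The gap is in your justification of the pivotal claim that $\mu$ is the only dominant weight $\nu$ with $\mu\preccurlyeq\nu\prec\lambda$ that can afford the highest weight of a composition factor of $\overline{V(\lambda)}$ (a claim the paper, admittedly, asserts without detail). You state that every dominant $\nu$ with $\mu\prec\nu\prec\lambda$ other than $\mu_{1,2}$ satisfies $\m_{V(\lambda)}(\nu)=1$. That enumeration is false. The dominant weights strictly between $\mu$ and $\lambda$ are $\lambda-\alpha_1$ (when $a\geqslant 2$), $\lambda-(\alpha_2+\cdots+\alpha_n)=(a+1)\lambda_1$, $\mu_{1,2}=\lambda-\alpha_1-\alpha_2$, and $\sigma:=\lambda-(\alpha_1+\alpha_2+\cdots+\alpha_n)=(a-1)\lambda_1+\lambda_2$. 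The first two do have multiplicity $1$, but $\m_{V(\lambda)}(\sigma)=2$: by Proposition \ref{A simplification for V(lambda)mu} the weight space $V(\lambda)_\sigma$ is spanned by $f_{\alpha_1+\cdots+\alpha_n}v^{\lambda}$ and $f_{\alpha_1}f_{\alpha_2+\cdots+\alpha_n}v^{\lambda}$, and applying $e_{\alpha_1}$ and $e_{\alpha_2+\cdots+\alpha_n}$ to a vanishing linear combination yields a $2\times 2$ system with nonzero determinant in characteristic zero (a Freudenthal computation in type $B_3$ confirms the value $2$). For $\sigma$ your multiplicity-one-plus-Premet argument says nothing, so as written you have not excluded a composition factor $L(\sigma)$ of $\overline{\rad(\lambda)}$; if such a factor occurred, a nonzero element of $\overline{\rad(\lambda)}_\mu$ need not be a maximal vector, and your chain \ref{conditions_for_f(i,i)_to_belong_to_V(k-1,k,n)_part_2}$\Rightarrow$\ref{conditions_for_f(i,i)_to_belong_to_V(k-1,k,n)_part_1},\,\ref{conditions_for_f(i,i)_to_belong_to_V(k-1,k,n)_part_4} collapses.

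The repair is exactly the absorption argument you sketch, but never carry out, for $\mu_{1,2}$, and it disposes of both problematic weights in the same breath. Since $u^+$ is a maximal vector of weight $\mu_{1,2}\neq\lambda$, we have $\langle Gu^+\rangle_K\subseteq\rad(\lambda)$, whence for any $\nu$ one gets $\m_{\overline{\rad(\lambda)}}(\nu)=\m_{V(\lambda)}(\nu)-\m_{L(\lambda)}(\nu)-\m_{\langle Gu^+\rangle_K}(\nu)$. For $\nu=\sigma$: Theorem \ref{[Preliminaries] Weights and multiplicities: Premet's theorem} gives $\m_{L(\lambda)}(\sigma)\geqslant 1$; moreover $\sigma=\mu_{1,2}-(\alpha_3+\cdots+\alpha_n)\in\Lambda(\mu_{1,2})$, so Premet's theorem applied to $L(\mu_{1,2})$, which is a quotient of $\langle Gu^+\rangle_K$, gives $\m_{\langle Gu^+\rangle_K}(\sigma)\geqslant 1$, and therefore $\m_{\overline{\rad(\lambda)}}(\sigma)\leqslant 2-1-1=0$. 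The identical computation at $\nu=\mu_{1,2}$ (using $u^+\in\langle Gu^+\rangle_K$ in place of the second Premet application) gives $\m_{\overline{\rad(\lambda)}}(\mu_{1,2})=0$, which is the verification you deferred to what the construction is ``engineered to guarantee''; neither Lemma \ref{[Dn<Bn] Preliminaries for L of type Bn: reduction to the study of V(lambda)/<Lu> for lambda = a lambda1 + lambda2} nor Proposition \ref{Une premiere simplification de l'ensemble generateur de V(mu)} states it, so it must be said. With these two short computations inserted, and the list of intermediate dominant weights corrected, your proof is complete and coincides with the paper's.
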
 

\begin{proof}
Clearly \ref{conditions_for_f(i,i)_to_belong_to_V(k-1,k,n)_part_3} implies \ref{conditions_for_f(i,i)_to_belong_to_V(k-1,k,n)_part_2}, while if \ref{conditions_for_f(i,i)_to_belong_to_V(k-1,k,n)_part_1} holds, then Lemma \ref{[Dn<Bn] Preliminaries for L of type Bn: reduction to the study of V(lambda)/<Lu> for lambda = a lambda1 + lambda2} yields $[\overline{V(\lambda)},L(\mu)]\neq 0,$ so that \ref{conditions_for_f(i,i)_to_belong_to_V(k-1,k,n)_part_2} holds (by comparing \eqref{elements_generating_W_{lambda-alpha_k-1-2alpha_k-...-2alpha_n}} with \eqref{elements_generating_V_{lambda-alpha_k-1-2alpha_k-...-2alpha_n}}). Now if \ref{conditions_for_f(i,i)_to_belong_to_V(k-1,k,n)_part_2} is satisfied, then $L(\nu)$ occurs as a composition factor of $\overline{V(\lambda)}$ for some $\nu\in \Lambda^+(\lambda)$ such that $\mu \preccurlyeq \nu\prec \lambda$ by Proposition \ref{Une premiere simplification de l'ensemble generateur de V(mu)}. Since only $\mu$ can afford the highest weight of such a composition factor, \ref{conditions_for_f(i,i)_to_belong_to_V(k-1,k,n)_part_1} holds by Lemma \ref{[Dn<Bn] Preliminaries for L of type Bn: reduction to the study of V(lambda)/<Lu> for lambda = a lambda1 + lambda2} again. This also shows the existence of $0\neq A \in K^n$ such that $\bar{w}(A)$ is a maximal vector of weight \(\mu\)  in $\overline{V(\lambda)}$ for $B,$ where we adopt the notation of \eqref{[Dn<Bn] Definition of w(A) for A=(A1,...,An), mu=lambda-12...2 and G of type Bn}. Therefore \ref{conditions_for_f(i,i)_to_belong_to_V(k-1,k,n)_part_2} implies \ref{conditions_for_f(i,i)_to_belong_to_V(k-1,k,n)_part_4} by Lemma \ref{Conditions_pour_que_la_combinaison_lineaire_de_vecteurs_generateurs_de_V(k-1,k,n)_soit_tuee_par_tous_les_e_alpha}. Finally suppose that \ref{conditions_for_f(i,i)_to_belong_to_V(k-1,k,n)_part_4} holds. By Lemma \ref{Conditions_pour_que_la_combinaison_lineaire_de_vecteurs_generateurs_de_V(k-1,k,n)_soit_tuee_par_tous_les_e_alpha}, there exists $A\in K^{n-1} \times K^*$ such that $x_{\alpha}(c)\bar{w}(A)=0$ for every $\alpha\in \Pi$ and  \(c\in K.\) Consequently, we also get $x_{\alpha}(c)(\bar{w}(A)+\rad(\lambda))=0$ for every $\alpha \in \Pi$ and  \(c\in K,\) that is, $\bar{w}(A) + \rad(\lambda) \in \langle v^+ \rangle_K \cap V(\lambda)_{\mu}=0.$ Therefore \ref{conditions_for_f(i,i)_to_belong_to_V(k-1,k,n)_part_3} holds and the proof is complete.
\end{proof}

\subsection{Study of \texorpdfstring{$L(a\lambda_1+\lambda_k)$}{LG}  \texorpdfstring{$(2<k<n,\mbox{ } a\in \mathbb{Z}_{\geqslant 0})$}{LG}}     

Let \(a\in \Z_{>0},\) \(2<k<n,\) and set $\lambda=a\lambda_1+\lambda_k.$ Also write $\mu_{1,k}=\lambda-(\alpha_1+\cdots+\alpha_k)$ and consider  $$\mu=\lambda-(\alpha_1+2\alpha_2+\cdots+2\alpha_n).$$  By Proposition \ref{A simplification for V(lambda)mu}, our choice of ordering $\leqslant$ on $\Phi^+$ and Proposition \ref{[Dn<Bn] Preliminaries for L of type Bn: technical proposition concerning the weight lambda-2...2 for lambda=lambda_1} (applied to a suitable Levi subgroup), one sees that the weight space $V(\lambda)_{\mu}$ is spanned by  the vectors
\begin{align}
\{F_{1,k}v^{\lambda}\} 	&\cup \{f_{1,k-1}f_{\alpha_k}F_{k,k+1}v^{\lambda}\} 																\cr
																		&\cup \{f_{1,i}F_{i+1,k}v^{\lambda}\}_{1\leqslant i\leqslant k-2} 														\cr
																		&\cup \{f_{1,j}F_{k,j+1}v^{\lambda}\}_{k\leqslant j<n} 																	\cr
																		&\cup \{f_{1,i}f_{i+1,k-1}f_{\alpha_k}F_{k,k+1}v^{\lambda}\}_{1\leqslant i\leqslant k-2} 		\cr
																		&\cup \{f_{1,i}f_{k,j}F_{i+1,j+1}v^{\lambda}\}_{1\leqslant i\leqslant k-2,k\leqslant j <n} 				\cr
																		&\cup \{f_{k,j}F_{1,j+1}v^{\lambda}\}_{k\leqslant j<n} 																	\cr
																		&\cup \{f_{k,n}f_{1,n}v^{\lambda}\},
\label{generators_for_W(lambda)_nu}
\end{align}
where $v^{\lambda}$ is a maximal vector of weight $\lambda$ in $V(\lambda)$ for $B.$ As usual, an application of Proposition \ref{Various_weight_multiplicities} yields $\m_{V(\lambda)}(\mu)=k(n-k+2)-1,$ forcing the generating elements of \eqref{generators_for_W(lambda)_nu} to be linearly independent. The following result thus holds.

\begin{prop}\label{Elements_generating_W(lambda)_nu}
Let $a\in \Z_{>0}$ and $2<k<n.$ Also set $\lambda=a\lambda_1+\lambda_k\in X^+(T)$ and consider the dominant $T$-weight  $\mu=\lambda-(\alpha_1+\cdots+\alpha_{k-1}+2\alpha_k+\cdots+2\alpha_n)\in \Lambda^+(\lambda).$ Then  $\m_{V(\lambda)}(\mu)=k(n-k+2)-1$ and the set  given in \eqref{generators_for_W(lambda)_nu} forms a basis of $V(\lambda)_{\mu}.$
\end{prop}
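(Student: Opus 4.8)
The plan is to combine three ingredients: first establish that the set \eqref{generators_for_W(lambda)_nu} spans $V(\lambda)_{\mu}$, then compute $\m_{V(\lambda)}(\mu)$ from the known characteristic-zero multiplicities, and finally verify by direct enumeration that \eqref{generators_for_W(lambda)_nu} contains exactly $k(n-k+2)-1$ vectors. Since a spanning set whose cardinality equals the dimension of the space is automatically a basis, this will complete the argument. Most of this is already set up in the discussion immediately preceding the statement, so the proof is largely a matter of assembling these pieces.

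For the spanning step, I would begin from the general description \eqref{Elements_generating_W(lambda)_mu} of $V(\lambda)_{\mu}$ in terms of products $\tfrac{f_{\gamma_1}^{k_1}}{k_1!}\cdots\tfrac{f_{\gamma_r}^{k_r}}{k_r!}v^{\lambda}$ with $\gamma_1<\cdots<\gamma_r\in\Phi^+$. Since $\mu=\lambda-(\alpha_1+\cdots+\alpha_{k-1}+2\alpha_k+\cdots+2\alpha_n)$ has all its simple-root coefficients bounded by $2<p$ (recall $p\neq 2$ is assumed throughout this section), Proposition \ref{A simplification for V(lambda)mu} applies and restricts attention to monomials supported on $\Phi^+_{\lambda}$. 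For $\lambda=a\lambda_1+\lambda_k$ these are exactly the roots starting at $\alpha_1$ together with those meeting the support of $\lambda_k$; running through the admissible products in the ordering $\leqslant$, and applying Proposition \ref{[Dn<Bn] Preliminaries for L of type Bn: technical proposition concerning the weight lambda-2...2 for lambda=lambda_1} via Lemma \ref{[Preliminaries] Parabolic embeddings: restriction to Levi subgroup} to the type-$B_{n-k+1}$ Levi subgroup generated by $\alpha_k,\ldots,\alpha_n$, one reduces every generator to one of the eight forms in \eqref{generators_for_W(lambda)_nu}. This is the same bookkeeping already carried out in the preceding subsections for $\lambda=\lambda_2$ and $\lambda=a\lambda_1+\lambda_2$.

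For the dimension I would invoke the last row of Table \ref{Some_weight_multiplicities_in_Weyl_modules} in Proposition \ref{Various_weight_multiplicities} with $i=k$: since $\mu=(a-1)\lambda_1+\lambda_{k-1}$ is precisely the weight $\lambda-(\alpha_1+\cdots+\alpha_{k-1}+2\alpha_k+\cdots+2\alpha_n)$, this gives $\m_{V(\lambda)}(\mu)=k(n-k+2)-1$. It then remains to count the vectors in \eqref{generators_for_W(lambda)_nu}: the eight families have sizes $1$, $1$, $k-2$, $n-k$, $k-2$, $(k-2)(n-k)$, $n-k$ and $1$. Writing $u=k-2$ and $v=n-k$, these sum to $3+2u+2v+uv=(u+2)(v+2)-1=k(n-k+2)-1$. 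Thus \eqref{generators_for_W(lambda)_nu} spans $V(\lambda)_{\mu}$ and has cardinality equal to $\dim V(\lambda)_{\mu}$, so its elements are linearly independent and form a basis.

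I expect the only real difficulty to lie in the spanning step, namely in checking that no admissible monomial escapes the list \eqref{generators_for_W(lambda)_nu}. This requires a careful case analysis of how a product $f_{\gamma_1}\cdots f_{\gamma_r}v^{\lambda}$ with roots drawn from $\Phi^+_{\lambda}$ collapses under the commutation reductions of Lemma \ref{elements spanning V(lambda)mu: a first simplification} and the $B_{n-k+1}$-Levi identities of Proposition \ref{[Dn<Bn] Preliminaries for L of type Bn: technical proposition concerning the weight lambda-2...2 for lambda=lambda_1}; the dimension computation and the counting are then routine once the spanning set has been pinned down.
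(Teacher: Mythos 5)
Your proposal is correct and follows essentially the same route as the paper: the paper likewise obtains the spanning property from Proposition \ref{A simplification for V(lambda)mu}, the ordering $\leqslant$ on $\Phi^+$, and Proposition \ref{[Dn<Bn] Preliminaries for L of type Bn: technical proposition concerning the weight lambda-2...2 for lambda=lambda_1} applied to a suitable Levi subgroup, then invokes Proposition \ref{Various_weight_multiplicities} to get $\m_{V(\lambda)}(\mu)=k(n-k+2)-1$ and concludes linear independence by comparing cardinality with dimension. Your explicit count of the eight families (which the paper leaves implicit) and your observation that the simple-root coefficients of $\lambda-\mu$ are at most $2<p$ are both accurate and complete the same argument.
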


Suppose for the remainder of this section that $p \mid a+k,$ so that $\mu_{1,k}$ affords the highest weight of a composition factor of $V(\lambda)$ by Proposition \ref{Conditions_for_f(i,j)_to_belong_to_V(i,j)} (applied to the Levi subgroup corresponding to the simple roots \(\alpha_1,\ldots,\alpha_k\)). Also denote by $u^+$ the corresponding maximal vector in $V(\lambda)_{\mu_{1,k}}$ for  $B$ given in Remark \ref{A_maximal_vector_in_V_(A_n)(a,0,....,0,b)}, and set 
$$\overline{V(\lambda)}= V(\lambda)/\langle G u^+\rangle_K.$$ We omit the details of the proof of the following result, as it is identical to that of Lemma \ref{[Dn<Bn] Preliminaries for L of type Bn: reduction to the study of V(lambda)/<Lu> for lambda = a lambda1 + lambda2}.

\begin{lem}\label{[Dn<Bn] Preliminaries for L of type Bn: reduction to the study of V(lambda)/<Lu> for lambda = a lambda_1 + lambda_k}
Assume $p\mid a+k$ and adopt the notation introduced above. Then $[\langle G u^+\rangle_K,L(\mu)]=0 .$ In particular $[V(\lambda),L(\mu)]=[\overline{V(\lambda)},L(\mu)].$
\end{lem}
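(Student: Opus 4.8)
The plan is to mirror the proof of Lemma~\ref{[Dn<Bn] Preliminaries for L of type Bn: reduction to the study of V(lambda)/<Lu> for lambda = a lambda1 + lambda2} essentially word for word, the only change being the value of the highest weight \(\mu_{1,k}\) of the maximal vector \(u^+\). Everything rests on the short exact sequence \(0\to \langle Gu^+\rangle_K \to V(\lambda)\to \overline{V(\lambda)}\to 0\): composition multiplicities are additive along it, so \([V(\lambda),L(\mu)]=[\langle Gu^+\rangle_K,L(\mu)]+[\overline{V(\lambda)},L(\mu)]\), and the ``in particular'' clause follows at once once the vanishing \([\langle Gu^+\rangle_K,L(\mu)]=0\) is in hand. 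To obtain the latter I would first invoke \cite[Lemma 2.13 b)]{Jantzen}: since \(u^+\) is a maximal vector of weight \(\mu_{1,k}=\lambda-(\alpha_1+\cdots+\alpha_k)\), the submodule \(\langle Gu^+\rangle_K\) is a homomorphic image of the Weyl module \(V(\mu_{1,k})\), so its composition factors occur among those of \(V(\mu_{1,k})\); it therefore suffices to prove \([V(\mu_{1,k}),L(\mu)]=0\).

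Next I would record the two relevant highest weights by a short telescoping computation. One finds \(\mu=(a-1)\lambda_1+\lambda_{k-1}\) in all cases, while \(\mu_{1,k}=(a-1)\lambda_1+\lambda_{k+1}\) for \(k\leqslant n-2\) and \(\mu_{1,n-1}=(a-1)\lambda_1+2\lambda_n\) (the short node \(\alpha_{n-1}\) accounting for the exceptional shape). In every case \(\mu=\mu_{1,k}-(\alpha_k+2\alpha_{k+1}+\cdots+2\alpha_n)\), so that \(\mu\) is obtained from \(\mu_{1,k}\) by subtracting only roots lying in \(J=\{\alpha_k,\ldots,\alpha_n\}\), which spans a subsystem of type \(B_{n-k+1}\).

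The heart of the matter is the multiplicity comparison \(\m_{V(\mu_{1,k})}(\mu)=\m_{L(\mu_{1,k})}(\mu)\). On the Weyl side, weight multiplicities are characteristic independent, and a char-zero computation (Proposition~\ref{Various_weight_multiplicities} for \(k\leqslant n-2\), and the zero-weight multiplicity of the adjoint module of \(B_2\) when \(k=n-1\)) gives \(\m_{V(\mu_{1,k})}(\mu)=n-k+1\); here one uses that \(\mu|_{T_H}=0\), which holds because \(\langle\lambda_1,\alpha_j^\vee\rangle=\langle\lambda_{k-1},\alpha_j^\vee\rangle=0\) for \(j\geqslant k\), while \(\mu_{1,k}|_{T_H}\) restricts to \(\lambda_2\) of \(B_{n-k+1}\) (respectively \(2\lambda_2\) of \(B_2\)). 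On the irreducible side, Lemma~\ref{[Preliminaries] Parabolic embeddings: restriction to Levi subgroup} gives \(\m_{L(\mu_{1,k})}(\mu)=\m_{L_H(\mu_{1,k}|_{T_H})}(0)\) for \(H=L_J'\); since \(p\neq 2\) the corresponding small Weyl module is irreducible --- this is the generalization of Proposition~\ref{[Dn<Bn] Preliminaries for L of type Bn: irreducibility of V(lambda2) and basis of L(lambda2)_mu, where mu=lambda-12...2} furnished by \cite{Luebeck} --- so the irreducible zero-weight multiplicity again equals \(n-k+1\), and the two agree. Writing \(\m_{V(\mu_{1,k})}(\mu)=\sum_\nu [V(\mu_{1,k}):L(\nu)]\,\m_{L(\nu)}(\mu)\) and isolating the summand \(\nu=\mu_{1,k}\), which already accounts for the entire left-hand side, forces \(\m_{L(\nu)}(\mu)=0\) for every other composition factor \(L(\nu)\); in particular \([V(\mu_{1,k}):L(\mu)]=[V(\mu_{1,k}):L(\mu)]\cdot\m_{L(\mu)}(\mu)=0\), exactly as in the proof of Proposition~\ref{Conditions_for_f(i,j)_to_belong_to_V(i,j)}.

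The step I expect to be the genuine obstacle is this multiplicity equality, i.e.\ the irreducibility at the zero weight of the \(B_{n-k+1}\)-Weyl module attached to \(\mu_{1,k}|_{T_H}\). The delicate case is \(k=n-1\), where the Levi is of type \(B_2\) and the restricted highest weight is \(2\lambda_2\) rather than a fundamental weight, so the clean assertion that \(V(\lambda_2)\) is irreducible has to be replaced by a direct verification --- again via \cite{Luebeck} --- that \(V_{B_2}(2\lambda_2)\) and \(L_{B_2}(2\lambda_2)\) share the same zero-weight multiplicity. Here the hypothesis \(p\neq 2\), together with Lübeck's explicit tables, is what rules out the small odd primes that could otherwise contribute an extra composition factor carrying the zero weight.
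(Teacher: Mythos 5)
Your proposal is correct and takes essentially the same route as the paper, whose proof is simply declared identical to that of the $k=2$ case: apply \cite[Lemma 2.13 b)]{Jantzen} to realize $\langle Gu^+\rangle_K$ as an image of $V(\mu_{1,k})$, then rule out $L(\mu)$ as a composition factor of $V(\mu_{1,k})$ by matching $\m_{V(\mu_{1,k})}(\mu)$ against $\m_{L(\mu_{1,k})}(\mu)$ through the Levi subgroup of type $B_{n-k+1}$ corresponding to $\alpha_k,\ldots,\alpha_n$. Your explicit treatment of $k=n-1$ --- where that Levi is of type $B_2$, the restricted highest weight is $2\lambda_2$ rather than $\lambda_2$, and one needs the irreducibility for $p\neq 2$ of the ten-dimensional adjoint Weyl module --- fills in a case that the paper's citations (stated only for $\lambda_2$, respectively $\lambda_i$ with $1<i<n$) leave implicit.
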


In view of Lemma \ref{[Dn<Bn] Preliminaries for L of type Bn: reduction to the study of V(lambda)/<Lu> for lambda = a lambda_1 + lambda_k}, it is  natural to investigate the structure of the quotient $\overline{V(\lambda)}.$ Write $\bar{v}^{\lambda}$ for the class of $v^{\lambda}$ in $\overline{V(\lambda)}.$ By Lemma \ref{structure_constants_for_B} and Remark \ref{A_maximal_vector_in_V_(A_n)(a,0,....,0,b)}, we successively get
\begin{equation}
f_{1,r}\bar{v}^{\lambda}	=  f_{k+1,r}f_{1,k}\bar{v}^{\lambda} =  \sum_{s=1}^{k-1}{f_{k+1,r}f_{1,s}f_{s+1,k}\bar{v}^{\lambda}}  =  \sum_{s=1}^{k-1}{f_{1,s}f_{s+1,r}\bar{v}^{\lambda}}
\label{[Dn<Bn] Preliminaries: f_(1,r) = sum f_(1,s) f_(s+1,r) for 2<r<n}
\end{equation}
for $k<r \leqslant n.$ Also, since $\langle G u^+\rangle_K$ is an image of $V(\mu_{1,k})$ and $\m_{L(\mu_{1,k})}(\mu)=n-k+1$ by Proposition \ref{basis_of_W_lambda-alpha_i-2alpha_k-...-2alpha_n} (applied to a suitable Levi subgroup), we have $\dim \overline{V(\lambda)}_\mu = (k-1)(n-k+2).$ Those observations can be used to determine a basis of the weight space $\overline{V(\lambda)}_{\mu},$ as the following result shows.

%
%
\begin{prop}\label{Une premiere simplification de l'ensemble generateur de V(nu)}
Let $a\in \Z_{>0}$ and $2<k<n$ be such that $p\mid a+k,$ and consider the dominant character $\lambda=a\lambda_1+\lambda_k.$ Also set $\mu=\lambda-(\alpha_1+\cdots+ \alpha_{k-1} +2\alpha_k+\cdots+2\alpha_n)\in \Lambda^+(\lambda)$ and let $u^+$ be the maximal vector of weight $\mu_{1,k}$ in $V(\lambda)$ for $B$ given in Remark \textnormal{\ref{A_maximal_vector_in_V_(A_n)(a,0,....,0,b)}}. Finally, set $\overline{V(\lambda)}= V(\lambda)/\langle G u^+\rangle_K,$ and write $\bar{v}^{\lambda}$ for the class of $v^{\lambda}$ in $\overline{V(\lambda)}.$ Then $\dim \overline{V(\lambda)}_\mu = (k-1)(n-k+2)$ and a basis of the weight space $\overline{V(\lambda)}_{\mu}$ is given by
\begin{align}
\{F_{1,k}\bar{v}^{\lambda}\} 		&\cup  \{f_{1,i}F_{i+1,k}\bar{v}^{\lambda}\}_{1\leqslant i\leqslant k-2} 														\cr
																						&\cup  \{f_{1,i}f_{i+1,k-1}f_{\alpha_k}F_{k,k+1}\bar{v}^{\lambda}\}_{1\leqslant i\leqslant k-2} 			\cr
																						&\cup  \{f_{1,i}f_{k,j}F_{i+1,j+1}\bar{v}^{\lambda}\}_{1\leqslant i\leqslant k-2,k\leqslant j <n} 				\cr
																						&\cup  \{f_{k,j}F_{1,j+1}\bar{v}^{\lambda}\}_{k\leqslant j<n} 																	\cr
																						&\cup  \{f_{k,n}f_{1,n}\bar{v}^{\lambda}\}.
\label{generators_for_V(lambda)_nu}
\end{align}
\end{prop}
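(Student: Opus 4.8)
The plan is to follow the exact template established in the preceding propositions of this section (Propositions \ref{A_basis_of_W(lambda)_(lambda-2alphak-...-2alphan)}, \ref{basis_of_W_lambda-alpha_k-1-2alpha_k-...-2alpha_n}, and especially \ref{Une premiere simplification de l'ensemble generateur de V(mu)}), since the statement is the direct generalization of Proposition \ref{Une premiere simplification de l'ensemble generateur de V(mu)} from $\lambda=a\lambda_1+\lambda_2$ to $\lambda=a\lambda_1+\lambda_k$ with $2<k<n$. The dimension count $\dim\overline{V(\lambda)}_\mu=(k-1)(n-k+2)$ is already supplied in the paragraph preceding the statement: it follows from $\m_{V(\lambda)}(\mu)=k(n-k+2)-1$ (Proposition \ref{Elements_generating_W(lambda)_nu}) minus $\m_{L(\mu_{1,k})}(\mu)=n-k+1$, using that $\langle Gu^+\rangle_K$ is an image of $V(\mu_{1,k})$. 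Since the listed set \eqref{generators_for_V(lambda)_nu} is obtained from the spanning set \eqref{generators_for_W(lambda)_nu} of $V(\lambda)_\mu$ by deleting precisely the generators $f_{1,j}F_{k,j+1}v^\lambda$ $(k\leqslant j<n)$ and $f_{1,i}f_{i+1,k-1}f_{\alpha_k}F_{k,k+1}v^\lambda$ is retained, I must show that the \emph{deleted} vectors (namely $\{f_{1,j}F_{k,j+1}\bar v^\lambda\}_{k\leqslant j<n}$) become redundant modulo $\langle Gu^+\rangle_K$, and then a cardinality check closes the argument.

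The key computational step is to express each $f_{1,j}F_{k,j+1}\bar v^\lambda$ $(k\leqslant j<n)$ as a $K$-linear combination of the vectors in \eqref{generators_for_V(lambda)_nu}, exactly paralleling the two displayed computations in the proof of Proposition \ref{Une premiere simplification de l'ensemble generateur de V(mu)}. First I would invoke the crucial relation \eqref{[Dn<Bn] Preliminaries: f_(1,r) = sum f_(1,s) f_(s+1,r) for 2<r<n}, valid in $\overline{V(\lambda)}$ precisely because we have quotiented by $\langle Gu^+\rangle_K$ and used Remark \ref{A_maximal_vector_in_V_(A_n)(a,0,....,0,b)} on the Levi subgroup of type $A_k$ corresponding to $\alpha_1,\ldots,\alpha_k$. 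Applying this together with the structure-constant identities of Lemma \ref{structure_constants_for_B} and the two parts of Proposition \ref{[Dn<Bn] Preliminaries for L of type Bn: technical proposition concerning the weight lambda-2...2 for lambda=lambda_1} (the latter applied to the $B_{n-k+1}$-Levi corresponding to $\alpha_k,\ldots,\alpha_n$), I would rewrite $f_{1,j}F_{k,j+1}\bar v^\lambda$ by commuting $F_{k,j+1}$ past the decomposition $f_{1,j}=\sum_{s}f_{1,s}f_{s+1,j}$ and collecting the resulting terms, each of which lands in the span of \eqref{generators_for_V(lambda)_nu}. I would treat the summand $f_{k,n}f_{1,n}\bar v^\lambda$ along the way since it appears in the target basis and is produced by the $j=n-1$ type manipulations (cf.\ the second display in the proof of Proposition \ref{Une premiere simplification de l'ensemble generateur de V(mu)}).

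Having shown that \eqref{generators_for_V(lambda)_nu} spans $\overline{V(\lambda)}_\mu$, I would finish by a dimension comparison: the set \eqref{generators_for_V(lambda)_nu} has exactly $1+(k-1)+(k-1)+(k-1)(n-k)+(n-k)+1$ elements, which I would verify equals $(k-1)(n-k+2)$, matching $\dim\overline{V(\lambda)}_\mu$; hence the spanning set is a basis and no further linear independence argument is required. The main obstacle is purely bookkeeping: correctly tracking the signs and the coefficients $1,2,\ldots$ coming from the structure constants in Lemma \ref{structure_constants_for_B} (particularly parts \ref{structure_constants_for_B_part4} and \ref{structure_constants_for_B_part5}, which carry the factors $2$ and $-1$) across the $A_k$-type rewriting \eqref{[Dn<Bn] Preliminaries: f_(1,r) = sum f_(1,s) f_(s+1,r) for 2<r<n}, and ensuring the doubly-indexed family $\{f_{1,i}f_{k,j}F_{i+1,j+1}\bar v^\lambda\}$ is handled uniformly. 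As in the model proof, once the reduction of the deleted vectors is verified the assertion on $\dim\overline{V(\lambda)}_\mu$ already established in the text lets one conclude immediately, so I expect the write-up to delegate the routine verifications to the reader, mirroring the phrasing of the surrounding propositions.
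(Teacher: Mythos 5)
Your overall strategy (show that the listed vectors span $\overline{V(\lambda)}_\mu$, then conclude via the dimension statement supplied in the paragraph before the proposition) is the same as the paper's, but there is a genuine gap in the execution: you have misidentified which generators of \eqref{generators_for_W(lambda)_nu} are absent from \eqref{generators_for_V(lambda)_nu}. The deleted vectors are not only $\{f_{1,j}F_{k,j+1}\bar v^{\lambda}\}_{k\leqslant j<n}$ but also the single vector $f_{1,k-1}f_{\alpha_k}F_{k,k+1}\bar v^{\lambda}$, which is \emph{not} a member of the retained family $\{f_{1,i}f_{i+1,k-1}f_{\alpha_k}F_{k,k+1}\bar v^{\lambda}\}_{1\leqslant i\leqslant k-2}$ (those are products of four root vectors, with $i\leqslant k-2$). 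Indeed, the number of deleted vectors must be $[k(n-k+2)-1]-(k-1)(n-k+2)=n-k+1$, whereas your list accounts for only $n-k$ of them; correspondingly your cardinality check fails as written, since $1+(k-1)+(k-1)+(k-1)(n-k)+(n-k)+1$ does not equal $(k-1)(n-k+2)$ — the correct ranges give $1+(k-2)+(k-2)+(k-2)(n-k)+(n-k)+1$, which does.

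This omission is not cosmetic: the reduction of $f_{1,j}F_{k,j+1}\bar v^{\lambda}$ that you outline unavoidably produces the very term you overlooked — in the paper one arrives at $F_{k,j+1}f_{1,k-1}f_{k,j}\bar v^{\lambda}=F_{1,k}\bar v^{\lambda}+f_{k,j}F_{1,j+1}\bar v^{\lambda}+f_{1,k-1}f_{\alpha_k}F_{k,k+1}\bar v^{\lambda}$ — so without first expressing $f_{1,k-1}f_{\alpha_k}F_{k,k+1}\bar v^{\lambda}$ in terms of \eqref{generators_for_V(lambda)_nu}, the spanning argument stalls. The paper's proof therefore begins with exactly this step: it writes $f_{1,k-1}f_{\alpha_k}F_{k,k+1}\bar v^{\lambda}=\tfrac12 f_{1,k-1}(f_{k,n})^2\bar v^{\lambda}$ via Proposition \ref{[Dn<Bn] Preliminaries for L of type Bn: technical proposition concerning the weight lambda-2...2 for lambda=lambda_1}, commutes using Lemma \ref{structure_constants_for_B}, invokes the relation \eqref{[Dn<Bn] Preliminaries: f_(1,r) = sum f_(1,s) f_(s+1,r) for 2<r<n}, and then disposes of the resulting terms $f_{1,r}f_{k,n}f_{r+1,n}\bar v^{\lambda}$ by Proposition \ref{basis_of_W_lambda-alpha_i-2alpha_k-...-2alpha_n} applied to the Levi subgroup on $\alpha_{r+1},\ldots,\alpha_n$ — a tool your sketch also omits, and which is equally needed for the cross terms $f_{1,r}F_{k,j+1}f_{r+1,j}\bar v^{\lambda}$ arising in your reduction of $f_{1,j}F_{k,j+1}\bar v^{\lambda}$. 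Once these two points are repaired (treat $f_{1,k-1}f_{\alpha_k}F_{k,k+1}\bar v^{\lambda}$ first, and fix the count), your plan coincides with the paper's proof.
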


\begin{proof}
We first show that $f_{1,k-1}f_{\alpha_k}F_{k,k+1}\bar{v}^{\lambda}$ lies inside the subspace of $\overline{V(\lambda)}$ generated by the elements $F_{1,k}\bar{v}^{\lambda},$ $f_{1,i}F_{i+1,k}\bar{v}^{\lambda},$ $f_{1,i}f_{i+1,k-1}f_{\alpha_k}F_{k,k+1}\bar{v}^{\lambda},$ and $f_{1,i}f_{k,j}F_{i+1,j+1}\bar{v}^{\lambda},$ where $1\leqslant i\leqslant k-2$ and $k\leqslant j<n.$ By Lemma \ref{structure_constants_for_B} and case  \ref{[Dn<Bn] Preliminaries for L of type Bn: technical proposition concerning the weight lambda-2...2 for lambda=lambda_1 (part 2)} of Proposition \ref{[Dn<Bn] Preliminaries for L of type Bn: technical proposition concerning the weight lambda-2...2 for lambda=lambda_1}, we have
\begin{align*}
f_{1,k-1}f_{\alpha_k}F_{k,k+1}\bar{v}^{\lambda}			&= 		\tfrac{1}{2}f_{1,k-1}(f_{k,n})^2\bar{v}^{\lambda} 																																\cr
																										&= 		\tfrac{1}{2}(f_{k,n}f_{1,k-1}f_{k,n}\bar{v}^{\lambda}-f_{1,n}f_{k,n}\bar{v}^{\lambda})														\cr
																										&=		\tfrac{1}{2}(f_{k,n}f_{1,k-1}f_{k,n}\bar{v}^{\lambda}-2F_{1,k}\bar{v}^{\lambda}- f_{k,n}f_{1,n}\bar{v}^{\lambda}),
\end{align*}
and by \eqref{[Dn<Bn] Preliminaries: f_(1,r) = sum f_(1,s) f_(s+1,r) for 2<r<n}, we get $f_{k,n}f_{1,k-1}f_{k,n}\bar{v}^{\lambda} = f_{k,n}f_{1,n}\bar{v}^{\lambda}-\sum_{r=1}^{k-2}{f_{1,r}f_{k,n}f_{r+1,n}\bar{v}^{\lambda}}.$ For each \(1\leqslant r\leqslant k-2,\) an application of Proposition \ref{basis_of_W_lambda-alpha_i-2alpha_k-...-2alpha_n} (to the Levi subgroup corresponding to the simple roots \(\alpha_{r+1},\ldots,\alpha_n\)) then shows that \(f_{1,r}f_{k,n}f_{r+1,n}\bar{v}^\lambda\) can be written as a linear combination of elements of \eqref{generators_for_V(lambda)_nu} as desired. Next let $k\leqslant j <n,$ and first observe that by Lemma \ref{structure_constants_for_B} and \eqref{[Dn<Bn] Preliminaries: f_(1,r) = sum f_(1,s) f_(s+1,r) for 2<r<n}, we have 
\begin{align*}
f_{1,j}F_{k,j+1}\bar{v}^{\lambda}			 		&= F_{1,k}\bar{v}^{\lambda} + F_{k,j+1}f_{1,j}\bar{v}^{\lambda} 																																						\cr
																		&= F_{1,k}\bar{v}^{\lambda} + \sum_{r=1}^{k-2}{f_{1,r}F_{k,j+1}f_{r+1,j}\bar{v}^{\lambda}} + F_{k,j+1}f_{1,k-1}f_{k,j}\bar{v}^{\lambda}.
\end{align*}
For each \(1\leqslant r\leqslant k-2,\) applying Proposition \ref{basis_of_W_lambda-alpha_i-2alpha_k-...-2alpha_n} (to the Levi subgroup corresponding to the simple roots \(\alpha_{r+1},\ldots,\alpha_n\)) shows that $f_{1,r}F_{k,j+1}f_{r+1,j}\bar{v}^{\lambda}$ lies inside the subspace of $\overline{V(\lambda)}$ generated by the elements of \eqref{generators_for_V(lambda)_nu} as desired, while  an application of  case \ref{[Dn<Bn] Preliminaries for L of type Bn: technical proposition concerning the weight lambda-2...2 for lambda=lambda_1 (part 1)} of Proposition \ref{[Dn<Bn] Preliminaries for L of type Bn: technical proposition concerning the weight lambda-2...2 for lambda=lambda_1} yields
\begin{align*}
F_{k,j+1}f_{1,k-1}f_{k,j}\bar{v}^{\lambda} &= F_{1,j+1}f_{k,j}\bar{v}^{\lambda} + f_{1,k-1}F_{k,j+1}f_{k,j}\bar{v}^{\lambda} \cr
														 &= F_{1,k}\bar{v}^{\lambda} + f_{k,j}F_{1,j+1}\bar{v}^{\lambda} + f_{1,k-1}f_{\alpha_k}F_{k,k+1}\bar{v}^{\lambda}.
\end{align*}

Therefore the weight space $\overline{V(\lambda)}_{\mu}$ is spanned by the elements of \eqref{generators_for_V(lambda)_nu} and the assertion on $\dim \overline{V(\lambda)}_{\mu}$ given above allows us to conclude.
\end{proof}

In order to investigate the existence of a maximal vector of weight $\mu$ in $\overline{V(\lambda)}$ for $B$ as in Lemma \ref{Conditions_pour_que_la_combinaison_lineaire_de_vecteurs_generateurs_de_V(k-1,k,n)_soit_tuee_par_tous_les_e_alpha}, we require the following technical result.

\begin{lem}\label{technical_lemma}
Let $\lambda,$ and $\mu$ be as above, with  $p \mid a+k.$ Then the following assertions hold.
\begin{enumerate} 
\item	\label{technical_lemma_part_1} $f_{k,n}f_{2,n}\bar{v}^{\lambda} = -F_{2,k}\bar{v}^{\lambda} - f_{2,k-1}f_{\alpha_k}F_{k,k+1}\bar{v}^{\lambda}.$
\item \label{technical_lemma_part_3} $f_{k,n-1}f_{1,n}\bar{v}^{\lambda} = -\sum_{r=1}^{k-2}{f_{1,r}f_{r+1,n-1}f_{k,n}\bar{v}^{\lambda}} + f_{1,n-1}f_{k,n}\bar{v}^{\lambda}.$
\item \label{technical_lemma_part_4} $f_{k,n-1}f_{r+1,n}\bar{v}^{\lambda} = -f_{r+1,n-1}f_{k,n}\bar{v}^{\lambda}$ for  $1\leqslant r<k-1.$
\end{enumerate}
\end{lem}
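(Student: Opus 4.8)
The plan is to work throughout in the quotient $\overline{V(\lambda)}$ and to treat each of the three identities as a relation living inside a single, explicitly understood weight space. Three facts will be used repeatedly. First, since $\langle\lambda,\alpha_n\rangle=0$ we have $f_{\alpha_n}\bar{v}^{\lambda}=0$, so that $f_{i,n}\bar{v}^{\lambda}=[f_{i,n-1},f_{\alpha_n}]\bar{v}^{\lambda}=-f_{\alpha_n}f_{i,n-1}\bar{v}^{\lambda}$ for each $i$. Second, the defining relation \eqref{[Dn<Bn] Preliminaries: f_(1,r) = sum f_(1,s) f_(s+1,r) for 2<r<n} (a consequence of Remark \ref{A_maximal_vector_in_V_(A_n)(a,0,....,0,b)}) lets me expand $f_{1,r}\bar{v}^{\lambda}$. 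Third, I translate every $f_{i,j}$ and $F_{r,s}$ into the usual $\pm e_i$ description of the roots, which makes it immediate (via Lemma \ref{structure_constants_for_B}) which pairs of root vectors commute and what the relevant structure constants are: $f_{i,j}$ carries the short root $e_i$ when $j=n$ and the long root $e_i-e_{j+1}$ when $j<n$, while $F_{r,s}$ carries $e_r+e_s$.

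For \ref{technical_lemma_part_1} and \ref{technical_lemma_part_4} I would restrict to the standard Levi subgroup $L_J$ with $J=\{\alpha_2,\dots,\alpha_n\}$ for \ref{technical_lemma_part_1}, and $J=\{\alpha_{r+1},\dots,\alpha_n\}$ for \ref{technical_lemma_part_4}. In each case every root vector occurring is supported on $J$, while $\langle\lambda,\alpha_k\rangle=1$ and $\langle\lambda,\alpha_i\rangle=0$ for the remaining nodes $\alpha_i\in J$; hence, by Lemma \ref{[Preliminaries] Parabolic embeddings: restriction to Levi subgroup}, $\bar{v}^{\lambda}$ generates a homomorphic image of the Weyl module $V_{L_J'}(\lambda_{k-1})$ (resp.\ $V_{L_J'}(\lambda_{k-r})$) for the simple group $L_J'$ of type $B_{n-1}$ (resp.\ $B_{n-r}$), and every relation valid in that Weyl module descends to the quotient. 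For \ref{technical_lemma_part_1} the target weight is precisely the weight $\mu$ treated in Proposition \ref{basis_of_W_lambda-alpha_i-2alpha_k-...-2alpha_n} (after relabelling, $F_{2,k}$ becomes the distinguished generator $F_{1,k-1}$), so the identity is just the expression of $f_{k,n}f_{2,n}\bar{v}^{\lambda}$ in that basis, read off from the proof of Proposition \ref{basis_of_W_lambda-alpha_k-1-2alpha_k-...-2alpha_n} and Proposition \ref{basis_of_W_lambda-alpha_i-2alpha_k-...-2alpha_n} (both ultimately resting on Proposition \ref{[Dn<Bn] Preliminaries for L of type Bn: technical proposition concerning the weight lambda-2...2 for lambda=lambda_1}). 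For \ref{technical_lemma_part_4} the target weight is shallower; after relabelling the identity reads $f_{i,m-1}f_{1,m}v=-f_{1,m-1}f_{i,m}v$ in $V_{B_m}(\lambda_{i})$ with $i=k-r$, and I would verify it directly on the exterior-power realisation of $V_{B_m}(\lambda_i)$, where it reduces to the equality of the two structure constants governing the lowering of the distinct short roots $e_1$ and $e_i$ into the zero weight.

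For \ref{technical_lemma_part_3} I would first apply \eqref{[Dn<Bn] Preliminaries: f_(1,r) = sum f_(1,s) f_(s+1,r) for 2<r<n} with $r=n$ to write $f_{k,n-1}f_{1,n}\bar{v}^{\lambda}=\sum_{s=1}^{k-1}f_{k,n-1}f_{1,s}f_{s+1,n}\bar{v}^{\lambda}$. For $s\leqslant k-2$ the vectors $f_{k,n-1}$ and $f_{1,s}$ commute, and part \ref{technical_lemma_part_4} replaces $f_{k,n-1}f_{s+1,n}\bar{v}^{\lambda}$ by $-f_{s+1,n-1}f_{k,n}\bar{v}^{\lambda}$, producing exactly the sum in the statement. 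For the remaining term $s=k-1$ the Chevalley bracket $[f_{k,n-1},f_{1,k-1}]=f_{1,n-1}$ (a type-$A$ structure constant equal to $1$) yields the term $f_{1,n-1}f_{k,n}\bar{v}^{\lambda}$ together with a residual $f_{1,k-1}f_{k,n-1}f_{k,n}\bar{v}^{\lambda}$. The latter vanishes: restricting now to the Levi with $J=\{\alpha_k,\dots,\alpha_n\}$, the vector $\bar{v}^{\lambda}$ generates a quotient of the natural module $V_{B_{n-k+1}}(\lambda_1)$, inside which $f_{k,n-1}f_{k,n}\bar{v}^{\lambda}$ has a weight that is not a weight of the natural module and is therefore zero.

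The main obstacle will be the bookkeeping of the type-$B$ structure constants — in particular the factors of $2$ attached to short roots in Lemma \ref{structure_constants_for_B} and the signs produced by \eqref{structure_constants_relation_1} — and, more essentially, the fact that pure Chevalley commutation together with $f_{\alpha_n}\bar{v}^{\lambda}=0$ is \emph{circular} for \ref{technical_lemma_part_4}: it only shows that the identity is equivalent to the vanishing of $f_{\alpha_n}f_{k,n-1}f_{r+1,n-1}\bar{v}^{\lambda}$, which cannot be deduced formally. The genuine input is thus the explicit relation in $V_{B_m}(\lambda_i)$ imported from the earlier propositions, and the care needed lies in matching the relabelled root vectors to the vectors appearing there and in confirming that the relevant weight spaces and quotient identifications are as claimed.
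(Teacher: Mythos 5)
Your treatment of parts \ref{technical_lemma_part_1} and \ref{technical_lemma_part_3} is correct and is, in substance, the paper's own proof. For \ref{technical_lemma_part_1} the paper does not pass through a Levi explicitly but performs exactly the computation underlying the identity you want to read off: it combines the bracket $[f_{k,n},f_{2,n}]=-2F_{2,k}$ (part \ref{structure_constants_for_B_part4} of Lemma \ref{structure_constants_for_B}), the relation $(f_{k,n})^2\bar{v}^{\lambda}=2f_{\alpha_k}F_{k,k+1}\bar{v}^{\lambda}$ obtained from part \ref{[Dn<Bn] Preliminaries for L of type Bn: technical proposition concerning the weight lambda-2...2 for lambda=lambda_1 (part 2)} of Proposition \ref{[Dn<Bn] Preliminaries for L of type Bn: technical proposition concerning the weight lambda-2...2 for lambda=lambda_1} applied to the Levi on $\alpha_k,\ldots,\alpha_n$, and the vanishing $f_{2,k-1}\bar{v}^{\lambda}=0$; this is the same calculation that produces the corresponding identity in the proof of Proposition \ref{basis_of_W_lambda-alpha_k-1-2alpha_k-...-2alpha_n}. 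Your proof of \ref{technical_lemma_part_3} --- expand $f_{1,n}\bar{v}^{\lambda}$ by \eqref{[Dn<Bn] Preliminaries: f_(1,r) = sum f_(1,s) f_(s+1,r) for 2<r<n}, commute $f_{k,n-1}$ past $f_{1,s}$ for $s\leqslant k-2$, and kill the residual term of the $s=k-1$ summand using $f_{k,n-1}f_{k,n}\bar{v}^{\lambda}=0$ --- is precisely the paper's, with that vanishing (left implicit in the paper) made explicit, and your Levi/weight justification for it is the right one.

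Part \ref{technical_lemma_part_4} is where you genuinely diverge, and also where your one incorrect claim sits. The assertion that the identity ``cannot be deduced formally'' is false: the paper deduces it by pure commutation plus weight-space vanishing, just with a different decomposition from the one you tried. Since $f_{r+1,k-1}\bar{v}^{\lambda}=0$, part \ref{structure_constants_for_B_part1} of Lemma \ref{structure_constants_for_B} gives $f_{r+1,n}\bar{v}^{\lambda}=-f_{r+1,k-1}f_{k,n}\bar{v}^{\lambda}$; then $[f_{k,n-1},f_{r+1,k-1}]=f_{r+1,n-1}$ together with the same vanishing $f_{k,n-1}f_{k,n}\bar{v}^{\lambda}=0$ that you already use in part \ref{technical_lemma_part_3} yields $f_{k,n-1}f_{r+1,n}\bar{v}^{\lambda}=-f_{r+1,n-1}f_{k,n}\bar{v}^{\lambda}$ at once. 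In other words, the obstruction you met is an artifact of splitting off $f_{\alpha_n}$ rather than $f_{k,n}$, and the paper's route proves \ref{technical_lemma_part_3} and \ref{technical_lemma_part_4} in one stroke. Your alternative --- verifying the relabelled identity $f_{i,m-1}f_{1,m}v=-f_{1,m-1}f_{i,m}v$ in an exterior-power model of $V_{B_m}(\lambda_i)$ --- does work, but two points need care. First, the Levi-submodule of $\overline{V(\lambda)}$ generated by $\bar{v}^{\lambda}$ is only a homomorphic image of the Weyl module $V_{B_m}(\lambda_i)$, so the relation must be established in the Weyl module itself; relations in the irreducible $L_{B_m}(\lambda_i)$ (i.e.\ the statements, as opposed to the proofs, of Propositions \ref{basis_of_W_lambda-alpha_k-1-2alpha_k-...-2alpha_n} and \ref{basis_of_W_lambda-alpha_i-2alpha_k-...-2alpha_n}) would not descend. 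Second, in characteristic $p$ the exterior power $\wedge^i W$ need not be isomorphic to $V_{B_m}(\lambda_i)$; the verification is nevertheless legitimate because the relation is an identity among elements of $\mathfrak{U}_{\Z}v^{\lambda}$ and can therefore be checked over $\C$, where the exterior power is the Weyl module --- but this reduction, and the matching of your matrix realisation with the sign conventions of Section \ref{structure constants and Chevalley basis}, should be stated.
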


\begin{proof}
By Lemma \ref{structure_constants_for_B} and case \ref{[Dn<Bn] Preliminaries for L of type Bn: technical proposition concerning the weight lambda-2...2 for lambda=lambda_1 (part 2)} of Proposition \ref{[Dn<Bn] Preliminaries for L of type Bn: technical proposition concerning the weight lambda-2...2 for lambda=lambda_1} (together with the fact that \(f_{2,k-1}\bar{v}^\lambda=0\)), we get 
\begin{align*}
f_{k,n}f_{2,n}\bar{v}^{\lambda}			&= 	 -f_{2,n}f_{k,n}\bar{v}^{\lambda} - f_{2,k-1}(f_{k,n})^2\bar{v}^{\lambda}\cr
																		&= -2F_{2,k}\bar{v}^{\lambda} - f_{k,n}f_{2,n}\bar{v}^{\lambda} - f_{2,k-1}(f_{k,n})^2\bar{v}^{\lambda} \cr
																		&= -2F_{2,k}\bar{v}^{\lambda} - f_{k,n}f_{2,n}\bar{v}^{\lambda} - 2f_{2,k-1}f_{\alpha_k}F_{k,k+1}\bar{v}^{\lambda},
\end{align*}
from which \ref{technical_lemma_part_1} immediately follows. Also, by Lemma \ref{structure_constants_for_B} and \eqref{[Dn<Bn] Preliminaries: f_(1,r) = sum f_(1,s) f_(s+1,r) for 2<r<n}, we have
\begin{align*}
f_{k,n-1}f_{1,n} \bar{v}^{\lambda}		&= \sum_{r=1}^{k-2}{f_{1,r}f_{k,n-1}f_{r+1,n}\bar{v}^{\lambda}} + f_{k,n-1}f_{1,k-1}f_{k,n}\bar{v}^{\lambda} 				\cr
																		&= \sum_{r=1}^{k-2}{f_{1,r}f_{k,n-1}f_{r+1,n}\bar{v}^{\lambda}} + f_{1,n-1}f_{k,n}\bar{v}^{\lambda}.
\end{align*}
Noticing that $f_{k,n-1}f_{r+1,n}\bar{v}^{\lambda}=-f_{k,n-1}f_{r+1,k-1}f_{k,n}\bar{v}^{\lambda}=-f_{r+1,n-1}f_{k,n}\bar{v}^{\lambda}$ for $1\leqslant r\leqslant k-1$ then yields \ref{technical_lemma_part_3} and \ref{technical_lemma_part_4}, thus completing the proof.
\end{proof}

We now study the relation between the triple $(n,k,p)$ and the existence of a maximal vector in  $\overline{V(\lambda)}_{\mu}$ for $B,$ assuming $p \mid a + k.$  For $S=(A,B_i,C_i,D_{ij},E_j,F)\in K^{(k-1)(n-k+2)}$ $(1\leqslant i\leqslant k-2$ and $ k\leqslant j\leqslant n-1),$ set  
\begin{equation}
\begin{aligned}
\bar{w}(S) = AF_{1,k}\bar{v}^{\lambda} 	&+\sum_{i=1}^{k-2}{B_i f_{1,i}F_{i+1,k}\bar{v}^{\lambda}} + \sum_{i=1}^{k-2}{C_i f_{1,i}f_{i+1,k-1}f_{\alpha_k}F_{k,k+1}\bar{v}^{\lambda}} \cr
																				&+ \sum_{i=1}^{k-2}{\sum_{j=k}^{n-1}{D_{ij}f_{1,i}f_{k,j}F_{i+1,j+1}\bar{v}^{\lambda}}}+ \sum_{j=k}^{n-1}{E_j f_{k,j}F_{1,j+1}\bar{v}^{\lambda}} \cr
																				&+ Ff_{k,n}f_{1,n} \bar{v}^{\lambda}.
\end{aligned}
\label{[Dn<Bn] Definition of w(X) for mu=lambda-1...12...2 and G of type Bn}
\end{equation}

\begin{lem}\label{Conditions_pour_que_la_combinaison_lineaire_de_vecteurs_generateurs_de_V(l,k,n)_soit_tuee_par_tous_les_e_alpha}
Let $\lambda,$ $\mu$ be as above, with $p \mid a+k,$ and adopt the notation of \eqref{[Dn<Bn] Definition of w(X) for mu=lambda-1...12...2 and G of type Bn}. Then the following assertions are equivalent. 
\begin{enumerate}
\item \label{V(l,k,n)_part_1} There exists $0\neq S=(A,B_i,C_i,D_{ij},E_j,F)\in K^{(k-1)(n-k+2)}$ $(1\leqslant i\leqslant k-2$ and $k\leqslant j\leqslant n-1)$ such that $x_{\alpha}(c)\bar{w}(S)=0$ for every $\alpha \in \Pi$ and  \(c\in K.\)
\item \label{V(l,k,n)_part_2} There exists $S=(A,B_i,C_i,D_{ij},E_j,F)\in K^{(k-1)(n-k+2)-1}\times K^{*}$ $(1\leqslant i\leqslant k-2$ and $k\leqslant j\leqslant n-1)$ such that $x_{\alpha}(c)\bar{w}(S)=0$ for every $\alpha \in \Pi$ and  \(c\in K.\)
\item \label{V(l,k,n)_part_3} The divisibility condition $ p\mid 2(n-k) + 1$ is satisfied.
\end{enumerate}
\end{lem}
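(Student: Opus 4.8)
The plan is to follow the template established in the proofs of Lemmas \ref{[Dn<Bn] Prelimiaries for L of type An: existence of a maximal vector of weight lambda-1...1}, \ref{Conditions_pour_que_la_combinaison_lineaire_de_vecteurs_generateurs_de_V(k,n)_soit_tuee_par_tous_les_e_alpha} and \ref{Conditions_pour_que_la_combinaison_lineaire_de_vecteurs_generateurs_de_V(k-1,k,n)_soit_tuee_par_tous_les_e_alpha}: reduce the problem to a finite homogeneous linear system on the coefficients of $\bar{w}(S)$ and then analyze its solvability. Fix $S\in K^{(k-1)(n-k+2)}$ and set $\bar{w}=\bar{w}(S)$. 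First I would observe, exactly as in those earlier lemmas, that $\tfrac{e_\gamma^i}{i!}\bar{w}$ lands in $\overline{V(\lambda)}_{\mu+i\gamma}$ and that this weight space vanishes for $i\geqslant 2$ and every $\gamma\in\Phi^+$, so that $x_\gamma(c)\bar{w}=\bar{w}+c\,e_\gamma\bar{w}$. Hence conditions \ref{V(l,k,n)_part_1} and \ref{V(l,k,n)_part_2} are equivalent to the vanishing of $e_{\alpha_r}\bar{w}$ for every simple root $\alpha_r\in\Pi$, with the $F$-component left free (respectively required to be nonzero).

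The heart of the argument is the explicit computation of each $e_{\alpha_r}\bar{w}$ for $1\leqslant r\leqslant n$, using the structure constants from Lemma \ref{structure_constants_for_B}, the relation \eqref{[Dn<Bn] Preliminaries: f_(1,r) = sum f_(1,s) f_(s+1,r) for 2<r<n} valid in $\overline{V(\lambda)}$, Proposition \ref{[Dn<Bn] Preliminaries for L of type Bn: technical proposition concerning the weight lambda-2...2 for lambda=lambda_1}, and crucially the technical Lemma \ref{technical_lemma}, whose three identities are tailored precisely to re-express the vectors $f_{k,n}f_{2,n}\bar{v}^\lambda$, $f_{k,n-1}f_{1,n}\bar{v}^\lambda$ and $f_{k,n-1}f_{r+1,n}\bar{v}^\lambda$ that surface when $e_{\alpha_1}$ and $e_{\alpha_n}$ are applied. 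After rewriting each $e_{\alpha_r}\bar{w}$ in terms of the basis \eqref{generators_for_V(lambda)_nu} of $\overline{V(\lambda)}_\mu$ furnished by Proposition \ref{Une premiere simplification de l'ensemble generateur de V(nu)}, I would verify (as in the earlier lemmas, by applying a suitable product of raising operators and landing on a nonzero extremal vector) that every coefficient vector appearing is itself a nonzero element of the relevant weight space. This converts the conditions $e_{\alpha_r}\bar{w}=0$ into an explicit homogeneous linear system in the entries $A,B_i,C_i,D_{ij},E_j,F$ of $S$.

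Finally I would solve this system. The equations coming from $e_{\alpha_r}$ for the ``interior'' simple roots force the various families of coefficients to be constant along chains (mirroring the ``$A_{r-1}=A_r$'' equations of \eqref{third_system_of_equations}), while those coming from $e_{\alpha_1}$, $e_{\alpha_2}$, $e_{\alpha_{k-1}}$, $e_{\alpha_k}$ and $e_{\alpha_n}$ tie these constants together and produce, after elimination, a single scalar relation whose coefficient is a nonzero multiple of $2(n-k)+1$. Thus the system has a nontrivial solution if and only if $p\mid 2(n-k)+1$, giving \ref{V(l,k,n)_part_1} $\Leftrightarrow$ \ref{V(l,k,n)_part_3}; and in that case the solution space is one-dimensional with its generator having $F\neq 0$, giving \ref{V(l,k,n)_part_1} $\Leftrightarrow$ \ref{V(l,k,n)_part_2}. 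The main obstacle is the bookkeeping: unlike the earlier lemmas where $\dim\overline{V(\lambda)}_\mu$ was small, here the basis \eqref{generators_for_V(lambda)_nu} carries several two-index families, so organizing the $e_{\alpha_r}\bar{w}$ into a tractable system—and correctly tracking how the $D_{ij}$ and $E_j$ couple under $e_{\alpha_n}$ via Lemma \ref{technical_lemma}—is where the care is needed; everything else is a routine, if lengthy, propagation of the structure-constant relations.
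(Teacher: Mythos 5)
Your plan follows the paper's own proof essentially step for step: reduce to the vanishing of $e_{\alpha}\bar{w}(S)$ for all $\alpha\in\Pi$; compute these images using Lemma \ref{structure_constants_for_B}, relation \eqref{[Dn<Bn] Preliminaries: f_(1,r) = sum f_(1,s) f_(s+1,r) for 2<r<n}, Proposition \ref{[Dn<Bn] Preliminaries for L of type Bn: technical proposition concerning the weight lambda-2...2 for lambda=lambda_1} and Lemma \ref{technical_lemma}; rewrite everything in the basis \eqref{generators_for_V(lambda)_nu} of Proposition \ref{Une premiere simplification de l'ensemble generateur de V(nu)}; and solve the resulting homogeneous system, whose nontrivial solvability is equivalent to $p\mid 2(n-k)+1$, the solution line being spanned by a vector whose $F$-entry is nonzero. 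This is exactly what the paper does; the only organizational difference is that the paper treats $k=3$ separately, since there the families indexed by $1\leqslant i\leqslant k-2$ collapse to a single index — a split you would inevitably hit in the bookkeeping you describe.

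There is, however, one justification in your reduction that is false as stated. You claim that $\overline{V(\lambda)}_{\mu+i\gamma}=0$ for every $i\geqslant 2$ and every $\gamma\in\Phi^+$, so that $x_\gamma(c)\bar{w}=\bar{w}+c\,e_\gamma\bar{w}$. This fails for the simple root $\alpha_n$: writing weights in the standard orthonormal basis $\varepsilon_1,\ldots,\varepsilon_n$ of $X(T)_{\R}$, one computes $\mu+2\alpha_n=a\varepsilon_1+\varepsilon_2+\cdots+\varepsilon_{k-1}+2\varepsilon_n$, which is $\mathscr{W}$-conjugate to the weight $\lambda-\alpha_1$ (to $\lambda$ itself when $a=1$); since its dominant conjugate does not lie under $\mu_{1,k}$, this weight is not a weight of $\langle Gu^+\rangle_K$, so its weight space survives with multiplicity $1$ in the quotient $\overline{V(\lambda)}$. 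The equivalence you want is nevertheless true, but for $\alpha=\alpha_n$ it must be argued differently: the terms $\tfrac{e_{\alpha_n}^i}{i!}\bar{w}$ lie in distinct weight spaces, so $x_{\alpha_n}(c)\bar{w}=\bar{w}$ for all $c\in K$ forces each of them to vanish; conversely, if $e_{\alpha_n}\bar{w}=0$ then, since $p\neq 2$ throughout the appendix, $\tfrac{e_{\alpha_n}^2}{2}\bar{w}=\tfrac{1}{2}e_{\alpha_n}\bigl(e_{\alpha_n}\bar{w}\bigr)=0$, while $\mu+i\alpha_n\notin\Lambda(\lambda)$ for $i\geqslant 3$. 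With this one-line repair (a point the paper itself glosses over by merely citing the earlier lemmas, whose weight-space-vanishing argument is only literally valid in the type $A$ setting), your proposal coincides with the paper's proof.
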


\begin{proof} 
We start by assuming $k=3,$ so write $B,C$ and $D_{j} $ for $B_{1},C_{1}$ and $D_{1,j}$ $(3\leqslant j\leqslant n-1),$ and let $S=(A,B,C,D_j,E_j,F)\in K^{2(n-1)}.$ With these simplifications, $\bar{w}=\bar{w}(S)$ can be rewritten as
\begin{align*}
\bar{w}=AF_{1,3}\bar{v}^{\lambda} &+Bf_{\alpha_1}F_{2,3}\bar{v}^{\lambda} + C f_{\alpha_1}f_{\alpha_2}f_{\alpha_3}F_{3,4}\bar{v}^{\lambda} \cr
							&+ \sum_{j=3}^{n-1}{D_{j}f_{\alpha_1}f_{3,j}F_{2,j+1}\bar{v}^{\lambda}}+ \sum_{j=3}^{n-1}{E_j f_{3,j}F_{1,j+1}\bar{v}^{\lambda}} \cr
							&+ Ff_{3,n}f_{1,n} \bar{v}^{\lambda}.
\end{align*}
Arguing as in the proof of Lemmas \ref{[Dn<Bn] Prelimiaries for L of type An: existence of a maximal vector of weight lambda-1...1} and \ref{Conditions_pour_que_la_combinaison_lineaire_de_vecteurs_generateurs_de_V(k,n)_soit_tuee_par_tous_les_e_alpha}, one sees that \(x_\alpha (c) \bar{w}=\bar{w}\) for every \(\alpha \in \Pi\) and  \(c\in K\) if and only if \(e_{\alpha}\bar{w}=0\) for every \(\alpha\in \Pi.\) Lemma \ref{structure_constants_for_B} then yields

\begin{align*}
e_{\alpha_1}\bar{w}		&=		(-A + (a+1)B)F_{2,3}\bar{v}^{\lambda} +(a+1)Cf_{\alpha_2}f_{\alpha_3}F_{3,4}\bar{v}^{\lambda}	\cr
						&\;\;\;\;\,\,+ \sum_{j=3}^{n-1}{((a+1)D_j-E_j)f_{3,j}F_{2,j+1}\bar{v}^{\lambda}} - Ff_{3,n}f_{2,n}\bar{v}^{\lambda}\cr
						&= (-A + (a+1)B+F)F_{2,3}\bar{v}^{\lambda} +((a+1)C+F) f_{\alpha_2}f_{\alpha_3}F_{3,4}\bar{v}^{\lambda}\cr
						&\;\;\;\;\,\,+ \sum_{j=3}^{n-1}{((a+1)D_j-E_j)f_{3,j}F_{2,j+1}\bar{v}^{\lambda}},
\end{align*}
where the last equality follows from case \ref{technical_lemma_part_1} of Lemma \ref{technical_lemma}. Similarly, we have
\begin{align*}
e_{\alpha_2}\bar{w} 			&= (2C-D_3)f_{\alpha_1}f_{\alpha_3}F_{3,4}\bar{v}^{\lambda} - \sum_{j=4}^{n-1}{D_j f_{\alpha_1}f_{3,j}F_{3,j+1}\bar{v}^{\lambda}}\cr
							&= \bigg(2C-\sum_{j=3}^{n-1}{D_j}\bigg)f_{\alpha_1}f_{\alpha_3}F_{3,4}\bar{v}^{\lambda},\cr
\intertext{where the last equality follows from Proposition \ref{[Dn<Bn] Preliminaries for L of type Bn: technical proposition concerning the weight lambda-2...2 for lambda=lambda_1} (case \ref{[Dn<Bn] Preliminaries for L of type Bn: technical proposition concerning the weight lambda-2...2 for lambda=lambda_1 (part 1)}). Again, applying Lemma \ref{structure_constants_for_B} successively gives}
e_{\alpha_3}\bar{w} 			&= (2E_3-A)F_{1,4}\bar{v}^{\lambda} - \sum_{j=4}^{n-1}{D_jf_{\alpha_1}f_{4,j}F_{2,j+1}\bar{v}^{\lambda}}\cr 
							&\;\;\;\;\,\,+ (2D_3-B-C)f_{\alpha_1}F_{2,4}\bar{v}^{\lambda} - \sum_{j=4}^{n-1}{E_jf_{4,j}F_{1,j+1}\bar{v}^{\lambda}}- F f_{4,n}f_{1,n}\bar{v}^{\lambda}\cr
							&= \bigg(2E_3 +\sum_{j=4}^{n-1}{E_j}-A+2F \bigg) F_{1,4}\bar{v}^{\lambda} + \bigg(2D_3+ \sum_{j=4}^{n-1}{D_j}-B-C \bigg) f_{\alpha_1}F_{2,4}\bar{v}^{\lambda},																																												\cr
\intertext{while for every $4\leqslant r\leqslant n-1$ one shows that }
e_{\alpha_r}\bar{w}				&= (D_r-D_{r-1})f_{\alpha_1}f_{3,r-1}F_{2,r+1}\bar{v}^{\lambda} + (E_r-E_{r-1})f_{3,r-1}F_{1,r+1}\bar{v}^{\lambda}.
\end{align*}
Finally, we leave to the reader to check (using case \ref{technical_lemma_part_3} of Lemma \ref{technical_lemma}  and \eqref{[Dn<Bn] Preliminaries: f_(1,r) = sum f_(1,s) f_(s+1,r) for 2<r<n}) that 
\begin{align*}
f_{\alpha_1}f_{3,n-1}f_{2,n}\bar{v}^{\lambda}			&=			f_{3,n-1}f_{1,n}\bar{v}^{\lambda}-f_{3,n-1}f_{1,2}f_{3,n}\bar{v}^{\lambda}			\cr
																									&=			f_{3,n-1}f_{1,n}\bar{v}^{\lambda}-f_{1,n-1} f_{3,n}\bar{v}^{\lambda}						\cr
																									&=			- f_{\alpha_1}f_{2,n-1}f_{3,n}\bar{v}^{\lambda},
\end{align*}
and hence 
\begin{align*}
e_{\alpha_n}\bar{w} 			&= -D_{n-1}f_{\alpha_1}f_{3,n-1}f_{2,n}\bar{v}^{\lambda}+ (2F-E_{n-1})f_{3,n-1}f_{1,n}\bar{v}^{\lambda}+ 2Ff_{1,n-1}f_{3,n}\bar{v}^{\lambda} 																\cr
													&= (2F-D_{n-1}-E_{n-1})f_{\alpha_1}f_{2,n-1}f_{3,n}\bar{v}^{\lambda} + (4F-E_{n-1})f_{1,n-1}f_{3,n}\bar{v}^{\lambda}.
\end{align*}

As usual, one then checks that the vector $f_{\alpha_1}f_{\alpha_3}F_{3,4}\bar{v}^{\lambda}$ is non-zero. Also by Lemma \ref{basis_of_W_lambda-alpha_i-2alpha_k-...-2alpha_n}, the set $\{F_{2,3}\bar{v}^{\lambda},f_{\alpha_2}f_{\alpha_3}F_{3,4}\bar{v}^{\lambda},f_{3,j}F_{2,j+1}\bar{v}^{\lambda} : 3\leqslant j< n\}$ is linearly independent. Similarly, one  sees that each of the sets $\{F_{1,4}\bar{v}^{\lambda} , f_{\alpha_1}F_{2,4}\bar{v}^{\lambda}\},$ $\{f_{\alpha_1}f_{3,r-1}F_{2,r+1}\bar{v}^{\lambda} , f_{3,r-1}F_{1,r+1}\bar{v}^{\lambda}\}$ (for every $4\leqslant r\leqslant n-1$),  $\{f_{\alpha_1}f_{2,n-1}f_{3,n}\bar{v}^{\lambda}, f_{1,n-1}f_{3,n}\bar{v}^{\lambda}\}$ is linearly independent as well.  One then concludes that $e_{\alpha}\bar{w}(S)=0$ for every $\alpha \in \Pi$ if and only if $p\mid 2n-5$ (showing that \ref{V(l,k,n)_part_1} and \ref{V(l,k,n)_part_3} are equivalent), in which case  
\[
S \in \langle (4,1-n,3-n,\underbrace{-2,\ldots,-2}_{n-3},\underbrace{4,\ldots,4}_{n-3},1)\rangle_K
\] 
(so that \ref{V(l,k,n)_part_1} and \ref{V(l,k,n)_part_2} are equivalent). The result  follows in this situation.

Now assume $3<k<n$  and let $S=(A,B_i,C_i,D_{ij},E_j,F)\in K^{(k-1)(n-k+2)},$ where $1\leqslant i\leqslant k-2$ and $k\leqslant j\leqslant n-1.$ Again, notice that \(x_\alpha (c) \bar{w}=\bar{w}\) for every \(\alpha \in \Pi\) and  \(c\in K\) if and only if \(e_{\alpha}\bar{w}=0\) for every \(\alpha \in \Pi.\) By Lemmas \ref{structure_constants_for_B} and \ref{technical_lemma}, we have
\begin{align*}
e_{\alpha_1}\bar{w} 	&= \bigg((a+1)B_1 + \sum_{i=2}^{k-2}{B_i}-A\bigg)F_{2,k}\bar{v}^{\lambda}	+ \bigg((a+1)C_1 + \sum_{i=2}^{k-2}{C_i}\bigg)f_{2,k-1}f_{\alpha_k}F_{k,k+1}\bar{v}^{\lambda} \cr
					&\;\;\;\;\,\,+ \sum_{j=k}^{n-1}{\bigg((a+1)D_{1j} + \sum_{i=2}^{k-2}{D_{ij}-E_j}\bigg)}f_{k,j}F_{2,j+1}\bar{v}^{\lambda}  - Ff_{k,n}f_{2,n}\bar{v}^{\lambda} \cr 
					&= \bigg( (a+1)B_1 +\sum_{i=2}^{k-2}{B_i}-A + F\bigg) F_{2,k}\bar{v}^{\lambda}  + \bigg( (a+1)C_1 +\sum_{i=2}^{k-2}{C_i} + F\bigg) f_{2,k-1}f_{\alpha_k}F_{k,k+1}\bar{v}^{\lambda} \cr
					&\;\;\;\;\,\,+ \sum_{j=k}^{n-1}{\bigg((a+1)D_{1j} + \sum_{i=2}^{k-2}{D_{ij}-E_j}\bigg)}f_{k,j}F_{2,j+1}\bar{v}^{\lambda},\cr
\end{align*}
where the last equality can be deduced from case \ref{technical_lemma_part_1} of Lemma \ref{technical_lemma}. Also, for $1<r<k-1,$ we get
\begin{align*}
e_{\alpha_r}\bar{w} = (B_r-B_{r-1})f_{1,r-1}F_{r+1,k}\bar{v}^{\lambda} 	&+ (C_r-C_{r-1})f_{1,r-1}f_{r+1,k-1}f_{\alpha_k}F_{k,k+1}\bar{v}^{\lambda} 														\cr
																																				&+ \sum_{j=k}^{n-1}{(D_{rj}-D_{r-1,j})}f_{1,r-1}f_{k,j}F_{r+1,j+1}\bar{v}^{\lambda},
\end{align*}
while case \ref{[Dn<Bn] Preliminaries for L of type Bn: technical proposition concerning the weight lambda-2...2 for lambda=lambda_1 (part 1)} of Proposition \ref{[Dn<Bn] Preliminaries for L of type Bn: technical proposition concerning the weight lambda-2...2 for lambda=lambda_1}  yields 
\begin{align*}
e_{\alpha_{k-1}}\bar{w} &= (2C_{k-2}-D_{k-2,k})f_{1,k-2}f_{\alpha_k}F_{k,k+1}\bar{v}^{\lambda} - \sum_{j=k+1}^{n-1}{D_{k-2,j}f_{1,k-2}f_{k,j}F_{k,j+1}\bar{v}^{\lambda}}			\cr
												&= \bigg(2C_{k-2}-\sum_{j=k}^{n-1}{D_{k-2,j}}\bigg)f_{1,k-2}f_{\alpha_k}F_{k,k+1}\bar{v}^{\lambda}.	
\intertext{Also}
e_{\alpha_k}\bar{w} 		&= \bigg(-A+2E_k +\sum_{j=k+1}^{n-1}{E_j}+2F\bigg)F_{1,k+1}\bar{v}^{\lambda} \cr
						&\;\;\;\;\;\,\,\,\,- \sum_{i=1}^{k-2}\bigg(B_i+C_i-2D_{i,k} - \sum_{j=k+1}^{n-1}{D_{i,j}}\bigg)f_{1,i}F_{i+1,k+1}\bar{v}^{\lambda},
\intertext{while for $k<s<n,$ we have}
e_{\alpha_s}\bar{w} 		&= \sum_{i=1}^{k-2}{(D_{is}-D_{i,s-1})f_{1,i}f_{k,s-1}F_{i+1,s+1}\bar{v}^{\lambda}} +(E_s-E_{s-1})f_{k,s-1}F_{1,s+1}\bar{v}^{\lambda}.
\intertext{Finally, thanks to Lemma \ref{technical_lemma} (case \ref{technical_lemma_part_4}), we see that}
e_{\alpha_n}\bar{w}	&= \sum_{i=1}^{k-2}{D_{i,n-1}f_{1,i}f_{i+1,n-1}f_{k,n}\bar{v}^{\lambda}} +(2F-E_{n-1})f_{k,n-1}f_{1,n}\bar{v}^{\lambda} + 2Ff_{1,n-1}f_{k,n}\bar{v}^{\lambda}\cr
					&= \sum_{i=1}^{k-2}{(D_{i,n-1}+E_{n-1}-2F)f_{1,i}f_{i+1,n-1}f_{k,n}\bar{v}^{\lambda}} + (4F-E_{n-1})f_{1,n-1}f_{k,n}\bar{v}^{\lambda},
\end{align*}
where the last equality follows from Lemma \ref{technical_lemma} (case  \ref{technical_lemma_part_3}). As usual, one  concludes that $e_{\alpha}\bar{w}(S)=0$ for every $\alpha \in \Pi$ if and only if  $p \mid 2(n-k)+1$ (showing that \ref{V(l,k,n)_part_1} and \ref{V(l,k,n)_part_3} are equivalent), in which case
\[
S\in \langle (4,\underbrace{n-k-1,\ldots,n-k-1}_{k-2},\underbrace{k-n,\ldots,k-n}_{k-2},\underbrace{-2,\ldots,-2}_{(n-k)(k-2)},\underbrace{4,\ldots,4}_{n-k},1)\rangle _K,
\]
thus completing the proof.
\end{proof}

Let $\lambda,$ and $\mu$ be as above, with $p \mid a + k,$  and consider an irreducible $KG$-module $V=L(\lambda)$ having highest weight $\lambda.$ As in the case where $k=2,$ take $V=V(\lambda)/\rad(\lambda),$ so that 
\[
V \cong \bigquotient{\overline{V(\lambda)}}{\overline{\rad(\lambda)}}, 
\]
where $\overline{\rad(\lambda)}=\rad(\lambda)/\langle G u^+\rangle_K.$ Also  write $v^+$ to denote the image of $\bar{v}^{\lambda}$ in $V,$ that is, $v^+$ is a maximal vector of weight \(\lambda\) in $V$ for $B.$  By Proposition \ref{Une premiere simplification de l'ensemble generateur de V(nu)}, the weight space $V_{\mu}$ is spanned by the vectors
\begin{equation}
\begin{aligned}
\left\{F_{1,k}v^+\right\} 									&\cup  \left\{f_{1,i}F_{i+1,k}v^+\right\}_{1\leqslant i\leqslant k-2} 														\cr
																						&\cup  \left\{f_{1,i}f_{i+1,k-1}f_{\alpha_k}F_{k,k+1}v^+\right\}_{1\leqslant i\leqslant k-2} 			\cr
																						&\cup  \left\{f_{1,i}f_{k,j}F_{i+1,j+1}v^+\right\}_{1\leqslant i\leqslant k-2,k\leqslant j <n} 				\cr
																						&\cup  \left\{f_{k,j}F_{1,j+1}v^+\right\}_{k\leqslant j<n} 																	\cr
																						&\cup  \left\{f_{k,n}f_{1,n}v^+\right\}.
\end{aligned}
\label{generators_for_V_nu}
\end{equation}

We write $V_{1,k,n}$ to denote the span of all the generators in \eqref{generators_for_V_nu} except for $f_{k,n}f_{1,n}v^+.$ As usual, the following result consists of a precise description of the weight space $V_{\mu},$ as well as a characterization for \(\mu\) to afford the highest weight of a composition factor of \(V(\lambda).\)


\begin{prop}\label{condition_if_l<k-1}
Let $G$ be a simple algebraic group of type $B_n$ over $K$ and consider an irreducible $KG$-module  $V=L(\lambda)$ having $p$-restricted highest weight $\lambda=a\lambda_1+\lambda_k,$ where  $a\in \Z_{>0},$ and $2<k<n$ are such that $p\mid a+k.$ Also set $\mu=\lambda-(\alpha_1+\cdots+\alpha_{k-1}+2\alpha_k+\cdots+2\alpha_n).$ Then  the following assertions are equivalent.
\begin{enumerate} 
\item \label{conditions_for_f(i,i)_to_belong_to_V(l,k,n)_part_1} The weight $\mu$ affords the highest weight of a composition factor of $V(\lambda).$
\item \label{conditions_for_f(i,i)_to_belong_to_V(l,k,n)_part_2} The generators in \eqref{generators_for_V_nu} are linearly dependent.
\item \label{conditions_for_f(i,i)_to_belong_to_V(l,k,n)_part_3} The element $f_{k,n}f_{1,n}v^+$ lies inside $V_{1,k,n}.$
\item \label{conditions_for_f(i,i)_to_belong_to_V(l,k,n)_part_4} The divisibility condition $p\mid 2(n-k)+1$ is satisfied.
\end{enumerate}
\end{prop}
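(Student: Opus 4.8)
The plan is to mirror, essentially verbatim, the proof of Proposition \ref{condition_if_l=k-1}, replacing each ingredient tailored to $\lambda=a\lambda_1+\lambda_2$ by its counterpart for $\lambda=a\lambda_1+\lambda_k$ that has already been assembled above. Concretely, the roles played in that proof by Lemma \ref{[Dn<Bn] Preliminaries for L of type Bn: reduction to the study of V(lambda)/<Lu> for lambda = a lambda1 + lambda2}, Proposition \ref{Une premiere simplification de l'ensemble generateur de V(mu)}, and Lemma \ref{Conditions_pour_que_la_combinaison_lineaire_de_vecteurs_generateurs_de_V(k-1,k,n)_soit_tuee_par_tous_les_e_alpha} are now taken over respectively by Lemma \ref{[Dn<Bn] Preliminaries for L of type Bn: reduction to the study of V(lambda)/<Lu> for lambda = a lambda_1 + lambda_k}, Proposition \ref{Une premiere simplification de l'ensemble generateur de V(nu)}, and Lemma \ref{Conditions_pour_que_la_combinaison_lineaire_de_vecteurs_generateurs_de_V(l,k,n)_soit_tuee_par_tous_les_e_alpha}. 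I would establish the four assertions by proving the chain of implications \ref{conditions_for_f(i,i)_to_belong_to_V(l,k,n)_part_3}$\Rightarrow$\ref{conditions_for_f(i,i)_to_belong_to_V(l,k,n)_part_2}, \ref{conditions_for_f(i,i)_to_belong_to_V(l,k,n)_part_1}$\Rightarrow$\ref{conditions_for_f(i,i)_to_belong_to_V(l,k,n)_part_2}, \ref{conditions_for_f(i,i)_to_belong_to_V(l,k,n)_part_2}$\Rightarrow$\ref{conditions_for_f(i,i)_to_belong_to_V(l,k,n)_part_1}, \ref{conditions_for_f(i,i)_to_belong_to_V(l,k,n)_part_2}$\Rightarrow$\ref{conditions_for_f(i,i)_to_belong_to_V(l,k,n)_part_4}, and \ref{conditions_for_f(i,i)_to_belong_to_V(l,k,n)_part_4}$\Rightarrow$\ref{conditions_for_f(i,i)_to_belong_to_V(l,k,n)_part_3}.

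The implication \ref{conditions_for_f(i,i)_to_belong_to_V(l,k,n)_part_3}$\Rightarrow$\ref{conditions_for_f(i,i)_to_belong_to_V(l,k,n)_part_2} is immediate from the definition of $V_{1,k,n}$ as the span of the generators in \eqref{generators_for_V_nu} other than $f_{k,n}f_{1,n}v^+$. For \ref{conditions_for_f(i,i)_to_belong_to_V(l,k,n)_part_1}$\Rightarrow$\ref{conditions_for_f(i,i)_to_belong_to_V(l,k,n)_part_2} I would invoke Lemma \ref{[Dn<Bn] Preliminaries for L of type Bn: reduction to the study of V(lambda)/<Lu> for lambda = a lambda_1 + lambda_k} to pass from $V(\lambda)$ to $\overline{V(\lambda)}$, obtaining $[\overline{V(\lambda)},L(\mu)]\neq 0$; comparing the spanning set \eqref{generators_for_V_nu} for $V_\mu$ with the basis of $\overline{V(\lambda)}_\mu$ furnished by Proposition \ref{Une premiere simplification de l'ensemble generateur de V(nu)} then forces the generators to be linearly dependent. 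For \ref{conditions_for_f(i,i)_to_belong_to_V(l,k,n)_part_2}$\Rightarrow$\ref{conditions_for_f(i,i)_to_belong_to_V(l,k,n)_part_1}, linear dependence means $\rad(\lambda)$ meets $\overline{V(\lambda)}_\mu$ nontrivially, so some $L(\nu)$ with $\mu\preccurlyeq\nu\prec\lambda$ occurs as a composition factor of $\overline{V(\lambda)}$; the crux here is to verify that $\mu$ is the only dominant weight in this range that can afford such a highest weight, after which Lemma \ref{[Dn<Bn] Preliminaries for L of type Bn: reduction to the study of V(lambda)/<Lu> for lambda = a lambda_1 + lambda_k} returns the conclusion for $V(\lambda)$ itself.

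The same linear dependence produces a nonzero $S$ with $\bar{w}(S)$ a maximal vector of weight $\mu$ in $\overline{V(\lambda)}$, so Lemma \ref{Conditions_pour_que_la_combinaison_lineaire_de_vecteurs_generateurs_de_V(l,k,n)_soit_tuee_par_tous_les_e_alpha} yields $p\mid 2(n-k)+1$, giving \ref{conditions_for_f(i,i)_to_belong_to_V(l,k,n)_part_2}$\Rightarrow$\ref{conditions_for_f(i,i)_to_belong_to_V(l,k,n)_part_4}; conversely, assuming \ref{conditions_for_f(i,i)_to_belong_to_V(l,k,n)_part_4}, the same lemma supplies $S\in K^{(k-1)(n-k+2)-1}\times K^*$ with $x_\alpha(c)\bar{w}(S)=0$ for all $\alpha\in\Pi$ and $c\in K$, whence $\bar{w}(S)+\rad(\lambda)\in\langle v^+\rangle_K\cap V_\mu=0$ and therefore $f_{k,n}f_{1,n}v^+\in V_{1,k,n}$, that is, \ref{conditions_for_f(i,i)_to_belong_to_V(l,k,n)_part_3}. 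The heavy computational input — applying the raising operators $e_{\alpha_1},\ldots,e_{\alpha_n}$ to $\bar{w}(S)$, solving the resulting linear system, and extracting the divisibility condition $p\mid 2(n-k)+1$ — has already been discharged in Lemma \ref{Conditions_pour_que_la_combinaison_lineaire_de_vecteurs_generateurs_de_V(l,k,n)_soit_tuee_par_tous_les_e_alpha}, so the only residual difficulty, and the main obstacle, is the weight-combinatorial verification inside \ref{conditions_for_f(i,i)_to_belong_to_V(l,k,n)_part_2}$\Rightarrow$\ref{conditions_for_f(i,i)_to_belong_to_V(l,k,n)_part_1} that $\mu$ is the unique candidate. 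I would handle it by showing that every dominant $\nu$ with $\mu\prec\nu\prec\lambda$ either has multiplicity one in $\overline{V(\lambda)}$, hence cannot afford a composition factor by Premet's theorem (Theorem \ref{[Preliminaries] Weights and multiplicities: Premet's theorem}, applicable since $p\neq 2$), or has already been exhausted by the factor $\langle G u^+\rangle_K$, an image of $V(\mu_{1,k})$, that was quotiented out.
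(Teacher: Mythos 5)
Your proposal is correct and follows essentially the same route as the paper: the paper's own proof of Proposition \ref{condition_if_l<k-1} is precisely the instruction to repeat the argument of Proposition \ref{condition_if_l=k-1}, replacing $\mu_{1,2}$ by $\mu_{1,k}$, Lemma \ref{[Dn<Bn] Preliminaries for L of type Bn: reduction to the study of V(lambda)/<Lu> for lambda = a lambda1 + lambda2} by Lemma \ref{[Dn<Bn] Preliminaries for L of type Bn: reduction to the study of V(lambda)/<Lu> for lambda = a lambda_1 + lambda_k}, and Lemma \ref{Conditions_pour_que_la_combinaison_lineaire_de_vecteurs_generateurs_de_V(k-1,k,n)_soit_tuee_par_tous_les_e_alpha} by Lemma \ref{Conditions_pour_que_la_combinaison_lineaire_de_vecteurs_generateurs_de_V(l,k,n)_soit_tuee_par_tous_les_e_alpha}, which is exactly the chain of implications and substitutions you carry out. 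Your more explicit treatment of the crux in \ref{conditions_for_f(i,i)_to_belong_to_V(l,k,n)_part_2}$\Rightarrow$\ref{conditions_for_f(i,i)_to_belong_to_V(l,k,n)_part_1} (ruling out intermediate dominant weights via Premet's theorem and the quotiented-out image of $V(\mu_{1,k})$) only spells out what the paper leaves implicit.
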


\begin{proof}
Proceed exactly as in the proof of  Proposition \ref{condition_if_l=k-1}, replacing $\mu_{1,2}$ by $\mu_{1,k},$ Lemma \ref{[Dn<Bn] Preliminaries for L of type Bn: reduction to the study of V(lambda)/<Lu> for lambda = a lambda1 + lambda2} by Lemma \ref{[Dn<Bn] Preliminaries for L of type Bn: reduction to the study of V(lambda)/<Lu> for lambda = a lambda_1 + lambda_k}, and Lemma \ref{Conditions_pour_que_la_combinaison_lineaire_de_vecteurs_generateurs_de_V(k-1,k,n)_soit_tuee_par_tous_les_e_alpha} by Lemma \ref{Conditions_pour_que_la_combinaison_lineaire_de_vecteurs_generateurs_de_V(l,k,n)_soit_tuee_par_tous_les_e_alpha}.
\end{proof}

\end{document}